\newcommand{\Add}{{\rm Add}}
\newtheorem{cor}{Corollary}
\newtheorem{lem}{Lemma}
\newtheorem{rem}{Remark}
\newtheorem{ex}{Example}
\newtheorem{opr}{Definition}
\newtheorem{thm}{Theorem}
\newtheorem{prop}{Proposition}
\newtheorem{ques}{Question}
\newtheorem{setup}{Setup}
\newtheorem{rems}{Remarks}
\begin{document}

\title{Silting Theory in triangulated categories with coproducts}

\author{Pedro Nicol\'as \thanks{The authors thank Chrysostomos Psaroudakis, Jorge Vit\'oria, Francesco Mattiello and Luisa Fiorot for their careful reading of two earlier versions of the paper and for their subsequent comments and suggestions which helped us a lot. We also thank Jan \v{S}\v{t}ov\'\i\v{c}ek for telling us about Lemma \ref{lem.Stovicek}. Finally, the authors deeply thank the referee for the careful reading of the paper and for her/his comments and suggestions.   Nicol\'as and Saor\'in are supported   by research projects from the  Spanish Ministerio de Econom\'ia y Competitividad (MTM2016-77445-P)  and from the Fundaci\'on `S\'eneca' of Murcia (19880/GERM/15), with a part of FEDER funds. Zvonareva is supported by the RFFI Grant 16-31-60089. 
The authors
thank these institutions for their help. Zvonareva also thanks the University of Murcia for its hospitality during her visit, on which this research started.}
\\
Departamento de Did\'actica de las Ciencias Matem\'aticas y Sociales\\ Universidad de Murcia, Aptdo. 4021  \\ 30100 Espinardo, Murcia\\ SPAIN\\
{\it pedronz@um.es} \\  \\ Manuel Saor\'in 
\\ Departamento de Matem\'aticas\\
Universidad de Murcia, Aptdo. 4021\\
30100 Espinardo, Murcia\\
SPAIN\\ {\it msaorinc@um.es} \\ \\ Alexandra Zvonareva \\ Chebyshev Laboratory\\ St. Petersburg State University\\ 14th Line 29B,  St. Petersburg 199178\\ RUSSIA\\ {\it alexandrazvonareva@gmail.com}}


\date{}

\maketitle


\begin{abstract}

{\bf We introduce the notion of noncompact (partial) silting and (partial) tilting sets and objects in any triangulated category $\mathcal{D}$ with arbitrary (set-indexed) coproducts. We show that equivalence classes of partial silting sets are in bijection with t-structures generated by their co-heart whose heart has a generator, and in case $\mathcal{D}$ is compactly generated, this  restricts to: i) a bijection between equivalence classes of self-small partial silting objects and left nondegenerate t-structures in $\mathcal{D}$ whose heart is a module category and whose associated cohomological functor preserves products; ii) a bijection between equivalence classes of classical silting objects and  nondegenerate smashing and co-smashing t-structures whose heart is a module category.  

We describe the objects in the aisle of the t-structure associated to a partial silting set $\mathcal{T}$ as  Milnor (or homotopy) colimits  of sequences of morphisms with successive cones in $\text{Sum}(\mathcal{T})[n]$. We use this fact to develop a  theory of tilting objects in very general AB3 abelian categories, a setting and its dual in which we show the validity of several well-known results of tilting and cotilting theory of modules. Finally, we show that if $\mathcal{T}$ is a bounded tilting set in a compactly generated algebraic triangulated category $\mathcal{D}$ and $\mathcal{H}$ is the heart of the associated t-structure, then the inclusion $\mathcal{H}\hookrightarrow\mathcal{D}$ extends to a triangulated equivalence $\mathcal{D}(\mathcal{H})\stackrel{\sim}{\longrightarrow}\mathcal{D}$ which restricts to bounded levels.  }
 
\end{abstract}

{\bf Mathematics Subjects Classification:} 18E30, 18E10, 18GXX.

\vspace*{1cm}

\section{Introduction} \label{sec.introduction}

Silting sets and objects in triangulated categories were introduced by Keller and Vossieck \cite{KV} and later studied by Aihara and Iyama \cite{AI}, as a way of overcoming a problem inherent to tilting objects, namely,  that mutations are sometimes impossible to define. By extending the class of tilting objects to the wider class of silting objects Aihara and Iyama were able to define a concept of silting mutation that always worked. In the initial definition of a silting object, a strong generation property was required, in the sense that the ambient triangulated category had to be the thick subcategory generated by the object. As a consequence, the study of silting objects was mainly concentrated on `categories of compact objects', especially on the perfect derived category of an algebra. 

From contributions of several authors (see \cite{AI}, \cite{MSSS}, \cite{IJY}, \cite{KN}...) it soon became clear that silting complexes were connected with several concepts existing in the literature. For instance, with co-t-structures (equivalently,  weight structures), as defined in \cite{Bo} and  \cite{P}, with t-structures as defined in \cite{BBD} and, in the context of Representation Theory, also with the so-called simple minded collections (see \cite{KN}). As the final point of this route, K\"onig and Yang \cite{KY} gave, for a finite dimensional algebra $\Lambda$,  a precise bijection between equivalence classes of silting complexes in $\text{per}(\Lambda )\cong\mathcal{K}^b(\Lambda )$, bounded co-t-structures in $\mathcal{K}^b(\Lambda )$, bounded t-structures in $\mathcal{D}^b(\text{mod}-\Lambda)$ whose heart is a length category and equivalence classes of simple minded collections in $\mathcal{D}^b(\text{mod}-\Lambda )$. In addition, they showed that these bijections were compatible with the concepts of mutation defined for each set. Similar results, also presented in several meetings,  were independently obtained in \cite{KN2} for homologically smooth and homologically nonpositive dg algebras with finite dimensional homology.

In a route similar to the one followed by tilting modules and, more generally, tilting complexes, a few authors (see \cite{W} and \cite{AMV}) extended the notion of silting object to the unbounded derived category $\mathcal{D}(R)$ of a ring $R$. 
The strong generation condition had necessarily to be dropped, but the newly defined concept of `big' silting complex allowed them to extend the  K\"onig-Yang bijections, except for the simple-minded collections,  to the unbounded setting (see \cite{AMV}).  A further step in this direction is done independently in \cite{PV} and in this paper. Here we shall introduce a notion of partial silting object in any triangulated category with coproducts, which will still allow a sort of K\"onig-Yang bijection. In fact any such partial silting object defines a t-structure in the triangulated category whose heart has a projective generator. This leads naturally to the question, whether this is the way of obtaining all t-structures whose heart has a projective generator. Even more specifically, whether this is the way of obtaining all t-structures whose heart is the module category over a small $K$-category or over an ordinary algebra. Our results in the paper  give partial answers to these questions and, using the dual concept of partial cosilting object, we can also address the question of characterising t-structures whose heart is a Grothendieck category. 

If one follows the development of tilting theory for modules and complexes of modules, one sees that the so-called classical (i.e. compact) tilting complexes give equivalences of categories. Indeed, as shown in the work by  Rickard and Keller (see \cite{Ri1}, \cite{Ri2} and \cite{K}), if $T$ is a classical tilting complex in $\mathcal{D}(A)$ and $B=\text{End}_{\mathcal{D}(A)}(T)$, then there is an equivalence of triangulated categories $\mathcal{D}(A)\stackrel{\sim}{\longrightarrow}\mathcal{D}(B)$ which takes $T$ to $B$.  When replacing such a classical (=compact) tilting complex by a noncompact one, we do not have an equivalence of categories but, after replacing $T$ by some power $T^{(I)}$, one actually has a recollement of triangulated categories (see \cite{BMT} and \cite[Section 7]{NS2}). One of the common features is that a tilting complex $T$, be it compact or noncompact,  defines the t-structure $\tau_T=(T^{\perp_{>0}},T^{\perp_{<0}})$ in $\mathcal{D}(A)$. In the classical (=compact) case the associated heart $\mathcal{H}_T$ turns out to be equivalent to $\text{Mod}-\text{End}_{\mathcal{D}(A)}(B)$ and the inclusion functor $\mathcal{H}_T\hookrightarrow\mathcal{D}(A)$ extends to an equivalence of categories $\mathcal{D}(\mathcal{H}_T)\stackrel{\sim}{\longrightarrow}\mathcal{D}(A)$ (see \cite{FMT} for the case of a classical tilting object in an abelian category). Surprisingly, with the tilting theory for AB3 abelian categories developed in Section \ref{sect.tilting theory abelian}, one has that this phenomenon is  still true when $T$ is an infinitely generated $n$-tilting module  and, even more generally, for any ($n$-)tilting object in such an AB3 abelian category (see \cite{CXW} and \cite{FMS}). So it is very natural to ask if a similar phenomenon holds for (nonclassical) tilting  objects in any triangulated category with coproducts. Note that the phenomenon is discarded if, more generally, one deals just with silting objects (see \cite[Corollary 5.2]{PV}). 

In this paper we define the co-heart of a t-structure in a triangulated category (see the first paragraph of Section \ref{sect.coheart}) and, when such a category has coproducts, we introduce the notions of (partial) tilting and (partial) silting sets of objects, calling them classical when they consist of compact objects (see Definitions \ref{def.strong presilting} and \ref{def.classical silting objects}). Our  first main result and two of its consequences are the following, all stated for any triangulated category
 $\mathcal{D}$  with coproducts: 

- {\it (Part of Theorem \ref{teor.presilting t-structures})  A t-structure $\tau$ in $\mathcal{D}$ is generated by a partial silting set if and only if it is generated by its co-heart and its heart has a generator. When, in addition, $\mathcal{D}$ is compactly generated, this is equivalent to saying that $\tau$ is left nondegenerate, its heart has a projective generator and the cohomological functor $\tilde{H}:\mathcal{D}\longrightarrow\mathcal{H}$ preserves products.}

- {\it(Part of Corollary \ref{cor.bijection for comp.generated}) When $\mathcal{D}$ is compactly generated, there is a bijection between equivalence classes of self-small  partial silting sets (resp. objects)  and  left nondegenerate  t-structures in $\mathcal{D}$ whose heart is the module category over a small $K$-category (resp. an ordinary $K$-algebra) and whose associated cohomological functor $\tilde{H}$ preserves products. }

- {\it (Part of Proposition \ref{prop.bijection smashing-cosmashing})  When $\mathcal{D}$ is compactly generated, there are: i) a bijection between equivalence classes of classical silting sets (resp. objects) and nondegenerate smashing and co-smashing t-structures whose heart is a module category over a small $K$-category (resp. ordinary algebra); ii) a bijection between equivalence classes of cosilting pure-injective objects $Q$ such that ${}^{\perp_{<0}}Q$ is closed under taking products and nondegenerate smashing and co-smashing t-structures whose heart is a Grothendieck category.} 

These bijections are unbounded versions of the bijection in \cite[Theorem 6.1]{KY} (classes (1) and (3)) (see also  \cite[Theorem 1.1]{Liu} for a related result in a bounded setting). They follow the trend of extending  K\"onig-Yang's bijections  to  the unbounded context (see, e.g., \cite[Theorem 4.6]{AMV} or the parallel result \cite[Corollary 4.7]{PV}).

The concept of partial silting set in our general setting has the problem that it is sometimes difficult to check its defining conditions for a given set of objects. On the other hand, even in the case of a silting object $T$, where the aisle is $T^{\perp_{>0}}$, it is not clear how the objects of this aisle can be defined in terms of $T$. Our second main result partially solves these problems:

- {\it (Part of Theorem \ref{teor.weakly equivalent to presilting}) When $\mathcal{T}$ is a strongly nonpositive set in $\mathcal{D}$ (see Definition \ref{def.strongly nonpositive}), it is partial silting if and only if there is a t-structure $(\mathcal{V},\mathcal{V}^\perp [1])$ such that $\mathcal{T}\subset\mathcal{V}$ and, for some  $q\in\mathbb{Z}$, the functor $\text{Hom}_{D}(T,?)$ vanishes on $\mathcal{V}[q]$ for  all $T\in\mathcal{T}$. Moreover, each object in the aisle  of the associated t-structure is a Milnor (or homotopy) colimit of a sequence $$X_0\stackrel{x_1}{\longrightarrow}X_1\stackrel{x_2}{\longrightarrow}\cdots\stackrel{x_n}{\longrightarrow}X_n\stackrel{x_{n+1}}{\longrightarrow}\cdots, $$  where $X_0\in\text{Sum}(\mathcal{T})$ and $\text{cone}(x_n)\in\text{Sum}(\mathcal{T})[n]$, for all $n\geq 0$.}

This description of the aisle has an important (not straightforward) consequence, when $\mathcal{D}=\mathcal{D}(\mathcal{A})$ is the derived category of an abelian category $\mathcal{A}$ and $T$ is an object of $\mathcal{A}$ which is  partial silting in $\mathcal{D}(\mathcal{A})$ . In this case,  the objects in the aisle are precisely those chain complexes which are isomorphic in $\mathcal{D}(\mathcal{A})$ to complexes  $\cdots\longrightarrow T^{-n}\longrightarrow \cdots\longrightarrow T^{-1}\longrightarrow T^0\longrightarrow 0\longrightarrow\cdots$, with all the $T^{-k}$ in $\text{Sum}(T)$ (see Proposition \ref{prop.aisle of partial tilting object}). This led us to think that it might be possible to extend the well-established theory of   tilting modules (see, e.g.,  \cite{BB}, \cite{HR} and \cite{Mi} for the classical part, and \cite{AC} and \cite{CT} for the infinitely generated part) to any abelian category $\mathcal{A}$ whose derived category has $\text{Hom}$ sets and arbitrary   coproducts. This is indeed the case and Section \ref{sect.tilting theory abelian} is devoted to developing such a theory. Definition \ref{def.(partial) tilting object} introduces the concept of tilting object in such an abelian category and the main result of the section, {\it Theorem \ref{teor.characterization of tilting objects}}, shows that several known characterizations of tilting modules also work in this general setting. The advantage of the new theory is that it is apt to dualization. In this way tilting and cotilting theory are two sides of a unique theory. This has already been exploited in \cite{FMS}. 

The final of the main results   and its consequences provide a partial answer to the question of whether the inclusion from the heart can be extended to a triangulated equivalence. 

- {\it (Theorem \ref{teor.tilting derived equivalence} and Corollaries \ref{cor.restriction of the equivalence} and \ref{cor.restriction dg algebra}) Let $\mathcal{D}$ be any compactly generated algebraic triangulated category and let $\mathcal{T}$ be a bounded tilting set in $\mathcal{D}$ (see Definition \ref{def.big tilting object}). If $\mathcal{H}=\mathcal{H}_\mathcal{T}$ is the heart of the associated t-structure in $\mathcal{D}$, then the inclusion $\mathcal{H}\hookrightarrow\mathcal{D}$ extends to a triangulated equivalence $\Psi :\mathcal{D}(\mathcal{H})\stackrel{\sim}{\longrightarrow}\mathcal{D}$ which restricts to equivalences $\mathcal{D}^*(\mathcal{H})\stackrel{\sim}{\longrightarrow}\mathcal{D}^*$, for $*\in\{+,-,b\}$, with an appropriate definition of $\mathcal{D}^*$ which is the classical one when $\mathcal{D}=\mathcal{D}(A)$, for a dg algebra $A$.} 

\vspace*{0.3cm}

The organization of the paper goes as follows. Section \ref{sect.preliminares} is of preliminaries, and there we introduce most of the needed terminology. In Section \ref{sect.coheart} we introduce the co-heart of a t-structure and study its properties. In Section \ref{sect.silting bijection} partial silting and partial tilting sets and objects in a triangulated category with coproducts $\mathcal{D}$ are introduced and the mentioned Theorem \ref{teor.presilting t-structures} is proved together with some corollaries which give a K\"onig-Yang-like bijection between equivalence classes of partial silting objects and some t-structures in $\mathcal{D}$. In Section \ref{sect.aisle silting t-structure} we prove Theorem \ref{teor.weakly equivalent to presilting}. Then Section \ref{sect.tilting theory abelian} is devoted to developing a tilting theory for abelian categories whose derived category has $\text{Hom}$ sets and coproducts and to the proof of the mentioned Theorem \ref{teor.characterization of tilting objects}. Section \ref{sect.triangulated equivalences} gives the  statement and    proof of Theorem \ref{teor.tilting derived equivalence} and its corollaries. The final section \ref{sec.exceptional sequences} shows that exceptional sequences, as studied in Algebraic Geometry and Representation Theory,  and natural generalizations of them give rise to examples of partial silting sets.

\section{Preliminaries and terminology} \label{sect.preliminares}

All throughout this paper, we shall work over a commutative ring $K$, fixed from now on. All  categories will be $K$-categories. That is, the morphisms between two objects  form a $K$-module and the composition of morphisms will be $K$-bilinear. All of them are assumed to have $\text{Hom}$ sets and, in case of doubts as for the derived category of an abelian category, this $\text{Hom}$ set hypothesis will be  required.  Unless explicitly said otherwise  categories will be also additive and all subcategories will  be full and closed under taking isomorphisms.   Coproducts and products will be always  small (i.e. set-indexed). The expression `$\mathcal{A}$ has coproducts (resp. products)' will then mean that $\mathcal{A}$ has arbitrary set-indexed coproducts (resp. products).   
When $\mathcal{S}\subset\text{Ob}(\mathcal{A})$ is any class of objects, we shall
denote by $\text{sum}_\mathcal{A}(\mathcal{S})$ (resp. $\text{Sum}_\mathcal{A}(\mathcal{S})$) the subcategory of objects which are finite (resp. arbitrary) coproducts
of objects in $\mathcal{S}$. Then 
$\text{add}_\mathcal{A}(\mathcal{S})$ (resp.
$\text{Add}_\mathcal{A}(\mathcal{S})$) will denote the subcategory of objects which are direct summands of objects in $\text{sum}_\mathcal{A}(\mathcal{S})$ (resp. $\text{Sum}_\mathcal{A}(\mathcal{A})$).
Also, we will denote by $\text{Prod}_\mathcal{A}(\mathcal{S})$ the class of objects which are direct
summands of arbitrary products of objects in $\mathcal{S}$. When
$\mathcal{S}=\{V\}$, for some object $V$, we will simply write $\text{sum}_\mathcal{A}(V)$ (resp. $\text{Sum}_\mathcal{A}(V)$) 
$\text{add}_\mathcal{A}(V)$ (resp. $\text{Add}_\mathcal{A}(V)$) and
$\text{Prod}_\mathcal{A}(V)$.  

If  $\mathcal{S}$ is a class (resp. set) as above, we will say that it is a 
 \emph{class (resp. set) of
generators} when, given any nonzero morphism $f:X\longrightarrow Y$ in $\mathcal{A}$, there is a morphism $g:S\longrightarrow X$, for some $S\in\mathcal{S}$, such that $f\circ g\neq 0$. Note that when $\mathcal{S}$ is a set, this is equivalent to saying that the functor $\coprod_{S\in\mathcal{S}}\text{Hom}_\mathcal{A}(S,?):\mathcal{A}\longrightarrow\text{Ab}$ is faithful. An object $G$ is a
\emph{generator} of $\mathcal{A}$, when $\{G\}$ is a set of
generators. When $\mathcal{A}$ is abelian and  $\mathcal{P}$ is a class (resp. set)  of projective objects, then $\mathcal{P}$  is a class (resp. set) of generators if and only if, given any  $0\neq X\in\text{Ob}(\mathcal{A})$, there is a nonzero morphism $P\longrightarrow X$, for some $P\in\mathcal{P}$.   When $\mathcal{A}$ is  AB3 abelian  (see definition below) and $\mathcal{S}$ is a set, it is a set of generators exactly when each object $X$ of $\mathcal{A}$ is an epimorphic image of a coproduct of objects of $\mathcal{S}$ (see \cite[Proposition IV.6.2]{St}).  
 The concepts of a \emph{class (resp. a set) of cogenerators} and of a \emph{cogenerator} are defined dually. 
Sometimes, in the case of an abelian category $\mathcal{A}$ we will employ a stronger version of these concepts. Namely, a class $\mathcal{S}\subseteq\text{Ob}(\mathcal{A})$  will be called a \emph{generating (resp. cogenerating)
class} of $\mathcal{A}$ when, for  each object $X$ of $\mathcal{A}$,
there is  an epimorphism $S\twoheadrightarrow X$ (resp. monomorphism
$X\rightarrowtail S$), for some $S\in\mathcal{S}$.  

We shall say that \emph{idempotents split in $\mathcal{A}$}, when given any object $X$ of $\mathcal{A}$ and any idempotent endomorphism $e=e^2\in\text{End}_\mathcal{A}(X)$, there is an isomorphism $f:X\stackrel{\sim}{\longrightarrow}X_1\coprod X_2$, for some $X_1,X_2\in\text{Ob}(\mathcal{A})$, such that $e$ is the composition 
$$X\stackrel{f}{\longrightarrow}X_1\coprod X_2\stackrel{\begin{pmatrix}1 & 0 \end{pmatrix}}{\longrightarrow}X_1\stackrel{\begin{pmatrix}1\\ 0 \end{pmatrix}}{\longrightarrow}X_1\coprod X_2\stackrel{f^{-1}}{\longrightarrow}X.$$ 
When $\mathcal{A}$ is abelian, idempotents split in it.   

When $\mathcal{A}$ has  coproducts, we shall say that an object $X$ is a
\emph{compact (or small) object} when the functor
$\text{Hom}_\mathcal{A}(X,?):\mathcal{A}\longrightarrow\text{Ab}$
preserves coproducts. That is, when the canonical map $\coprod_{i\in I}\text{Hom}_\mathcal{A}(T,X_i)\longrightarrow\text{Hom}_\mathcal{A}(T,\coprod_{i\in I}X_i)$ is bijective, for each family $(X_i)_{i\in I}$ of objects of $\mathcal{A}$. More generally, we will say that a set $\mathcal{S}$ of objects is \emph{self-small}, when the canonical map $\coprod_{i\in I}\text{Hom}_\mathcal{A}(S,S_i)\longrightarrow\text{Hom}_\mathcal{A}(S,\coprod_{i\in I}S_i)$ is bijective, for each $S\in\mathcal{S}$ and each family $(S_i)_{i\in I}$ of objects of $\mathcal{S}$. 
An object $X$ will be called \emph{self-small} when $\{X\}$ is a self-small set. 

If $\mathcal{X}$ is a subcategory  and $M$ is an object of $\mathcal{A}$, a morphism $f:X_M\longrightarrow M$ is an \emph{$\mathcal{X}$-precover} (or \emph{right $\mathcal{X}$-approximation}) of $M$ when $X_M$ is in $\mathcal{X}$ and, for  every morphism $g:X\longrightarrow M$ with $X\in\mathcal{X}$, there is a morphism $v:X\longrightarrow X_M$ such that $f\circ v=g$. When $\mathcal{A}$ has coproducts and $\mathcal{S}$ is a set of objects, every object $M$ admits a morphism which is both an $\text{Add}(\mathcal{S})$-precover and a $\text{Sum}(\mathcal{S})$-precover, namely the canonical morphism $\epsilon_M:\coprod_{S\in\mathcal{S}}S^{(\text{Hom}_\mathcal{A}(S,M))}\longrightarrow M$. If, for each $S\in\mathcal{S}$ and each $f\in\text{Hom}_{\mathcal{A}}(S,M)$, we denote by $\iota_{(S,f)}:S\longrightarrow\coprod_{S\in\mathcal{S}}S^{(\text{Hom}_\mathcal{A}(S,M))}$ the corresponding injection into the coproduct, then $\epsilon_M$ is the unique morphism such that $\epsilon_M\circ\iota_{(S,f)}=f$, for all $S\in\mathcal{S}$ and $f\in\text{Hom}_\mathcal{A}(S,M)$. 

We will frequently use the following `hierarchy' among abelian categories
introduced by Grothendieck (\cite{Gr}). Let $\mathcal{A}$ be an
abelian category.

\begin{enumerate}
\item[] - $\mathcal{A}$ is \emph{AB3 (resp. AB3*)} when it has
coproducts (resp. products);
\item[] - $\mathcal{A}$ is \emph{AB4 (resp. AB4*)} when it is AB3 (resp. AB3*)
and the coproduct functor $\coprod
:[I,\mathcal{A}]\longrightarrow\mathcal{A}$ (resp. product functor
$\prod :[I,\mathcal{A}]\longrightarrow\mathcal{A}$) is  exact, for
each set $I$;
\item[] - $\mathcal{A}$ is \emph{AB5 (resp. AB5*)} when it is AB3 (resp. AB3*) and the direct limit
functor $\varinjlim :[I,\mathcal{A}]\longrightarrow\mathcal{A}$
(resp. inverse limit functor $\varprojlim
:[I^{op},\mathcal{A}]\longrightarrow\mathcal{A}$) is exact, for each
 directed set $I$.
\end{enumerate}
Note that the AB3 (resp. AB3*) condition is equivalent to the fact
that $\mathcal{A}$ is cocomplete (resp. complete).  An AB5 abelian category $\mathcal{G}$ having a set of generators
(equivalently, a generator),  is called a \emph{Grothendieck
category}. A classical example of a Grothendieck category is the  category of right modules $\text{Mod}-\mathcal{C}$ over a (skeletally) small (not necessarily additive) $K$-category $\mathcal{C}$. Its objects are the $K$-linear functors $\mathcal{C}^{op}\longrightarrow\text{Mod}-K$ and its morphisms are the natural transformations. As a particular case,  an ordinary (=associative unital) $K$-algebra $A$ may be viewed as a $K$-category with just one object, where the morphisms are the elements of $A$ and where the composition is the anti-product. In that case $\text{Mod}-A$ coincides with the usual description of modules over an algebra.   The following result of Gabriel-Mitchell will be frequently used (see \cite[Corollary 6.4]{Po})

\begin{prop} \label{prop.Gabriel-Mitchell}
Let $\mathcal{A}$ be an abelian $K$-category. The following assertions are equivalent:

\begin{enumerate}
\item $\mathcal{A}$ is AB3 and has a set of compact projective generators;
\item $\mathcal{A}$ is equivalent to $\text{Mod}-\mathcal{C}$, for some (skeletally) small $K$-category $\mathcal{C}$.
\end{enumerate}
In particular, $\mathcal{A}$ is equivalent to $\text{Mod}-A$, for some ordinary $K$-algebra $A$, if and only if $\mathcal{A}$ is AB3 and has a \emph{progenerator} (= compact projective generator). 
\end{prop}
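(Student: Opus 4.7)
The plan is to prove the two implications separately, with $(1)\Rightarrow (2)$ being the substantive half. The direction $(2)\Rightarrow (1)$ is relatively routine: I would observe that $\Mod-\mathcal{C}$ is a functor category into $\Mod-k$, hence AB3 with colimits computed pointwise. By the Yoneda lemma, $\Hom_{\Mod-\mathcal{C}}(\mathcal{C}(-,c),M)\cong M(c)$, so the representable $\mathcal{C}(-,c)$ corepresents evaluation at $c$, which is exact and coproduct-preserving. This makes each $\mathcal{C}(-,c)$ both compact and projective, and any nonzero $M$ admits a nonzero map from some $\mathcal{C}(-,c)$, yielding the desired set of compact projective generators.

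For $(1)\Rightarrow (2)$, fix a set of compact projective generators $\{P_i\}_{i\in I}$ and let $\mathcal{C}$ denote the full $k$-subcategory of $\mathcal{A}$ on $\{P_i\}_{i\in I}$, which is skeletally small. Define $F\colon\mathcal{A}\longrightarrow\Mod-\mathcal{C}$ by $F(X)(P_i)=\Hom_{\mathcal{A}}(P_i,X)$, with the $\mathcal{C}$-action on the right given by precomposition. Three properties of $F$ follow directly from the hypotheses: $F$ is exact (each $P_i$ is projective), $F$ preserves coproducts (each $P_i$ is compact), and $F$ is faithful (since $\{P_i\}$ is a set of generators). I then construct a left adjoint $G\colon\Mod-\mathcal{C}\longrightarrow\mathcal{A}$ via presentations by representables: every $M\in\Mod-\mathcal{C}$ admits an exact sequence $\coprod_j\mathcal{C}(-,P_{\alpha(j)})\stackrel{\varphi}{\longrightarrow}\coprod_i\mathcal{C}(-,P_{\beta(i)})\longrightarrow M\longrightarrow 0$, and Yoneda identifies $\varphi$ with a unique morphism $\coprod P_{\alpha(j)}\longrightarrow\coprod P_{\beta(i)}$ in $\mathcal{A}$, whose cokernel (which exists by AB3) I take as $G(M)$. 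Independence of the choice of presentation, functoriality of $G$, and the adjunction $G\dashv F$ all rely on the projectivity of the $P_i$'s to lift comparison maps.

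The crucial final step is to check that the unit $\eta\colon 1_{\Mod-\mathcal{C}}\to FG$ and counit $\epsilon\colon GF\to 1_\mathcal{A}$ are natural isomorphisms. On each representable $\mathcal{C}(-,P_i)$, the unit is an isomorphism by construction of $G$ and the Yoneda lemma, and on each generator $P_i$ the counit is an isomorphism since $G(F(P_i))=G(\mathcal{C}(-,P_i))\cong P_i$. Because $\mathcal{A}$ is AB3 with the set of generators $\{P_i\}$, each object of $\mathcal{A}$ is an epimorphic image of, and hence a cokernel between, coproducts of $P_i$'s; the analogous statement for $\Mod-\mathcal{C}$ in terms of representables is standard. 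A five-lemma argument, using that both $F$ and $G$ preserve cokernels (the latter as a left adjoint) and coproducts, extends the isomorphisms from the generators to all objects.

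The main obstacle I anticipate is the bookkeeping that $G$ is well-defined on morphisms and independent of the chosen presentations up to canonical isomorphism: given a morphism $M\to N$ in $\Mod-\mathcal{C}$ and presentations of both $M$ and $N$, one must use the projectivity of coproducts of $P_i$'s (themselves projective because $F$ is exact and coproduct-preserving) to lift it to compatible morphisms between the presenting pairs of coproducts, and then show that different lifts induce equal morphisms on the cokernels. For the final 'in particular' assertion, one takes $\{P_i\}=\{P\}$ to be a single progenerator; then $\mathcal{C}$ is a one-object $k$-category whose morphism algebra is, under the anti-product convention of the preliminaries, the ordinary $k$-algebra $A=\text{End}_\mathcal{A}(P)$, and $\Mod-\mathcal{C}$ coincides with $\Mod-A$ in the usual sense.
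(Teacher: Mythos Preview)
Your proposal is correct and follows the standard Gabriel--Mitchell argument. Note, however, that the paper does not actually prove this proposition: it is stated as a known result with a citation to \cite[Corollary 6.4]{Po}, so there is no in-paper proof to compare against. Your sketch is precisely the classical proof one finds in such references (restricted Yoneda functor $F=\Hom_\mathcal{A}(-,?)_{|\mathcal{C}}$, left adjoint $G$ built from presentations by representables, unit/counit checked on generators and extended by a five-lemma/right-exactness argument), and the bookkeeping you flag about well-definedness of $G$ is handled exactly as you describe.
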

A category as in the last proposition will be called a \text{module category} over a small $K$-category  (or over an ordinary algebra, if it is as in the last sentence).

 When $\mathcal{A}$ is an AB3 abelian category, $\mathcal{S}\subset\text{Ob}(\mathcal{A})$ is any class of objects and $n$ 
  is a natural number, we will denote by $\text{Pres}^n(\mathcal{S})$ the subcategory of objects $X\in\text{Ob}(\mathcal{A})$ which admit an exact sequence $\Sigma^{-n}\longrightarrow \cdots\longrightarrow \Sigma^{-1}\longrightarrow\Sigma^{0}\longrightarrow X\longrightarrow 0$, with the $\Sigma^{-k}$  in $\text{Sum} (\mathcal{S})$ for all $k=0,1,\dots,n$.

We refer the reader to \cite{N} for the precise definition of
\emph{triangulated category}, but, diverting from the terminology in
that book, for a given triangulated category $\mathcal{D}$, we will
denote by $?[1]:\mathcal{D}\longrightarrow\mathcal{D}$ its
suspension functor. We will then put $?[0]=1_\mathcal{D}$ and $?[k]$
will denote the $k$-th power of $?[1]$, for each integer $k$.
(Distinguished) triangles in $\mathcal{D}$ will be denoted
$X\longrightarrow Y\longrightarrow Z\stackrel{+}{\longrightarrow}$,
or also $X\longrightarrow Y\longrightarrow
Z\stackrel{w}{\longrightarrow}X[1]$ when the connecting morphism $w$
needs to be emphasized. A \emph{triangulated functor} between
triangulated categories is one which preserves triangles. 

Given any additive category $\mathcal{A}$ and any class $\mathcal{S}$ of objects in it, we shall denote by $\mathcal{S}^\perp$ (resp. ${}^\perp\mathcal{S}$) the subcategory of objects $X\in\text{Ob}(\mathcal{A})$ such  that $\text{Hom}_\mathcal{A}(S,X)=0$ (resp. $\text{Hom}_\mathcal{A}(X,S)=0$), for all $S\in\mathcal{S}$. In the particular case when $\mathcal{A}=\mathcal{D}$ is a triangulated category and $n\in\mathbb{Z}$ is an integer, we will denote by $\mathcal{S}^{\perp_{\geq n}}$ (resp. $\mathcal{S}^{\perp_{\leq n}}$) the subcategory of $\mathcal{D}$ consisting of the objects $Y$ such that $\text{Hom}_\mathcal{D}(S,Y[k])=0$, for all $S\in\mathcal{S}$ and all integers $k\geq n$ (resp. $k\leq n$). Symmetrically,  the subcategory ${}^{\perp_{\geq n}}\mathcal{S}$ (resp. ${}^{\perp_{\leq n}}\mathcal{S}$) will be the one whose objects $X$ satisfy that $\text{Hom}_\mathcal{D}(X,S[k])=0$, for all $S\in\mathcal{S}$ and all $k\geq n$ (resp. $k\leq n$). By analogous recipe, one defines $\mathcal{S}^{\perp_{>n}}$, $\mathcal{S}^{\perp_{<n}}$, ${}^{\perp_{>n}}\mathcal{S}$ and ${}^{\perp_{<n}}\mathcal{S}$. We will use also the symbol $\mathcal{S}^{\perp_{k\in\mathbb{Z}}}$ (resp. ${}^{\perp_{k\in\mathbb{Z}}}\mathcal{S}$) to denote the subcategory of those objects $X$ such that $\text{Hom}_\mathcal{D}(S,X[k])=0$ (resp.  $\text{Hom}_\mathcal{D}(X,S[k])=0$), for all $k\in\mathbb{Z}$. 

Unlike the
terminology used in the  general setting of additive categories, in
the specific context of triangulated categories a weaker version of the term
'class (resp. set) of generators' is commonly used.  Namely, a class (resp.  set)
$\mathcal{S}\subset\text{Ob}(\mathcal{D})$ is called a \emph{class (resp. set) of
generators of $\mathcal{D}$} when $\mathcal{S}^{\perp_{k\in\mathbb{Z}}}=0$. Dually $\mathcal{C}$ is a \emph{class (resp. set) of cogenerators} of $\mathcal{D}$ when ${}^{\perp_{k\in\mathbb{Z}}}\mathcal{C}=0$.  In case $\mathcal{D}$  has
coproducts, we will say that $\mathcal{D}$ is \emph{compactly
generated} when it has a set of compact generators. A triangulated category is called \emph{algebraic} when it is equivalent to the stable category of a Frobenius exact category  (see \cite{H}, \cite{K}). 

Recall that if $\mathcal{D}$ and $\mathcal{A}$ are a triangulated
and an abelian category, respectively, then an additive  functor
$H:\mathcal{D}\longrightarrow\mathcal{A}$ is a \emph{cohomological
functor} when, given any triangle $X\longrightarrow Y\longrightarrow
Z\stackrel{+}{\longrightarrow}$, one gets an induced long exact
sequence in $\mathcal{A}$:

\begin{center}
$\cdots \longrightarrow H^{n-1}(Z)\longrightarrow H^n(X)\longrightarrow
H^n(Y)\longrightarrow H^n(Z)\longrightarrow
H^{n+1}(X)\longrightarrow \cdots$,
\end{center}
where $H^n:=H\circ (?[n])$, for each $n\in\mathbb{Z}$. Each  representable functor $\text{Hom}_\mathcal{D}(X,?):\mathcal{D}\longrightarrow\text{Mod}-K$ is cohomological. We will say that $\mathcal{D}$ \emph{satisfies Brown representability theorem} when $\mathcal{D}$ has coproducts and each cohomological functor $H:\mathcal{D}^{op}\longrightarrow\text{Mod}-K$ that preserves products (i.e. that,  as a contravariant functor $\mathcal{D}\longrightarrow\text{Mod}-K$, it takes coproducts to products) is representable. We will say that $\mathcal{D}$ \emph{satisfies Brown representability theorem for the dual} when $\mathcal{D}^{op}$ satisfies Brown representability theorem. 
Each compactly generated triangulated category satisfies both Brown representability theorem and its dual (\cite[Theorem B and subsequent remark]{Kr}).

Given a triangulated category $\mathcal{D}$, a subcategory $\mathcal{E}$ will be called a \emph{suspended subcategory} when it is closed under taking extensions and $\mathcal{E}[1]\subseteq\mathcal{E}$. If, in addition, we have $\mathcal{E}=\mathcal{E}[1]$, we will say that $\mathcal{E}$ is a \emph{triangulated subcategory}. A triangulated subcategory closed under taking direct summands is called a \emph{thick subcategory}. When the ambient triangulated category $\mathcal{D}$ has coproducts, a triangulated subcategory closed under taking arbitrary coproducts is called a \emph{localizing subcategory}. Note that such a subcategory is always thick  (see the proof of \cite[Proposition 1.6.8]{N}, which also shows that idempotents split in any triangulated category with coproducts). Clearly, there are dual concepts of \emph{cosuspended subcategory} and \emph{colocalizing subcategories}, while those of triangulated and thick subcategory are self-dual. 
Given any class $\mathcal{S}$ of objects of $\mathcal{D}$, we will denote by $\text{susp}_\mathcal{D}(\mathcal{S})$ (resp. $\text{tria}_\mathcal{D}(\mathcal{S})$, resp. $\text{thick}_\mathcal{D}(\mathcal{S})$) the smallest suspended (resp. triangulated, resp. thick) subcategory of $\mathcal{D}$ containing $\mathcal{S}$. When $\mathcal{D}$ has coproducts, we will let $\text{Susp}_\mathcal{D}(\mathcal{S})$ and $\text{Loc}_\mathcal{D}(\mathcal{S})$ be the smallest suspended subcategory closed under taking coproducts and the smallest localizing subcategory containing $\mathcal{S}$, respectively. 
 
Given an additive  category $\mathcal{A}$, we will denote by
$\mathcal{C}(\mathcal{A})$ and  $\mathcal{K}(\mathcal{A})$  the category of chain complexes of
objects of $\mathcal{A}$ and the homotopy category of $\mathcal{A}$. Diverting from the classical notation, we will write superindices for chains, cycles and boundaries in ascending order.  We will denote by $\mathcal{C}^-(\mathcal{A})$ (resp. $\mathcal{K}^-(\mathcal{A})$), $\mathcal{C}^+(A)$ resp. $\mathcal{K}^+(\mathcal{A})$) and $\mathcal{C}^b(\mathcal{A})$ (resp. $\mathcal{K}^b(\mathcal{A})$) the full subcategories of $\mathcal{C}(\mathcal{A})$ (resp. $\mathcal{K}(\mathcal{A})$) consisting of those objects isomorphic to upper bounded, lower bounded and (upper and lower) bounded complexes, respectively. Note that $\mathcal{K}(\mathcal{A})$ is always a triangulated category of which $\mathcal{K}^-(\mathcal{A})$, $\mathcal{K}^+(\mathcal{A})$ and $\mathcal{K}^b(\mathcal{A})$ are triangulated subcategories. Furthermore, when $\mathcal{A}$ has coproducts,  $\mathcal{C}(\mathcal{A})$ and $\mathcal{K}(\mathcal{A})$ also have coproducts, which are calculated pointwise. When $\mathcal{A}$ is an abelian category, we will denote by
$\mathcal{D}(\mathcal{A})$ its derived category, which is the one obtained from $\mathcal{C}(\mathcal{A})$ by formally inverting the quasi-isomorphisms (see \cite{V} for the details). Note that, in principle, the morphisms in $\mathcal{D}(\mathcal{A})$ between two objects do not form a set, but a proper class. Therefore, in several parts of this paper, we will require that $\mathcal{D}(\mathcal{A})$ has $\text{Hom}$ sets. We shall denote by $\mathcal{D}^-(\mathcal{A})$ (resp. $\mathcal{D}^+(\mathcal{A})$, resp. $\mathcal{D}^b(\mathcal{A})$) the full subcategory of $\mathcal{D}(\mathcal{A})$ consisting of those complexes $X^\bullet$ such that $H^k(X^\bullet )=0$, for all $k\gg 0$ (resp. $k\ll 0$, resp. $|k| \gg 0$), where $H^k:\mathcal{D}(\mathcal{A})\longrightarrow\mathcal{A}$ denotes the $k$-th cohomology functor, for each $k\in\mathbb{Z}$. 

When $\mathcal{D}$ is a triangulated category with coproducts,  we will use the term \emph{Milnor colimit} of a sequence of morphisms $X_0\stackrel{x_1}{\longrightarrow}X_1\stackrel{x_2}{\longrightarrow}\cdots\stackrel{x_n}{\longrightarrow}X_n\stackrel{x_{n+1}}{\longrightarrow}\cdots$ what in \cite{N} is called homotopy colimit. It will be denoted $\text{Mcolim}(X_n)$, without reference to the $x_n$.  However, for the dual concept in a triangulated category with products we will retain the term \emph{homotopy limit}, denoted $\text{Holim}(X_n)$.

Given two subcategories $\mathcal{X}$ and $\mathcal{Y}$ of the triangulated category $\mathcal{D}$, we will denote by $\mathcal{X}\star\mathcal{Y}$ the subcategory of $\mathcal{D}$ consisting of the objects $M$ which fit into a triangle $X\longrightarrow M\longrightarrow Y\stackrel{+}{\longrightarrow}$, where $X\in\mathcal{X}$ and $Y\in\mathcal{Y}$. Due to the octahedral axiom, the operation $\star$ is associative, so that  $\mathcal{X}_1\star\mathcal{X}_2\star \cdots\star\mathcal{X}_n$ is well-defined,  for each family of subcategories $(\mathcal{X}_i)_{1\leq i\leq n}$.
 
A
\emph{t-structure} in $\mathcal{D}$ (see \cite[Section 1]{BBD}) is a pair
$\tau =(\mathcal{U},\mathcal{W})$ of full subcategories, closed under
taking direct summands in $\mathcal{D}$,  which satisfy the
 following  properties:

\begin{enumerate}
\item[i)] $\text{Hom}_\mathcal{D}(U,W[-1])=0$, for all
$U\in\mathcal{U}$ and $W\in\mathcal{W}$;
\item[ii)] $\mathcal{U}[1]\subseteq\mathcal{U}$;
\item[iii)] For each $X\in Ob(\mathcal{D})$, there is a triangle $U\longrightarrow X\longrightarrow
V\stackrel{+}{\longrightarrow}$ in $\mathcal{D}$, where
$U\in\mathcal{U}$ and $V\in\mathcal{W}[-1]$ (equivalently, we have  $\mathcal{D}=\mathcal{U}\star (\mathcal{W}[-1])$).
\end{enumerate}
It is easy to see that in such case $\mathcal{W}=\mathcal{U}^\perp
[1]$ and $\mathcal{U}={}^\perp (\mathcal{W}[-1])={}^\perp
(\mathcal{U}^\perp )$. For this reason, we will write a t-structure
as $\tau =(\mathcal{U},\mathcal{U}^\perp [1])$. We will call $\mathcal{U}$
and $\mathcal{U}^\perp$ the \emph{aisle} and the \emph{co-aisle} of
the t-structure, which are, respectively, a suspended and a cosuspended subcategory of $\mathcal{D}$. The objects $U$ and $V$ in the above
triangle are uniquely determined by $X$, up to isomorphism, and
define functors
$\tau_\mathcal{U}:\mathcal{D}\longrightarrow\mathcal{U}$ and
$\tau^{\mathcal{U}^\perp}:\mathcal{D}\longrightarrow\mathcal{U}^\perp$
which are right and left adjoints to the respective inclusion
functors. We call them the \emph{left and right truncation functors}
with respect to the given t-structure. Note that
$(\mathcal{U}[k],\mathcal{U}^\perp [k+1])$ is also a t-structure in
$\mathcal{D}$, for each $k\in\mathbb{Z}$. The full subcategory
$\mathcal{H}=\mathcal{U}\cap\mathcal{W}=\mathcal{U}\cap\mathcal{U}^\perp
[1]$ is called the \emph{heart} of the t-structure and it is an
abelian category, where the short exact sequences `are' the
triangles in $\mathcal{D}$ with its three terms in $\mathcal{H}$.
Moreover, with the obvious abuse of notation,  the assignments
$X\rightsquigarrow (\tau_{\mathcal{U}}\circ\tau^{\mathcal{U}^\perp
[1]})(X)$ and $X\longrightarrow (\tau^{\mathcal{U}^\perp
[1]}\circ\tau_\mathcal{U})(X)$ define   naturally isomorphic
functors $\mathcal{D}\longrightarrow\mathcal{H}$ which are
cohomological (see \cite{BBD}). The t-structure $\tau =(\mathcal{U},\mathcal{U}^\perp [1])$ will be called \emph{left (resp. right) nondegenerate} when $\bigcap_{k\in\mathbb{Z}}\mathcal{U}[k]=0$ (resp. $\bigcap_{k\in\mathbb{Z}}\mathcal{U}^\perp [k]=0$). It will be called \emph{nondegenerate} when it is left and right nondegenerate. We shall say that $\tau$ is a \emph{semi-orthogonal decomposition} when $\mathcal{U}=\mathcal{U}[1]$ (equivalently, $\mathcal{U}^\perp =\mathcal{U}^\perp [1]$). In this case $\tau =(\mathcal{U},\mathcal{U}^\perp )$ and both $\mathcal{U}$ and $\mathcal{U}^\perp$ are thick subcategories of $\mathcal{D}$.

When in the last paragraph $\mathcal{D}$ has coproducts, the aisle $\mathcal{U}$ is closed under coproducts, but the coaisle $\mathcal{U}^\perp$ need not be so. When this is also the case or, equivalently, when the truncation functor $\tau_\mathcal{U}:\mathcal{D}\longrightarrow\mathcal{U}$ preserves coproducts, we shall say that $\tau$ is a \emph{smashing t-structure}. Dually, when $\mathcal{D}$ has products, $\tau$ is said to be a \emph{co-smashing t-structure} when $\mathcal{U}$ is closed under taking products. 
Assuming that $\mathcal{D}$ has coproducts, if $\mathcal{S}\subset\mathcal{U}$ is any class (or set) of objects, we shall say that the t-structure $\tau$ is \emph{generated by $\mathcal{S}$} or that \emph{$\mathcal{S}$ is a class (resp. set) of generators} of $\tau$ when $\mathcal{U}^\perp =\mathcal{S}^{\perp_{\leq 0}}$. We shall say that $\tau$ is
\emph{compactly generated} when there is a set
 of compact objects which generates $\tau$. Note that such a t-structure is always smashing. Generalizing a  bit the classical definition, the following phenomenon will be called \emph{infinite d\'evissage} (see \cite[Theorem 12.1]{KN}).

\begin{lem} \label{lem.infinite devissage}
Let $\mathcal{D}$ be a triangulated category with coproducts, let $\tau =(\mathcal{U},\mathcal{U}^\perp [1])$ be a t-structure (resp. semi-orthogonal decomposition) in $\mathcal{D}$ and let $\mathcal{S}\subset\mathcal{U}$ be  a set of objects which are compact in $\mathcal{D}$ and such that $\mathcal{S}^{\perp_{\leq 0}}=\mathcal{U}^\perp$  (resp. $\mathcal{S}^{\perp_{k\in\mathbb{Z}}}=\mathcal{U}^\perp$). If $\mathcal{V}\subseteq\mathcal{U}$ is a closed under coproduct suspended (resp. localizing) subcategory of $\mathcal{D}$ such that $\mathcal{S}\subset\mathcal{V}$, then we have $\mathcal{V}=\mathcal{U}$. 
\end{lem}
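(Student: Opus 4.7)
The plan is to show that every $X\in\mathcal{U}$ lies in $\mathcal{V}$ by realizing $X$ as the Milnor colimit of a sequence of objects already known to live in $\mathcal{V}$. Fix $X\in\mathcal{U}$. I will construct inductively morphisms $x_n:X_{n-1}\to X_n$ and $f_n:X_n\to X$ in $\mathcal{D}$, with $f_n\circ x_n=f_{n-1}$, $X_0\in\text{Sum}(\mathcal{S})$, and $\text{cone}(x_n)\in\text{Sum}(\mathcal{S})[n]$ for all $n\geq 1$. Since $\mathcal{V}$ contains $\mathcal{S}$, is closed under coproducts, and is suspended (resp.\ localizing), each $\text{Sum}(\mathcal{S})[n]$ is contained in $\mathcal{V}$, so induction together with closure under extensions gives $X_n\in\mathcal{V}$ for all $n$. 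The defining triangle $\coprod_n X_n\to\coprod_n X_n\to M\to\coprod_n X_n[1]$ of $M:=\text{Mcolim}(X_n)$ then yields $M\in\mathcal{V}$.

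For the construction, take $X_0:=\coprod_{S\in\mathcal{S}}S^{(\text{Hom}_\mathcal{D}(S,X))}$ with canonical map $f_0$. Assuming $f_n:X_n\to X$ is given, let $Y_n:=\text{cone}(f_n)$, pick an $\text{Sum}(\mathcal{S})$-precover $p:P_{n+1}\to Y_n[-n-1]$, and compose the shift $P_{n+1}[n]\to Y_n[-1]$ with the map $Y_n[-1]\to X_n$ coming from the rotated triangle $Y_n[-1]\to X_n\stackrel{f_n}{\to}X\to Y_n$. Completing this composition to a triangle produces
\[
P_{n+1}[n]\to X_n\stackrel{x_{n+1}}{\to}X_{n+1}\to P_{n+1}[n+1],
\]
so $\text{cone}(x_{n+1})\cong P_{n+1}[n+1]\in\text{Sum}(\mathcal{S})[n+1]$. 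The octahedral axiom then produces the required $f_{n+1}:X_{n+1}\to X$ with $f_{n+1}\circ x_{n+1}=f_n$, whose cone $Y_{n+1}$ is canonically identified with $\text{cone}(p)[n+1]$.

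The $f_n$ assemble into a morphism $f:M\to X$; set $C:=\text{cone}(f)$. Since $M$ and $X$ lie in $\mathcal{U}$ and $\mathcal{U}$ is suspended, $C\in\mathcal{U}$, so it suffices to show $C\in\mathcal{U}^\perp$. Fix $S\in\mathcal{S}$. Compactness of $S$ applied to the Milnor colimit triangle gives $\text{Hom}_\mathcal{D}(S,M[k])\cong\varinjlim_n\text{Hom}_\mathcal{D}(S,X_n[k])$ for every $k\in\mathbb{Z}$. The choice of $p$ as a $\text{Sum}(\mathcal{S})$-precover combined with compactness of $S$ guarantees that $p_*$ is surjective, which one checks translates (through the long exact sequence attached to $\text{cone}(p)$) into $\text{Hom}_\mathcal{D}(S,Y_{n+1}[k])=0$ for the new value $k=-n-1$, so inductively $\text{Hom}_\mathcal{D}(S,Y_n[k])=0$ for $-n\leq k\leq 0$ after step $n$. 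Hence $(f_n)_*:\text{Hom}_\mathcal{D}(S,X_n[k])\to\text{Hom}_\mathcal{D}(S,X[k])$ is eventually an isomorphism for each $k\leq 0$, and passing to the colimit yields $\text{Hom}_\mathcal{D}(S,f[k])$ iso. Feeding this into $M\stackrel{f}{\to}X\to C\to M[1]$ gives $\text{Hom}_\mathcal{D}(S,C[k])=0$ for $k\leq 0$, i.e.\ $C\in\mathcal{S}^{\perp_{\leq 0}}=\mathcal{U}^\perp$; then $C\in\mathcal{U}\cap\mathcal{U}^\perp=0$, so $X\cong M\in\mathcal{V}$. In the semi-orthogonal decomposition case, $\mathcal{V}$ being localizing means every $\text{Sum}(\mathcal{S})[n]$ with $n\in\mathbb{Z}$ lies in $\mathcal{V}$, and the symmetric iteration of the above procedure yields the vanishing for all $k\in\mathbb{Z}$, matching $\mathcal{U}^\perp=\mathcal{S}^{\perp_{k\in\mathbb{Z}}}$.

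The main obstacle is the bookkeeping in the third paragraph: verifying that each step of the construction actually deepens the vanishing range of $\text{Hom}_\mathcal{D}(S,Y_n[-])$ by exactly one unit, uniformly in $S\in\mathcal{S}$, so that the colimit $\varinjlim_n\text{Hom}_\mathcal{D}(S,X_n[k])$ reaches $\text{Hom}_\mathcal{D}(S,X[k])$ in the required range of $k$. Once this is in place, the rest is a routine use of compactness, the Milnor colimit triangle, and the orthogonality condition $\text{Hom}_\mathcal{D}(\mathcal{U},\mathcal{U}^\perp)=0$.
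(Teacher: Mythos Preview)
The paper does not give its own proof of this lemma; it simply cites \cite[Theorem 12.1]{KN} and treats the result as known. Your overall strategy---build a tower $X_0\to X_1\to\cdots$ inside $\mathcal{V}$ mapping compatibly to $X$, then show the Milnor colimit is isomorphic to $X$ by proving the cone lies in $\mathcal{U}\cap\mathcal{U}^\perp=0$---is exactly the standard d\'evissage argument, and it is also what the paper itself runs in the Second step of the proof of Theorem \ref{teor.weakly equivalent to presilting}.

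There is, however, a genuine gap in your vanishing bookkeeping. Your claim that the surjectivity of $p_*$ ``translates \ldots\ into $\text{Hom}_\mathcal{D}(S,Y_{n+1}[-n-1])=0$'' is not correct under the hypotheses of the lemma. From the triangle $P_{n+1}\to Y_n[-n-1]\to Y_{n+1}[-n-1]$ the surjectivity of $p_*$ only yields an injection $\text{Hom}_\mathcal{D}(S,Y_{n+1}[-n-1])\hookrightarrow\text{Hom}_\mathcal{D}(S,P_{n+1}[1])$, and the target need not vanish: the lemma does \emph{not} assume $\mathcal{S}$ is nonpositive. You have implicitly imported the strong nonpositivity hypothesis of Theorem \ref{teor.weakly equivalent to presilting}, where this step does succeed. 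With your single-shift precovers, for a fixed $k\geq 0$ only the one transition map $\text{Hom}(S[k],Y_{k-1})\to\text{Hom}(S[k],Y_k)$ is forced to be zero, and a single zero map in a directed system does not make the colimit vanish.

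The fix is small: at each step take a $\text{Sum}\big(\bigcup_{k\geq 0}\mathcal{S}[k]\big)$-precover of $Y_n$ rather than a $\text{Sum}(\mathcal{S})[n+1]$-precover. This is exactly what the paper quotes from \cite{KN} in the proof of Example \ref{ex.examples partial silting}(1), where the cones live in $\text{Sum}(\coprod_{k>0}T[k])$. With this choice \emph{every} transition map $\text{Hom}(S[k],Y_n)\to\text{Hom}(S[k],Y_{n+1})$ is zero for all $k\geq 0$, so $\varinjlim_n\text{Hom}(S[k],Y_n)=0$ and your argument then goes through verbatim. Since $\mathcal{V}$ is suspended and closed under coproducts, these larger precovers still lie in $\mathcal{V}$, so nothing else in your proof needs to change. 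The semi-orthogonal case is handled by the obvious variant using $\bigcup_{k\in\mathbb{Z}}\mathcal{S}[k]$.
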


\begin{ex} \label{exem. canonical t-structure}
The following examples of t-structures are relevant for us:

\begin{enumerate}
\item (see \cite[Example 1.3.2]{BBD}) Let $\mathcal{A}$ be an abelian category and, for each $k\in\mathbb{Z}$,  denote by $\mathcal{D}^{\leq
k}(\mathcal{A})$ (resp. $\mathcal{D}^{\geq k}(\mathcal{A})$) the
 subcategory of $\mathcal{D}(\mathcal{A})$ consisting of the
complexes $X^\bullet$ such that $H^j(X^\bullet )=0$, for all $j>k$ (resp. $j<k$). The
pair $(\mathcal{D}^{\leq
k}(\mathcal{A}),\mathcal{D}^{\geq k}(\mathcal{A}))$ is a t-structure
in $\mathcal{D}(\mathcal{A})$ whose heart is equivalent to
$\mathcal{A}$. Its left and right truncation functors will be
denoted by $\tau^{\leq
k}:\mathcal{D}(\mathcal{A})\longrightarrow\mathcal{D}^{\leq
k}(\mathcal{A})$ and
$\tau^{>k}:\mathcal{D}(\mathcal{A})\longrightarrow\mathcal{D}^{>k}(\mathcal{A}):=\mathcal{D}^{\geq
k}(\mathcal{A})[-1]$. For $k=0$, the  t-structure is known as
the \emph{canonical t-structure} in $\mathcal{D}(\mathcal{A})$.
 
\item Let $\mathcal{D}$ be a triangulated category with
coproducts and let $T$ be a
classical tilting object (see Definition \ref{def.classical silting objects}). It is well-known, and will be a particular case of our results in Section \ref{sect.silting bijection}, that the pair $\tau_T=(T^{\perp_{>0}},T^{\perp_{<0}})$ is a t-structure in $\mathcal{D}$ generated by $T$. Its heart $\mathcal{H}_T$ is equivalent to $\text{Mod}-E$, where $E=\text{End}_\mathcal{D}(T)$. 
\end{enumerate}

\end{ex}

\section{The co-heart of a t-structure} \label{sect.coheart}

In this section $\mathcal{D}$ will be a triangulated category and $\tau:=(\mathcal{U},\mathcal{U}^\perp [1])$ will be a t-structure in $\mathcal{D}$. Apart from its heart $\mathcal{H}=\mathcal{U}\cap\mathcal{U}^\perp [1]$, we will also consider its \emph{co-heart} $\mathcal{C}:=\mathcal{U}\cap {}^\perp\mathcal{U}[1]$. Note that we do not assume the existence of any co-t-structure in $\mathcal{D}$ with $\mathcal{C}$ as its co-heart. We will denote by $\tilde{H}:\mathcal{D}\longrightarrow\mathcal{H}$ the associated cohomological functor. 

The objects of the co-heart were called ``$\text{Ext}$-projectives with respect to $\mathcal{U}$'' in \cite{AST}. The proof of assertion 1 of the next lemma is essentially that of \cite[Lemma 1.3]{AST}. We reproduce it here since it will be frequently used throughout the paper. 

\begin{lem} \label{lem.coheart projective in heart}
The following assertions hold:

\begin{enumerate}
\item The restriction to the co-heart $\tilde{H}_{| \mathcal{C}}:\mathcal{C}\longrightarrow\mathcal{H}$ is a fully faithful functor and its image consists of projective objects. 
\item Suppose now that $\mathcal{D}$ has coproducts and let $C\in\mathcal{C}$ be any object of the co-heart. The following statements hold true:

\begin{enumerate}

\item The functor $\tilde{H}_{| \mathcal{C}}:\mathcal{C}\longrightarrow\mathcal{H}$ preserves coproducts;
\item  $\tilde{H}(C)$ is compact in $\mathcal{H}$ if and only if $C$ is compact in $\mathcal{U}$. In particular, when $\tau$ is smashing, $\tilde{H}(C)$ is compact in $\mathcal{H}$ if and only if $C$ is compact in $\mathcal{D}$.
\end{enumerate}
\end{enumerate}
\end{lem}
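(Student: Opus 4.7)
The entire argument would hinge on one triangle. For any $C\in\mathcal{U}$ (in particular $C\in\mathcal{C}$), the right truncation of $C$ with respect to the shifted t-structure $(\mathcal{U}[1],\mathcal{U}^\perp[2])$ yields a triangle
\[ K\longrightarrow C\longrightarrow\tilde{H}(C)\stackrel{+}{\longrightarrow} \]
with $K\in\mathcal{U}[1]$ and $\tilde{H}(C)\in\mathcal{U}^\perp[1]$; since both $C$ and $K[1]\in\mathcal{U}[2]\subseteq\mathcal{U}$ lie in $\mathcal{U}$, extension closure applied to the rotated triangle $C\to\tilde{H}(C)\to K[1]$ forces $\tilde{H}(C)\in\mathcal{U}$, so $\tilde{H}(C)\in\mathcal{H}$ as expected. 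Everything else will be read off this triangle by elementary $\text{Hom}$ juggling, using the vanishing $\text{Hom}(\mathcal{U}[1],\mathcal{U}^\perp[1])=\text{Hom}(\mathcal{U},\mathcal{U}^\perp)=0$.

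For (1), I would apply $\text{Hom}_\mathcal{D}(C,-)$ to the analogous triangle for $C'\in\mathcal{C}$: the outer terms vanish because $C\in{}^\perp\mathcal{U}[1]$ while $K',K'[1]\in\mathcal{U}[1]$ (using $\mathcal{U}[2]\subseteq\mathcal{U}[1]$), whence $\text{Hom}(C,C')\cong\text{Hom}(C,\tilde{H}(C'))$. Applying $\text{Hom}_\mathcal{D}(-,\tilde{H}(C'))$ to the triangle for $C$ and using the vanishing above on $K$ and $K[1]$ gives $\text{Hom}(\tilde{H}(C),\tilde{H}(C'))\cong\text{Hom}(C,\tilde{H}(C'))$; composing the two isomorphisms proves $\tilde{H}_{|\mathcal{C}}$ is fully faithful. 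For projectivity, I would use the standard fact that short exact sequences in $\mathcal{H}$ arise from triangles in $\mathcal{D}$ between objects of $\mathcal{H}$, and show that $\text{Hom}_\mathcal{D}(\tilde{H}(C),H[1])=0$ for every $H\in\mathcal{H}$: apply $\text{Hom}(-,H[1])$ to the triangle, noting that $\text{Hom}(C,H[1])=0$ because $H[1]\in\mathcal{U}[1]$, while the other outer term vanishes by the same device.

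For (2)(a), the key auxiliary isomorphism is $\text{Hom}_\mathcal{D}(X,Y)\cong\text{Hom}_\mathcal{H}(\tilde{H}(X),Y)$ for every $X\in\mathcal{U}$ and $Y\in\mathcal{H}$, extracted from the triangle in exactly the same style. Given $(C_i)\subseteq\mathcal{C}$, the coproduct $C=\coprod_\mathcal{D} C_i$ still lies in $\mathcal{C}$ (closure of $\mathcal{U}$ under coproducts, plus $\text{Hom}(\coprod C_i,\mathcal{U}[1])=\prod\text{Hom}(C_i,\mathcal{U}[1])=0$), and the chain
\[ \text{Hom}_\mathcal{H}(\tilde{H}(C),Y)=\text{Hom}_\mathcal{D}(C,Y)=\prod\text{Hom}_\mathcal{D}(C_i,Y)=\prod\text{Hom}_\mathcal{H}(\tilde{H}(C_i),Y),\quad Y\in\mathcal{H}, \]
identifies $\tilde{H}(C)$ with $\coprod_\mathcal{H}\tilde{H}(C_i)$ by Yoneda. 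Exactly the same argument, applied to an arbitrary family in $\mathcal{U}$, shows that $\tilde{H}_{|\mathcal{U}}$ preserves coproducts and that $\mathcal{H}$ inherits all coproducts via $\coprod_\mathcal{H} H_i=\tilde{H}(\coprod_\mathcal{D} H_i)$; both facts feed into (b).

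For (2)(b), these preservation statements combine with the iso above to give $\text{Hom}_\mathcal{D}(C,\coprod_\mathcal{D} U_i)\cong\text{Hom}_\mathcal{H}(\tilde{H}(C),\coprod_\mathcal{H}\tilde{H}(U_i))$ for any $(U_i)\subseteq\mathcal{U}$, whence compactness of $\tilde{H}(C)$ in $\mathcal{H}$ and compactness of $C$ in $\mathcal{U}$ translate into each other (``$\Rightarrow$'' uses an arbitrary family in $\mathcal{U}$; ``$\Leftarrow$'' specialises to $(H_i)\subseteq\mathcal{H}$). For the smashing addendum I would combine the well-known equivalence ``$\tau$ smashing $\iff$ $\tau_\mathcal{U}$ preserves coproducts'' with the iso $\text{Hom}_\mathcal{D}(C,X)\cong\text{Hom}_\mathcal{D}(C,\tau_\mathcal{U}(X))$ valid for $C\in\mathcal{U}$, immediate from the $\tau$-truncation triangle. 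The main subtlety I anticipate lies in (2), where one must distinguish coproducts computed in $\mathcal{D}$, $\mathcal{U}$ and $\mathcal{H}$, and the central enabling fact that $\tilde{H}$ is well-behaved on coproducts of objects of $\mathcal{U}$ has to be coaxed out of the Yoneda-style argument rather than used off the shelf.
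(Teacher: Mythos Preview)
Your proposal is correct and follows essentially the same approach as the paper: both arguments pivot on the truncation triangle $\tau_{\mathcal{U}[1]}C\to C\to\tilde{H}(C)\stackrel{+}{\to}$ and the resulting isomorphism $\text{Hom}_\mathcal{D}(C,M)\cong\text{Hom}_\mathcal{H}(\tilde{H}(C),M)$ for $M\in\mathcal{H}$. Your treatment is in fact slightly more streamlined in two spots---you obtain full faithfulness in one stroke via two long exact sequences rather than the paper's separate faithful/full verifications, and you deduce coproduct preservation in (2)(a) by a clean Yoneda/left-adjoint argument where the paper takes coproducts of triangles---but the underlying ideas coincide.
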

\begin{proof}

 For each $U\in\mathcal{U}$, we have a canonical truncation triangle 

$$\tau_{\mathcal{U}[1]}U\stackrel{j_U}{\longrightarrow}U\stackrel{p_U}{\longrightarrow}\tilde{H}(U)\stackrel{w_U}{\longrightarrow}(\tau_{\mathcal{U}[1]}U)[1].$$

1) Let $C,C'\in\mathcal{C}$ be any two objects and consider the induced map $\text{Hom}_\mathcal{C}(C,C')\longrightarrow\text{Hom}_\mathcal{H}(\tilde{H}(C),\tilde{H}(C'))$. If $f:C\longrightarrow C'$ is in the kernel of the latter map,  then $p_{C'}\circ f=0$, which implies that $f$ factors in the form $f:C\stackrel{g}{\longrightarrow}\tau_{\mathcal{U}[1]}C'\stackrel{j_{C'}}{\longrightarrow}C'$. But $g=0$ since $C\in {}^\perp\mathcal{U}[1]$, and hence $\tilde{H}_{| \mathcal{C}}$ is faithful.  

Suppose now that $h:\tilde{H}(C)\longrightarrow\tilde{H}(C')$ is a morphism in $\mathcal{H}$. Then the composition $w_{C'}\circ h\circ p_C$ is zero, since $\text{Hom}_\mathcal{D}(C,?)$ vanishes on $\mathcal{U}[2]$. We then get a morphism $f:C\longrightarrow C'$ such that $p_{C'}\circ f=h\circ p_C$. Note that, by definition of $\tilde{H}$, we also have that $p_{C'}\circ f=\tilde{H}(f)\circ p_C$. It follows that $(h-\tilde{H}(f))\circ p_C=0$, which implies that $h-\tilde{H}(f)$ factors in the form $\tilde{H}(C)\stackrel{w_C}{\longrightarrow}(\tau_{\mathcal{U}[1]}C)[1]\stackrel{v}{\longrightarrow}\tilde{H}(C')$. But the second arrow in this composition is zero since it has domain in $\mathcal{U}[2]$ and codomain in $\mathcal{U}^\perp [1]$. Therefore $\tilde{H}_{| \mathcal{C}}$ is also full. 

Note now that if $C\in\mathcal{C}$ and $M\in\mathcal{H}$, then adjunction gives an isomorphism, functorial on both variables,

\begin{center}
$\text{Hom}_{\mathcal{H}}(\tilde{H}(C),M)=\text{Hom}_{\mathcal{U}^\perp [1]}(\tau^{\mathcal{U}^\perp [1]}C,M)\cong\text{Hom}_{\mathcal{D}}(C,M)$.
\end{center} 
If now $\pi :M\longrightarrow N$ is an epimorphism in $\mathcal{H}$ and we put $K':=\text{Ker}_\mathcal{H}(\pi )$, then we get a triangle $M\stackrel{\pi}{\longrightarrow}N\longrightarrow K'[1]\stackrel{+}{\longrightarrow}$. Since $\text{Hom}_\mathcal{D}(C,K'[1])=0$, for all $C\in\mathcal{C}$, we conclude that the induced map 

\begin{center}
$\pi_*:\text{Hom}_{\mathcal{H}}(\tilde{H}(C),M)\cong\text{Hom}_{\mathcal{D}}(C,M)\longrightarrow\text{Hom}_{\mathcal{D}}(C,N)\cong\text{Hom}_{\mathcal{H}}(\tilde{H}(C),N)$
\end{center}
is surjective, which shows that $\tilde{H}(C)$ is a projective object of $\mathcal{H}$.

\vspace*{0.3cm}

2) All throughout the proof of this assertion, we will use that $\mathcal{H}$ is AB3 (see \cite[Proposition 3.2]{PS1}).

a) Note also that if $(M_i)_{i\in I}$ is a family of objects of $\mathcal{H}$, then its coproduct in this category is precisely $\tilde{H}(\coprod_{i\in I}M_i)$. This is a direct consequence of the fact that $\tilde{H}_{| \mathcal{U}}:\mathcal{U}\longrightarrow\mathcal{H}$ is left adjoint to the inclusion $\mathcal{H}\hookrightarrow\mathcal{U}$ (see \cite[Lemma 3.1]{PS1}) and the inclusion $\mathcal{U}\longrightarrow\mathcal{D}$ preserves coproducts. On the other hand, if $(C_i)_{i\in I}$ is a family of objects of $\mathcal{C}$, by \cite[Remark 1.2.2]{N} we have a triangle $$\coprod_{i\in I}\tau_{\mathcal{U}[1]}C_i\longrightarrow\coprod_{i\in I}C_i\longrightarrow\coprod_{i\in I}\tilde{H}(C_i)\stackrel{+}{\longrightarrow} .$$ It immediately follows that $\tilde{H}(\coprod_{i\in I}C_i)\cong\tilde{H}(\coprod_{i\in I}\tilde{H}(C_i))$, and the second member of this isomorphism is precisely the coproduct of the $\tilde{H}(C_i)$ in $\mathcal{H}$.

b)  Fix $C\in\mathcal{C}$. By the first triangle of this proof and the fact that $\text{Hom}_\mathcal{D}(C,?)$ vanishes on $\mathcal{U}[1]$, we have a functorial isomorphism $(p_U)_*:\text{Hom}_\mathcal{D}(C,U)\stackrel{\sim}{\longrightarrow}\text{Hom}_\mathcal{D}(C,\tilde{H}(U))$, for each $U\in\mathcal{U}$. Let now $(M_i)_{i\in I}$ be any family of objects of $\mathcal{H}$. Considering that $\tilde{H}_{| \mathcal{U}}:\mathcal{U}\longrightarrow\mathcal{H}$ is left adjoint to the inclusion functor, we then have a chain of morphisms:

\begin{center}
$\coprod_{i\in I}\text{Hom}_{\mathcal{D}}(C,M_i)=\coprod_{i\in I}\text{Hom}_{\mathcal{U}}(C,M_i)\cong\coprod_{i\in I}\text{Hom}_{\mathcal{H}}(\tilde{H}(C),M_i)\stackrel{can}{\longrightarrow}\text{Hom}_{\mathcal{H}}(\tilde{H}(C),\coprod_{i\in I}^\mathcal{H}M_i)=\text{Hom}_{\mathcal{H}}(\tilde{H}(C),\tilde{H}(\coprod_{i\in I}M_i))\cong\text{Hom}_{\mathcal{U}}(C,\tilde{H}(\coprod_{i\in I}M_i))=\text{Hom}_{\mathcal{D}}(C,\tilde{H}(\coprod_{i\in I}M_i))\cong\text{Hom}_{\mathcal{D}}(C,\coprod_{i\in I}M_i)$, 
\end{center} 
where $\coprod_{i\in I}^\mathcal{H}$ denotes the coproduct in $\mathcal{H}$. 
It follows that $\tilde{H}(C)$ is compact in $\mathcal{H}$ if and only if $\text{Hom}_\mathcal{D}(C,?)$ preserves coproducts (in $\mathcal{D}$!) of objects of $\mathcal{H}$. 

But if $(U_i)_{i\in I}$ is any family of objects in $\mathcal{U}$, then we get a triangle

$$\coprod_{i\in I}\tau_{\mathcal{U}[1]}U_i\longrightarrow\coprod_{i\in I}U_i\longrightarrow\coprod_{i\in I}\tilde{H}(U_i)\stackrel{+}{\longrightarrow} $$
(see \cite[Remark 1.2.2]{N}), which gives an isomorphism $\text{Hom}_\mathcal{D}(C,\coprod_{i\in I}U_i)\stackrel{\sim}{\longrightarrow}\text{Hom}_\mathcal{D}(C,\coprod_{i\in I}\tilde{H}(U_i))$ since $\text{Hom}_\mathcal{D}(C,?)$ vanishes on $\mathcal{U}[1]$. It follows immediately that $\text{Hom}_\mathcal{D}(C,?)$ preserves coproducts (in $\mathcal{D}$) of objects of $\mathcal{H}$ if and only if it preserves coproducts of objects of $\mathcal{U}$. 
This proves that $\tilde{H}(C)$ is compact in $\mathcal{H}$ if and only if $C$ is compact in $\mathcal{U}$.

Assume now that $\tau$ is smashing. Proving that $C$ is compact in $\mathcal{D}$ whenever $\text{Hom}_\mathcal{D}(C,?)$ preserves coproducts of objects in $\mathcal{U}$ is a standard argument. Let $(X_i)_{i\in I}$ be any family of objects of $\mathcal{D}$ and consider the adjoint pair $(\iota_\mathcal{U},\tau_\mathcal{U})$, where $\iota_\mathcal{U}:\mathcal{U}\hookrightarrow\mathcal{D}$ is the inclusion functor. Bearing in mind  that,  due to the smashing condition of $\tau$, the functor $\tau_\mathcal{U}:\mathcal{D}\longrightarrow\mathcal{U}$ preserves coproducts, we then get a chain of isomorphisms

\begin{center}
$\coprod_{i\in I}\text{Hom}_\mathcal{D}(C,X_i)=\coprod_{i\in I}\text{Hom}_\mathcal{D}(\iota_\mathcal{U}(C),X_i)\cong\coprod_{i\in I}\text{Hom}_\mathcal{U}(C,\tau_\mathcal{U}(X_i))\cong\text{Hom}_\mathcal{D}(C,\coprod \tau_\mathcal{U}(X_i))=\text{Hom}_\mathcal{D}(C,\coprod \tau_\mathcal{U}(X_i))\cong\text{Hom}_\mathcal{D}(C,\tau_\mathcal{U}(\coprod X_i))=\text{Hom}_\mathcal{U}(C,\tau_\mathcal{U}(\coprod X_i))\cong\text{Hom}_\mathcal{D}(\iota_\mathcal{U}(C),\coprod X_i)=\text{Hom}_\mathcal{D}(C,\coprod X_i)$.
\end{center}
\end{proof}

 The following result is a generalization of \cite[Proposition 1.3.7]{BBD}:

\begin{lem} \label{lem.description of aisle by homology}
Let  $X$ be an object of $\mathcal{D}$. The following assertions are equivalent:

\begin{enumerate}
\item $\tilde{H}^j(X)=0$, for all $j>0$ (resp. $j\leq 0$).
\item $\tau^{\mathcal{U}^\perp}X$ is in $\bigcap_{n\in\mathbb{Z}}\mathcal{U}^\perp [n]$ (resp. $\tau_\mathcal{U}X$ is in  $\bigcap_{n\in\mathbb{Z}}\mathcal{U}[n]$).
\end{enumerate}
\end{lem}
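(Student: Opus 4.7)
My plan is to attack the two equivalences by first using the truncation triangle of $X$ and the long exact sequence in cohomology to reduce each statement to a claim about one of the truncated pieces $\tau_\mathcal{U}X$ or $\tau^{\mathcal{U}^\perp}X$, and then to settle each reduced claim by a short induction that exploits the uniqueness of truncation triangles.

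For the reduction step I would apply $\tilde{H}$ to the truncation triangle $\tau_\mathcal{U}X\to X\to\tau^{\mathcal{U}^\perp}X\stackrel{+}{\to}$. Two preliminary observations keep the bookkeeping clean: $\tilde{H}$ vanishes on $\mathcal{U}^\perp$ (because $Y\in\mathcal{U}^\perp$ forces $\tau_\mathcal{U}Y=0$ by applying the uniqueness of truncation to the trivial triangle $0\to Y\to Y$) and on $\mathcal{U}[1]$ (symmetrically, $\tau^{\mathcal{U}^\perp[1]}Y=0$ for $Y\in\mathcal{U}[1]$, as $Y$ is already in the aisle of the shifted t-structure $(\mathcal{U}[1],\mathcal{U}^\perp[2])$). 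Since $(\tau_\mathcal{U}X)[j]\in\mathcal{U}[1]$ for $j\geq 1$, and since the chain $\{\mathcal{U}^\perp[k]\}_{k\in\mathbb{Z}}$ is increasing in $k$ (the inclusion $\mathcal{U}[1]\subseteq\mathcal{U}$ gives $\mathcal{U}^\perp\subseteq\mathcal{U}^\perp[1]$), so that $(\tau^{\mathcal{U}^\perp}X)[j]\in\mathcal{U}^\perp[j]\subseteq\mathcal{U}^\perp$ for $j\leq 0$, the long exact sequence collapses to yield $\tilde{H}^j(X)\cong\tilde{H}^j(\tau^{\mathcal{U}^\perp}X)$ for $j\geq 1$ and $\tilde{H}^j(X)\cong\tilde{H}^j(\tau_\mathcal{U}X)$ for $j\leq 0$. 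Writing $V:=\tau^{\mathcal{U}^\perp}X$ and $U:=\tau_\mathcal{U}X$, the first equivalence becomes: for $V\in\mathcal{U}^\perp$, $\tilde{H}^j(V)=0$ for all $j>0$ iff $V[j]\in\mathcal{U}^\perp$ for all $j\in\mathbb{Z}$ (the $j\leq 0$ case being automatic from the monotonicity above); the second is the analogous statement about $U\in\mathcal{U}$.

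For the reduced first statement, $(\Leftarrow)$ is immediate from the vanishing of $\tilde{H}$ on $\mathcal{U}^\perp$. For $(\Rightarrow)$ I would induct on $j\geq 0$, with base case $V[0]=V\in\mathcal{U}^\perp$. The inductive step uses the identity $\tilde{H}=\tau^{\mathcal{U}^\perp[1]}\circ\tau_\mathcal{U}$ combined with the fact that the right truncation for $(\mathcal{U}[1],\mathcal{U}^\perp[2])$ vanishes exactly on its aisle $\mathcal{U}[1]$; hence $\tilde{H}(V[j+1])=0$ forces $\tau_\mathcal{U}(V[j+1])\in\mathcal{U}[1]$. Shifting the truncation triangle of $V[j+1]$ by $[-1]$ then yields the triangle
$$(\tau_\mathcal{U}V[j+1])[-1]\longrightarrow V[j]\longrightarrow(\tau^{\mathcal{U}^\perp}V[j+1])[-1]\stackrel{+}{\longrightarrow},$$
whose left term lies in $\mathcal{U}[1][-1]=\mathcal{U}$ and whose right term lies in $\mathcal{U}^\perp[-1]\subseteq\mathcal{U}^\perp$. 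By uniqueness of truncation this \emph{is} the truncation triangle of $V[j]$, so $\tau_\mathcal{U}(V[j])\cong(\tau_\mathcal{U}V[j+1])[-1]$. The inductive hypothesis $V[j]\in\mathcal{U}^\perp$ gives $\tau_\mathcal{U}(V[j])=0$, so $\tau_\mathcal{U}(V[j+1])=0$, i.e.\ $V[j+1]\in\mathcal{U}^\perp$, closing the induction.

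The dual case is analogous and slightly cleaner: $U\in\bigcap_n\mathcal{U}[n]$ amounts to $U[m]\in\mathcal{U}$ for all $m\leq 0$, and if $U[m]\in\mathcal{U}$ then $\tau_\mathcal{U}(U[m])=U[m]$, so $\tilde{H}(U[m])=0$ is immediately equivalent to $U[m]\in\mathcal{U}[1]$, i.e.\ $U[m-1]\in\mathcal{U}$; a trivial induction on $-m\geq 0$ settles this direction and the reverse is equally direct. The main technical point, and where I expect the reader to pause, is the shifted-truncation-triangle trick in the non-dual case: the shift by $[-1]$ succeeds precisely because the family $\{\mathcal{U}^\perp[k]\}$ is monotone in $k$, which ensures that $(\tau^{\mathcal{U}^\perp}V[j+1])[-1]$ lands back in $\mathcal{U}^\perp$ and thus allows uniqueness of truncation to propagate the vanishing from $V[j]$ to $V[j+1]$. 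Everything else is bookkeeping around the truncation functors.
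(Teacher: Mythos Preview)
Your proof is correct and follows essentially the same route as the paper's: both reduce via the long exact sequence to a claim about $V=\tau^{\mathcal{U}^\perp}X$, then induct to show $V[j]\in\mathcal{U}^\perp$ for all $j\geq 0$. The paper's inductive step is slightly more direct than your shifted-triangle maneuver: once $V[j]\in\mathcal{U}^\perp$ one has $V[j+1]\in\mathcal{U}^\perp[1]$, so $\tau^{\mathcal{U}^\perp[1]}(V[j+1])=V[j+1]$ and hence $\tilde{H}(V[j+1])=\tau_\mathcal{U}(V[j+1])$ immediately, giving $V[j+1]\in\mathcal{U}^\perp$ from $\tilde{H}^{j+1}(V)=0$ without any shift trick.
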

\begin{proof}
We will prove the `not in-between brackets' assertion, the other one will follow by the duality principle. 

Note that if $U\in\mathcal{U}$ then $\tilde{H}^j(U)=\tilde{H}(U[j])=\tau^{\mathcal{U}^\perp [1]}(U[j])=0$, for all $j>0$. If follows from this that if  $U\in\bigcap_{n\in\mathbb{Z}}\mathcal{U} [n]$, then we have that $\tilde{H}^j(U)=\tilde{H}^1(U[j-1])=0$, for all $j\in\mathbb{Z}$,  because $U[j-1]\in\mathcal{U}$.  By the duality principle, we first get that $\tilde{H}^j$ vanishes on $\mathcal{U}^\perp$, for all $j\leq 0$, and we also get that if $V\in\bigcap_{n\in\mathbb{Z}}\mathcal{U}^\perp [n]$, then $\tilde{H}^j(V)=0$, for all $j\in\mathbb{Z}$. 

In the rest of the proof we put  $U=\tau_\mathcal{U}X$ and $V=\tau^{\mathcal{U}^\perp }X$, and consider the corresponding truncation triangle $U\longrightarrow X\longrightarrow V\stackrel{+}{\longrightarrow}$.
The long exact sequence associated to $\tilde{H}$ gives an exact sequence $$0=\tilde{H}^j(U)\longrightarrow\tilde{H}^j(X)\longrightarrow\tilde{H}^j(V)\longrightarrow\tilde{H}^{j+1}(U)=0 $$ in $\mathcal{H}$,  for all $j>0$. It follows that assertion 1 holds if and only if $\tilde{H}^j(V)=0$, for all $j>0$. But this in turn is equivalent to say that $\tilde{H}^j(V)=0$, for all $j\in\mathbb{Z}$, due to the previous paragraph. The implication $2)\Longrightarrow 1)$ is then clear. 

On the other hand, the implication $1)\Longrightarrow 2)$ reduces to prove that if $V\in\mathcal{U}^\perp$ and $\tilde{H}^j(V)=0$, for all $j\in\mathbb{Z}$, then $V\in\bigcap_{n\in\mathbb{Z}}\mathcal{U}^\perp [n]$. Suppose that this is not the case, so that there exists an integer $n>0$ such that $V\in\mathcal{U}^\perp [-n+1]\setminus\mathcal{U}^\perp [-n]$. That is, we have $V[n-1]\in\mathcal{U}^\perp$, but $V[n]\not\in\mathcal{U}^\perp$. But then we have $0=\tilde{H}^n(V)=\tilde{H}(V[n])=\tau_\mathcal{U}(V[n])$ since $V[n]\in\mathcal{U}^\perp [1]$. It follows that $V[n]\in\mathcal{U}^\perp$, which is a contradiction. 
\end{proof}

\begin{lem} \label{lem.t-structures generated by co-heart}
 The following assertions are equivalent:

\begin{enumerate}
\item $\tau$ is generated by $\mathcal{C}$.
\item $\tau$ is a left non-degenerated t-structure and, for each $M\in\mathcal{H}\setminus\{0\}$, there is a nonzero morphism $f:C\longrightarrow M$, where $C\in\mathcal{C}$.
\end{enumerate}
In this case $\tilde{H}(\mathcal{C})$ is a class of projective generators of $\mathcal{H}$. 
\end{lem}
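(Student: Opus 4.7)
The plan hinges on a single natural isomorphism: for every $C\in\mathcal{C}$ and every $X\in\mathcal{D}$,
\[
\text{Hom}_\mathcal{D}(C,X)\;\cong\;\text{Hom}_\mathcal{D}(C,\tilde{H}(X)).
\]
To derive it, I would first apply $\text{Hom}_\mathcal{D}(C,-)$ to the truncation triangle $\tau_\mathcal{U}X\longrightarrow X\longrightarrow\tau^{\mathcal{U}^\perp}X\stackrel{+}{\longrightarrow}$; since $\mathcal{U}[1]\subseteq\mathcal{U}$ forces $\mathcal{U}^\perp$ to be closed under $[-1]$, both $\tau^{\mathcal{U}^\perp}X$ and $(\tau^{\mathcal{U}^\perp}X)[-1]$ lie in $\mathcal{U}^\perp$, so $C\in\mathcal{U}$ kills their Hom's and yields $\text{Hom}_\mathcal{D}(C,\tau_\mathcal{U}X)\cong\text{Hom}_\mathcal{D}(C,X)$. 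Next, I would apply the same functor to $\tau_{\mathcal{U}[1]}(\tau_\mathcal{U}X)\longrightarrow\tau_\mathcal{U}X\longrightarrow\tilde{H}(X)\stackrel{+}{\longrightarrow}$; here the outer terms lie in $\mathcal{U}[1]$ and are annihilated by $C\in {}^\perp\mathcal{U}[1]$, which gives $\text{Hom}_\mathcal{D}(C,\tau_\mathcal{U}X)\cong\text{Hom}_\mathcal{D}(C,\tilde{H}(X))$. Concatenation proves the claim.

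For $(1)\Rightarrow(2)$: if $X\in\bigcap_{k\in\mathbb{Z}}\mathcal{U}[k]$, then $X[k]\in\mathcal{U}[1]$ for every $k$, so $\text{Hom}_\mathcal{D}(C,X[k])=0$ for all $C\in\mathcal{C}$ and $k\leq 0$; hence $X\in\mathcal{C}^{\perp_{\leq 0}}=\mathcal{U}^\perp$, which together with $X\in\mathcal{U}$ forces $X=0$ and gives left non-degeneracy. For the second condition, take $0\neq M\in\mathcal{H}$: from $M\in\mathcal{U}\setminus\mathcal{U}^\perp=\mathcal{U}\setminus\mathcal{C}^{\perp_{\leq 0}}$ there exist $C\in\mathcal{C}$ and $k\leq 0$ with $\text{Hom}_\mathcal{D}(C,M[k])\neq 0$; since $M\in\mathcal{U}^\perp[1]$ and $\mathcal{U}^\perp$ is closed under $[-1]$, one has $M[k]\in\mathcal{U}^\perp$ for every $k\leq -1$, and $C\in\mathcal{U}$ would force the Hom to vanish. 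So $k=0$ and we obtain the desired nonzero map $C\to M$.

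For $(2)\Rightarrow(1)$ the inclusion $\mathcal{U}^\perp\subseteq\mathcal{C}^{\perp_{\leq 0}}$ is the same shift bookkeeping. Conversely, if $X\in\mathcal{C}^{\perp_{\leq 0}}$, the key formula applied to $X[j]$ yields $\text{Hom}_\mathcal{D}(C,\tilde{H}^j(X))\cong\text{Hom}_\mathcal{D}(C,X[j])=0$ for all $C\in\mathcal{C}$ and $j\leq 0$; hypothesis $(2)$ then forces $\tilde{H}^j(X)=0$ for $j\leq 0$, and by Lemma \ref{lem.description of aisle by homology} this puts $\tau_\mathcal{U}X\in\bigcap_{n\in\mathbb{Z}}\mathcal{U}[n]$, which is zero by left non-degeneracy, so $X\in\mathcal{U}^\perp$. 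Finally, Lemma \ref{lem.coheart projective in heart}(1) supplies projectivity of each $\tilde{H}(C)$ in $\mathcal{H}$ together with the natural identification $\text{Hom}_\mathcal{D}(C,M)\cong\text{Hom}_\mathcal{H}(\tilde{H}(C),M)$ for $M\in\mathcal{H}$, so the nonzero $C\to M$ from $(2)$ transfers to a nonzero $\tilde{H}(C)\to M$ in $\mathcal{H}$, which is exactly the generator property for the set of projectives $\tilde{H}(\mathcal{C})$. I expect the isolated technical crux to be the derivation of the key isomorphism in the first paragraph; once it is in place, both directions reduce to keeping careful track of how $\mathcal{U}$ and $\mathcal{U}^\perp$ behave under shifts.
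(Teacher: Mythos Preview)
Your proposal is correct and follows essentially the same strategy as the paper, centered on the natural isomorphism $\text{Hom}_\mathcal{D}(C,X)\cong\text{Hom}_\mathcal{D}(C,\tilde H(X))$ for $C\in\mathcal{C}$. Two minor differences are worth noting: for left nondegeneracy in $(1)\Rightarrow(2)$ your argument is more direct than the paper's (which routes through Lemma~\ref{lem.description of aisle by homology} to get $\tilde H^j(U)=0$ before concluding $\text{Hom}(C,U[j])=0$), since $X\in\bigcap_k\mathcal{U}[k]$ immediately gives $X[k]\in\mathcal{U}[1]$ and hence $\text{Hom}(C,X[k])=0$ by definition of the co-heart; and for $(2)\Rightarrow(1)$ the paper proceeds by an iteration (reducing to $Y\in\mathcal{U}$, showing $\tilde H(Y)=0$ so $Y\in\mathcal{U}[1]$, then repeating to land in $\bigcap_n\mathcal{U}[n]$), whereas you invoke Lemma~\ref{lem.description of aisle by homology} once to obtain $\tau_\mathcal{U}X\in\bigcap_n\mathcal{U}[n]=0$ directly. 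Both variants are valid; yours is slightly more economical.
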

\begin{proof}
$1)\Longrightarrow 2)$  Suppose that $M\in\mathcal{H}$ and $\text{Hom}_\mathcal{D}(?,M)$ vanishes on $\mathcal{C}$. Then $\text{Hom}_\mathcal{D}(?,M)$ vanishes on $\mathcal{C}[k]$, for each $k\geq 0$, because $M\in\mathcal{U}^\perp [1]$. Assertion 1 then says that $M\in\mathcal{U}^\perp$, so that $M\in\mathcal{U}\cap\mathcal{U}^\perp =0$.

Let us take $U\in\bigcap_{n\in\mathbb{Z}}\mathcal{U}[n]$. By Lemma \ref{lem.description of aisle by homology}, we get that $\tilde{H}^j(U)=0$, for all $j\leq 0$ (in fact, it even follows that $H^j(U)=0$, for all $j\in\mathbb{Z}$). Bearing in mind that $\mathcal{C}\subset\mathcal{U}$, for each $j\leq 0$ and each $C\in\mathcal{C}$, we have that $$0=\text{Hom}_\mathcal{D}(C,\tilde{H}^j(U))=\text{Hom}_\mathcal{D}(C,\tau_\mathcal{U}\tau^{\mathcal{U}^\perp [1]}(U[j]))\cong\text{Hom}_\mathcal{D}(C,\tau^{\mathcal{U}^\perp [1]}(U[j])).$$ But, by the fact that $\text{Hom}_\mathcal{D}(C,?)$ vanishes on $\mathcal{U}[1]$, we also have an isomorphism $\text{Hom}_\mathcal{D}(C,U[j])\cong\text{Hom}_\mathcal{D}(C,\tau^{\mathcal{U}^\perp [1]}(U[j]))$. It follows that $\text{Hom}_\mathcal{D}(C,U[j])=0$, for all $C\in\mathcal{C}$ and all $j\leq 0$, so that $U\in\mathcal{C}^{\perp_{\leq 0}}=\mathcal{U}^\perp$ and hence  $U\in\mathcal{U}\cap\mathcal{U}^\perp =0$.

$2)\Longrightarrow 1)$ Since we have $\mathcal{C}[k]\subseteq\mathcal{U}$, for all $k\geq 0$,  the inclusion $\mathcal{U}^\perp\subseteq\mathcal{C}^{\perp_{\leq 0}}$ is obvious.  Conversely, let us consider $Y\in\mathcal{C}^{\perp_{\leq 0}}$. Since we have an isomorphism $\text{Hom}_\mathcal{D}(C[k],\tau_\mathcal{U}(Y))\cong\text{Hom}_\mathcal{D}(C[k],Y)$, for all $C\in\mathcal{C}$ and all $k\geq 0$, we can assume without loss of generality that $Y\in\mathcal{U}$, and the goal is shifted to prove that $Y=0$. Considering the triangle
$\tau_{\mathcal{U}[1]}Y\stackrel{j_Y}{\longrightarrow}Y\stackrel{p_Y}{\longrightarrow}\tilde{H}(Y)\stackrel{w_Y}{\longrightarrow}(\tau_{\mathcal{U}[1]}Y)[1]$
from the first paragraph of the proof of Lemma \ref{lem.coheart projective in heart}, we see that $(p_Y)_*:\text{Hom}_\mathcal{D}(C,Y)\longrightarrow\text{Hom}_\mathcal{D}(C,\tilde{H}(Y))$ is an isomorphism, for each $C\in\mathcal{C}$. We then have that $\text{Hom}_\mathcal{D}(?,\tilde{H}(Y))$ vanishes on $\mathcal{C}$, which, by assertion 2, means that $\tilde{H}(Y)=0$. That is, we have that $Y\in\mathcal{U}[1]$. But then $Y=Y'[1]$, where $Y'\in\mathcal{U}$ and $\text{Hom}_\mathcal{D}(C[k],Y')=\text{Hom}_\mathcal{D}(C[k+1],Y)=0$, for all $k\geq 0$. It follows that $Y'\in\mathcal{U}[1]$, so that $Y\in\mathcal{U}[2]$. By iterating the process, we conclude that $Y\in\bigcap_{n\geq 0}\mathcal{U}[n]=0$. 

Suppose now that assertions 1 and 2 hold. For each $0\neq M\in\mathcal{H}$, we have an object  $C\in\mathcal{C}$ and a nonzero morphism $C\longrightarrow M$ which,  by the proof of Lemma \ref{lem.coheart projective in heart},  factors in the form $C\longrightarrow\tilde{H}(C)\longrightarrow M$. Since $\tilde{H}(\mathcal{C})$ consists of projective objects (see Lemma \ref{lem.coheart projective in heart}), we get that it is a class of projective generators of $\mathcal{H}$. 
\end{proof}

\section{A silting bijection at the unbounded level} \label{sect.silting bijection}

The following concept will be important for us.

\begin{opr} \label{def.strongly nonpositive}
Let $\mathcal{D}$ be a triangulated category. A class (or set)  $\mathcal{T}$ of objects in such a category will be called \emph{nonpositive} (resp. \emph{exceptional}) when $\text{Hom}_\mathcal{D}(T,T'[k])=0$, for all $T,T'\in\mathcal{T}$ and all integers $k>0$ (resp. $k\neq 0$). When $\mathcal{D}$ has coproducts, we will say that $\mathcal{T}$ is \emph{strongly nonpositive} (resp. \emph{strongly exceptional}) when $\text{Sum}(\mathcal{T})$ (or, equivalently, $\text{Add}(\mathcal{T})$) is a nonpositive (resp. exceptional) class.  Note that  $\mathcal{T}$ is strongly nonpositive (resp. strongly exceptional) if and only if $\text{Hom}_\mathcal{D}(T,\coprod_{i\in I}T_i[k])=0$, for all  $T\in\mathcal{T}$, all families $(T_i)_{i\in I}$  in $\mathcal{T}$ and all integers $k>0$ (resp. $k\neq 0$).
\end{opr}

All throughout the rest of the section, we assume that $\mathcal{D}$ is a triangulated category with coproducts.

\begin{opr} \label{def.strong presilting}
Let $\mathcal{T}$ be a set of objects of $\mathcal{D}$.
 We shall say that $\mathcal{T}$ is \emph{partial silting}   when the following conditions hold:
\begin{enumerate}
\item The pair  $(\mathcal{U}_\mathcal{T},\mathcal{U}_\mathcal{T}^{\perp }[1]):=({}^\perp (\mathcal{T}^{\perp _{\leq 0}}),\mathcal{T}^{\perp _{<0}})$ is a t-structure in $\mathcal{D}$;
\item $\text{Hom}_\mathcal{D}(T,?)$ vanishes on $\mathcal{U}_\mathcal{T}[1]$, for all $T\in\mathcal{T}$. 
\end{enumerate}

Note that if $\mathcal{T}$ is a partial silting set in $\mathcal{D}$, then $\mathcal{T}\subseteq\mathcal{U}_\mathcal{T}$,  and hence $\mathcal{T}$ is strongly nonpositive. 
A strongly exceptional partial silting set will be called \emph{partial tilting}. 
When $\mathcal{T}=\{T\}$ is a partial silting (resp. partial tilting) set, we will say that $T$ is a \emph{partial silting (resp. partial tilting) object} of $\mathcal{D}$. 

A partial silting (resp. partial tilting) set (resp.  object) will be called a \emph{silting (resp. tilting) set (resp. object)} when it generates $\mathcal{D}$ as a triangulated category.

\end{opr}

\begin{rem} \label{rem.cosilting-cotilting}
Obviously, we have dual notions valid in a triangulated category with products $\mathcal{D}$. We make them explicit since they will eventually appear in the paper. A set $\mathcal{Q}$ of objects of $\mathcal{D}$ is \emph{partial cosilting} when the following conditions hold:

\begin{enumerate}
\item The pair $({}^\perp(\mathcal{V}_\mathcal{Q})[-1],\mathcal{V}_\mathcal{Q}):=({}^{\perp_{<0}}\mathcal{Q},({}^{\perp_{\leq 0}}\mathcal{Q})^\perp)$ is a t-structure in $\mathcal{D}$;
\item $\text{Hom}_\mathcal{D}(?,Q)$ vanishes on $\mathcal{V}_\mathcal{Q}[-1]$, for all $Q\in\mathcal{Q}$.
\end{enumerate}

When in addition $\mathcal{Q}$ cogenerates  $\mathcal{D}$ (i.e. ${}^{\perp_{k\in\mathbb{Z}}}\mathcal{Q}=0$) we will say that $\mathcal{Q}$ is a \emph{cosilting set}. The adjective (partial) cosilting is applied to an object $Q$ when  
$\mathcal{Q}=\{Q\}$ is a (partial) cosilting set. Finally, a (partial) cosilting set $\mathcal{Q}$ is said to be a \emph{(partial) cotilting set} when $\text{Hom}_\mathcal{D}(\prod_{i\in I}Q_i,Q[k])=0$, for all $Q\in\mathcal{Q}$, all families
 of objects $(Q_i)_{i\in I}$ in $\mathcal{Q}$ and all integers $k\neq 0$. 

\end{rem}

\begin{rem} \label{rem.Frobenius exat}
For many  `well-behaved' triangulated categories with coproducts, condition 1 of Definition \ref{def.strong presilting} is automatic. For instance, if $\mathcal{E}$ is a Frobenius exact category of Grothendieck type as defined in \cite{Sto}, then it was proved in \cite[Theorem 2.13 and Proposition 3.9]{SS} that, for each set $\mathcal{T}$ of objects of $\mathcal{D}:=\underline{\mathcal{E}}$, the pair $({}^\perp (\mathcal{T}^{\perp _{ \leq 0}}),\mathcal{T}^{\perp _{<0}})$ is a t-structure in $\mathcal{D}$ (see also \cite{AJS}). Actually, ${}^\perp (\mathcal{T}^{\perp _{ \leq 0}})$ consists of those objects which are isomorphic in $\mathcal{D}$ to direct summands of objects that admit a continuous transfinite filtration in $\mathcal{E}$ with successive factors in $\bigcup_{k\geq 0}\mathcal{T}[k]$.  
\end{rem}

\begin{rem} 
In an independent recent work, Psaroudakis and Vit\'oria (see \cite[Definition 4.1]{PV}) call an object $T$ of $\mathcal{D}$ silting when $(T^{\perp_{>0}},T^{\perp_{<0}})$ is a t-structure such that $T\in T^{\perp_{>0}}$. From Theorem \ref{teor.presilting t-structures} below one can easily derive that their definition is equivalent to our definition of silting object. 
\end{rem}

We immediately get some examples. An extension of the second one will be given in later sections. 

\begin{ex} \label{ex.examples partial silting}

\begin{enumerate}
\item Let $\mathcal{D}$ be compactly generated (e.g. $\mathcal{D}=\mathcal{D}(A)$, where $A$ is a dg algebra). 
 Any   set $\mathcal{T}$ of compact objects such that $\text{Hom}_\mathcal{D}(T,T'[k])=0$, for all $T,T'\in\mathcal{T}$ and all $k>0$ (resp. $k\neq 0$), is a partial  silting (resp. partial tilting) set. In such case, it is a silting (resp. tilting) set if and only if  $\mathcal{S}\subseteq\text{thick}_{\mathcal{D}}(\mathcal{T})$, for some (resp. every) set $\mathcal{S}$ of compact generators of $\mathcal{D}$ (e.g. $\mathcal{S}=\{A\}$ when $\mathcal{D}=\mathcal{D}(A)$).   
\item Let $\mathcal{D}=\mathcal{D}(A)$ be the derived category of an ordinary algebra $A$ and $\mathcal{T}=\{T\}$, where $T$ is a \emph{big silting complex} (see \cite{AMV}, also called \emph{big semi-tilting complex} in \cite{W}), that is,  $\text{thick}_{\mathcal{D}(A)}(\text{Add}(T))=\text{thick}_{\mathcal{D}(A)}(\text{Add}(A))=\mathcal{K}^b(\text{Proj}-A)$ and $\text{Hom}_\mathcal{D}(T,T[k]^{(I)})=0$, for all sets $I$ and integers $k>0$, then $T$ is silting in the sense of Definition \ref{def.strong presilting}. 
\end{enumerate}
\end{ex}
\begin{proof}
1) We will prove the statement for the partial silting case. The corresponding statement when replacing `silting' by `tilting' is clear.

By \cite[Theorems 12.1 and 12.2]{KN}, we know that each $X\in\mathcal{U}_\mathcal{T}[1]$ is the Milnor colimit of a sequence $$X_0\stackrel{f_1}{\longrightarrow}X_1\stackrel{f_2}{\longrightarrow}\cdots\stackrel{f_n}{\longrightarrow}X_n\stackrel{f_{n+1}}{\longrightarrow}\cdots,$$ where $X_0$ and all cones of the $f_n$ are in $\text{Sum}(\coprod_{T\in\mathcal{T},k>0}T[k])$. It follows by induction that $\text{Hom}_\mathcal{D}(T,X_n)=0$, for all $T\in\mathcal{T}$ and all $n\in\mathbb{N}$, from which we get that $\text{Hom}_\mathcal{D}(T,X)\cong\varinjlim\text{Hom}_\mathcal{D}(T,X_n)=0$ since each $T\in\mathcal{T}$ is a compact object. 

In this situation $\mathcal{T}$ is a silting set if and only if $\mathcal{T}$ is a set of compact generators of $\mathcal{D}$. By \cite[Theorem 5.3]{K}, this is equivalent to the condition mentioned in the statement.  

2) By  \cite[Proposition 4.2]{AMV}, we know that $(T^{\perp_{>0}},T^{\perp_{<0}})$ is the t-structure generated by $T$ and that $T$ generates $\mathcal{D}(A)$. But $\text{Hom}_{\mathcal{D}(A)}(T,?)$ clearly vanishes on $T^{\perp_{>0}}[1]=T^{\perp_{\geq 0}}$, so that conditions 1 and 2 of Definition \ref{def.strong presilting} are satisfied. This  and the generating condition imply that $T$ is a silting object of $\mathcal{D}(A)$.
\end{proof} 

Historically, silting and tilting objects or complexes were assumed to be compact. The terminology that we use in this paper, in particular Definition \ref{def.strong presilting}, is reminiscent of the one used for tilting modules and this justifies the following.

\begin{opr} \label{def.classical silting objects}
A \emph{classical (partial) silting} (resp. \emph{classical (partial) tilting}) \emph{set} of $\mathcal{D}$ will be a (partial) silting (resp. (partial) tilting) set consisting of compact objects. An object $T$ will be called \emph{classical (partial) silting object} (resp. \emph{classical (partial) tilting object}) when the set $\{T\}$ is so. 
\end{opr}

\begin{ex}
\begin{enumerate}
\item Let $\mathcal{A}$ be an  abelian category such that $\mathcal{D}(\mathcal{A})$ has $\text{Hom}$ sets and arbitrary coproducts (see Setup \ref{setup} below), and let  $\mathcal{P}$ be a set of projective generators of $\mathcal{A}$. Then $\mathcal{P}$ is a tilting (and hence silting) set of $\mathcal{D}(\mathcal{A})$.
\item Let $\mathcal{A}$ be an abelian category  such that $\mathcal{D}(\mathcal{A})$ has $\text{Hom}$ sets and arbitrary products, and let   $\mathcal{I}$ be a set of injective cogenerators of $\mathcal{A}$. Then $\mathcal{I}$ is a cotilting (and hence cosilting) set in $\mathcal{D}(\mathcal{A})$. 
\end{enumerate}
\end{ex}
\begin{proof}
Example 1 `is' \cite[Example 4.2(ii)]{PV}. Example 2 is dual. 
\end{proof}

\begin{lem} \label{lem.SumT-precovers}
Let $\mathcal{T}$ be a partial silting set in $\mathcal{D}$, let $\tau =(\mathcal{U},\mathcal{U}^\perp [1])$ be its associated t-structure. The following assertions hold:

\begin{enumerate}
\item $\mathcal{T}^{\perp_{>0}}$ consists of the objects $X\in\mathcal{D}$ such that $\tau^{\mathcal{U}^\perp}X\in\mathcal{T}^{\perp_{i\in\mathbb{Z}}}$
\item  Let $f:T'\longrightarrow U$ be a morphism, where $T'\in\text{Sum}(\mathcal{T})$ and $U\in\mathcal{U}$. The following statements are equivalent:

\begin{enumerate}
\item $f$ is a $\text{Sum}(\mathcal{T})$-precover.
\item If $T'\stackrel{f}{\longrightarrow}U\longrightarrow Z\stackrel{+}{\longrightarrow}$ is a triangle in $\mathcal{D}$, then $Z\in\mathcal{U}[1]$. 
\end{enumerate} 

In particular, if $(U_i)_{i\in I}$ is a family of objects of $\mathcal{U}$ and $(f_i:T_i\longrightarrow U_i)_{i\in I}$ is a family of $\text{Sum}(\mathcal{T})$-precovers, then the morphism $\coprod f_i:\coprod_{i\in I}T_i\longrightarrow\coprod_{i\in I}U_i$ is also a $\text{Sum}(\mathcal{T})$-precover. 
\end{enumerate}

\end{lem}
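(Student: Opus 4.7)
My plan is to treat assertion (1) by a straightforward application of the truncation triangle of $X$, and assertion (2) by reducing the precover condition to an internal vanishing inside the heart $\mathcal{H}$. For (1), fix $X\in\D$ and consider the canonical triangle $\tau_\mathcal{U}X\to X\to\tau^{\mathcal{U}^\perp}X\stackrel{+}{\to}$. For $T\in\mathcal{T}$ and $n>0$, both $(\tau_\mathcal{U}X)[n]$ and $(\tau_\mathcal{U}X)[n+1]$ lie in $\mathcal{U}[1]$, so condition (2) of Definition~\ref{def.strong presilting} and the induced long exact sequence yield an isomorphism $\Hom_\D(T,X[n])\cong\Hom_\D(T,(\tau^{\mathcal{U}^\perp}X)[n])$. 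Since $\tau^{\mathcal{U}^\perp}X$ lies already in $\mathcal{U}^\perp=\mathcal{T}^{\perp_{\leq 0}}$, we conclude that $X\in\mathcal{T}^{\perp_{>0}}$ iff $\tau^{\mathcal{U}^\perp}X\in\mathcal{T}^{\perp_{>0}}\cap\mathcal{T}^{\perp_{\leq 0}}=\mathcal{T}^{\perp_{i\in\mathbb{Z}}}$.

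For (2), two preliminary observations are in order. First, condition (2) of partial silting says exactly that $\mathcal{T}\subseteq{}^{\perp}(\mathcal{U}[1])$, so $\mathcal{T}\subseteq\mathcal{C}$, and since the co-heart is closed under coproducts, $\text{Sum}(\mathcal{T})\subseteq\mathcal{C}$. Second, since $T'$, $T'[1]$ and $U$ all lie in the suspended subcategory $\mathcal{U}$, the cone $Z$ automatically lies in $\mathcal{U}$, so $Z\in\mathcal{U}[1]$ is equivalent to $\tilde H(Z)=0$. Applying $\Hom_\D(T,?)$ to the triangle for $T\in\mathcal{T}$ and exploiting $\Hom(T,T'[1])=0$ gives the exact sequence $\Hom(T,T')\stackrel{f_*}{\to}\Hom(T,U)\to\Hom(T,Z)\to 0$; since any morphism from a coproduct in $\text{Sum}(\mathcal{T})$ decomposes into its components, condition (a) is equivalent to $\Hom_\D(T,Z)=0$ for every $T\in\mathcal{T}$.

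The main obstacle is to upgrade this componentwise vanishing into $\tilde H(Z)=0$. Here I would invoke the canonical isomorphism $\Hom_\D(T,Z)\cong\Hom_\mathcal{H}(\tilde H(T),\tilde H(Z))$, valid for $T\in\mathcal{C}$ and $Z\in\mathcal{U}$ by the adjunction computation in the proof of Lemma~\ref{lem.coheart projective in heart}, and then verify that $\tilde H(\mathcal{T})$ is a class of generators of $\mathcal{H}$. The latter I would prove directly: if $M\in\mathcal{H}$ satisfies $\Hom(T,M)=0$ for every $T\in\mathcal{T}$, then the inclusion $M[-1]\in\mathcal{U}^\perp=\mathcal{T}^{\perp_{\leq 0}}$ forces $\Hom(T,M[j])=0$ for all $j<0$ as well, so $M\in\mathcal{T}^{\perp_{\leq 0}}=\mathcal{U}^\perp$ and hence $M\in\mathcal{U}\cap\mathcal{U}^\perp=0$. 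This bridge between the set-theoretic precover property and the internal homological condition in $\mathcal{H}$ is the main technical point. The `in particular' clause is then immediate: coproducts of triangles in $\D$ are triangles, so the cone of $\coprod f_i$ is $\coprod Z_i$, and since $\mathcal{U}[1]$ is closed under coproducts, (b) transfers and (a) follows.
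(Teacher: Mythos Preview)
Your proof is correct. Part (1) is essentially the same argument as the paper's: both use the truncation triangle and the vanishing of $\Hom_\D(T,?)$ on $\mathcal{U}[1]$ to reduce to the coaisle component.

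For part (2), however, you take a genuinely different route. The paper first deduces from the precover condition that $Z\in\mathcal{T}^{\perp_{\geq 0}}$, then applies assertion (1) (shifted by one) to conclude that $\tau^{\mathcal{U}^\perp[1]}Z\in\mathcal{T}^{\perp_{i\in\mathbb{Z}}}$; since $Z\in\mathcal{U}\subseteq{}^\perp(\mathcal{T}^{\perp_{i\in\mathbb{Z}}})$, the canonical morphism $Z\to\tau^{\mathcal{U}^\perp[1]}Z$ is zero, forcing $Z\in\mathcal{U}[1]$. Your argument instead passes to the heart: you identify the precover condition with $\Hom_\D(T,Z)=0$ for all $T\in\mathcal{T}$, rewrite this via Lemma~\ref{lem.coheart projective in heart} as $\Hom_\mathcal{H}(\tilde H(T),\tilde H(Z))=0$, and then prove directly that $\tilde H(\mathcal{T})$ generates $\mathcal{H}$, yielding $\tilde H(Z)=0$. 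Your approach is more conceptual in that it locates the obstruction squarely in the heart and, in passing, establishes that $\tilde H(\mathcal{T})$ generates $\mathcal{H}$ --- a fact the paper only records later, in the proof of Theorem~\ref{teor.presilting t-structures}. The paper's argument, on the other hand, is more self-contained within the triangulated category and makes assertion (1) do real work in proving assertion (2), which your proof does not need.
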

\begin{proof}
1) Due to the definition of partial silting set, we have $\mathcal{U}\subseteq\mathcal{T}^{\perp_{>0}}$, and then the inclusion $\mathcal{U}\star\mathcal{T}^{\perp_{i\in\mathbb{Z}}}\subseteq\mathcal{T}^{\perp_{>0}}$ is clear since  $\mathcal{T}^{\perp_{>0}}$ is closed under extensions. Therefore all objects $X$ such that $\tau^{\mathcal{U}^\perp}X\in\mathcal{T}^{\perp_{i\in\mathbb{Z}}}$ are in $\mathcal{T}^{\perp_{>0}}$. 

Conversely, let $X\in\mathcal{T}^{\perp_{>0}}$ and consider the triangle $$X\longrightarrow\tau^{\mathcal{U}^\perp}X\longrightarrow (\tau_\mathcal{U}X)[1]\stackrel{+}{\longrightarrow}$$ given by truncating with respect to $\tau$. Its outer vertices are in $\mathcal{T}^{\perp_{>0}}$ and, hence, its three vertices are in this subcategory. In particular,  $Y:=\tau^{\mathcal{U}^\perp}X$ satisfies that $\text{Hom}_\mathcal{D}(T,Y[k])=0$, for all $k>0$. But we also have that $Y\in \mathcal{U}^\perp =\mathcal{T}^{\perp_{\leq 0}}$. It follows that $Y\in\mathcal{T}^{\perp_{i\in\mathbb{Z}}}$.

2) We only need to prove the implication $a)\Longrightarrow b)$ for the reverse one is obvious since $\text{Hom}_\mathcal{D}(T,?)$ vanishes on $\mathcal{U}[1]$. Note first that $Z\in\mathcal{U}$ since $Z\in\mathcal{U}\star\text{Sum}(\mathcal{T})[1]\subset\mathcal{U}$ . Moreover,  by applying the long exact sequence associated to $\text{Hom}_\mathcal{D}(T,?)$ and the surjectivity of $f_*=\text{Hom}_\mathcal{D}(T,f):\text{Hom}_\mathcal{D}(T,T')\longrightarrow\text{Hom}_\mathcal{D}(T,U)$, one also gets that $\text{Hom}_\mathcal{D}(T,Z)=0$. We then get that $\text{Hom}_\mathcal{D}(T,Z[j])=0$, for all $j\geq 0$ and all $T\in\mathcal{T}$. By assertion 1, we then have $Z\in\mathcal{T}^{\perp_{\geq 0}}=\mathcal{T}^{\perp_{>0}}[1]=\mathcal{U}[1]\star\mathcal{T}^{\perp_{i\in\mathbb{Z}}}$, so that $\tau^{\mathcal{U}^\perp [1]}Z\in\mathcal{T}^{\perp_{i\in\mathbb{Z}}}$. But then the canonical morphism $Z\longrightarrow\tau^{\mathcal{U}^\perp [1]}Z$ is the zero one. Indeed, we have $\mathcal{U}^\perp =\mathcal{T}^{\perp_{\leq 0}}\supset\mathcal{T}^{\perp_{i\in\mathbb{Z}}}$, which implies that $\mathcal{U}={}^{\perp}(\mathcal{T}^{\perp_{\leq 0}})\subset{}^{\perp}(\mathcal{T}^{\perp_{i\in\mathbb{Z}}})$. We then get that the canonical morphism $\tau_{\mathcal{U}[1]}Z\longrightarrow Z$ is a retraction, so that $Z\in\mathcal{U}[1]$. 
\end{proof}

\begin{lem} \label{lem.presilting set}
Let $\mathcal{T}$ be a   partial silting set  in $\mathcal{D}$ and $(\mathcal{U},\mathcal{U}^{\perp }[1]):=({}^\perp (\mathcal{T}^{\perp _{\leq 0}}),\mathcal{T}^{\perp _{<0}})$ be its associated  t-structure.  Then we have $\mathcal{C}=\text{Add}(\mathcal{T})$, where $\mathcal{C}=\mathcal{U}\cap {}^\perp\mathcal{U}[1]$ is the co-heart of the t-structure. 
\end{lem}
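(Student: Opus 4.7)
\bigskip
\noindent\textbf{Proof plan.} The strategy is to prove the two inclusions separately.

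The inclusion $\text{Add}(\mathcal{T})\subseteq\mathcal{C}$ is essentially by definition. As noted in Definition \ref{def.strong presilting}, $\mathcal{T}\subseteq\mathcal{U}$, and since $\mathcal{U}$ is the aisle of a t-structure (hence closed under coproducts and direct summands), we have $\text{Add}(\mathcal{T})\subseteq\mathcal{U}$. Moreover, condition 2 of Definition \ref{def.strong presilting} says that $\text{Hom}_\mathcal{D}(T,?)$ vanishes on $\mathcal{U}[1]$ for every $T\in\mathcal{T}$. Converting a coproduct in the first variable of $\text{Hom}$ into a product, we obtain that $\text{Hom}_\mathcal{D}(T',U[1])=0$ for every $T'\in\text{Sum}(\mathcal{T})$ and every $U\in\mathcal{U}$, and this condition clearly passes to direct summands. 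Thus $\text{Add}(\mathcal{T})\subseteq{}^\perp\mathcal{U}[1]$, and combining both facts gives $\text{Add}(\mathcal{T})\subseteq\mathcal{C}$.

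The reverse inclusion $\mathcal{C}\subseteq\text{Add}(\mathcal{T})$ is the substantive part. Let $C\in\mathcal{C}$. Using the discussion in the preliminaries, the canonical morphism
\[
\epsilon_C\colon T':=\coprod_{T\in\mathcal{T}}T^{(\text{Hom}_\mathcal{D}(T,C))}\longrightarrow C
\]
is a $\text{Sum}(\mathcal{T})$-precover of $C$, and $T'\in\text{Sum}(\mathcal{T})$. Complete it to a triangle
\[
T'\stackrel{\epsilon_C}{\longrightarrow}C\stackrel{g}{\longrightarrow}Z\stackrel{+}{\longrightarrow}.
\]
Since $C\in\mathcal{C}\subseteq\mathcal{U}$, Lemma \ref{lem.SumT-precovers}(2) applies and yields $Z\in\mathcal{U}[1]$.

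Now the defining property of the co-heart, namely $C\in{}^\perp\mathcal{U}[1]$, forces $\text{Hom}_\mathcal{D}(C,Z)=0$, so the morphism $g\colon C\to Z$ vanishes. Applying the cohomological functor $\text{Hom}_\mathcal{D}(C,?)$ to the above triangle, the long exact sequence gives that $(\epsilon_C)_*\colon\text{Hom}_\mathcal{D}(C,T')\to\text{Hom}_\mathcal{D}(C,C)$ is surjective, so $\text{id}_C$ admits a preimage, which is a section of $\epsilon_C$. Hence $C$ is a direct summand of $T'\in\text{Sum}(\mathcal{T})$, so $C\in\text{Add}(\mathcal{T})$. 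The only mild subtlety is the use of Lemma \ref{lem.SumT-precovers}(2), which is exactly tailored to convert the precover property into the t-structural information $Z\in\mathcal{U}[1]$ that meshes with $C\in{}^\perp\mathcal{U}[1]$; after that the splitting argument is standard.
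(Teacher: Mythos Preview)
Your proof is correct and follows essentially the same approach as the paper: take the canonical $\text{Sum}(\mathcal{T})$-precover of $C\in\mathcal{C}$, use Lemma~\ref{lem.SumT-precovers}(2) to see that the cone lies in $\mathcal{U}[1]$, and conclude from $C\in{}^\perp\mathcal{U}[1]$ that the precover is a retraction. The only difference is that you also spell out the easy inclusion $\text{Add}(\mathcal{T})\subseteq\mathcal{C}$, which the paper leaves implicit.
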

\begin{proof}
Let $C\in\mathcal{C}$ be any object. We claim that the canonical $\text{Sum}(T)$-precover $f:T':=\coprod_{T\in\mathcal{T}}T^{(\text{Hom}_\mathcal{D}(T,C))}\longrightarrow C$ is a retraction. Indeed, if we complete  to a triangle $T'\stackrel{f}{\longrightarrow}C\stackrel{g}{\longrightarrow}Z\stackrel{+}{\longrightarrow}$, then the previous lemma says that $Z\in\mathcal{U}[1]$, which implies that  $g=0$ due to the definition of the coheart. Therefore $f$ is a retraction, as claimed. 
\end{proof}

Recall that an abelian category is \emph{locally small} when the subobjects of any object form a set. We are now ready to prove the main result of this section. 

\begin{thm} \label{teor.presilting t-structures}
Let $\mathcal{D}$ be a triangulated category with coproducts and let $\tau =(\mathcal{U},\mathcal{U}^\perp [1])$ be a t-structure in $\mathcal{D}$, with heart $\mathcal{H}$ and co-heart $\mathcal{C}$. The following assertions are equivalent:

\begin{enumerate}
\item $\tau$ is generated by a  partial silting set.
\item $\tau$ is generated by a set $\mathcal{T}$ such that $\mathcal{T}^{\perp_{>0}}=\mathcal{U}\star\mathcal{T}^{\perp_{k\in\mathbb{Z}}}$.
\item $\tau$ is generated by $\mathcal{C}$ and $\mathcal{H}$ has a set of (projective)  generators.
\item $\tau$ is generated by a set, also  generated by $\mathcal{C}$ and $\mathcal{H}$ is locally small. 

When in addition $\mathcal{D}$ satisfies Brown representability theorem for the dual (e.g. when $\mathcal{D}$ is compactly generated), they  are also equivalent to:

\item $\tau$ is left nondegenerate,  $\mathcal{H}$ has a projective generator and the cohomological functor $\tilde{H}:\mathcal{D}\longrightarrow\mathcal{H}$ preserves products. 

\end{enumerate}
\end{thm}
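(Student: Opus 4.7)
The plan is to establish the circuit $(1)\Rightarrow(2)\Rightarrow(1)$ and $(1)\Rightarrow(3)\Rightarrow(4)\Rightarrow(1)$, and then, under the additional Brown representability hypothesis for the dual, prove $(1)\Leftrightarrow(5)$. The equivalence $(1)\Leftrightarrow(2)$ is essentially a repackaging of Lemma \ref{lem.SumT-precovers}(1): that lemma directly gives $\mathcal{T}^{\perp_{>0}}=\mathcal{U}\star\mathcal{T}^{\perp_{k\in\mathbb{Z}}}$ when $\mathcal{T}$ is partial silting, and conversely the trivial inclusion $\mathcal{U}\subseteq\mathcal{U}\star\mathcal{T}^{\perp_{k\in\mathbb{Z}}}=\mathcal{T}^{\perp_{>0}}$ forces $\text{Hom}_\mathcal{D}(T,U[1])=0$ for every $T\in\mathcal{T}$ and $U\in\mathcal{U}$, which is exactly condition 2 of Definition \ref{def.strong presilting}.

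For $(1)\Rightarrow(3)$ I would invoke Lemma \ref{lem.presilting set} to identify $\mathcal{C}=\text{Add}(\mathcal{T})$; this gives both that $\tau$ is generated by $\mathcal{C}$ and, via Lemma \ref{lem.coheart projective in heart}(2a) together with Lemma \ref{lem.t-structures generated by co-heart}, that $\tilde{H}(\mathcal{T})$ is a set of projective generators of $\mathcal{H}$. The step $(3)\Rightarrow(4)$ is then immediate, since a set of generators of an abelian category forces well-poweredness (subobjects of any object inject into the power set of the disjoint union of the hom-sets from the generators).

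For $(4)\Rightarrow(1)$, Lemma \ref{lem.t-structures generated by co-heart}(2) supplies a class $\tilde{H}(\mathcal{C})$ of projective generators of $\mathcal{H}$ together with left non-degeneracy of $\tau$. Let $\mathcal{T}$ be the set of generators of $\tau$ given by $(4)$; a short computation using that $\tilde{H}|_\mathcal{U}$ is left adjoint to the inclusion $\mathcal{H}\hookrightarrow\mathcal{U}$ and that $\mathcal{U}^\perp=\mathcal{T}^{\perp_{\leq 0}}$ shows that $\tilde{H}(\mathcal{T})$ is a set of generators of $\mathcal{H}$. For each $T\in\mathcal{T}$, the union of images of all morphisms $\tilde{H}(C)\longrightarrow\tilde{H}(T)$ with $C\in\mathcal{C}$ is the whole of $\tilde{H}(T)$; by local smallness this directed union has a cofinal subset indexed by a set, from which I extract a set $\{C_i\}\subseteq\mathcal{C}$ such that $\{\tilde{H}(C_i)\}$ is a set of projective generators of $\mathcal{H}$. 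Left non-degeneracy and Lemma \ref{lem.description of aisle by homology} then upgrade $\{\tilde{H}(C_i)\}$ generating $\mathcal{H}$ to $\{C_i\}^{\perp_{\leq 0}}=\mathcal{U}^\perp$, so $\{C_i\}$ is a set in the co-heart generating $\tau$, hence partial silting.

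Under Brown representability for the dual, $(1)\Rightarrow(5)$ is obtained by taking $C=\coprod_{T\in\mathcal{T}}T\in\mathcal{C}$ and combining the natural isomorphism $\text{Hom}_\mathcal{D}(C,X)\cong\text{Hom}_\mathcal{H}(\tilde{H}(C),\tilde{H}(X))$ (coming from the truncation triangles and the full faithfulness of $\tilde{H}|_\mathcal{C}$) with the equivalence $\mathcal{H}\simeq\text{Mod-End}_\mathcal{H}(\tilde{H}(C))$; this identifies $\tilde{H}$ with a representable functor, which automatically preserves products. Conversely, for $(5)\Rightarrow(1)$, let $P$ be the projective generator of $\mathcal{H}$; the covariant cohomological functor $F:=\text{Hom}_\mathcal{H}(P,\tilde{H}(?)):\mathcal{D}\longrightarrow k\text{-}\text{Mod}$ preserves products, so by Brown representability for the dual there exists $C\in\mathcal{D}$ with $F\cong\text{Hom}_\mathcal{D}(C,?)$. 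Since $\tilde{H}$ vanishes on $\mathcal{U}^\perp$ and on $\mathcal{U}[1]$, the object $C$ lies in ${}^\perp\mathcal{U}^\perp\cap{}^\perp\mathcal{U}[1]=\mathcal{U}\cap{}^\perp\mathcal{U}[1]=\mathcal{C}$. Yoneda gives $\tilde{H}(C)\cong P$, and the left non-degeneracy hypothesis together with Lemma \ref{lem.description of aisle by homology} yields $\{C\}^{\perp_{\leq 0}}=\mathcal{U}^\perp$, so $\{C\}$ is a partial silting set generating $\tau$.

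The hardest step will be $(4)\Rightarrow(1)$: extracting an actual \emph{set} of projective generators of $\mathcal{H}$ from the \emph{class} $\tilde{H}(\mathcal{C})$, and then lifting it back to a set inside the co-heart. This is the point at which both the local smallness of $\mathcal{H}$ and the existence of a set of generators of $\tau$ are simultaneously used, and it also requires verifying that $\mathcal{C}$ is closed under arbitrary coproducts in $\mathcal{D}$ and that idempotents split there, so that the lifted generators live genuinely in $\mathcal{C}$ rather than only up to direct summands. A secondary technical point is the identification of $\tilde{H}$ with a representable functor in $(1)\Rightarrow(5)$, where the passage between $\mathcal{H}$ and $\text{Mod-End}_\mathcal{H}(\tilde{H}(C))$ must be handled with care since $\tilde{H}(C)$ need not be compact in $\mathcal{H}$.
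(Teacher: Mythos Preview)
Your overall architecture matches the paper's, and your arguments for $(1)\Leftrightarrow(2)$, $(1)\Rightarrow(3)$, $(4)\Rightarrow(1)$ and $(5)\Rightarrow(1)$ are essentially the paper's (your $(5)\Rightarrow(1)$ is in fact slightly slicker: you land $C$ directly in the co-heart, whereas the paper first checks $T\in\mathcal{U}$ and then verifies the vanishing on $\mathcal{U}[1]$ separately). Two points need fixing.

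\textbf{The step $(3)\Rightarrow(4)$ is not immediate.} You only address local smallness of $\mathcal{H}$, but assertion $(4)$ also demands that $\tau$ be generated by a \emph{set}; in $(3)$ you only know it is generated by the (possibly proper) class $\mathcal{C}$. The paper handles this by taking a generator $G$ of $\mathcal{H}$, using local smallness to reduce the class of images of maps $\tilde{H}(C)\to G$ (for $C\in\mathcal{C}$) to a set, picking representatives $\{C_Y\}\subseteq\mathcal{C}$, and then observing that $\tilde{H}(\mathcal{C})=\text{Add}(\tilde{H}(\{C_Y\}))$ forces $\mathcal{C}=\text{Add}(\{C_Y\})$ via the full faithfulness of $\tilde{H}_{|\mathcal{C}}$. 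This is exactly the kind of extraction you already do in your $(4)\Rightarrow(1)$; you just need to run it once more here.

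\textbf{In $(1)\Rightarrow(5)$ the appeal to an equivalence $\mathcal{H}\simeq\text{Mod-End}_\mathcal{H}(\tilde{H}(C))$ is unjustified}, since $\tilde{H}(C)$ is only a projective generator, not a progenerator (you note this yourself at the end). The paper avoids this entirely: your natural isomorphism $\text{Hom}_\mathcal{D}(C,X)\cong\text{Hom}_\mathcal{H}(\tilde{H}(C),\tilde{H}(X))$ already shows that the composite $\text{Hom}_\mathcal{H}(\tilde{H}(C),-)\circ\tilde{H}$ is the representable functor $\text{Hom}_\mathcal{D}(C,-)$, hence preserves products. Since $\tilde{H}(C)$ is a projective generator of $\mathcal{H}$, the functor $\text{Hom}_\mathcal{H}(\tilde{H}(C),-)$ reflects isomorphisms, and this is enough to conclude that the canonical map $\tilde{H}(\prod X_i)\to\prod^{\mathcal{H}}\tilde{H}(X_i)$ is an isomorphism. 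No module-category identification is needed.
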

\begin{proof}

$1)\Longrightarrow 2)$ is a direct consequence of Lemma \ref{lem.SumT-precovers}, when taking as $\mathcal{T}$ any partial silting set which generates $\tau$. 

$2)\Longrightarrow 1)$ The inclusion $\mathcal{U}\subseteq\mathcal{T}^{\perp_{>0}}$ gives condition 2) of the definition of  partial silting set for $\mathcal{T}$. Condition 1) of that definition is automatic since, by hypothesis,  $\tau$ is generated by $\mathcal{T}$. 

$1)\Longrightarrow 3)$ Let $\mathcal{T}$ be  a partial silting set which generates $\tau$. By Lemma \ref{lem.presilting set}, we get that $\tau$ is generated by $\mathcal{C}$. Moreover,  this same lemma together with Lemma \ref{lem.coheart projective in heart} give that $\tilde{H}(\mathcal{C})=\text{Add}(\tilde{H}(\mathcal{T}))$. Therefore $\tilde{H}(\mathcal{T})$ is a set of projective generators of $\mathcal{H}$ due to Lemma \ref{lem.t-structures generated by co-heart}.

$3)\Longrightarrow 4)$ By \cite[Proposition IV.6.6]{St}, we know that $\mathcal{H}$ is locally small. It remains to check that $\tau$ is generated by a set. Let $G$ be a generator of $\mathcal{H}$. The images in $\mathcal{H}$ of morphisms $\tilde{H}(C)\longrightarrow G$, with $C\in\mathcal{C}$, form a set,  denoted  by $\mathcal{Y}$ in the sequel, because  $\mathcal{H}$ is locally small. We put $t(G)=\sum_{Y\in\mathcal{Y}}Y$. This sum exists because $\mathcal{H}$ is AB3 (see \cite[Proposition 3.2]{PS1}). If we had $G/t(G)\neq 0$, then we would have a nonzero morphism $f:\tilde{H}(C')\longrightarrow G/t(G)$, for some $C'\in\mathcal{C}$ (see Lemma \ref{lem.t-structures generated by co-heart} and its proof).  Due to the projective condition of $\tilde{H}(C')$ in $\mathcal{H}$, this morphism would factor in the form $f:\tilde{H}(C')\stackrel{g}{\longrightarrow}G\stackrel{\pi}{\longrightarrow}G/t(G)$, where $\pi$ is the projection. But then $\text{Im}_\mathcal{H}(g)\subseteq t(G)$, which would imply that $f=\pi\circ g=0$, and thus a contradiction.   Therefore we have $t(G)=G$.
For each $Y\in\mathcal{Y}$,  fix now a morphism $f_Y:\tilde{H}(C_Y)\longrightarrow G$, with $C_Y\in\mathcal{C}$,  such that $\text{Im}_\mathcal{H}(f_Y)=Y$. It immediately follows that if we put $\mathcal{T}:=\{C_Y:\text{ }Y\in\mathcal{Y}\}$, then $\tilde{H}(\mathcal{T})$ is a set of projective generators of $\mathcal{H}$. But, due to Lemma \ref{lem.coheart projective in heart},  we then have $\tilde{H}(\mathcal{C})=\text{Add}(\tilde{H}(\mathcal{T}))$, and also $\mathcal{C}=\text{Add}(\mathcal{T})$. Then  $\mathcal{S}:=\mathcal{T}$ is a set which generates $\tau$ since $\tau$ is generated by $\mathcal{C}$.

$4)\Longrightarrow 1)$ By Lemma \ref{lem.t-structures generated by co-heart}, we know that $\tilde{H}(\mathcal{C})$ is a class of projective generators of $\mathcal{H}$. This together with the locally small condition of  $\mathcal{H}$ implies that each object $M$ of $\mathcal{H}$ is an epimorphic image of a (set-indexed) coproduct of objects of $\tilde{H}(\mathcal{C})$. Indeed the images of morphisms $\tilde{H}(C)\longrightarrow M$, with $C\in\mathcal{C}$, form a set and their sum is $M$ due to the projectivity of the objects of $\tilde{H}(\mathcal{C})$. Moreover, using Lemma \ref{lem.coheart projective in heart}(2.a) we conclude that there is an epimorphism $\tilde{H}(C_M)\twoheadrightarrow M$, with $C_M\in\mathcal{C}$.

Let now $\mathcal{S}$ be a set of generators of $\tau$. Note that if $0\neq M\in\mathcal{H}$, then there is a nonzero morphism $f:S\longrightarrow M$, for some $S\in\mathcal{S}$, for otherwise we would have $\text{Hom}(S[k],M)=0$, for all $S\in\mathcal{S}$ and $k\geq 0$, because $\mathcal{H}\subseteq\mathcal{U}^\perp [1]$. That is, we would have that $M\in\mathcal{S}^{\perp_{\leq 0}}=\mathcal{U}^\perp$, and hence $M\in\mathcal{U}\cap\mathcal{U}^\perp =0$, which is a contradiction. Note also that, as in the proof of Lemma \ref{lem.coheart projective in heart}, we have a triangle $\tau_{\mathcal{U}[1]}S\longrightarrow S\longrightarrow\tilde{H}(S)\stackrel{+}{\longrightarrow}$ in $\mathcal{D}$ and $f$ vanishes on 
$\tau_{\mathcal{U}[1]}S$. Therefore 
 $f$ factors through $\tilde{H}(S)$ and we have a nonzero morphism $\tilde{H}(S)\longrightarrow M$, with $S\in\mathcal{S}$. 

Fixing now an epimorphism $\pi_S:\tilde{H}(C_S)\twoheadrightarrow\tilde{H}(S)$ in $\mathcal{H}$, where $C_S\in\mathcal{C}$, for each $S\in\mathcal{S}$, we get that if $\mathcal{T}:=\{C_S\text{: }S\in\mathcal{S}\}$ then $\tilde{H}(\mathcal{T})$ is a set of projective generators of $\mathcal{H}$, because, for each $0\neq M\in\mathcal{H}$, there is a nonzero morphism $\tilde{H}(C_S)\longrightarrow M$, for some $S\in\mathcal{S}$. In particular, we get that $\tilde{H}(\mathcal{C})=\text{Add}(\tilde{H}(\mathcal{T}))$ and, by Lemma \ref{lem.coheart projective in heart}, we  conclude that $\mathcal{C}=\text{Add}(\mathcal{T})$. Then $\mathcal{T}$ is the desired partial silting set which generates $\tau$.

 $1)=3)\Longrightarrow 5)$ For this implication we do not need the full strength of the dual Brown representability theorem. It is enough for $\mathcal{D}$ to have products. 
The left nondegeneracy of $\tau$ follows from Lemma \ref{lem.t-structures generated by co-heart}. By \cite[Proposition 3.2]{PS1} and its proof, we know that $\mathcal{H}$ is AB3 and AB3* and the coproduct and product in $\mathcal{H}$ of a family of objects $(M_i)_{i\in I}$ are $\coprod_{i\in I}^*M_i=\tilde{H}(\coprod_{i\in I}M_i)$ and  $\prod_{i\in I}^*M_i=\tilde{H}(\prod_{i\in I}M_i)$. Here we have used a $*$ superscript to denote the (co)product in $\mathcal{H}$. 
On the other hand, the proof of implication $1)\Longrightarrow 3)$ shows that $\tilde{H}(\mathcal{T})$ is a set of projective generators of $\mathcal{H}$, so that this category is also AB4* (use the dual of \cite[Corollary 3.2.9]{Po}). Put now $T_0:=\coprod_{T\in\mathcal{T}}T$. Then $\tilde{H}(T_0)$ a projective generator of $\mathcal{H}$. We claim that the composition of functors $$\mathcal{D}\stackrel{\tilde{H}}{\longrightarrow}\mathcal{H}\stackrel{\text{Hom}_\mathcal{H}(\tilde{H}(T_0),?)}{\longrightarrow}Ab$$ is naturally isomorphic to the functor $\text{Hom}_\mathcal{D}(T_0,?):\mathcal{D}\longrightarrow Ab$. Indeed, by the proof of Lemma \ref{lem.coheart projective in heart},  we know that if $X\in\mathcal{D}$ and and we put $M=\tilde{H}(X)$, $U=\tau_\mathcal{U}X$ and $C=T_0$ in that proof,  then we get an isomorphism $\text{Hom}_\mathcal{H}(\tilde{H}(T_0),\tilde{H}(X))\cong\text{Hom}_\mathcal{D}(T_0,\tilde{H}(X))$ which is functorial on $X$. Moreover we have a triangle $\tau_{\mathcal{U}[1]}X\longrightarrow\tau_\mathcal{U} X\longrightarrow\tilde{H}(X)\stackrel{+}{\longrightarrow}$ since $\tau_{\mathcal{U}[1]}\tau_\mathcal{U}X\cong\tau_{\mathcal{U}[1]}X$ and $\tilde{H}(X)\cong\tau^{\mathcal{U}^\perp[1]}\tau_\mathcal{U}X$. This gives another isomorphism $\text{Hom}_\mathcal{D}(T_0,\tau_\mathcal{U}X)\cong\text{Hom}_\mathcal{D}(T_0,\tilde{H}(X))$ because $\text{Hom}_\mathcal{D}(T_0,?)$ vanishes on $\mathcal{U}[1 ]$. As a result, we get an isomorphism $\text{Hom}_\mathcal{H}(\tilde{H}(T_0),\tilde{H}(X))\cong\text{Hom}_\mathcal{D}(T_0,\tau_\mathcal{U}X)$ which is functorial on $X$. 
 Using finally the adjoint pair $(\iota_\mathcal{U}:\mathcal{U}\hookrightarrow\mathcal{D},\tau_\mathcal{U}:\mathcal{D}\longrightarrow\mathcal{U})$ and the fact that $T_0\in\mathcal{U}$ we  get the desired functorial isomorphism $$\text{Hom}_\mathcal{H}(\tilde{H}(T_0),\tilde{H}(X))\cong\text{Hom}_\mathcal{D}(T_0,\tau_\mathcal{U}X)=\text{Hom}_\mathcal{U}(T_0,\tau_\mathcal{U}X)\cong\text{Hom}_\mathcal{D}(T_0,X).$$

We now prove that $\tilde{H}:\mathcal{D}\longrightarrow\mathcal{H}$ preserves products. Let $(X_i)_{i\in I}$ be a family of objects of $\mathcal{D}$ and consider the canonical morphism $\psi :\tilde{H}(\prod_{i\in I}X_i)\longrightarrow\prod^*_{i\in I}\tilde{H}(X_i)$, where $\prod^*$ stands for the product in $\mathcal{H}$. Due to the fact that $\tilde{H}(T_0)$ is a projective generator of $\mathcal{H}$, in order to prove that $\psi$ is an isomorphism it is enough to prove that the induced map 

\begin{center}
$\psi_*:\text{Hom}_\mathcal{H}(\tilde{H}(T_0),\tilde{H}(\prod_{i\in I}X_i))\longrightarrow\text{Hom}_\mathcal{H}(\tilde{H}(T_0),\prod^*_{i\in I}\tilde{H}(X_i))\cong\prod_{i\in I}\text{Hom}_\mathcal{H}(\tilde{H}(T_0),\tilde{H}(X_i))$
\end{center}
is an isomorphism. But this is a direct consequence of the previous paragraph and the fact that the functor $\text{Hom}_\mathcal{D}(T_0,?):\mathcal{D}\longrightarrow Ab$ preserves products.

$5)\Longrightarrow 1)$ In the rest of the proof we assume that $\mathcal{D}$ satisfies Brown representability theorem for the dual (BRT* in the sequel).  Fix now a projective generator $P$ of $\mathcal{H}$ and consider the composition functor $$\mathcal{D}\stackrel{\tilde{H}}{\longrightarrow}\mathcal{H}\stackrel{\text{Hom}_\mathcal{H}(P,?)}{\longrightarrow}Ab.$$ This functor preserves products and takes triangles to long exact sequences. By  BRT*, there exists an object $T\in\mathcal{D}$ such that $\text{Hom}_\mathcal{D}(T,?)$ is naturally isomorphic to the last composition functor. The rest of the proof is devoted to checking that $T$ is a partial silting object which generates $\tau$.

Let us take $V\in\mathcal{U}^\perp$ arbitrary, so that $\tilde{H}(V)=0$. It follows that $\text{Hom}_\mathcal{D}(T,V)\cong\text{Hom}_\mathcal{H}(P,\tilde{H}(V))=0$, so that $T\in {} ^\perp (\mathcal{U}^\perp )=\mathcal{U}$. Suppose next that $Y\in T^{\perp_{\leq 0}}$. It follows that $\text{Hom}_\mathcal{H}(P,\tilde{H}^j(Y))\cong\text{Hom}_\mathcal{D}(T,Y[j])=0$, and so $\tilde{H}^j(Y)=0$,  for all $j\leq 0$. By Lemma \ref{lem.description of aisle by homology} and the left nondegeneracy of $\tau$, we get that $\tau_\mathcal{U}Y=0$, so that $Y\in\mathcal{U}^\perp$. It follows that $T$ generates $\tau$. Finally, if $U\in\mathcal{U}$, we have $\text{Hom}_\mathcal{D}(T,U[1])\cong\text{Hom}_\mathcal{H}(P,\tilde{H}^1(U))=0$, so that $T$ is a partial silting object. 
\end{proof}

\begin{ques}
Is the local smallness of $\mathcal{H}$ superfluous in assertion 4 of the last theorem? In other words, suppose that $\tau$ is generated by a set and also by its coheart. Is the heart of $\tau$ necessarily a locally small (abelian)  category?
\end{ques}

By \cite[Theorem 7.2]{Porta}, we know that if $\mathcal{D}$ is well-generated in the sense of Neeman (see \cite[Definition 8.1.6 and Remark 8.1.7]{N} and  \cite{Kr}) and it is algebraic, then there exists a dg category $\mathcal{A}$ and a set $\mathcal{S}$ of objects in $\mathcal{D}(\mathcal{A})$ such that $\mathcal{D}\cong\text{Loc}_{\mathcal{D}(\mathcal{A})}(\mathcal{S})^\perp$. In particular $\mathcal{D}$ has products in that case and,  by \cite[Proposition 8.4.2]{N}, we also know that $\mathcal{D}$ satisfies Brown representability theorem. Note that the derived category of a Grothendieck category is an example of a well-generated algebraic triangulated category. 

\begin{cor} \label{cor.injectivecogenerator-implies-cosilting}
Let  $\tau$ be a t-structure in $\mathcal{D}$. The following assertions hold:

\begin{enumerate}
\item If $\mathcal{D}$ satisfies Brown representability theorem for the dual,  and $\tau$ is  left nondegenerate co-smashing and its heart has a projective generator,  then $\tau =({}^\perp(\mathcal{T}^{\perp_{\leq 0}}), \mathcal{T}^{\perp_{<0}})$  for some partial silting set $\mathcal{T}$ in $\mathcal{D}$. 
\item If $\mathcal{D}$ has products and satisfies Brown representability theorem (e.g. if $\mathcal{D}$ is well-generated algebraic), then the following statements are equivalent:

\begin{enumerate}
\item $\tau$ is right nondegenerate, its heart $\mathcal{H}$ has an injective cogenerator and the cohomological functor $\tilde{H}:\mathcal{D}\longrightarrow\mathcal{H}$ preserves coproducts.
\item $\tau =({}^{\perp_{<0}}\mathcal{Q},({}^{\perp_{\leq 0}}\mathcal{Q})^\perp )$, for some partial cosilting set $\mathcal{Q}$ in $\mathcal{D}$ (see Remark \ref{rem.cosilting-cotilting}).
\end{enumerate}
\end{enumerate}
\end{cor}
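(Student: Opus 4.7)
My plan is to deduce both assertions from Theorem \ref{teor.presilting t-structures}, the first directly and the second by passing to the opposite category.

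For assertion 1, I would verify condition (5) of Theorem \ref{teor.presilting t-structures}, that is, show that the cohomological functor $\tilde{H}:\mathcal{D}\longrightarrow\mathcal{H}$ preserves products, since the other two hypotheses of (5) (left nondegeneracy and existence of a projective generator of $\mathcal{H}$) are given outright. The key observation is that for any family $(X_i)_{i\in I}$ in $\mathcal{D}$, the product of the truncation triangles $\tau_{\mathcal{U}}X_i\longrightarrow X_i\longrightarrow\tau^{\mathcal{U}^\perp [1]}X_i\stackrel{+}{\longrightarrow}$ is a triangle $\prod\tau_{\mathcal{U}}X_i\longrightarrow\prod X_i\longrightarrow\prod\tau^{\mathcal{U}^\perp [1]}X_i\stackrel{+}{\longrightarrow}$, whose left vertex lies in $\mathcal{U}$ by the co-smashing hypothesis and whose right vertex lies in $\mathcal{U}^{\perp}[1]$ since right orthogonal classes are always closed under products. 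By uniqueness of truncation, both $\tau_\mathcal{U}$ and $\tau^{\mathcal{U}^\perp [1]}$ commute with products, and hence so does their composition $\tilde{H}$. Theorem \ref{teor.presilting t-structures} then delivers a partial silting set $\mathcal{T}$ generating $\tau$, and the required expression $\tau=({}^\perp(\mathcal{T}^{\perp_{\leq 0}}),\mathcal{T}^{\perp_{<0}})$ is precisely the one from Definition \ref{def.strong presilting}.

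For assertion 2, I would argue by dualization. The hypotheses that $\mathcal{D}$ has products and satisfies Brown representability theorem translate, in $\mathcal{D}^{op}$, to having coproducts and satisfying Brown representability theorem for the dual; hence Theorem \ref{teor.presilting t-structures} applies to $\mathcal{D}^{op}$. Any t-structure $\tau=(\mathcal{U},\mathcal{U}^\perp [1])$ in $\mathcal{D}$ induces a t-structure $\tau^{op}$ in $\mathcal{D}^{op}$ sharing the same heart (now viewed as $\mathcal{H}^{op}$). Under this passage right nondegeneracy becomes left nondegeneracy, injective cogenerators become projective generators, and coproduct-preservation of $\tilde{H}^{\mathcal{D}}$ is equivalent to product-preservation of the analogous cohomological functor in $\mathcal{D}^{op}$. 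Condition (a) is therefore exactly condition (5) of Theorem \ref{teor.presilting t-structures} read in $\mathcal{D}^{op}$, while the partial cosilting description in (b) is the $\mathcal{D}^{op}$-translation, via the perpendicularity definitions in the Preliminaries, of the partial silting description in condition (1). The equivalence $(1)\Leftrightarrow (5)$ of the theorem then yields $(a)\Leftrightarrow (b)$.

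The main obstacle is the verification of product-preservation of $\tilde{H}$ in assertion 1, which is the only hypothesis of condition (5) not assumed outright; the subtle point is recognizing that co-smashing is exactly what forces both truncation functors (and not merely one of them) to commute with products. Once this is observed, everything else amounts to a direct invocation of Theorem \ref{teor.presilting t-structures} or its opposite-category formulation, with careful bookkeeping of the duality between partial silting and partial cosilting sets.
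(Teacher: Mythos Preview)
Your proposal is correct and follows essentially the same approach as the paper: for assertion 1 you verify condition (5) of Theorem \ref{teor.presilting t-structures} by using co-smashing to show that truncation (and hence $\tilde{H}$) preserves products, and for assertion 2 you dualize the equivalence $(1)\Leftrightarrow(5)$ of that theorem. One small notational slip: the truncation triangle is $\tau_{\mathcal{U}}X_i\to X_i\to\tau^{\mathcal{U}^\perp}X_i\stackrel{+}{\to}$ (not $\tau^{\mathcal{U}^\perp[1]}X_i$), so you should first conclude that $\tau_{\mathcal{U}}$ and $\tau^{\mathcal{U}^\perp}$ preserve products and then observe the same holds for the shifted truncations $\tau_{\mathcal{U}[1]}$, $\tau^{\mathcal{U}^\perp[1]}$ needed to assemble $\tilde{H}$.
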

\begin{proof}
1) Note that if $\tau$ is co-smashing, then  $\mathcal{H}$ is closed under taking products in $\mathcal{D}$, so that products in $\mathcal{H}$ are calculated as in $\mathcal{D}$. On the other hand, if $(M_i)_{i\in I}$ is a family of objects of $\mathcal{D}$, then the associated truncation triangle with respect to $\tau=(\mathcal{U},\mathcal{U}^\perp [1])$ is $$\prod_{i\in I}\tau_\mathcal{U}M_i\longrightarrow\prod_{i\in I}M_i\longrightarrow\prod_{i\in I}\tau^{\mathcal{U}^\perp}M_i\stackrel{+}{\longrightarrow}. $$ Then both truncation functors $\tau_\mathcal{U}:\mathcal{D}\longrightarrow\mathcal{U}$ and $\tau^{\mathcal{U}^\perp }:\mathcal{D}\longrightarrow\mathcal{U}^\perp$ preserve products, and the same can be said about the `shifted' truncations functors $\tau_{\mathcal{U}[1]}$ and $\tau^{\mathcal{U}^\perp [1]}$. In particular, the cohomological functor $\tilde{H}=\tau^{\mathcal{U}^\perp [1]}\circ\tau_{\mathcal{U}}:\mathcal{D}\longrightarrow\mathcal{H}$ preserves products, and assertion 1 follows from Theorem \ref{teor.presilting t-structures}.

\vspace*{0.3cm}

2) The equivalence of assertions 2.a and 2.b is the dual version of the equivalence of assertions 1 and 5 in Theorem \ref{teor.presilting t-structures}. 
\end{proof}

\begin{cor} \label{cor.self-smallness versus compactness}
Let $\mathcal{T}$ be a partial silting set in $\mathcal{D}$, let $\tau$ be the associated t-structure and let $\tilde{H}:\mathcal{D}\longrightarrow\mathcal{H}$ be the induced cohomological functor. The following assertions are equivalent:

\begin{enumerate}
\item $\tilde{H}(\mathcal{T})$ is a set of compact projective generators of $\mathcal{H}$;
\item $\mathcal{T}$ is a self-small set in $\mathcal{D}$. 
\end{enumerate}
\end{cor}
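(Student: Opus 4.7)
The plan is first to reduce the statement to a single implication. By Lemma~\ref{lem.presilting set} one has $\mathcal{C}=\text{Add}(\mathcal{T})$, and the argument for the implication $1)\Rightarrow 3)$ in Theorem~\ref{teor.presilting t-structures}, combined with Lemma~\ref{lem.coheart projective in heart} and Lemma~\ref{lem.t-structures generated by co-heart}, already yields that $\tilde{H}(\mathcal{T})$ is a set of projective generators of $\mathcal{H}$ for any partial silting set $\mathcal{T}$. The corollary is therefore equivalent to the assertion that, for every $T\in\mathcal{T}$, the object $\tilde{H}(T)$ is compact in $\mathcal{H}$ if and only if $\mathcal{T}$ is self-small in $\mathcal{D}$.

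The bridge between the $\mathcal{D}$-side and the $\mathcal{H}$-side is the following translation. Since $\tilde{H}_{|\mathcal{C}}:\mathcal{C}\longrightarrow\mathcal{H}$ is fully faithful by Lemma~\ref{lem.coheart projective in heart}(1) and preserves coproducts by Lemma~\ref{lem.coheart projective in heart}(2)(a), for any family $(T_\alpha)_\alpha$ of objects of $\mathcal{T}$ one has natural identifications $\text{Hom}_{\mathcal{D}}(T,T_\alpha)\cong\text{Hom}_{\mathcal{H}}(\tilde{H}(T),\tilde{H}(T_\alpha))$ and $\coprod_\alpha^{\mathcal H}\tilde{H}(T_\alpha)\cong\tilde{H}(\coprod_\alpha T_\alpha)$, where $\coprod^\mathcal{H}$ denotes the coproduct in $\mathcal{H}$. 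Under these identifications, self-smallness of $\mathcal{T}$ in $\mathcal{D}$ translates exactly into the statement that, for each $T\in\mathcal{T}$, the functor $\text{Hom}_\mathcal{H}(\tilde{H}(T),?)$ preserves the coproduct in $\mathcal{H}$ of every family of objects of $\text{Sum}(\tilde{H}(\mathcal{T}))$. The implication $(1)\Rightarrow(2)$ is then immediate, since compact objects preserve arbitrary coproducts.

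For the converse, fix $T\in\mathcal{T}$ and an arbitrary family $(M_i)_{i\in I}$ in $\mathcal{H}$. Since $\tilde{H}(\mathcal{T})$ is a set of projective generators, each $M_i$ admits a projective presentation $P^{(1)}_i\longrightarrow P^{(0)}_i\longrightarrow M_i\longrightarrow 0$ in $\mathcal{H}$ with $P^{(0)}_i,P^{(1)}_i\in\text{Sum}(\tilde{H}(\mathcal{T}))$. Because $\mathcal{H}$ is AB3 by \cite[Proposition 3.2]{PS1} and coproducts, being left adjoints, are right exact, the resulting sequence $\coprod_i^\mathcal{H}P^{(1)}_i\longrightarrow\coprod_i^\mathcal{H}P^{(0)}_i\longrightarrow\coprod_i^\mathcal{H}M_i\longrightarrow 0$ is still exact in $\mathcal{H}$. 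Applying $\text{Hom}_\mathcal{H}(\tilde{H}(T),?)$, which is exact because $\tilde{H}(T)$ is projective, and comparing with the coproduct in $\text{Ab}$ of the individual sequences $\text{Hom}_\mathcal{H}(\tilde{H}(T),P^{(1)}_i)\longrightarrow\text{Hom}_\mathcal{H}(\tilde{H}(T),P^{(0)}_i)\longrightarrow\text{Hom}_\mathcal{H}(\tilde{H}(T),M_i)\longrightarrow 0$, one obtains a commutative diagram of right-exact rows in $\text{Ab}$. The first two vertical maps are bijective by the translation of the previous paragraph, because an arbitrary coproduct of objects of $\text{Sum}(\tilde{H}(\mathcal{T}))$ is still in $\text{Sum}(\tilde{H}(\mathcal{T}))$; the snake-lemma comparison then forces the third vertical map to be bijective as well, which is exactly the compactness of $\tilde{H}(T)$ in $\mathcal{H}$. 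The main delicate point is that only the AB3 (and not AB4) hypothesis on $\mathcal{H}$ is available, so the argument has to be run at the level of right-exact rather than honest short exact sequences; but this is all that is needed since $\text{Hom}_\mathcal{H}(\tilde{H}(T),?)$ is already exact by projectivity.
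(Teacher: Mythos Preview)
Your proof is correct, but it takes a genuinely different route from the paper's. Both proofs begin with the same reduction: $\tilde{H}(\mathcal{T})$ is already known to be a set of projective generators of $\mathcal{H}$, so only compactness is at issue. From there the arguments diverge.

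The paper works on the triangulated side. For $1)\Rightarrow 2)$ it invokes Lemma~\ref{lem.coheart projective in heart}(2)(b) directly: compactness of $\tilde{H}(T)$ in $\mathcal{H}$ is equivalent to compactness of $T$ in $\mathcal{U}$, and since $\mathcal{T}\subset\mathcal{U}$ self-smallness follows. For $2)\Rightarrow 1)$ it proves compactness of each $T$ in $\mathcal{U}$ by covering an arbitrary family $(U_i)$ in $\mathcal{U}$ with $\text{Sum}(\mathcal{T})$-precovers $f_i:T_i\to U_i$; Lemma~\ref{lem.SumT-precovers} guarantees that the cones $Z_i$ and $\coprod Z_i$ lie in $\mathcal{U}[1]$, so $\text{Hom}_\mathcal{D}(T,?)$ kills them, and a two-by-two square comparison (self-smallness on the $T_i$, epimorphisms along the precovers) forces the canonical map $\coprod\text{Hom}_\mathcal{D}(T,U_i)\to\text{Hom}_\mathcal{D}(T,\coprod U_i)$ to be bijective.

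You instead work entirely inside the abelian category $\mathcal{H}$: after translating self-smallness in $\mathcal{D}$ into preservation of coproducts of objects of $\text{Sum}(\tilde{H}(\mathcal{T}))$ via the fully faithful coproduct-preserving functor $\tilde{H}_{|\mathcal{C}}$, you use projective presentations and a right-exact five-lemma comparison. Your argument avoids Lemma~\ref{lem.SumT-precovers} and the compactness-in-$\mathcal{U}$ step of Lemma~\ref{lem.coheart projective in heart}(2)(b), relying only on the AB3 structure of $\mathcal{H}$ and the exactness of $\text{Hom}_\mathcal{H}(\tilde{H}(T),?)$; the paper's argument, by contrast, exploits the triangulated precover machinery specific to partial silting sets. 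Both are short; yours is perhaps more portable to other abelian settings, while the paper's stays closer to the ambient triangulated structure.
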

\begin{proof}
By the proof of implication $1)\Longrightarrow 3)$ in the last theorem, we know that $\tilde{H}(\mathcal{T})$ is a set of projective generators of $\mathcal{H}$.

$1)\Longrightarrow 2)$ By Lemma \ref{lem.coheart projective in heart}, we know that all objects of $\mathcal{T}$ are compact in $\mathcal{U}={}^{\perp}(\mathcal{T}^{\leq 0})$. Since $\mathcal{T}\subseteq\mathcal{U}$ the self-smallness of $\mathcal{T}$ is clear. 

$2)\Longrightarrow 1)$ We will prove that each $T\in\mathcal{T}$ is compact in $\mathcal{U}$, which, by Lemma \ref{lem.coheart projective in heart}, will end the proof. Let $(U_i)_{i\in I}$ be a family of objects in $\mathcal{U}$ and fix a $\text{Sum}(\mathcal{T})$-precover $f_i:T_i\longrightarrow U_i$ and complete to a corresponding triangle $T_i\stackrel{f_i}{\longrightarrow}U_i\longrightarrow Z_i\stackrel{+}{\longrightarrow}$, for each $i\in I$. Then  $f=\coprod f_i:\coprod_{i\in I}T_i\longrightarrow\coprod_{i\in I}U_i$ is also a $\text{Sum}(\mathcal{T})$-precover (see Lemma \ref{lem.SumT-precovers}). By this same lemma, we have that $\text{Hom}_\mathcal{D}(T,?)$ vanishes on each $Z_i$ and on $\coprod_{i\in I}Z_i$, for all $T\in\mathcal{T}$. We then have the following commutative square, where the horizontal arrows are epimorphisms:

$$
\xymatrix  {
 \coprod_{i\in I} \text{Hom}_\mathcal{D}(T,T_i)\ar[r] \ar[d]^{\simeq} &\coprod_{i\in I} \text{Hom}_\mathcal{D}(T,U_i) \ar[d]\\
  \text{Hom}_\mathcal{D}(T,\coprod_{i\in I}T_i)\ar[r] &
\text{Hom}_\mathcal{D}(T,\coprod_{i\in I}U_i)}
$$

for every $T\in\mathcal{T}$. Moreover, the left vertical arrow is an isomorphism because  $\mathcal{T}$ is self-small. It follows that the right vertical arrow is an epimorphism. But it is always a monomorphism. Therefore $T$ is compact in $\mathcal{U}$, as desired.
\end{proof}

The following definition is very helpful.

\begin{opr} \label{def.equivalent partial silting}
Two  strongly nonpositive sets $\mathcal{T}$ and $\mathcal{T}'$ in $\mathcal{D}$ will be called \emph{equivalent} when $\text{Add}(\mathcal{T})=\text{Add}(\mathcal{T}')$. In particular, two partial silting objects $T$ and $T'$ will be equivalent when $\text{Add}(T)=\text{Add}(T')$. 
\end{opr}

\begin{cor} \label{cor.bijection t-structures}
Let $\mathcal{D}$ be a  triangulated category with coproducts. The assignment $\mathcal{T}\rightsquigarrow\tau_\mathcal{T}=({}^\perp (\mathcal{T}^{\perp_{\leq 0}}),\mathcal{T}^{\perp_{<0}})$ gives a one-to-one correspondence between equivalence classes of  partial silting sets (with just one object) and t-structures in $\mathcal{D}$ generated by their co-heart whose heart has a  generator. This restricts to:

\begin{enumerate}
\item A bijection between equivalence classes of  silting objects and (right) nondegenerate t-structures  in $\mathcal{D}$ generated by their co-heart whose heart has a generator.  
\item A bijection between equivalence classes of self-small (resp. classical) partial silting sets  and  t-structures (resp. smashing t-structures) in $\mathcal{D}$ generated by their co-heart whose heart is the module category over a small $K$-category.
\item A bijection between equivalence classes of self-small (resp. classical) partial silting  objects and  t-structures (resp. smashing t-structures) in $\mathcal{D}$ generated by their co-heart whose heart is the module category over an ordinary algebra.
\item A bijection  between equivalence classes of self-small (resp. classical) silting  objects and (right) nondegenerate  t-structures (resp. (right) nondegenerate smashing t-structures) in $\mathcal{D}$ generated by their co-heart whose heart is the module category over an ordinary algebra.  
\end{enumerate}
\end{cor}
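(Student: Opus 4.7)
The plan is to deduce this corollary from Theorem \ref{teor.presilting t-structures} and Lemma \ref{lem.presilting set}, then obtain each of the four restrictions by layering on an additional characterization already developed in the section.

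For the main bijection, that the assignment $\mathcal{T}\rightsquigarrow\tau_\mathcal{T}$ is well-defined and lands inside t-structures generated by their co-heart whose heart has a generator is (the proof of) the implication (1)$\Rightarrow$(3) of Theorem \ref{teor.presilting t-structures}; surjectivity is the converse implication (3)$\Rightarrow$(1). Injectivity is immediate from Lemma \ref{lem.presilting set}: if $\tau_\mathcal{T}=\tau_{\mathcal{T}'}$, then the associated co-hearts coincide, but the lemma identifies the co-heart with $\text{Add}(\mathcal{T})=\text{Add}(\mathcal{T}')$, which is exactly the equivalence relation of Definition \ref{def.equivalent partial silting}.

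Restriction (1) is purely formal: a partial silting set $\mathcal{T}$ is silting precisely when $\mathcal{T}^{\perp_{k\in\mathbb{Z}}}=0$, and since $\mathcal{U}_\mathcal{T}^\perp=\mathcal{T}^{\perp_{\leq 0}}$ with $\mathcal{T}\subseteq\mathcal{U}_\mathcal{T}$, this is equivalent to $\bigcap_{k\in\mathbb{Z}}\mathcal{U}_\mathcal{T}^\perp[k]=0$, i.e.\ right nondegeneracy of $\tau_\mathcal{T}$. For restriction (2), Corollary \ref{cor.self-smallness versus compactness} identifies self-smallness of $\mathcal{T}$ with compactness of every object of $\tilde{H}(\mathcal{T})$ in $\mathcal{H}$; since by the proof of Theorem \ref{teor.presilting t-structures} the set $\tilde{H}(\mathcal{T})$ is always a set of projective generators of $\mathcal{H}$, Proposition \ref{prop.Gabriel-Mitchell} translates this into $\mathcal{H}$ being a module category over a small $k$-category. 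Conversely, given such a $\tau$, one must produce a self-small partial silting set: using Lemma \ref{lem.t-structures generated by co-heart} and Lemma \ref{lem.coheart projective in heart}(1), each compact projective generator $P$ of $\mathcal{H}$, being projective, is a direct summand of $\tilde{H}(D)$ for some $D\in\mathcal{C}$; the corresponding splitting idempotent of $\tilde{H}(D)$ lifts (by full faithfulness of $\tilde{H}|_\mathcal{C}$) to an idempotent on $D$, which splits in $\mathcal{D}$ and hence in $\mathcal{C}$, yielding $T\in\mathcal{C}$ with $\tilde{H}(T)\cong P$. The set of such $T$'s is the desired self-small partial silting set.

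For the classical/smashing refinement of (2), compactness of every $T\in\mathcal{T}$ in $\mathcal{D}$ forces $\mathcal{T}^{\perp_{\leq 0}}=\mathcal{U}_\mathcal{T}^\perp$ to be closed under coproducts, so $\tau_\mathcal{T}$ is smashing; conversely, under the smashing hypothesis, Lemma \ref{lem.coheart projective in heart}(2)(b) upgrades compactness of $\tilde{H}(T)$ in $\mathcal{H}$ to compactness of $T$ in $\mathcal{D}$, so $\mathcal{T}$ is classical. Restrictions (3) and (4) follow by combining (1) and (2) with the last sentence of Proposition \ref{prop.Gabriel-Mitchell}, which identifies module categories over ordinary algebras with those having a single progenerator (the one-object case of (2)). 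The main obstacle I anticipate is the lifting step in the backward direction of restriction (2): producing a self-small $\mathcal{T}\subseteq\mathcal{C}$ whose image under $\tilde{H}$ realizes a prescribed set of compact projective generators of $\mathcal{H}$. Everything else is a careful composition of results already in place.
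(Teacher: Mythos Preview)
Your proposal is correct and follows essentially the same approach as the paper: the general bijection via Theorem~\ref{teor.presilting t-structures} and Lemma~\ref{lem.presilting set}, restriction~(1) via the equivalence $\mathcal{T}^{\perp_{k\in\mathbb{Z}}}=0 \Leftrightarrow \bigcap_k \mathcal{U}_\mathcal{T}^\perp[k]=0$, and restrictions~(2)--(4) via Corollary~\ref{cor.self-smallness versus compactness}, Proposition~\ref{prop.Gabriel-Mitchell}, and Lemma~\ref{lem.coheart projective in heart}(2)(b). Your idempotent-splitting argument for the backward direction of~(2) is exactly what the paper does (somewhat tersely) when it writes ``it follows from Lemma~\ref{lem.coheart projective in heart} that $P\cong\tilde{H}(C_P)$''; you have simply made the lifting via full faithfulness of $\tilde{H}_{|\mathcal{C}}$ and splitting of idempotents in $\mathcal{D}$ explicit, so the obstacle you anticipated is already overcome.
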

\begin{proof}
The general bijection is a consequence of Theorem \ref{teor.presilting t-structures} and Lemma \ref{lem.presilting set}. Note that a set $\mathcal{T}$ is  partial silting if and only if $\hat{T}=\coprod_{T\in\mathcal{T}}T$ is a partial silting object. Then the `in bracket' comment is clear. 

Let us now check that this bijection restricts to the indicated bijections:

\vspace*{0.3cm}

1) Given a partial silting set $\mathcal{T}$ of $\mathcal{D}$ and putting $\hat{T}$ as above, we have a chain of double implications 
\begin{center}
$\mathcal{T}$ is a silting set $\Longleftrightarrow$ $\mathcal{T}$ (or $\{\hat{T}\}$) generates $\mathcal{D}$ $\Longleftrightarrow$ $\bigcap_{n\in\mathbb{Z}}\mathcal{T}^\perp [n]=0$ (equivalently $\bigcap_{n\in\mathbb{Z}}\hat{T}^\perp [n]=0$) $\Longleftrightarrow$ $\bigcap_{n\in\mathbb{Z}}\mathcal{U}_{\mathcal{T}}^\perp [n]=0$ $\Longleftrightarrow$ $\tau_\mathcal{T}=(\mathcal{U}_\mathcal{T},\mathcal{U}_\mathcal{T}^\perp [1])$ is right nondegenerate. 
\end{center}
The left nondegeneracy of $\tau_\mathcal{T}$ follows from Lemma \ref{lem.t-structures generated by co-heart}. 

\vspace*{0.3cm}

2) The general part of this bijection, concerning self-small partial silting sets,  follows directly from Corollary \ref{cor.self-smallness versus compactness} and Proposition \ref{prop.Gabriel-Mitchell}.  If in this bijection the set $\mathcal{T}$ consists of compact objects, then $\tau_\mathcal{T}$ is smashing. Conversely, suppose that $\tau$ is a smashing t-structure.  We select then a set $\mathcal{P}$ of compact projective generators of $\mathcal{H}$. Since also $\tilde{H}(\mathcal{C})$ is a class of projective generators of $\mathcal{H}$, each $P\in\mathcal{P}$ is a direct summand of an object  $\tilde{H}(C)$, with $C\in\mathcal{C}$. It follows from Lemma \ref{lem.coheart projective in heart} that $P\cong\tilde{H}(C_P)$, for some $C_P\in\mathcal{C}\cap\mathcal{D}^c$. Then $\mathcal{T}=\{C_P\text{: }P\in\mathcal{P}\}$ is a classical partial silting set. Moreover, we then have that $\tilde{H}(\mathcal{C})=\text{Add}(\mathcal{P})=\text{Add}(\tilde{H}(\mathcal{T}))$,  which implies that $\mathcal{C}=\text{Add}(\mathcal{T})$. Therefore we have $\tau =\tau_\mathcal{T}$. 

\vspace*{0.3cm}

3) The bijection here is an obvious restriction of the bijection in 2.

\vspace*{0.3cm}

4) This bijection follows from the bijection in  1 and from Corollary \ref{cor.self-smallness versus compactness}, for the self-small case, and that this bijection restricts to the one for classical (=compact) silting objects follows as the bijection in 2) or 3). 
\end{proof}

In the particular case when $\mathcal{D}$ satisfies BRT*, we can use assertion 5 of Theorem \ref{teor.presilting t-structures} to obtain the following bijection, whose proof goes along the lines of the previous corollary and is left to the reader. 

\begin{cor} \label{cor.bijection for comp.generated}
Suppose that $\mathcal{D}$ satisfies Brown representability theorem for the dual (e.g. when $\mathcal{D}$ is compactly generated). The assignment $\mathcal{T}\rightsquigarrow\tau_\mathcal{T}=({}^\perp (\mathcal{T}^{\leq 0}),\mathcal{T}^{\perp_{>0}})$ gives a bijection between equivalence classes of partial silting (resp. silting) sets (with just one object) and left (resp. left and right) nondegenerate t-structures in $\mathcal{D}$ whose heart has a projective generator and whose associated cohomological functor preserves products. This bijection restricts to:

\begin{enumerate}
\item A bijection between equivalence classes of self-small (resp. classical) partial silting sets and  left nondegenerate (resp. left nondegenerate smashing) t-structures  in $\mathcal{D}$ whose heart is the module category over a small $K$-category and whose associated cohomological functor preserves products. 
\item A bijection between equivalence classes of self-small (resp. classical) partial silting objects and  left nondegenerate (resp. left nondegenerate  smashing) t-structures  in $\mathcal{D}$ whose heart is the module category over an ordinary algebra and whose associated cohomological functor preserves products. 
\end{enumerate}
\end{cor}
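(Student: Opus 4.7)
The plan is to mimic the proof of Corollary \ref{cor.bijection t-structures} almost verbatim, but substituting the characterization of partial silting t-structures given by the equivalence $1)\Longleftrightarrow 5)$ of Theorem \ref{teor.presilting t-structures} for the one given by $1)\Longleftrightarrow 3)$. First I would show that the assignment $\mathcal{T}\rightsquigarrow\tau_\mathcal{T}$ is well-defined on equivalence classes, since $\text{Add}(\mathcal{T})=\text{Add}(\mathcal{T}')$ forces $\mathcal{T}^{\perp_{\leq 0}}=\mathcal{T}'^{\perp_{\leq 0}}$ and $\mathcal{T}^{\perp_{<0}}=\mathcal{T}'^{\perp_{<0}}$. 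Injectivity is handled by Lemma \ref{lem.presilting set}: from $\tau_\mathcal{T}$ one recovers $\text{Add}(\mathcal{T})=\mathcal{C}$, the co-heart of $\tau_\mathcal{T}$. Surjectivity onto the class of t-structures described in the statement is precisely the content of $5)\Longrightarrow 1)$ of Theorem \ref{teor.presilting t-structures}, while $1)\Longrightarrow 5)$ ensures that t-structures in the image satisfy the listed properties. The ``silting versus partial silting'' part is then the observation (as in item 1 of Corollary \ref{cor.bijection t-structures}) that $\mathcal{T}$ generates $\mathcal{D}$ as a triangulated category exactly when $\bigcap_{n\in\mathbb{Z}}\mathcal{U}_\mathcal{T}^\perp[n]=0$, i.e.\ when $\tau_\mathcal{T}$ is right nondegenerate.

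For restriction (1), I would invoke Corollary \ref{cor.self-smallness versus compactness}, which states that $\mathcal{T}$ is self-small if and only if $\tilde{H}(\mathcal{T})$ is a set of compact projective generators of $\mathcal{H}$. Combined with Proposition \ref{prop.Gabriel-Mitchell}, this pins down the hearts as module categories over small $k$-categories, since $\mathcal{H}$ is automatically AB3. For the ``classical'' (smashing) subcase, one direction is clear: if every $T\in\mathcal{T}$ is compact in $\mathcal{D}$, then $\mathcal{T}^{\perp_{\leq 0}}$ is closed under coproducts, hence $\tau_\mathcal{T}$ is smashing. Conversely, starting from a smashing $\tau$ whose heart has a set $\mathcal{P}$ of compact projective generators, I would lift each $P\in\mathcal{P}$ to the co-heart: by Lemma \ref{lem.coheart projective in heart}(1) the functor $\tilde{H}_{|\mathcal{C}}$ is fully faithful with image consisting of projective objects of $\mathcal{H}$, so any projective object of $\mathcal{H}$ that lies in $\text{Add}(\tilde{H}(\mathcal{C}))$ has the form $\tilde{H}(C)$ for some $C\in\mathcal{C}$. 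Since $\tilde{H}(\mathcal{C})$ generates $\mathcal{H}$ projectively, each $P\in\mathcal{P}$ is a summand of such an $\tilde{H}(C)$, hence $P\cong\tilde{H}(C_P)$ for some $C_P\in\mathcal{C}$. By Lemma \ref{lem.coheart projective in heart}(2)(b), applied under the smashing hypothesis, the compactness of $P$ in $\mathcal{H}$ forces $C_P$ to be compact in $\mathcal{D}$.

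The main obstacle is then to verify that $\mathcal{T}:=\{C_P:P\in\mathcal{P}\}$ is a classical partial silting set with $\tau_\mathcal{T}=\tau$. For this I would argue that $\tilde{H}(\mathcal{C})=\text{Add}(\mathcal{P})=\text{Add}(\tilde{H}(\mathcal{T}))$: the first equality holds because $\mathcal{P}$ is a set of compact projective generators of $\mathcal{H}$ (so every projective is a summand of a coproduct of copies of objects in $\mathcal{P}$), and the second by construction. Using that $\tilde{H}_{|\mathcal{C}}$ is fully faithful and preserves coproducts (Lemma \ref{lem.coheart projective in heart}(1) and (2)(a)), this lifts to the equality $\mathcal{C}=\text{Add}(\mathcal{T})$. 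Since $\tau$ is already in the bijection and its co-heart coincides with $\text{Add}(\mathcal{T})$, Lemma \ref{lem.presilting set} yields $\tau =\tau_\mathcal{T}$, so $\mathcal{T}$ is the desired preimage under the bijection.

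Finally, restriction (2) follows by specializing the set-valued bijections of (1) to single-object partial silting sets and replacing ``module category over a small $k$-category'' by ``module category over an ordinary algebra'' through the last sentence of Proposition \ref{prop.Gabriel-Mitchell}: a set $\mathcal{T}=\{T\}$ is self-small (resp.\ classical) partial silting precisely when $\tilde{H}(T)$ is a projective generator (resp.\ a progenerator) of $\mathcal{H}$, which characterizes $\mathcal{H}$ as the module category over an ordinary $k$-algebra, concretely $\text{End}_\mathcal{H}(\tilde{H}(T))^{op}$.
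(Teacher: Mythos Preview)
Your proposal is correct and follows exactly the approach the paper indicates: the paper leaves the proof to the reader, saying it ``goes along the lines of the previous corollary'' (i.e.\ Corollary \ref{cor.bijection t-structures}), and that is precisely what you do, substituting the equivalence $1)\Longleftrightarrow 5)$ of Theorem \ref{teor.presilting t-structures} for $1)\Longleftrightarrow 3)$. Your treatment of the classical/smashing subcase in restriction (1), lifting compact projectives of $\mathcal{H}$ to compact objects of $\mathcal{D}$ via Lemma \ref{lem.coheart projective in heart}(2)(b), reproduces verbatim the argument already given in the proof of Corollary \ref{cor.bijection t-structures}(2).
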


\begin{opr} \label{def.pure-injective}
Let $\mathcal{D}$ have arbitrary coproducts and products. An object $Y$ of $\mathcal{D}$ will be called \emph{pure-injective} when, given any set $I$ and the canonical map $\lambda :Y^{(I)}\longrightarrow Y^I$, the transpose map $\lambda^*:\text{Hom}_\mathcal{D}(Y^I,Y)\longrightarrow\text{Hom}_\mathcal{D}(Y^{(I)},Y)$ is surjective.
\end{opr}

Note that if $\mathcal{D}$ is compactly generated, then $\lambda :Y^{(I)}\longrightarrow Y^I$ is a pure monomorphism in the terminology of  \cite{Kr2}. In particular, by \cite[Theorem 1.8]{Kr2} our notion of pure-injectivity agrees in that case with the classical one. 
Recall also that if $T$ is a compact object of $\mathcal{D}$ and $E$ is the minimal injective cogenerator of $\text{Mod}-K$, then $\text{Hom}_K(\text{Hom}_\mathcal{D}(T,?),E):\mathcal{D}\longrightarrow \text{Mod}-K$ is a contravariant cohomological functor which takes coproducts to products. When $\mathcal{D}$ satisfies Brown representability theorem, this functor is represented by an object $D(T)$, usually called the \emph{Brown-Comenetz dual}  of $T$, uniquely determined up to isomorphism. 

\begin{ex}
Let $\mathcal{D}$ satisfy Brown representability theorem and let $\mathcal{T}$ be a set of compact objects in $\mathcal{D}$. Then $D(\mathcal{T}):=\prod_{T\in\mathcal{T}}D(T)$ is a pure-injective object of $\mathcal{D}$. 
\end{ex}
\begin{proof}
Put $Y:=D(\mathcal{T})$. The map $\lambda^*:\text{Hom}_\mathcal{D}(Y^I,Y)\longrightarrow\text{Hom}_\mathcal{D}(Y^{(I)},Y)$ is surjective if and only if $\lambda^*:\text{Hom}_\mathcal{D}(Y^I,D(T))\longrightarrow\text{Hom}_\mathcal{D}(Y^{(I)},D(T))$ is surjective, for each $T\in\mathcal{T}$. By definition of $D(T)$, this is equivalent to saying that $\lambda_*:\text{Hom}_\mathcal{D}(T,Y^{(I)})\longrightarrow\text{Hom}_\mathcal{D}(T,Y^I)$ is injective, for all $T\in\mathcal{T}$. This is clear due to the compactness of all $T\in\mathcal{T}$. 
\end{proof}

For our next result, we warn the reader that the dual notion of equivalence of (partial) silting objects, that of equivalence of (partial) cosilting objects, is defined by the fact that two (partial) cosilting objects $Q$ and $Q'$ are equivalent exactly when $\text{Prod}(Q)=\text{Prod}(Q')$.

\begin{prop} \label{prop.bijection smashing-cosmashing}
Suppose that both $\mathcal{D}$ and $\mathcal{D}^{op}$ satisfy Brown representability theorem  and consider the classes $\mathcal{S}_i$ ($i=1,\dots,4$) whose elements are the following:

\begin{enumerate}
\item The equivalence classes of classical silting sets (resp. objects) in $\mathcal{D}$;
\item The smashing and co-smashing nondegenerate t-structures in $\mathcal{D}$ whose heart is a module category over some small $K$-category (resp.  ordinary algebra);
\item The smashing and co-smashing nondegenerate t-structures in $\mathcal{D}$ whose heart is a Grothendieck category;
\item  The equivalence classes of pure-injective cosilting objects $Q$ in $\mathcal{D}$ such that ${}^{\perp_{<0}}Q$ is closed under taking products in $\mathcal{D}$.
\end{enumerate}
There are bijections and injections $$\mathcal{S}_1\stackrel{\sim}{\longleftrightarrow}\mathcal{S}_2\rightarrowtail\mathcal{S}_3\rightarrowtail\mathcal{S}_4.$$ The composed map $\mathcal{S}_1\longrightarrow\mathcal{S}_4$ takes $\mathcal{T}$ to the equivalence class of  $D(\mathcal{T}):=\prod_{T\in\mathcal{T}}D(T)$, where $D(T)$ is the Brown-Comenetz dual of $T$. Moreover, 
when $\mathcal{D}$ is compactly generated, the map $\mathcal{S}_3\longrightarrow\mathcal{S}_4$ is bijective.  
\end{prop}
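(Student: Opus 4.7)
The plan is to break the chain into four claims and treat them in order: (1) the bijection $\mathcal{S}_1\leftrightarrow\mathcal{S}_2$; (2) the injection $\mathcal{S}_2\hookrightarrow\mathcal{S}_3$; (3) the injection $\mathcal{S}_3\hookrightarrow\mathcal{S}_4$ together with the description of the composed map; and (4) the bijectivity of $\mathcal{S}_3\to\mathcal{S}_4$ under the compact generation hypothesis.

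For (1), Corollary \ref{cor.bijection t-structures}(2) and (4) already provide a bijection between equivalence classes of classical silting sets/objects and nondegenerate smashing t-structures whose heart is a module category. The remaining check is that such t-structures are automatically co-smashing: for a classical silting set $\mathcal{T}$, the aisle is $\mathcal{U}_\mathcal{T}=\mathcal{T}^{\perp_{>0}}$, and since every representable functor preserves products, $\text{Hom}_\mathcal{D}(T,\prod_i X_i[k])\cong\prod_i\text{Hom}_\mathcal{D}(T,X_i[k])$ vanishes in positive degrees whenever every $X_i\in\mathcal{U}_\mathcal{T}$. Claim (2) is tautological since every module category over a small $k$-category is Grothendieck.

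For (3), given $\tau\in\mathcal{S}_3$ with Grothendieck heart $\mathcal{H}$, fix an injective cogenerator $E$ of $\mathcal{H}$. The smashing (resp.\ co-smashing) condition on $\tau$ forces $\tilde{H}$ to preserve coproducts (resp.\ products), dualizing the argument in the proof of Corollary \ref{cor.injectivecogenerator-implies-cosilting}(1). Consequently the contravariant cohomological functor $F:=\text{Hom}_\mathcal{H}(\tilde{H}(-),E):\mathcal{D}\to k\text{-Mod}$ converts coproducts into products and is representable by Brown representability in $\mathcal{D}$; the representing object $Q$ is, by the dual of Corollary \ref{cor.injectivecogenerator-implies-cosilting}(2), a partial cosilting object inducing $\tau$, which is upgraded to cosilting by the nondegeneracy of $\tau$. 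Pure-injectivity of $Q$ reduces, using the product- and coproduct-preservation of $\tilde{H}$, to the surjectivity of the map $\text{Hom}_\mathcal{H}(\tilde{H}(Q)^I,E)\to\text{Hom}_\mathcal{H}(\tilde{H}(Q)^{(I)},E)$ induced by the canonical monomorphism in the AB5 category $\mathcal{H}$, and this is immediate from the injectivity of $E$. Injectivity of the assignment $\tau\mapsto[Q]$ is clear since $\text{Prod}(Q)$ is determined by $\tau$. The formula for $\mathcal{S}_1\to\mathcal{S}_4$ follows from the computation $\text{Hom}_\mathcal{D}(X,D(T)[k])\cong\text{Hom}_k(\text{Hom}_\mathcal{D}(T,X[-k]),E_k)$, which yields ${}^{\perp_{<0}}D(\mathcal{T})=\mathcal{T}^{\perp_{>0}}=\mathcal{U}_\mathcal{T}$; the pure-injectivity of $D(\mathcal{T})$ was recorded in the preceding example.

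For (4), given $Q\in\mathcal{S}_4$ with $\mathcal{D}$ compactly generated, I need $\tau_Q\in\mathcal{S}_3$. Co-smashing is built into the hypothesis on ${}^{\perp_{<0}}Q$, nondegeneracy follows from the cogenerating cosilting property of $Q$, and smashing is expected to follow from pure-injectivity of $Q$ combined with compactness of a set of generators of $\mathcal{D}$, by showing that the relevant Hom-vanishing against the compact generators is preserved under coproducts. The principal obstacle, and by far the deepest part of the argument, is proving that the heart $\mathcal{H}_Q$ is a Grothendieck category. The dual of Lemma \ref{lem.coheart projective in heart} identifies the image of $Q$ as an injective cogenerator of $\mathcal{H}_Q$; abelianness and AB3/AB3* follow from \cite[Proposition 3.2]{PS1}; what remains is the AB5 condition (exactness of filtered colimits). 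I expect to establish this by realizing filtered colimits in $\mathcal{H}_Q$ as Milnor colimits in $\mathcal{D}$ and exploiting the pure-injectivity of $Q$ so that $\text{Hom}_\mathcal{D}(-,Q)$ behaves like Hom into a genuine injective on such Milnor colimits, presumably using Lemma \ref{lem.Stovicek} attributed to Stovicek. This AB5 verification is the hard technical heart of the proof.
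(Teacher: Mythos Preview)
The main gap is in part (4). To invoke Lemma \ref{lem.Stovicek} you must exhibit an injective cogenerator of $\mathcal{H}_Q$ that is \emph{pure-injective as an object of $\mathcal{D}$}. The natural candidate is $\tilde{H}(Q)$, but your argument in (3) gives no mechanism for transferring pure-injectivity from $Q$ to $\tilde{H}(Q)$. In (3) you chose an injective cogenerator $E$ of the Grothendieck heart, represented $\text{Hom}_\mathcal{H}(\tilde{H}(-),E)$ by $Q$, and then obtained the required surjectivity from the fact that $\tilde{H}(Q)^{(I)}\to\tilde{H}(Q)^I$ is a monomorphism in the \emph{AB5} category $\mathcal{H}$. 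In (4) that reasoning is circular: AB5 is precisely what you are trying to establish. Your Milnor-colimit sketch does not fill the gap either; Lemma \ref{lem.Stovicek} is the black-box \emph{conclusion}, and the task is to verify its hypothesis, not to reprove it.

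The paper avoids this circularity by proving, for any cosilting $Q$ with ${}^{\perp_{<0}}Q$ closed under products, the equivalence ``$Q$ is pure-injective in $\mathcal{D}$ $\Longleftrightarrow$ $\tilde{H}(Q)$ is pure-injective in $\mathcal{D}$''. The argument assembles the natural isomorphisms $\text{Hom}_\mathcal{D}(V,Q)\cong\text{Hom}_\mathcal{D}(\tilde{H}(V),Q)\cong\text{Hom}_\mathcal{D}(\tilde{H}(V),\tilde{H}(Q))$, valid for $V$ in the co-aisle (dual of the computations in Lemma \ref{lem.coheart projective in heart}), into a commutative diagram with $V=Q^I$ and $V=Q^{(I)}$; since $\tilde{H}$ preserves both products and coproducts here, the outer columns are the two restriction maps in question. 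One direction of this iff yields (3), the other feeds directly into Lemma \ref{lem.Stovicek} for (4), with no appeal to AB5.

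Two minor points. In (4), smashing is automatic: the co-aisle has the form ${}^{\perp_{\geq 0}}Q$, and such left orthogonals are always closed under coproducts, so no compactness or pure-injectivity is needed there. In (1), your citation of Corollary \ref{cor.bijection t-structures} is imprecise, since its bijections land in t-structures \emph{generated by their co-heart}; to cover all of $\mathcal{S}_2$ you must check that any smashing co-smashing nondegenerate t-structure with module heart has this property. This follows from Theorem \ref{teor.presilting t-structures} (equivalence of assertions 3 and 5) once you note, as you do in (3), that co-smashing forces $\tilde{H}$ to preserve products. That said, your direct observation that the aisle $\mathcal{T}^{\perp_{>0}}$ is closed under products is cleaner than the paper's indirect route via $\tilde{H}$.
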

\begin{proof}
The bijection of Corollary \ref{cor.bijection for comp.generated}(1) clearly restricts to a bijection between $\mathcal{S}_1$ and the class of nondegenerate smashing t-structures $\tau$ in $\mathcal{D}$ whose heart $\mathcal{H}=\mathcal{H}_\tau$ is a module category over some small $K$-category and whose associated cohomological functor $\tilde{H}:\mathcal{D}\longrightarrow\mathcal{H}$ preserves products. We will prove that this class of t-structures is precisely $\mathcal{S}_2$, which will give the bijection $\mathcal{S}_1\stackrel{\sim}{\longleftrightarrow}\mathcal{S}_2$. When $\tau =(\mathcal{U},\mathcal{U}^\perp [1])$ is smashing and co-smashing, both $\mathcal{U}$ and $\mathcal{U}^\perp [1]$ are closed under coproducts and products, which easily implies that the inclusion $\mathcal{H}\hookrightarrow\mathcal{D}$ preserves coproducts and products. This in turn implies that $\tilde{H}$ preserves coproducts and products. Conversely, suppose that $\tau$ is nondegenerate smashing and that $\tilde{H}$ preserves products. If $(U_i)_{i\in I}$ is a family of objects of $\mathcal{U}$, then $$\tilde{H}^k(\prod U_i)=\tilde{H}(\prod U_i[k])\cong\prod^*\tilde{H}(U_i[k])\cong\prod^*\tilde{H}^k(U_i)=0,$$ for all $k>0$,  where $\prod^*$ denotes the product in $\mathcal{H}$. By Lemma \ref{lem.description of aisle by homology} and the right nondegeneracy of $\tau$, we get that $\tau^{\mathcal{U}^\perp}(\prod U_i)=0$ and so $\prod U_i\in\mathcal{U}$. That is, $\tau$ is co-smashing.
 
Let us assume now that $\tau$ is a t-structure as in 3. As proved in the previous paragraph,  such a t-structure satisfies condition 2.a of Corollary \ref{cor.injectivecogenerator-implies-cosilting}. Therefore, we have a partial cosilting set $\mathcal{Q}$, uniquely determined up to equivalence, such that $\tau =({}^{\perp_{<0}}\mathcal{Q}, ({}^{\perp_{\leq 0}}\mathcal{Q})^\perp)$. Putting $Q=\prod_{Q'\in\mathcal{Q}}Q'$, we can assume that $\mathcal{Q}=\{Q\}$.
But the left nondegeneracy of $\tau$ implies that $Q$ cogenerates $\mathcal{D}$, so that $Q$ is actually a cosilting set. In particular, by the dual of Theorem \ref{teor.presilting t-structures}, applied to the cosilting situation,  we have that $\tau =({}^{\perp_{<0}}Q,{}^{\perp_{>0}}Q)$. 

In order to have a (clearly injective) map $\mathcal{S}_3\longrightarrow\mathcal{S}_4$, we just need to check that $Q$ is a pure-injective object. We go to a more general situation and assume that $Q$ is a cosilting object, with  $\tau =({}^{\perp_{<0}}Q,{}^{\perp_{>0}}Q)$ as associated t-structure,  such that ${}^{\perp_{<0}}Q$ is closed under taking products. We claim that  $Q$ is pure-injective in $\mathcal{D}$ if, and only if, $\tilde{H}(Q)$ is also pure-injective in $\mathcal{D}$. Note that, as in the first paragraph of this proof,  $\tilde{H}:\mathcal{D}\longrightarrow\mathcal{H}$ preserves products and coproducts. It is convenient to put $\tau =({}^\perp\mathcal{V}[-1],\mathcal{V})$, where ${}^\perp\mathcal{V}={}^{\perp_{\leq 0}}Q$. 
Then, for each $V\in\mathcal{V}$,  in particular for each $V\in\text{Add}(Q)\cup\text{Prod}(Q)$, we have a triangle $\tilde{H}(V)\longrightarrow V\longrightarrow \tau^{\mathcal{V}[-1]}V\stackrel{+}{\longrightarrow}$.  Bearing in mind that $\text{Hom}_\mathcal{D}(?,Q)$ and $\text{Hom}_\mathcal{D}(\tilde{H}(V),?)$ both vanish on $\mathcal{V}[-k]$, for $k=1,2$,  we get induced isomorphisms $$\text{Hom}_\mathcal{D}(V,Q)\stackrel{\sim}{\longrightarrow}\text{Hom}_\mathcal{D}(\tilde{H}(V),Q)\stackrel{\sim}{\longleftarrow}\text{Hom}_\mathcal{D}(\tilde{H}(V),\tilde{H}(Q)),$$  for all $V\in\mathcal{V}$, which are natural on $V$. Given any set $I$, we then get the following commutative diagram, where all the horizontal arrows are isomorphisms and the vertical arrows are the restriction maps:

$$
\begin{xymatrix}{\text{Hom}_\mathcal{D}(Q^I,Q) \ar[d] \ar[r]^{\cong} & \text{Hom}_\mathcal{D}(\tilde{H}(Q)^I,Q) \ar[d] & \text{Hom}_\mathcal{D}(\tilde{H}(Q)^I,\tilde{H}(Q)) \ar[d] \ar[l]_{\cong}\\
\text{Hom}_\mathcal{D}(Q^{(I)},Q) \ar[r]^{\cong} & \text{Hom}_\mathcal{D}(\tilde{H}(Q)^{(I)},Q) & \text{Hom}_\mathcal{D}(\tilde{H}(Q)^{(I)},\tilde{H}(Q))\ar[l]_{\cong}}
\end{xymatrix}
$$

It then follows that the left vertical arrow is an epimorphism if and only if so is the right vertical arrow. That is, $Q$ is pure-injective in $\mathcal{D}$ if and only if so is $\tilde{H}(Q)$, as it was claimed. If now $\tau=({}^{\perp_{<0}}Q, {}^{\perp_{>0}}Q)$ is as in 3, then, due to the fact that $\tilde{H}(Q)$ is an injective cogenerator of $\mathcal{H}$,  the right vertical arrow of the last diagram is an epimorphism.  Then  $Q$ is a pure-injective object of $\mathcal{D}$.  

Note that the composed map $\mathcal{S}_1\longrightarrow\mathcal{S}_4$ takes the equivalence class of  $\mathcal{T}$ to the equivalence class of $Q$ if and only if $(\mathcal{T}^{\perp_{>0}},\mathcal{T}^{\perp_{<0}})=({}^{\perp_{<0}}Q, {}^{\perp_{>0}}Q)$. But, by construction of the Brown-Comenetz dual, we have that $T^{\perp_{>0}}={}^{\perp_{<0}}D(T)$ and $T^{\perp_{<0}}={}^{\perp_{>0}}D(T)$, for each $T\in\mathcal{T}$. It immediately follows that $(\mathcal{T}^{\perp_{>0}},\mathcal{T}^{\perp_{<0}})=({}^{\perp_{<0}}D(\mathcal{T}), {}^{\perp_{>0}}D(\mathcal{T}))$, where $D(\mathcal{T})=\prod_{T\in\mathcal{T}}D(T)$. Moreover, the equality $\text{Hom}_\mathcal{D}(T,T'[j])=0$ gives the equality $\text{Hom}_\mathcal{D}(T'[j],D(T))=0$, for all $T,T'\in\mathcal{T}$ and $j>0$, which implies that $D(T)\in\mathcal{T}^{\perp_{<0}}={}^{\perp_{>0}}D(T)$. This together with the fact that
  $({}^{\perp_{<0}}D(\mathcal{T}), {}^{\perp_{>0}}D(\mathcal{T}))$ is a t-structure implies that $D(\mathcal{T})$ is a cosilting object clearly equivalent to $Q$. 

It remains to see that, when $\mathcal{D}$ is compactly generated, the map $\mathcal{S}_3\longrightarrow\mathcal{S}_4$ is surjective, for which we just need to prove that the heart $\mathcal{H}$ is a Grothendieck category. 
But note that, as seen above, the fact that $Q$ is  pure-injective in $\mathcal{D}$ implies that $\tilde{H}(Q)$ is also pure-injective in $\mathcal{D}$. Then the result is a consequence of the next lemma. 
\end{proof}

Recall from \cite[D\'efinition 1.2.5]{BBD} that an \emph{admissible abelian subcategory} $\mathcal{A}$ of $\mathcal{D}$ is a full subcategory closed under finite coproducts which is abelian and such that the inclusion functor $\mathcal{A}\hookrightarrow\mathcal{D}$ takes short exact sequences to triangles. The following result  was communicated to us by \v{S}\v{t}ov\'i\v{c}ek \cite{Sto3}.

\begin{lem} \label{lem.Stovicek}
Let $\mathcal{D}$ be a compactly generated triangulated category and let $\mathcal{A}$ be an AB3* admissible abelian subcategory such that the inclusion functor $\mathcal{A}\hookrightarrow\mathcal{D}$ preserves products. If $\mathcal{A}$ admits an injective cogenerator $Y$  which is a pure-injective object of $\mathcal{D}$, then $\mathcal{A}$ is a Grothendieck category. 
\end{lem}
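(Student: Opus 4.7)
The plan is to exploit the compact generation of $\mathcal{D}$ by means of Krause's restricted Yoneda functor $h:\mathcal{D}\longrightarrow\Mod\text{-}\D^c$, where $\D^c$ is the full subcategory of compact objects and $h(X)=\Hom_\mathcal{D}(-,X)_{|\D^c}$. The target is a Grothendieck category and, by a classical result of Krause, an object $X\in\mathcal{D}$ is pure-injective if and only if $h(X)$ is injective in $\Mod\text{-}\D^c$, and $h$ restricts to a fully faithful functor on the full subcategory of pure-injective objects. Since $h$ preserves products and $\mathcal{D}$ is compactly generated, all products $Y^I$—which, by the hypothesis that the inclusion preserves products, are the same whether computed in $\mathcal{A}$ or in $\mathcal{D}$—are again pure-injective, and thus $h(Y^I)\cong h(Y)^I$ are injective in $\Mod\text{-}\D^c$.

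Next I would form the hereditary torsion pair $(\mathcal{T}_{h(Y)},\mathcal{F}_{h(Y)})$ in $\Mod\text{-}\D^c$ cogenerated by the injective $h(Y)$ (so $\mathcal{F}_{h(Y)}$ is the class of subobjects of products of copies of $h(Y)$) and let $\overline{\mathcal{G}}$ denote the associated Gabriel localization. Being a Gabriel quotient of a Grothendieck category, $\overline{\mathcal{G}}$ is itself Grothendieck, and (the image of) $h(Y)$ is an injective cogenerator of $\overline{\mathcal{G}}$. Because $Y$ cogenerates $\mathcal{A}$, each $M\in\mathcal{A}$ embeds in some $Y^I$, and applying $h$ shows $h(M)$ lies in the torsion-free class; hence $h_{|\mathcal{A}}$ factors through $\overline{\mathcal{G}}$, producing a functor $\Phi:\mathcal{A}\longrightarrow\overline{\mathcal{G}}$.

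The routine verifications are that $\Phi$ is exact (using that $h$ is cohomological, $\mathcal{A}$ is an admissible abelian subcategory, and Gabriel localization is exact) and fully faithful. For full faithfulness, the key identification is $\Hom_\mathcal{A}(M,N)\cong\Hom_{\overline{\mathcal{G}}}(\Phi M,\Phi N)$, which should come from describing both groups as equalizers of morphisms into products of the respective cogenerator ($Y$ on one side, $h(Y)$ on the other) and matching these equalizer diagrams via the full faithfulness of $h$ on the pure-injective products $Y^I$ secured in the previous step.

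The main obstacle will be essential surjectivity of $\Phi$. Given $N\in\overline{\mathcal{G}}$, the injective cogeneration by $h(Y)$ provides a presentation $0\to N\to h(Y)^I\to h(Y)^J$ in $\overline{\mathcal{G}}$. The strategy is to lift the second morphism to $f:Y^I\to Y^J$ in $\mathcal{A}$—possible because $h$ is fully faithful on the pure-injectives $Y^I$ and $Y^J$—and to take its kernel $K$ in the abelian category $\mathcal{A}$; exactness of $\Phi$ then should identify $\Phi K\cong N$. The technical subtlety lies in controlling the Gabriel localization on objects not a priori in the image of $h$, but once the matching of Hom groups from the full-faithfulness step is invoked, one expects this to reduce to a bookkeeping exercise. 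The resulting equivalence $\mathcal{A}\simeq\overline{\mathcal{G}}$ transfers the Grothendieck property to $\mathcal{A}$.
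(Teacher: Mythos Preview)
The paper does not actually prove this lemma: it is stated without proof and attributed to \v{S}\v{t}ov\'{\i}\v{c}ek via private communication (see the acknowledgements and reference \cite{Sto3}). So there is no ``paper's own proof'' to compare against; what matters is whether your argument stands on its own.

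Your overall strategy---Krause's restricted Yoneda functor $h$ into $\Mod\text{-}\mathcal{D}^c$, full faithfulness of $h$ on pure-injectives, Gabriel localization at the hereditary torsion pair cogenerated by the injective $h(Y)$, and identifying $\mathcal{A}$ with the resulting Grothendieck quotient---is the right one and is exactly the kind of argument one expects here. The full-faithfulness and essential-surjectivity steps are correctly outlined: lifting morphisms between products $h(Y)^I$ back to $\mathcal{D}$ via Krause's theorem and taking kernels in $\mathcal{A}$ works as you describe.

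The one place that deserves tightening is the exactness of $\Phi$. Saying ``$h$ is cohomological and localization is exact'' is not quite enough, because for a short exact sequence $0\to A\to B\to C\to 0$ in $\mathcal{A}$ the cohomological functor $h$ only guarantees exactness at $h(B)$; neither $h(A)\to h(B)$ mono nor $h(B)\to h(C)$ epi is automatic, and there is no reason for $h(M)$ to be torsion-free in general. The clean way is to test exactness in $\overline{\mathcal{G}}$ against its injective cogenerator $h(Y)$: since $\text{Hom}_{\overline{\mathcal{G}}}(\Phi M, h(Y))\cong\text{Hom}_{\Mod\text{-}\mathcal{D}^c}(h(M),h(Y))\cong\text{Hom}_\mathcal{D}(M,Y)=\text{Hom}_\mathcal{A}(M,Y)$ (the middle isomorphism by Krause, the first because $h(Y)$ is closed), exactness of $0\to\Phi A\to\Phi B\to\Phi C\to 0$ reduces to exactness of $0\to\text{Hom}_\mathcal{A}(C,Y)\to\text{Hom}_\mathcal{A}(B,Y)\to\text{Hom}_\mathcal{A}(A,Y)\to 0$, which holds because $Y$ is injective in $\mathcal{A}$. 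With this fix, your proof goes through.
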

\begin{proof}
It is well-known that if $\mathcal{D}^c$ denotes the (skeletally small) subcategory of compact objects, then   the category of contravariant functors $[(\mathcal{D}^c)^{op},Ab]$ has $\text{Hom}$ sets and  is a Grothendieck category (see \cite{Kr2}). Moreover, the generalized Yoneda functor $\Upsilon :\mathcal{D}\longrightarrow [(\mathcal{D}^c)^{op},Ab]$, $N\rightsquigarrow (?,N):=\text{Hom}_\mathcal{D}(?,N)_{| \mathcal{D}^c}$   induces a category equivalence between the pure-injective objects of $\mathcal{D}$ and the injective objects of $[(\mathcal{D}^c)^{op},Ab]$  (see \cite[Corollary 1.9]{Kr2}). If $Y$ is an injective cogenerator of $\mathcal{A}$ which is pure-injective in $\mathcal{D}$, then $\Upsilon$ gives an equivalence of categories $\text{Inj}(\mathcal{A})=\text{Prod}_\mathcal{D}(Y)\cong\text{Prod}_{[(\mathcal{D}^c)^{op},Ab]}((-,Y))$, where $\text{Inj}(-)$ is the subcategory of injective objects. But if $\mathcal{S}$  denotes the class of functors $S\in [(\mathcal{D}^c)^{op},Ab]$ such that 
$\text{Hom}_{[(\mathcal{D}^c)^{op},Ab]}(S,(?,Y))=0$, then $\mathcal{S}$ is a localizing subcategory of $[(\mathcal{D}^c)^{op},Ab]$, so that the quotient category $[(\mathcal{D}^c)^{op},Ab]/ \mathcal{S}$ exists, is a Grothendieck category and the (exact) quotient functor $q:[(\mathcal{D}^c)^{op},Ab]\longrightarrow [(\mathcal{D}^c)^{op},Ab]/ \mathcal{S}$ induces an equivalence 
$\text{Prod}_{[\mathcal{D}^{op},Ab]}((?,Y))\stackrel{\cong}{\longrightarrow}\text{Inj}([(\mathcal{D}^c)^{op},Ab]/ \mathcal{S})$ (see \cite{Ga} for the terminology and details). It then follows that the composition functor $\mathcal{A}\stackrel{\Upsilon}{\longrightarrow}[(\mathcal{D}^c)^{op},Ab]\stackrel{q}{\longrightarrow}[(\mathcal{D}^c)^{op},Ab]/\mathcal{S}$ is additive and induces  an equivalence of categories $\text{Inj}(\mathcal{A})\cong\text{Inj}([(\mathcal{D}^c)^{op},Ab]/\mathcal{S})$. 

The key observation,  already present in \cite[Section I.2]{AR}, is that if $F:\mathcal{A}\longrightarrow\mathcal{B}$ is an additive functor between abelian categories  with enough injectives that induces an equivalence of categories $\text{Inj}(\mathcal{A})\cong\text{Inj}(\mathcal{B})$, then $\mathcal{A}$ and $\mathcal{B}$ are equivalent  (see \cite[Lemma 2.9]{AMV2} for a particular case), which will allow us to conclude, by the last paragraph,  that $\mathcal{A}$ is  a Grothendieck category. Indeed one obviously gets an induced equivalence still denoted the same $F:\text{Mor}(\text{Inj}(A))\stackrel{\cong}{\longrightarrow}\text{Mor}(\text{Inj}(B))$, where 
 $\text{Mor}(\text{Inj}(?))$ denotes the category of morphisms in $\text{Inj}(\mathcal{?})$, for $?\in\{ \mathcal{A}, \mathcal{B}\}$.  But taking kernels gives an equivalence $\text{Mor}(\text{Inj}(\mathcal{A}))/\mathcal{I}_A\stackrel{\cong}{\longrightarrow} \mathcal{A}$,  where $\mathcal{I}_A$ is the ideal of $\text{Mor}(\text{Inj}(\mathcal{A}))$ formed by the injectively trivial morphisms, in the terminology of \cite{AR}. That is, 
 if $f:Y_1\longrightarrow Y_2$ and $f':Y'_1\longrightarrow Y'_2$ are in $\text{Mor}(\text{Inj}(\mathcal{A}))$, then $\mathcal{I}_A(f,f')$ consists of the  $(\alpha_1:Y_1\longrightarrow Y'_1,\alpha_2:Y_2\longrightarrow Y'_2)$ in $\text{Hom}_{\text{Mor}(\text{Inj}(\mathcal{A}))}(f,f')$ such that there exists a morphism $\gamma :Y_2\longrightarrow Y'_1$ in $\text{Inj}(\mathcal{A})$ with $\gamma\circ f=\alpha_1$.  Note that the category $\text{Mor}(\text{Inj}(\mathcal{A}))/\mathcal{I}_A$ is  $\text{Comod}(\text{Inj}(\mathcal{A}))$, with the terminology of \cite[Section I.2]{AR}. 
  One clearly has that $F(\mathcal{I}_A)=\mathcal{I}_B$, so that we get equivalences of categories $$ \mathcal{A}\stackrel{\text{Ker}^{-1}}{\longrightarrow}\text{Mor}(\text{Inj}(\mathcal{A}))/\mathcal{I}_A\stackrel{F}{\longrightarrow}\text{Mor}(\text{Inj}(\mathcal{B}))/\mathcal{I}_B\stackrel{Ker}{\longrightarrow}\mathcal{B}.$$  

\end{proof}

\begin{rem}
Under sufficiently general assumptions on $\mathcal{D}$, as those of Remark \ref{rem.Frobenius exat}, one has that every orthogonal pair in $\mathcal{D}$ generated  by a set gives triangles (see \cite[Proposition 3.3]{SS}). Then, in similarity with the K\"onig-Yang bijections,  one can include co-t-structures in the last bijections. Recall that a \emph{co-t-structure}  (see \cite{P} ) or \emph{weight structure} (see \cite{Bo}) in  $\mathcal{D}$ is a pair of full subcategories $(\mathcal{X},\mathcal{Y})$ such that $\mathcal{Y}[1]\subseteq\mathcal{Y}$  and $(\mathcal{X},\mathcal{Y}[1])$ is an orthogonal pair which gives triangles, i.e., such that for each object $M\in\mathcal{D}$ there is a triangle $X\longrightarrow M\longrightarrow Y[1]\stackrel{+}{\longrightarrow}$, with $X\in\mathcal{X}$ and $Y\in\mathcal{Y}$. The \emph{co-heart} of the co-t-structure is then $\mathcal{C}:=\mathcal{X}\cap\mathcal{Y}$. Concretely, the assignment $\mathcal{T}\rightsquigarrow ({}^\perp (\mathcal{T}^{\perp_{\geq 0}}),\mathcal{T}^{\perp_{ >0}})$ gives a bijection between 
 equivalence classes of partial silting sets and co-t-structures in $\mathcal{D}$ generated by their co-heart and such that this co-heart has an additive generator (i.e. there is an object $T\in\mathcal{C}$ such that $\mathcal{C}=\text{Add}(T)$). Here we say that a co-t-structure $(\mathcal{X},\mathcal{Y})$ is generated by a class $\mathcal{S}$ of objects when $\mathcal{X}^\perp =\mathcal{S}^{\perp_{\geq 0}}$ or, equivalently, when $\mathcal{Y}=\mathcal{S}^{\perp_{>0}}$. The inverse takes $(\mathcal{X},\mathcal{Y})$ to the equivalence class of $\{T\}$, where $T$ is an additive generator of $\mathcal{C}=\mathcal{X}\cap\mathcal{Y}$. Note that the co-heart of the co-t-structure corresponding to the partial silting set $\mathcal{T}$ is precisely the co-heart of the t-structure $({}^\perp (\mathcal{T}^{\perp_{\leq 0}}),\mathcal{T}^{\perp_{ <0}})$ given by the bijection of  Corollary \ref{cor.bijection t-structures}.  This is due to the fact that both bijections assign to the (co-)t-structure the equivalence class of an additive generator of its co-heart. 
  However this co-t-structure and t-structure are not generally adjacent in the sense of \cite{Bo} since the
 inclusion $\mathcal{U}_\mathcal{T}:=({}^\perp (\mathcal{T}^{\perp_{\leq 0}})\subset\mathcal{T}^{\perp_{>0}}$ is strict, except in case $\mathcal{T}$ is silting (see Theorem \ref{teor.presilting t-structures}). 
\end{rem}

One can easily give examples of nonclassical self-small partial silting objects:

\begin{ex}
If $f:A\longrightarrow B$ is a homological epimorphism of ordinary algebras (i.e. the multiplication map $B\otimes_AB\longrightarrow B$ is an isomorphism and $\text{Tor}_i^A(B,B)=0$, for all $i>0$), then the restriction of scalars $f_*:\mathcal{D}(B)\longrightarrow\mathcal{D}(A)$ preserves partial silting (resp.  partial tilting) objects and preserves self-smallness. In particular, $B$ is a self-small partial tilting object of $\mathcal{D}(A)$ which need not be compact. 
\end{ex}
\begin{proof}
The proof is a direct consequence of the fact that $f_*:\mathcal{D}(B)\longrightarrow\mathcal{D}(A)$ is fully faithful and preserves coproducts (see \cite[Theorem 4.4]{GL} and \cite[Lemma 4]{NS}). Taking  the inclusion $\mathbb{Z}\stackrel{f}{\hookrightarrow}\mathbb{Q}$, we see that $B$ need not be compact in $\mathcal{D}(A)$. 
\end{proof}

However, the following seems to be  a more delicate question. As a by-product of our Sections \ref{sect.tilting theory abelian} and \ref{sect.triangulated equivalences},  \cite[Corollary 2.5]{FMS} and our Corollary \ref{cor.self-small versus compact tilting} give partial affirmative answers. 

\begin{ques} \label{ques.self-small silting}
Let $\mathcal{D}$ be a triangulated category with coproducts (even compactly generated). Is any self-small silting object necessarily compact (=classical)? 
\end{ques}

\section{The aisle of a partial silting t-structure} \label{sect.aisle silting t-structure}

The main result of this section, Theorem \ref{teor.weakly equivalent to presilting}, gives a handy criterion to identify those strongly nonpositive sets in a triangulated category with coproducts which are partial silting. In such case, it also gives a precise description of the objects in the aisle of the associated t-structure. We first need a preliminary lemma.

\begin{lem} \label{lem.star-product of subcategories}
Let $\mathcal{\mathcal{D}}$ be a triangulated category and let $\mathcal{E},\mathcal{F}$ be extension-closed subcategories of $\mathcal{D}$ such that $\text{Hom}_\mathcal{D}(E,F[1])=0$, for all $E\in\mathcal{E}$ and $F\in\mathcal{F}$. Then $\mathcal{E}\star\mathcal{F}$ is closed under extensions in $\mathcal{D}$. In particular, if $\mathcal{D}$ has coproducts and $\mathcal{T}$ is a strongly nonpositive class in $\mathcal{D}$, then we have an equality: 

\begin{center}
$\text{thick}_\mathcal{D}(\text{Sum}(\mathcal{T}))=\text{thick}_\mathcal{D}(\text{Add}(\mathcal{T}))=\bigcup (\text{Sum}(\mathcal{T})[r]\star\text{Sum}(\mathcal{T})[r+1])\star \cdots\star\text{Sum}(\mathcal{T})[s])=\bigcup (\text{Add}(\mathcal{T})[r]\star\text{Add}(\mathcal{T})[r+1])\star \cdots\star\text{Add}(\mathcal{T})[s])$, 
\end{center}
where the unions range over all pairs  of integers $(r,s)$ such that  $r\leq s$.
\end{lem}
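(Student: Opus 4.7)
My plan is to prove the first assertion by a $3\times 3$/octahedral argument exploiting the Hom-vanishing hypothesis to lift the connecting morphism, and then to deduce the description of $\text{thick}_\mathcal{D}(\text{Sum}(\mathcal{T}))$ by iterating this first assertion and handling direct summands via a Milnor colimit trick. For the first assertion, I will fix a triangle $X\to Y\to Z\stackrel{w}{\to}X[1]$ with $X,Z\in\mathcal{E}\star\mathcal{F}$ and decomposing triangles $E_X\to X\to F_X\stackrel{+}{\to}$ and $E_Z\to Z\to F_Z\stackrel{+}{\to}$ with $E_X,E_Z\in\mathcal{E}$ and $F_X,F_Z\in\mathcal{F}$. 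The composition $E_Z\to Z\stackrel{w}{\to}X[1]$ lies in $\text{Hom}(E_Z,X[1])$; applying $\text{Hom}(E_Z,-)$ to the triangle for $X$, the consecutive term $\text{Hom}(E_Z,F_X[1])$ vanishes by hypothesis, so this composition factors as $E_Z\stackrel{g}{\to}E_X[1]\to X[1]$. Completing $g$ to a triangle $E_X\to E\to E_Z\stackrel{g}{\to}E_X[1]$ yields $E\in\mathcal{E}$ by extension-closedness of $\mathcal{E}$. Applying TR3 to this triangle and to $X\to Y\to Z\stackrel{w}{\to}X[1]$ (the outer squares commute by construction of $g$) produces a morphism $E\to Y$ extending to a morphism of triangles, and Verdier's $3\times 3$ lemma then supplies a distinguished triangle $E\to Y\to F\stackrel{+}{\to}$ together with a triangle $F_X\to F\to F_Z\stackrel{+}{\to}$. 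By extension-closedness of $\mathcal{F}$, $F\in\mathcal{F}$, and hence $Y\in\mathcal{E}\star\mathcal{F}$.

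For the ``in particular'' assertion, I set $\mathcal{V}_{r,s}:=\text{Sum}(\mathcal{T})[r]\star\cdots\star\text{Sum}(\mathcal{T})[s]$ and $\mathcal{V}:=\bigcup_{r\leq s}\mathcal{V}_{r,s}$, and analogously $\mathcal{W}$ for $\text{Add}$. The equality $\text{thick}_\mathcal{D}(\text{Sum}(\mathcal{T}))=\text{thick}_\mathcal{D}(\text{Add}(\mathcal{T}))$ is immediate, and $\mathcal{V}\subseteq\mathcal{W}\subseteq\text{thick}_\mathcal{D}(\text{Sum}(\mathcal{T}))$ is clear, so it suffices to prove $\mathcal{V}$ is a thick subcategory containing $\text{Sum}(\mathcal{T})$. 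The strongly nonpositive hypothesis gives $\text{Hom}(\text{Sum}(\mathcal{T})[i],\text{Sum}(\mathcal{T})[j+1])=0$ whenever $j\geq i$, so induction on $s-r$ combined with the first assertion shows each $\mathcal{V}_{r,s}$ is extension-closed (the base case $s=r$ follows because any such extension has zero connecting morphism and therefore splits). Since any two objects of $\mathcal{V}$ may be placed in a common $\mathcal{V}_{r,s}$ by enlarging the range and inserting zeros (using $0\in\text{Sum}(\mathcal{T})[i]$ for every $i$), $\mathcal{V}$ is closed under extensions and hence under cones of morphisms, and it is trivially shift-stable.

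The principal obstacle is closure of $\mathcal{V}$ under direct summands. Given $Y=Y_1\oplus Y_2\in\mathcal{V}_{r,s}$, I will realize $Y_1$ as the Milnor colimit of the tower $Y\stackrel{e}{\to}Y\stackrel{e}{\to}\cdots$, where $e$ is the idempotent endomorphism with image $Y_1$: the tower splits as the identity tower on $Y_1$ (whose Milnor colimit is $Y_1$) and the zero tower on $Y_2$ (whose Milnor colimit vanishes). Because each $\text{Sum}(\mathcal{T})[i]$ is coproduct-closed, so is each $\mathcal{V}_{r,s}$; hence $Y^{(\mathbb{N})}\in\mathcal{V}_{r,s}$, and the defining Milnor colimit triangle $Y^{(\mathbb{N})}\to Y^{(\mathbb{N})}\to Y_1\stackrel{+}{\to}$ places $Y_1$ in $\mathcal{V}_{r,s}\star\mathcal{V}_{r+1,s+1}\subseteq\mathcal{V}_{r,s+1}\subseteq\mathcal{V}$, where the penultimate inclusion uses that both $\mathcal{V}_{r,s}$ and $\mathcal{V}_{r+1,s+1}$ embed in the extension-closed $\mathcal{V}_{r,s+1}$ via insertion of zeros. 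This completes the verification that $\mathcal{V}$ is thick and therefore equals $\text{thick}_\mathcal{D}(\text{Sum}(\mathcal{T}))$, forcing all four displayed expressions to coincide.
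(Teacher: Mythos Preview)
Your proof is correct and follows essentially the same route as the paper: the first assertion is handled by the same $3\times 3$/octahedral lift of the connecting morphism through the Hom-vanishing, and the thick-subcategory description uses the same Eilenberg-swindle/Milnor-colimit argument (the paper just cites \cite[Proposition 1.6.8]{N} for the triangle $X^{(\mathbb{N})}\to X^{(\mathbb{N})}\to Y\stackrel{+}{\to}$ that you spell out explicitly). Your treatment of extension-closedness via induction on $s-r$ is slightly more detailed than the paper's, which simply asserts that each $\text{Sum}(\mathcal{T})[k]$ is extension-closed and then invokes the first statement.
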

\begin{proof}
 Let $X,X'\in\mathcal{E}\star\mathcal{F}$ be any
objects and consider triangles in $\mathcal{D}$:

\begin{center}
$E\stackrel{u}{\longrightarrow}
X\stackrel{v}{\longrightarrow}F\stackrel{+}{\longrightarrow}$

$E'\stackrel{u'}{\longrightarrow}
X'\stackrel{v'}{\longrightarrow}F'\stackrel{+}{\longrightarrow}$

$X\stackrel{f}{\longrightarrow}
M\stackrel{g}{\longrightarrow}X'\stackrel{h}{\longrightarrow}X[1]$,
\end{center}
where $E,E'\in\mathcal{E}$ and $F,F'\in\mathcal{F}$. The goal is to
prove that $M\in\mathcal{E}\star\mathcal{F}$. Note that $v\circ
h[-1]\circ u'[-1]\in\text{Hom}_\mathcal{D}(E'[-1],F)=0$. This gives
a (non-unique) morphism $h_E[-1]:E'[-1]\longrightarrow E$ such that
$u\circ h_E[-1]=h[-1]\circ u'[-1]$. Verdier's $3\times 3$ Lemma (see
\cite[Lemma 1.7]{May}) says that we can form a commutative diagram,
with all rows and columns being triangles:

$$
\begin{xymatrix} {E'[-1] \ar[r]^{u'[-1]} \ar[d]^{h_{E}[-1]}&X'[-1]\ar[r] \ar[d]^{h[-1]}&F'[-1] \ar[d]\\
E \ar[r]^{u} \ar[d]&X \ar[r]^{v} \ar[d]&F  \ar[d]\\
\tilde{E} \ar[r]&M \ar[r]&\tilde{F}}
\end{xymatrix}
$$

Clearly $\tilde{E}\in\mathcal{E}$ and $\tilde{F}\in\mathcal{F}$,
which proves that $M\in\mathcal{E}\star\mathcal{F}$. 

The key point for the final equalities is that $\text{Add}(\mathcal{T})[k]$ and $\text{Sum}(\mathcal{T})[k]$ are both closed under taking extensions, whenever $\mathcal{T}$ is a strongly nonpositive class of objects and $k\in\mathbb{Z}$. Then the chain of equalities will follow automatically from the first statement,  once we check that $\bigcup (\text{Sum}(\mathcal{T})[r]\star\text{Sum}(\mathcal{T})[r+1]\star \cdots\star\text{Sum}(\mathcal{T})[s])$ is closed under taking direct summands. Indeed, if $X$ is in  $\text{Sum}(\mathcal{T})[r]\star\text{Sum}(\mathcal{T})[r+1]\star \cdots\star\text{Sum}(\mathcal{T})[s]$, for some integers $r\leq s$, then the same is true for $X^{(\mathbb{N})}$ since coproducts of triangles  in $\mathcal{D}$ are again triangles (see \cite[Proposition 1.2.1 and Remark 1.2.2]{N}). If now $Y$ is a direct summand of $X$ then,  by the proof of \cite[Proposition 1.6.8]{N}, we know that there is a triangle $X^{(\mathbb{N})}\longrightarrow X^{(\mathbb{N})}\longrightarrow Y\stackrel{+}{\longrightarrow}$ in $\mathcal{D}$. It follows that $Y\in\text{Sum}(\mathcal{T})[r]\star\text{Sum}(\mathcal{T})[r+1]\star \cdots\star\text{Sum}(\mathcal{T})[s+1]$.
\end{proof}

Throughout the rest of the  section $\mathcal{D}$ will be a triangulated category with coproducts. 
Apart from the usual equivalence of partial silting sets (see Definition \ref{def.equivalent partial silting}), we will use the following weaker version for strongly nonpositive sets.

\begin{opr} \label{def.weakly equivalent silting sets}
Let $\mathcal{T}$ and $\mathcal{T}'$ be two strongly nonpositive sets of  $\mathcal{D}$. We will say that they are \emph{weakly equivalent} when $\text{thick}_\mathcal{D}(\text{Sum}(\mathcal{T}))=\text{thick}_\mathcal{D}(\text{Sum}(\mathcal{T}'))$.

\end{opr} 

\begin{thm} \label{teor.weakly equivalent to presilting}
Let $\mathcal{D}$ be a triangulated category with coproducts and let  $\mathcal{T}$ be a strongly nonpositive set of objects of $\mathcal{D}$. The following assertions are equivalent:

\begin{enumerate}
\item $\mathcal{T}$ is a partial silting set.
\item $\mathcal{T}$ is weakly equivalent to a partial silting set.
\item There is a t-structure $(\mathcal{V},\mathcal{V}^\perp [1])$ in $\mathcal{D}$ such that

\begin{enumerate}
\item $\mathcal{T}\subset\mathcal{V}$;
\item There is an integer $q$ such that $\text{Hom}_\mathcal{D}(T,?)$ vanishes on $\mathcal{V}[q]$, for all $T\in\mathcal{T}$. 
\end{enumerate}  
\end{enumerate}

In such case, if $\tau_\mathcal{T}=({}^{\perp}(\mathcal{T}^{\perp_{\leq 0}}),\mathcal{T}^{\perp_{<0}})$ is the associated t-structure, then $\mathcal{U}_\mathcal{T}:={}^{\perp}(\mathcal{T}^{\perp_{\leq 0}})$ consists of the objects $X$ in $\mathcal{D}$ which are the Milnor colimit of some sequence $$X_0\stackrel{x_1}{\longrightarrow}X_1\stackrel{x_2}{\longrightarrow}\cdots\stackrel{x_n}{\longrightarrow}X_n\stackrel{x_{n+1}}{\longrightarrow}\cdots$$ such that $X_0\in\text{Sum}(\mathcal{T})$ and $\text{cone}(x_n)\in\text{Sum}(\mathcal{T})[n]$, for each $n>0$.
\end{thm}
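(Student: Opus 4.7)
The plan is to establish the cycle $(1) \Rightarrow (2) \Rightarrow (3) \Rightarrow (1)$, proving the ``moreover'' description of $\mathcal{U}_\mathcal{T}$ simultaneously with $(3) \Rightarrow (1)$. The implication $(1) \Rightarrow (2)$ is trivial upon taking $\mathcal{T}' = \mathcal{T}$, and $(1) \Rightarrow (3)$ is immediate by choosing $(\mathcal{V},\mathcal{V}^\perp[1]) = (\mathcal{U}_\mathcal{T},\mathcal{U}_\mathcal{T}^\perp[1])$ and $q = 1$, so that (3b) reduces to the defining vanishing of a partial silting set. For $(2) \Rightarrow (3)$, given a partial silting $\mathcal{T}'$ to which $\mathcal{T}$ is weakly equivalent, I would take the t-structure associated to $\mathcal{T}'$ and use Lemma \ref{lem.star-product of subcategories} to express each $T \in \mathcal{T} \subset \text{thick}_\mathcal{D}(\text{Sum}(\mathcal{T}'))$ as living in a finite star-product of shifts of $\text{Sum}(\mathcal{T}')$; a careful bookkeeping — exploiting the strong nonpositivity of $\mathcal{T}$ to bound the shifts that appear — then places $\mathcal{T}$ inside a suitable shift of the aisle of $\mathcal{T}'$ with a uniform vanishing integer $q$.

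The main work is $(3) \Rightarrow (1)$ together with the moreover statement. Let $\mathcal{A}$ denote the class of objects $X \in \mathcal{D}$ admitting a presentation $X = \text{Mcolim}(X_n)$ with $X_0 \in \text{Sum}(\mathcal{T})$ and $\text{cone}(x_n) \in \text{Sum}(\mathcal{T})[n]$ for every $n > 0$. First I would identify $\mathcal{A}^\perp = \mathcal{T}^{\perp_{\leq 0}}$. The inclusion $\mathcal{A}^\perp \subseteq \mathcal{T}^{\perp_{\leq 0}}$ follows from the observation that $T[n] \in \mathcal{A}$ for each $T \in \mathcal{T}$ and each $n \geq 0$ (take a sequence that vanishes up to step $n$ and is constant afterwards). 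For the reverse inclusion, given $Z \in \mathcal{T}^{\perp_{\leq 0}}$, a straightforward induction on $n$ using the triangles $X_{n-1} \to X_n \to \text{cone}(x_n) \to X_{n-1}[1]$ together with the strong nonpositivity of $\mathcal{T}$ yields $\text{Hom}_\mathcal{D}(X_n, Z[-k]) = 0$ for all $n, k \geq 0$; the conclusion then follows by applying $\text{Hom}_\mathcal{D}(-,Z)$ to the Milnor triangle $\coprod X_n \xrightarrow{1-s} \coprod X_n \to X \to \coprod X_n[1]$.

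Next I must show that $(\mathcal{A}, \mathcal{A}^\perp[1])$ is a t-structure by constructing, for each $Y \in \mathcal{D}$, a triangle $X \to Y \to Z \to X[1]$ with $X \in \mathcal{A}$ and $Z \in \mathcal{T}^{\perp_{\leq 0}}$. The plan is to first replace $Y$ by $Y^* := \tau_\mathcal{V} Y \in \mathcal{V}$: the complement $\tau^{\mathcal{V}^\perp}Y$ automatically lies in $\mathcal{T}^{\perp_{\leq 0}}$ because $\mathcal{V}^\perp$ is closed under negative shifts and $\mathcal{T} \subseteq \mathcal{V}$. Then iterate a small-object construction inside $\mathcal{V}$: starting from the canonical $\text{Sum}(\mathcal{T})$-precover $x_0 \colon X_0 \to Y^*$ with cone $Z_0$, at the $n$-th stage form the canonical $\text{Sum}(\mathcal{T})[n]$-precover $W_n \to Z_{n-1}$ and use the octahedral axiom to define $X_n$ via a triangle $X_{n-1} \to X_n \to W_n \to X_{n-1}[1]$ sitting in a compatible triangle $X_n \to Y^* \to Z_n \to X_n[1]$. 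An inductive long-exact-sequence analysis gives $\text{Hom}_\mathcal{D}(T, Z_n[-k]) = 0$ for $0 \leq k \leq n$, while all $X_n$ and $Z_n$ remain inside $\mathcal{V}$. Setting $X := \text{Mcolim}(X_n)$ and $Z := \text{cone}(X \to Y^*) \simeq \text{Mcolim}(Z_n)$ produces the candidate triangle, with $X \in \mathcal{A}$ by construction.

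The hardest step, and the main obstacle, is verifying that $Z \in \mathcal{T}^{\perp_{\leq 0}}$: one must transfer the stagewise vanishing $\text{Hom}_\mathcal{D}(T, Z_n[-k]) = 0$ across the Milnor colimit without the crutch of compactness of $T$. This is precisely where hypothesis (3b) plays a decisive role: because $Z$ lies in $\mathcal{V}$, the vanishing of $\text{Hom}_\mathcal{D}(T,-)$ on $\mathcal{V}[q]$ kills the ``tail'' of the long exact sequence obtained by applying $\text{Hom}_\mathcal{D}(T,-)$ to the Milnor triangle $\coprod Z_n \xrightarrow{1-s} \coprod Z_n \to Z \to \coprod Z_n[1]$ in all sufficiently positive shifts, compensating for the failure of $\text{Hom}_\mathcal{D}(T,-)$ to commute with coproducts; this is then combined with the term-by-term vanishing at low shifts coming from the construction to squeeze $\text{Hom}_\mathcal{D}(T, Z[-k]) = 0$ for all $k \geq 0$. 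Once this is established, $(\mathcal{A}, \mathcal{A}^\perp[1])$ is the desired t-structure, necessarily coinciding with $({}^\perp(\mathcal{T}^{\perp_{\leq 0}}), \mathcal{T}^{\perp_{<0}})$; the partial silting vanishing $\text{Hom}_\mathcal{D}(T, \mathcal{U}_\mathcal{T}[1]) = 0$ is obtained by running the same Milnor-colimit argument on $X[1] \in \mathcal{A}$, and the ``moreover'' description of $\mathcal{U}_\mathcal{T}$ is read off directly from the construction.
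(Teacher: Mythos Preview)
Your overall architecture matches the paper's: the same cycle of implications, the same inductive tower $X_n\to Y\to Z_n$ built from precovers, and the same identification of $\mathcal{U}_\mathcal{T}$ with the class of such Milnor colimits. The pre-truncation to $Y^*=\tau_{\mathcal V}Y$ is a harmless simplification the paper does not make, and the identification $Z\simeq\mathrm{Mcolim}(Z_n)$ you invoke is not automatic in a triangulated category and would need a $3\times 3$ argument; the paper sidesteps this by working with the map $f\colon\mathrm{Mcolim}(X_n)\to Y$ directly and proving that $f_*$ is bijective on $\mathrm{Hom}(T[k],-)$.

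The genuine gap is in your ``hardest step''. Applying $\mathrm{Hom}_\mathcal{D}(T,-)$ to the Milnor triangle $\coprod Z_n\to\coprod Z_n\to Z$ and appealing to condition (3b) does \emph{not} give the vanishing you need: you only know $\coprod Z_n\in\mathcal V$, not $\coprod Z_n\in\mathcal V[q]$, so (3b) tells you nothing about $\mathrm{Hom}(T,(\coprod Z_n)[-k])$ for $k\ge 0$. The phrase ``kills the tail \dots\ in all sufficiently positive shifts'' points in the wrong direction --- what you must control are the \emph{nonpositive} shifts $Z[-k]$, and there (3b) gives no leverage on the full coproduct because the early terms $Z_0,\dots,Z_{k-1}$ are present and are merely in $\mathcal V$.

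What actually works, and what the paper does, is a truncation comparison carried out for each fixed $k$ separately: replace the sequence $(X_n)$ by the sequence $(X'_n)$ that agrees with it up to step $m(k)=q+k$ and is constant thereafter. The successive cones beyond that step lie in $\mathrm{Sum}(\mathcal T)[n]\subset\mathcal V[q+k+1]$, so the object measuring the discrepancy between $\mathrm{Mcolim}(X_n)$ and $\mathrm{Mcolim}(X'_n)\cong X_{m(k)}$ lies in $\mathcal V[q+k+1]$, and \emph{now} (3b) applies to show $\mathrm{Hom}(T[k],-)$ does not see it. Since $X_{m(k)}$ is a single object, no compactness is needed to read off $\mathrm{Hom}(T[k],X_{m(k)})$. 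This yields surjectivity of $\varinjlim\mathrm{Hom}(T[k],X_n)\to\mathrm{Hom}(T[k],X_\infty)$, from which both $\mathrm{Hom}(T,X_\infty[1])=0$ and the bijectivity of $f_*$ (hence $Y\in\mathcal T^{\perp_{\le 0}}$) follow. Your proposal gestures at the right ingredients but does not contain this comparison, and without it the argument does not close.
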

\begin{proof}
$1)\Longrightarrow 2)$ is clear.

$2)\Longrightarrow 3)$ Let $\mathcal{S}$ be a partial silting set weakly equivalent to $\mathcal{T}$. We will prove that, up to shift,  the associated t-structure $\tau_\mathcal{S}=(\mathcal{U}_\mathcal{S},\mathcal{U}_\mathcal{S}^{\perp}[1])$ satisfies the requirements. Indeed, put $\hat{T}=\coprod_{T\in\mathcal{T}}T$. Then, by the hypothesis and Lemma \ref{lem.star-product of subcategories},   we have that $\hat{T}\in\text{Add}(\mathcal{S})[p]\star\text{Add}(\mathcal{S})[p+1]\star \cdots\star\text{Add}(\mathcal{S})[p+t]$, for some $p\in\mathbb{Z}$ and $t\in\mathbb{N}$. Replacing $\mathcal{S}$ by $\mathcal{S}[p]$, we can assume without loss of generality that $\text{Sum}(\mathcal{T})\subset\text{Add}(\mathcal{S})\star\text{Add}(\mathcal{S})[1]\star \cdots\star\text{Add}(\mathcal{S})[t]$ (*). Then condition 3.a clearly holds with $\mathcal{V}=\mathcal{U}_\mathcal{S}$. On the other hand, due to the partial silting condition of $\mathcal{S}$, we know that $\text{Hom}_\mathcal{D}(S,?)$ vanishes on $\mathcal{U}_\mathcal{S}[1]$, for all $S\in\mathcal{S}$. But then the inclusion (*) gives that $\text{Hom}_\mathcal{D}(T,?)$ vanishes on $\mathcal{U}_\mathcal{S}[t+1]$, for all $T\in\mathcal{T}$. 

$3)\Longrightarrow 1)$ The proof will have two steps:
\bigskip

\emph{Step a)} We shall prove that if  $$X_0\stackrel{x_1}{\longrightarrow}X_1\stackrel{x_2}{\longrightarrow}\cdots\stackrel{x_n}{\longrightarrow}X_n\stackrel{x_{n+1}}{\longrightarrow}\cdots$$  is a sequence as in the statement and put $X_\infty =\text{Mcolim}X_n$, then  $\text{Hom}_\mathcal{D}(T,X_\infty [1])=0$;
\bigskip

\emph{Step b)} We will show that, for each object $M\in\mathcal{D}$, there is a sequence $(X_n,x_n)$ as in step a) together with a triangle $X_\infty\longrightarrow M\longrightarrow Y\stackrel{+}{\longrightarrow}$ such that $Y\in\mathcal{T}^{\perp_{\leq 0}}$.
\bigskip

After the two steps are fulfilled, the proof of the implication will be finished. Indeed, for the $X_n$'s in the sequence one clearly has that $\coprod_{n\in\mathbb{N}}X_n\in {}^\perp(\mathcal{T}^{\perp_{\leq 0}})$ since each $X_n$ is a finite extension of objects of $\text{Sum}(\mathcal{T})[k]$, with $0\leq k\leq n$, and any object of $\text{Sum}(\mathcal{T})[k]$ is in ${}^\perp (\mathcal{T}^{\perp_{\leq 0}})$. Therefore $X_\infty$ is also in ${}^\perp (\mathcal{T}^{\perp_{\leq 0}})$ since this is a suspended subcategory of $\mathcal{D}$. It follows from this argument and step b)  that the pair $(\mathcal{U}_\mathcal{T},(\mathcal{U}_\mathcal{T}^\perp [1]):=({}^\perp (\mathcal{T}^{\perp_{\leq 0}}),(\mathcal{T}^{\perp_{< 0}})$ is a t-structure in $\mathcal{D}$ and that each object of $\mathcal{U}_\mathcal{T}$
is of the form $X_\infty=\text{Mocolim}X_n$, for some sequence $X_0\stackrel{x_1}{\longrightarrow}X_1\stackrel{x_2}{\longrightarrow}\cdots\stackrel{x_n}{\longrightarrow}X_n\stackrel{x_{n+1}}{\longrightarrow}\cdots$ as above.  Then,  by step a), one also gets that $\text{Hom}_\mathcal{D}(T,?)$ vanishes on $\mathcal{U}_\mathcal{T}[1]$, for each $T\in\mathcal{T}$. Therefore $\mathcal{T}$ is a partial silting set.

Let us then fulfil the mentioned steps.

{\it Step a)}: We will actually prove that the canonical morphism $\varinjlim\text{Hom}_\mathcal{D}(T[k],X_n)\longrightarrow\text{Hom}_\mathcal{D}(T[k],X_\infty)$ is an epimorphism, for each $k\in\mathbb{Z}$, which immediately leads to the desired equality $\text{Hom}_\mathcal{D}(T,X_\infty [1])=0$ since, due to the strong nonpositivity of $\mathcal{T}$, we have that $\text{Hom}_\mathcal{D}(T[-1],X_n)=0$ for each $n\in\mathbb{N}$.

  Let us fix $k$ and consider $q$ as in condition 3.b. Due to the inclusion $\mathcal{T}\subset\mathcal{V}$, we know that  $\text{cone}(x_n)\in\mathcal{V}[q+k+1]$, for all $n>q+k$. 
  We  put $m(k)=q+k$ in the sequel, and we also  put $X'_n=X_n$, for $n\leq m(k)$,  and $X'_n=X_{m(k)}$, for all $n>m(k)$. Note that we get a new sequence $$X'_0\stackrel{x'_1}{\longrightarrow}X'_1\stackrel{x'_2}{\longrightarrow}\cdots\stackrel{x'_n}{\longrightarrow}X'_n\stackrel{x'_{n+1}}{\longrightarrow}\cdots,$$ where $x'_n$ is the identity map, for each $n>m(k)$. In particular, by  \cite[Lemma 1.6.6]{N}, we know that $X'_\infty:=\text{Mcolim}X'_n$ is isomorphic to $X'_{m(k)}$, with the canonical composition map $\mu_{m(k)}:X'_{m(k)}\longrightarrow\coprod_{n\in\mathbb{N}}X'_n\stackrel{p'}{\longrightarrow} X'_\infty$ being an isomorphism.  Moreover, we clearly have a `morphism of sequences'  $(X'_n,x'_n)\longrightarrow (X_n,x_n)$. 

Fix any object $T\in\mathcal{T}$. For each $n\in\mathbb{N}$, we have a triangle $X'_n\longrightarrow X_n\longrightarrow X''_n\stackrel{+}{\longrightarrow}$, where $X''_n\in\mathcal{V}[q+k+1]$ (note that $X''_n=0$ for $n\leq m(k)$). As a consequence, we get that $\coprod_{n\in\mathbb{N}}X_n''\in\mathcal{V}[q+k+1]$  and we put $\coprod_{n\in\mathbb{N}}X_n''=V[q+k+1]$, with $V\in\mathcal{V}$. We then have $\text{Hom}_\mathcal{D}(T[k],\coprod_{n\in\mathbb{N}}X_n''[j])=
\text{Hom}_\mathcal{D}(T[k],V[q+k+1+j])\cong\text{Hom}_\mathcal{D}(T,V[q+1+j])=0$, for all $j\geq -1$, due to the choice of $q$. In particular, the canonical morphism $\text{Hom}_\mathcal{D}(T[k],\coprod_{n\in\mathbb{N}}X_n')\longrightarrow\text{Hom}_\mathcal{D}(T[k],\coprod_{n\in\mathbb{N}}X_n)$ is an isomorphism. By applying the 3x3 lemma (see \cite[Lemma 1.7]{May}), we can form the following commutative diagram whose rows and columns are triangles:

$$\begin{xymatrix}{\coprod X'_n \ar[d] \ar[r]^{1-\sigma'}&\coprod X'_n \ar[d] \ar[r]^{p'}& X'_{\infty} \ar@{-->}[d]^{h} \\
\coprod X_n \ar[d] \ar[r]^{1-\sigma}&\coprod X_n \ar[d] \ar[r]^{p}& X_{\infty}\ar@{-->}[d] \\
\coprod X''_n  \ar[r]&\coprod X''_n  \ar@{-->}[r]&Z }
\end{xymatrix}$$

We then have that $\text{Hom}_\mathcal{D}(T[k],Z[j])=0$, for all $j\geq -1$, which in turn implies that the morphism $h_*:\text{Hom}_\mathcal{D}(T[k],X'_\infty[j])\longrightarrow\text{Hom}_\mathcal{D}(T[k],X_\infty[j])$ is
an isomorphism, for all $j\geq 0$, and an epimorphism for $j=-1$. Denote by $\iota'_{m(k)}:X'_{m(k)}=X_{m(k)}\longrightarrow\coprod_{n\in\mathbb{N}}X'_n$ and  $\iota_{m(k)}:X_{m(k)}\longrightarrow\coprod_{n\in\mathbb{N}}X_n$ the  injections into the respective coproducts and put $\mu'_{m(k)}=p'\circ\iota'_{m(k)}$ and $\mu_{m(k)}=p\circ\iota_{m(k)}$. By the last diagram, we know that $h\circ\mu'_{m(k)}=\mu_{m(k)}$. And, as mentioned above, we know that $\mu'_{m(k)}$ is an isomorphism. It then follows that $(\mu_{m(k)}[j])_*:\text{Hom}_\mathcal{D}(T[k],X_{m(k)}[j])\longrightarrow\text{Hom}_\mathcal{D}(T[k],X_\infty [j])$ is an isomorphism, for all $j\geq 0$, and an epimorphism for $j=-1$. But $(\mu_{m(k)}[j])_*$ factors in the form $$\text{Hom}_\mathcal{D}(T[k],X_{m(k)}[j])\longrightarrow\varinjlim\text{Hom}_\mathcal{D}(T[k],X_n[j])\longrightarrow\text{Hom}_\mathcal{D}(T[k],X_\infty[j]).$$ Then the canonical map $\varinjlim\text{Hom}_\mathcal{D}(T[k],X_n[j])\longrightarrow\text{Hom}_\mathcal{D}(T[k],X_\infty [j])$ is an epimorphism, for all $j\geq -1$.

{\it Step b)}:   The proof is inspired by that of \cite[Theorem 12.2]{KN}. Let $M\in\mathcal{D}$ be any object. We construct a direct system of triangles $(X_n\stackrel{f_n}{\longrightarrow} M\stackrel{g_n}{\longrightarrow} Y_n\stackrel{+}{\longrightarrow})_{n\in\mathbb{N}}$, with the property that $\text{Hom}_\mathcal{D}(T[k],Y_n)=0$, for all $0\leq k\leq n$. For $n=0$, the map $f_0:X_0\longrightarrow M$ is a $\text{Sum}(\mathcal{T})$-precover of $M$ and $g_0$ and $Y_0$ are obtained by choosing a (fixed) completion to a triangle $X_0\stackrel{f_0}{\longrightarrow}M\stackrel{g_0}{\longrightarrow} Y_0\stackrel{+}{\longrightarrow}$. If $n>0$ and we already have defined the direct system up to step $n-1$, then we choose a $\text{Sum}(T)[n]$-precover $p_n:T_{n}[n]\longrightarrow Y_{n-1}$. Note that $p_n$ is also an $\text{Sum}(\coprod_{0\leq k\leq n}T[k])$-precover since $\text{Hom}_\mathcal{D}(T[k],Y_{n-1})=0$, for $0\leq k\leq n-1$. When completing to a triangle $T_{n}[n]\stackrel{p_n}{\longrightarrow}Y_{n-1}\stackrel{y_n}{\longrightarrow}Y_n\stackrel{+}{\longrightarrow}$, from the precovering condition of $p_n$ and the fact that $\text{Hom}_\mathcal{D}(T,T[j])=0$ for $j>0$,  we easily deduce that $\text{Hom}_\mathcal{D}(T[k],Y_n)=0$, for all $0\leq k\leq n$. Now, applying the octahedral axiom, we get the following commutative diagram, which gives the triangle $X_n\stackrel{f_n}{\longrightarrow}M\stackrel{g_n}{\longrightarrow}Y_n\stackrel{+}{\longrightarrow}$ at step $n$, together with the connecting morphisms $x_n:X_{n-1}\longrightarrow X_n$ and $y_n:Y_{n-1}\longrightarrow Y_n$:

$$
\begin{xymatrix}{ X_{n-1} \ar@{=}[d] \ar[r]^{x_n}& X_n \ar[d]^{f_n} \ar[r]& T_n[n] \ar[d]^{p_n} \\
 X_{n-1}  \ar[r]^{f_{n-1}}& M \ar[d]^{g_n} \ar[r]^{g_{n-1}}& Y_{n-1}\ar[d]^{y_n} \\
 &Y_n  \ar@{=}[r]&Y_n }
\end{xymatrix}
$$

 Denote by $\mu_r$ the composition $X_r\stackrel{\iota_r}{\longrightarrow}\coprod_{n\in\mathbb{N}}X_n\stackrel{p}{\longrightarrow}\text{Mcolim}X_n=X_\infty$.  If $\hat{f}:\coprod_{n\in\mathbb{N}}X_n\longrightarrow M$ is the unique morphism such that $\hat{f}\circ\iota_r=f_r$, for all $r\in\mathbb{N}$, then the composition $\coprod_{n\in\mathbb{N}}X_n\stackrel{1-\sigma}{\longrightarrow}\coprod_{n\in\mathbb{N}}X_n\stackrel{\hat{f}}{\longrightarrow}M$ is the zero morphism. 
We then get a morphism $f:X_\infty\longrightarrow M$ such that $f\circ p=\hat{f}$, and hence $f\circ\mu_r=f_r$, for each $r\in\mathbb{N}$,  together with a triangle $X_\infty\stackrel{f}{\longrightarrow}M\stackrel{g}{\longrightarrow} Y\stackrel{+}{\longrightarrow}$. 

It remains to prove that $Y\in\mathcal{T}^{\perp_{\leq 0}}$.  Fix  $k\in\mathbb{N}$.  The map  $(f_r)_*:\text{Hom}_\mathcal{D}(T[k],X_r)\longrightarrow\text{Hom}_\mathcal{D}(T[k],M)$ is an epimorphism for $r>k$ and $T\in\mathcal{T}$, because $\text{Hom}_\mathcal{D}(T[k],Y_r)=0$ for $r>k$. 
Since we have $(f_r)_*=f_*\circ (\mu_r)_*$, we conclude that $f_*$ is an epimorphism, for all $k\in\mathbb{N}$. This implies in particular that $\text{Hom}_\mathcal{D}(T,Y)=0$, because $\text{Hom}_\mathcal{D}(T,X_\infty [1])=0$.

We now prove that  $f_*:\text{Hom}_\mathcal{D}(T[k],X_\infty)\longrightarrow\text{Hom}_\mathcal{D}(T[k],M)$ is a monomorphism (and hence an isomorphism), for all $k\geq 0$ and all $T\in\mathcal{T}$. Take any $\varphi\in\text{Ker}(f_*)$. Since, by step a),  the canonical morphism $\varinjlim\text{Hom}_\mathcal{D}(T[k],X_n)\longrightarrow\text{Hom}_\mathcal{D}(T[k],X_\infty)$ is surjective, we get that $\text{Hom}_\mathcal{D}(T[k],X_\infty)$ is the union of all the images of the maps $(\mu_r)_*:\text{Hom}_\mathcal{D}(T[k],X_r)\longrightarrow\text{Hom}_\mathcal{D}(T[k],X_\infty)$. In particular, we have that 
  $\varphi =(\mu_r)_*(\psi)$, for some $r\in\mathbb{N}$ and some $\psi\in\text{Hom}_\mathcal{D}(T[k],X_r)$. We can assume without loss of generality that $r>k+1$. We then have $$0=f_*(\varphi )=(f_*\circ (\mu_r)_*)(\psi )=f\circ\mu_r\circ\psi =f_r\circ\psi .$$
Using now the triangle $Y_r[-1]\stackrel{w}{\longrightarrow} X_r\stackrel{f_r}{\longrightarrow X}\stackrel{+}{\longrightarrow}$, we conclude  that $\psi$ factors in the form $\psi: T[k]\longrightarrow Y_r[-1]\stackrel{w}{\longrightarrow} X_r$. But $\text{Hom}_\mathcal{D}(T[k],Y_r[-1])\cong\text{Hom}_\mathcal{D}(T[k+1],Y_r)$ is zero, because $r>k+1$. We then get $\psi =0$, and also $\varphi =0$.

The fact that $f_*$ is an isomorphism, for all $k\geq 0$ , and that $\text{Hom}_\mathcal{D}(T,X_\infty [1])=0$ imply that $\text{Hom}_\mathcal{D}(T[k],Y)=0$, for all $k\geq 0$ and all $T\in\mathcal{T}$, so that $Y\in\mathcal{T}^{\perp_{\leq 0}}$ as desired. 
 
\end{proof}

We will frequently use the following two auxiliary results. The first one is a slight improvement of  \cite[Lemma 4.10 (i,ii)]{PV}.

\begin{lem} \label{lem.t-structure class generators}
Let $\mathcal{A}$ be any abelian category such that $\mathcal{D}(\mathcal{A})$ has $\text{Hom}$ sets, and let $\mathcal{G}$ be a class of generators of $\mathcal{A}$. If $k\in\mathbb{Z}$ and  $X\in\mathcal{D}(\mathcal{A})$ are such that $\text{Hom}_{\mathcal{D}(\mathcal{A})}(?,X[k])$ vanishes on $\mathcal{G}$, then $H^k(X)=0$. In particular, 
we have an equality $({}^\perp (\mathcal{G}^{\perp_{\leq 0}}),\mathcal{G}^{\perp_{<0}})=(\mathcal{D}^{\leq 0}(\mathcal{A}),\mathcal{D}^{\geq 0}(\mathcal{A}))$. 
\end{lem}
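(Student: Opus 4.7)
The plan is to prove the first assertion by contradiction, converting a nonzero morphism $G\to H^k(X)$ in $\mathcal{A}$ into a nonzero morphism $G'[0]\to X[k]$ in $\mathcal{D}(\mathcal{A})$ that violates the hypothesis. By shifting, it suffices to treat $k=0$.

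Assume $H^0(X)\neq 0$. Applying the defining property of $\mathcal{G}$ to the identity of $H^0(X)$ yields some $G\in\mathcal{G}$ and a nonzero morphism $\alpha:G\to H^0(X)$ in $\mathcal{A}$. Fix a chain complex representative of $X$, let $q:\ker(d_X^0)\twoheadrightarrow H^0(X)$ be the canonical epimorphism, and form the pullback
$$P:=G\times_{H^0(X)}\ker(d_X^0)$$
in $\mathcal{A}$, with projections $p_1:P\twoheadrightarrow G$ (epic, because $q$ is) and $p_2:P\to\ker(d_X^0)$. The composite $P\xrightarrow{p_2}\ker(d_X^0)\hookrightarrow X^0$, placed in degree $0$ and zero in all other degrees, is a bona fide chain map $\tilde p_2:P[0]\to X$ since its image lies in $\ker(d_X^0)$, and a direct check gives $H^0(\tilde p_2)=\alpha\circ p_1$, which is nonzero because $\alpha$ is nonzero and $p_1$ is an epimorphism.

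Now apply the generating property of $\mathcal{G}$ once more, this time to the nonzero morphism $\alpha p_1:P\to H^0(X)$, to obtain $G'\in\mathcal{G}$ and $g:G'\to P$ with $\alpha p_1 g\neq 0$. The composite
$$G'[0]\xrightarrow{\;g\;}P[0]\xrightarrow{\;\tilde p_2\;}X$$
in $\mathcal{D}(\mathcal{A})$ has $H^0$-image $\alpha p_1 g\neq 0$, so it is itself a nonzero element of $\text{Hom}_{\mathcal{D}(\mathcal{A})}(G',X)$, contradicting the hypothesis. The ``in particular'' statement follows at once: the first part applied at every $k<0$ gives $\mathcal{G}^{\perp_{<0}}\subseteq\mathcal{D}^{\geq 0}(\mathcal{A})$, the reverse inclusion is the routine observation that $\text{Hom}_{\mathcal{D}(\mathcal{A})}(\mathcal{D}^{\leq 0},\mathcal{D}^{\geq 1})=0$ (together with its shifts), and then ${}^\perp(\mathcal{G}^{\perp_{\leq 0}})={}^\perp(\mathcal{D}^{\geq 1}(\mathcal{A}))=\mathcal{D}^{\leq 0}(\mathcal{A})$ by the aisle property of the canonical t-structure.

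The main obstacle I anticipate is the leap from nonzero cohomology in $\mathcal{A}$ to a nonzero morphism in $\mathcal{D}(\mathcal{A})$: under the bare hypothesis that $\mathcal{D}(\mathcal{A})$ has \emph{Hom} sets we cannot assume any resolutions exist, so the usual Ext- or spectral-sequence arguments are unavailable. The chain-level pullback trick above sidesteps this by manufacturing the required chain map directly out of a cycle representative.
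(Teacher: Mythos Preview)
Your proof is correct and follows the same underlying strategy as the paper: build a chain map from a stalk complex $G'[0]$ (with $G'\in\mathcal{G}$) into $X$ landing in cycles, and show it is nonzero in $\mathcal{D}(\mathcal{A})$ because it has nonzero $H^0$.

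The paper streamlines two of your steps. First, instead of applying the generator property to $\text{id}_{H^0(X)}$, then forming a pullback, then applying the generator property a second time, the paper applies it \emph{once} to the nonzero epimorphism $p:Z^0(X)\twoheadrightarrow H^0(X)$, obtaining directly $\alpha:G\to Z^0(X)$ with $p\alpha\neq 0$; the inclusion $Z^0(X)\hookrightarrow X^0$ then already gives the chain map $\tilde\alpha:G[0]\to X$ with $H^0(\tilde\alpha)=p\alpha\neq 0$. Second, to verify that $\tilde\alpha\neq 0$ in $\mathcal{D}(\mathcal{A})$, the paper argues via the calculus of fractions (any quasi-isomorphism $s:X\to\hat X$ induces $Z^0(X)\to Z^0(\hat X)$ compatible with the projections to $H^0$, so $s\circ\tilde\alpha$ is never null-homotopic), whereas you invoke the functoriality of $H^0:\mathcal{D}(\mathcal{A})\to\mathcal{A}$. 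Your route via $H^0$ is arguably the cleaner justification; the paper's single application of the generator property is the more economical construction. Either combination works.
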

\begin{proof}
If we assume that $H^k(X)\neq 0$ and $p:Z^k(X)\longrightarrow H^k(X)$ is the epimorphism from $k$-cycles to $k$-homology, then there is a morphism $\alpha :G\longrightarrow Z^k(X)$, for some $G\in\mathcal{G}$, such that $p\circ\alpha\neq 0$. If now $s:X\longrightarrow\hat{X}$ is any quasi-isomorphism, then $p$ factors in the form $Z^k(X)\stackrel{s}{\longrightarrow}Z^k(\hat{X})\stackrel{\hat{p}}{\longrightarrow}H^k(X)$, which implies that the induced chain map $\tilde{\alpha}:G\longrightarrow X[k]$ has the property that $s[k]\circ\tilde{\alpha}$ is a nonzero morphism in $\mathcal{K}(\mathcal{A})$, for all quasi-isomorphisms $s$ with domain $X$. This implies that $\tilde{\alpha}$ is a nonzero morphism in $\mathcal{D}(\mathcal{A})$, which contradicts the fact that $\text{Hom}_{\mathcal{D}(\mathcal{A})}(?,X[k])$ vanishes on $\mathcal{G}$.
\end{proof}

\begin{lem} \label{lem.finite projective dimension}
Let $\mathcal{A}$ be any abelian category such that $\mathcal{D}(\mathcal{A})$ has $\text{Hom}$ sets, let $M$ be an object of $\mathcal{A}$ and let $n$ be a natural number. The following assertions are equivalent:

\begin{enumerate}
\item $\text{Ext}_\mathcal{A}^k(M,N)=0$, for all integers $k>n$ and all objects $N$ of $\mathcal{A}$.
\item The functor $\text{Hom}_{\mathcal{D}(\mathcal{A})}(M,?):\mathcal{D}(\mathcal{A})\longrightarrow Ab$ vanishes on $\mathcal{D}^{<-n}(\mathcal{A})$.

When $\mathcal{A}$ has enough projectives, the above conditions are equivalent to:

\item There exists an exact sequence $0\longrightarrow P^{-n}\longrightarrow \cdots\longrightarrow P^{-1}\longrightarrow P^0\longrightarrow M\longrightarrow 0$, where all the $P^{-k}$ are projective objects of $\mathcal{A}$. 
\end{enumerate}
\end{lem}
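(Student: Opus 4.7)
The plan is to establish $(2)\Rightarrow(1)$ directly, $(1)\Rightarrow(2)$ by dévissage with respect to the canonical $t$-structure, and $(1)\Leftrightarrow(3)$ under enough projectives by the classical dimension-shifting argument together with a K-projectivity argument giving $(3)\Rightarrow(2)$. For $(2)\Rightarrow(1)$: an object $N\in\mathcal{A}$ seen as a complex concentrated in degree $0$ satisfies $N[k]\in\mathcal{D}^{<-n}(\mathcal{A})$ whenever $k>n$, and the Yoneda identification $\text{Ext}_{\mathcal{A}}^{k}(M,N)\cong\text{Hom}_{\mathcal{D}(\mathcal{A})}(M,N[k])$ (valid whenever $\mathcal{D}(\mathcal{A})$ has Hom sets) makes (1) immediate from (2).

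For $(1)\Rightarrow(2)$: given $X\in\mathcal{D}^{<-n}(\mathcal{A})$, I may replace $X$ by $\tau^{\leq -n-1}X$, and hence assume $X$ is cohomologically supported in degrees $\leq -n-1$. In the bounded case $X\in\mathcal{D}^{[a,-n-1]}(\mathcal{A})$, I induct on the length $-n-1-a$: the base case $X\cong H^{-n-1}(X)[n+1]$ gives $\text{Hom}_{\mathcal{D}(\mathcal{A})}(M,X)=\text{Ext}_{\mathcal{A}}^{n+1}(M,H^{-n-1}(X))=0$ by (1), while the inductive step applies $\text{Hom}_{\mathcal{D}(\mathcal{A})}(M,-)$ to the canonical truncation triangle $\tau^{<b}X\longrightarrow X\longrightarrow H^{b}(X)[-b]\stackrel{+}{\longrightarrow}$ (with $b\leq -n-1$), whose outer terms vanish by the inductive hypothesis and by (1), respectively. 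For unbounded $X$, one approximates by the bounded truncations $Y_{m}:=\tau^{\geq -m}X$ and observes that any $f\colon M\longrightarrow X$ composed with the canonical map $X\longrightarrow Y_{m}$ is forced to be zero by the bounded case, so $f$ factors through $\tau^{<-m}X$ for every $m\geq n+1$; a dévissage along the resulting Postnikov-type tower of triangles then forces $f=0$.

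For $(1)\Leftrightarrow(3)$ under enough projectives: $(3)\Rightarrow(1)$ is the classical Ext-computation from a finite projective resolution. Conversely, for $(1)\Rightarrow(3)$ one takes any projective resolution $\cdots\to P^{-1}\to P^{0}\to M\to 0$ and sets $K:=\ker(P^{-n+1}\to P^{-n+2})$ to be the $n$-th syzygy; dimension shifting yields $\text{Ext}_{\mathcal{A}}^{1}(K,N)\cong\text{Ext}_{\mathcal{A}}^{n+1}(M,N)=0$ for all $N\in\mathcal{A}$, so $K$ is projective and the truncated sequence $0\to K\to P^{-n+1}\to\cdots\to P^{0}\to M\to 0$ is a projective resolution of length $n$. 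Under the same hypothesis, $(3)\Rightarrow(2)$ follows cleanly: the complex $P^{\bullet}$ concentrated in degrees $[-n,0]$ representing $M$ is K-projective, so $\text{Hom}_{\mathcal{D}(\mathcal{A})}(M,X)\cong\text{Hom}_{\mathcal{K}(\mathcal{A})}(P^{\bullet},\tau^{\leq -n-1}X)$, which vanishes trivially because the supports of $P^{\bullet}$ and $\tau^{\leq -n-1}X$ are disjoint. The chief obstacle is the unbounded case of $(1)\Rightarrow(2)$ without projectives, where one must lift vanishing on all bounded truncations $Y_{m}$ to vanishing on $X$ itself along the Postnikov tower.
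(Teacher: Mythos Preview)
Your handling of $(2)\Rightarrow(1)$, the bounded case of $(1)\Rightarrow(2)$, and $(1)\Leftrightarrow(3)$ is fine. The genuine gap is in the unbounded case of $(1)\Rightarrow(2)$, which you yourself flag as ``the chief obstacle'' but do not actually resolve. Knowing that $f:M\to X$ factors through $\tau^{<-m}X$ for every $m\geq n+1$ does not, in the stated generality, force $f=0$. The factorizations are not unique: the obstruction group is $\text{Hom}_{\mathcal{D}(\mathcal{A})}\bigl(M,(\tau^{\geq -m}X)[-1]\bigr)$, and $(\tau^{\geq -m}X)[-1]$ has top cohomological degree $-n$, so it lies outside the range $\mathcal{D}^{<-n}(\mathcal{A})$ where your bounded vanishing applies. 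Hence there is no reason the factorizations assemble into a compatible system, and even if they did, the lemma assumes only that $\mathcal{D}(\mathcal{A})$ has Hom sets, with no products or completeness available to run a genuine Postnikov-limit argument.

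The paper's proof sidesteps the tower entirely and treats bounded and unbounded $Y^\bullet\in\mathcal{D}^{<-n}(\mathcal{A})$ uniformly in one step. Any nonzero morphism $M\to Y^\bullet$ in $\mathcal{D}(\mathcal{A})$ can, after replacing $Y^\bullet$ by a quasi-isomorphic complex, be represented by an honest chain map $f:M[0]\to Y^\bullet$. Since $M[0]$ is concentrated in degree $0$, this chain map factors as $M[0]\stackrel{\tilde f}{\to}\sigma_{\geq -n-1}Y^\bullet\hookrightarrow Y^\bullet$ through the \emph{stupid} truncation. Because $H^k(Y^\bullet)=0$ for $k\geq -n$, the complex $\sigma_{\geq -n-1}Y^\bullet$ has cohomology concentrated in the single degree $-n-1$, so it is isomorphic in $\mathcal{D}(\mathcal{A})$ to a stalk $N[n+1]$; then $\tilde f\in\text{Ext}_{\mathcal{A}}^{n+1}(M,N)=0$ by (1), contradicting $f\neq 0$. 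The ingredient you are missing is exactly this: the chain-level representation of a morphism out of a stalk complex, combined with stupid (not canonical) truncation, collapses the problem to a single Ext group without any limit argument.
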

\begin{proof}
The equivalence of assertions 1 and 3 when $\mathcal{A}$ has enough projectives is standard, and the implication $2)\Longrightarrow 1)$ is clear. As for the implication $1)\Longrightarrow 2)$, consider a complex $Y^\bullet\in\mathcal{D}^{<-n}(\mathcal{A})$  and suppose that $\text{Hom}_{\mathcal{D}(\mathcal{A})}(M,Y^\bullet )\neq 0$. Then, up to replacement of $Y^\bullet$ by a quasi-isomorphic complex, we can assume that we have a chain map $f:M\longrightarrow Y^\bullet$ which represents a nonzero morphism in $\mathcal{D}(\mathcal{A})$. Then this map factors in the form $f:M\stackrel{\tilde{f}}{\longrightarrow}\sigma_{\geq -n-1}Y^\bullet\stackrel{can}{\longrightarrow}Y^\bullet$, where $\sigma_{\geq -n-1}$ denotes the stupid truncation at $-n-1$. But $\sigma_{ \geq -n-1}Y^\bullet$ has homology concentrated in degree $-n-1$ since $Y^\bullet\in\mathcal{D}^{<-n}(\mathcal{A})$. It follows that $\sigma_{ \geq -n-1}Y^\bullet$ is isomorphic in $\mathcal{D}(\mathcal{A})$ to a stalk complex $N[n+1]$, which implies that $\tilde{f}=0$ in $\mathcal{D}(\mathcal{A})$ since $\text{Ext}_\mathcal{A}^{n+1}(M,N)=0$. Therefore we have $f=0$ in $\mathcal{D}(\mathcal{A})$, which is a contradiction. 
\end{proof}

\begin{opr} \label{def.finite projective dimension}
An object $M$ of the abelian category $\mathcal{A}$ will be said to have \emph{projective dimension $\leq n$}, written $\text{pd}_\mathcal{A}(M)\leq n$,  when it satisfies condition 1 of Lemma \ref{lem.finite projective dimension}. The concept of \emph{injective dimension $\leq n$}, written $id_\mathcal{A}(M)\leq n$, is the dual. The category $\mathcal{A}$ is said to have \emph{global dimension $\leq n$}, written $\text{gldim}(\mathcal{A})\leq n$ when each object has projective (equivalently, injective) dimension $\leq n$. We will say that $\mathcal{A}$ has \emph{finite global dimension} when there is a $n\in\mathbb{N}$ such that $\text{gldim}(\mathcal{A})\leq n$. 
\end{opr}

In the rest of the section, we consider the following situation. 

\begin{setup} \label{setup}
 $\mathcal{A}$ is an abelian category with the property that its derived category $\mathcal{D}(\mathcal{A})$ has $\text{Hom}$ sets and arbitrary coproducts.
\end{setup}

\begin{lem} \label{lem.setup implies AB3}
Let $\mathcal{A}$ be an abelian category as in Setup \ref{setup}. Then $\mathcal{A}$ is AB3 and the restriction of the $0$-homology functor $H^0_{| \mathcal{D}^{\leq 0}(\mathcal{A})}:\mathcal{D}^{\leq 0}(\mathcal{A})\longrightarrow\mathcal{A}$ preserves coproducts. 
\end{lem}
\begin{proof}
We can identify $\mathcal{A}$ with the heart and $H^0$ with the cohomological functor provided by the canonical t-structure. The result is then a particular case of \cite[Proposition 3.2 and Lemma 3.1]{PS1}.
\end{proof}
 
\begin{ex}
Each AB4 abelian category with enough projectives, each Grothendieck category and the dual category of any Grothendieck category are abelian categories as in Setup \ref{setup}.
\end{ex}
\begin{proof}
If $\mathcal{A}$ is AB4, then $\mathcal{D}(\mathcal{A})$ has coproducts and they are calculated `pointwise', i.e., as in $\mathcal{C}(\mathcal{A})$ (see \cite{N2}). In such case, when either $\mathcal{A}$ has enough projectives (see  \cite[Theorem 1]{S}) or $\mathcal{A}$ is a Grothendieck category, we know that $\mathcal{D}(\mathcal{A})$ has $\text{Hom}$ sets.

Suppose finally $\mathcal{A}=\mathcal{G}^{op}$, where $\mathcal{G}$ is a Grothendieck category. We have an induced equivalence of categories $\mathcal{D}(\mathcal{A})\cong\mathcal{D}(\mathcal{G})^{op}$. The result in this case is just a consequence of the fact that $\mathcal{D}(\mathcal{G})$ has products. 
\end{proof}

Theorem \ref{teor.weakly equivalent to presilting} has now the following (nondirect) consequence. 

\begin{prop} \label{prop.aisle of partial tilting object}
Let $\mathcal{A}$ be as in Setup \ref{setup} and let $T\in\mathcal{A}$ be an object satisfying both of the following conditions:
\begin{enumerate}
\item[i)] The coproduct  of $I$ copies of $T$, denoted $T^{(I)}$, is the same  in $\mathcal{A}$ and $\mathcal{D}(\mathcal{A})$, for all sets $I$;
\item[ii)] $\text{Ext}_\mathcal{A}^k(T,T^{(I)})=0$, for all integers $k>0$ and all sets $I$. 
\end{enumerate}

 The following assertions hold:

\begin{enumerate}
\item $\text{thick}_{\mathcal{D}(\mathcal{A})}(\text{Sum(T)})$ consists of the complexes isomorphic in $\mathcal{D}(\mathcal{A})$ to bounded complexes of objects in $\text{Sum}(T)$ (or $\text{Add}(T)$).
\item If $m\leq n$ are integers, then the subcategory $\text{Add}(T)[m]\star\text{Add}(T)[m+1]\star \cdots\star\text{Add}(T)[n]$ consists of those complexes isomorphic in $\mathcal{D}(\mathcal{A})$ to complexes of $\mathcal{K}^{[-n,-m]}(\text{Add}(T))$. 
\item If $T$ is partial silting  and $\tau_T=({}^\perp(T^{\perp_{\leq 0}}),T^{\perp_{<0}})$ is the associated t-structure in $\mathcal{D}(\mathcal{A})$, then ${}^\perp(T^{\perp_{\leq 0}})$ consists of the complexes isomorphic in $\mathcal{D}(\mathcal{A})$ to complexes in $\mathcal{K}^{\leq 0}(\text{Sum}(T))=\mathcal{K}^{\leq 0}(\text{Add}(T))$. 
\end{enumerate}
\end{prop}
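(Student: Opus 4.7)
The plan is to proceed in four steps, the core being a ``Hom-lifting'' lemma.

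\textbf{Step 1 (Hom-lifting).} First I would show that for any bounded complexes $P^\bullet,Q^\bullet\in\mathcal{K}^b(\text{Add}(T))$, the canonical map $\text{Hom}_{\mathcal{K}(\mathcal{A})}(P^\bullet,Q^\bullet)\to\text{Hom}_{\mathcal{D}(\mathcal{A})}(P^\bullet,Q^\bullet)$ is bijective. The argument is a double induction on the number of nonzero terms: using the brutal truncation short exact sequences (degreewise split, hence distinguished triangles in both $\mathcal{K}(\mathcal{A})$ and $\mathcal{D}(\mathcal{A})$) together with the five-lemma, one reduces to the stalk-complex case, where bijectivity amounts to the vanishing of $\text{Ext}^k_{\mathcal{A}}(A,B)$ for $A,B\in\text{Add}(T)$ and $k>0$; this in turn follows from hypothesis (ii) since Yoneda Ext takes direct sums in the first variable to products and respects direct summands in both variables.

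\textbf{Step 2 (Assertions (1) and (2)).} Next I would prove (2) by induction on $n-m$. The inclusion ``$\supseteq$'' follows by iterating the brutal truncation triangle $P^{-m}[m]\to P^\bullet\to\sigma^{\leq -m-1}P^\bullet\stackrel{+}{\longrightarrow}$. For the converse, any $X\in\text{Add}(T)[m]\star(\text{Add}(T)[m+1]\star\cdots\star\text{Add}(T)[n])$ fits in a triangle $A[m]\to X\to Y\stackrel{\delta}{\longrightarrow}A[m+1]$ with $A\in\text{Add}(T)$ and, by induction, $Y\cong Q^\bullet\in\mathcal{K}^{[-n,-m-1]}(\text{Add}(T))$; Step 1 lets me lift $\delta$ to an honest chain map, and then $X$ is realised as its shifted mapping cone, a complex in $\mathcal{K}^{[-n,-m]}(\text{Add}(T))$ --- and in $\mathcal{K}^{[-n,-m]}(\text{Sum}(T))$ if the induction is started from $\text{Sum}(T)$. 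Assertion (1) then follows by combining (2) with Lemma \ref{lem.star-product of subcategories}, which describes $\text{thick}_{\mathcal{D}(\mathcal{A})}(\text{Sum}(T))=\text{thick}_{\mathcal{D}(\mathcal{A})}(\text{Add}(T))$ as a union of star products.

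\textbf{Step 3 (Assertion (3)).} For (3), by Theorem \ref{teor.weakly equivalent to presilting} I would write $X\in\mathcal{U}_T:={}^\perp(T^{\perp_{\leq 0}})$ as $\text{Mcolim}(X_n)$ with $X_0\in\text{Sum}(T)$ and $\text{cone}(x_n)\cong S_n[n]$, $S_n\in\text{Sum}(T)$. Inductively I would build $Y_n^\bullet\in\mathcal{K}^{[-n,0]}(\text{Sum}(T))$ together with chain inclusions $Y_{n-1}^\bullet\hookrightarrow Y_n^\bullet$ realising $x_n$: the rotated connecting morphism $S_n[n-1]\to Y_{n-1}^\bullet$ lifts to a chain map by Step 1, and $Y_n^\bullet$ is its mapping cone, i.e. $Y_{n-1}^\bullet$ extended by $S_n$ in degree $-n$ with the lifted map as new differential. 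The union $Y_\infty^\bullet=\bigcup_nY_n^\bullet\in\mathcal{K}^{\leq 0}(\text{Sum}(T))$ satisfies $\sigma^{\geq -n}Y_\infty^\bullet=Y_n^\bullet$, and I would check that the induced comparison $\text{Mcolim}(X_n)\to Y_\infty^\bullet$ is an isomorphism in $\mathcal{D}(\mathcal{A})$. Conversely, for $P^\bullet\in\mathcal{K}^{\leq 0}(\text{Sum}(T))$ each $\sigma^{\geq -n}P^\bullet$ lies in $\mathcal{U}_T$ by (2), and $P^\bullet$ is their Milnor colimit; since $\mathcal{U}_T$ is closed under coproducts and extensions (hence under Milnor colimits), $P^\bullet\in\mathcal{U}_T$.

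The most delicate point is the identification, inside Step 3, of the naive union $Y_\infty^\bullet$ with the Milnor colimit computed in $\mathcal{D}(\mathcal{A})$: Setup \ref{setup} does not force coproducts in $\mathcal{D}(\mathcal{A})$ to be computed degreewise, so this identification has to be argued using the equality $\sigma^{\geq -n}Y_\infty^\bullet=Y_n^\bullet$, the brutal truncation triangles, and hypothesis (i) applied to the components (which all lie in $\text{Sum}(T)$, so their coproducts in $\mathcal{A}$ and $\mathcal{D}(\mathcal{A})$ coincide).
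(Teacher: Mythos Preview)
Your proposal is correct and follows essentially the same route as the paper: the paper also first establishes your Hom-lifting statement (phrased there as full faithfulness of the canonical functor $\mathcal{K}^b(\text{Add}(T))\to\mathcal{D}(\mathcal{A})$, proved by a thick-subcategory d\'evissage rather than your explicit induction on the number of terms), then deduces (1) and (2) from it, and for (3) builds the complex $T^\bullet$ degree-by-degree exactly as you do by lifting the connecting maps to chain maps. The paper asserts $T^\bullet\cong\text{Mcolim}(\sigma_{\geq -n}T^\bullet)$ without further comment, so the delicate point you flag about coproducts in $\mathcal{D}(\mathcal{A})$ versus pointwise coproducts is glossed over there too; your remark that hypothesis (i) controls this (all components being in $\text{Sum}(T)$) is the right way to close that gap.
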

\begin{proof}
1) Consider the composition $F:\mathcal{K}^b(\text{Add}(T))\stackrel{\iota}{\longrightarrow}\mathcal{K}(\mathcal{A})\stackrel{q}{\longrightarrow}\mathcal{D}(\mathcal{A})$, where $\iota$ and $q$ are the inclusion and the localization functor, respectively. We will prove that $F$ is fully faithful by slightly modifying the proof of \cite[Proposition 7.3]{NS2}. First, it is clear that $\text{thick}_{\mathcal{K}(\mathcal{A})}(\text{Sum}(T))=\mathcal{K}^b(\text{Add}(T))$. Consider the subcategory $\mathcal{C}$ of $\mathcal{K}^b(\text{Add}(T))$ consisting of the $M$ such that the  map $$\text{Hom}_{\mathcal{K}(\mathcal{A})}(M,T^{(I)}[p])\longrightarrow\text{Hom}_{\mathcal{D}(\mathcal{A})}(F(M),F(T)^{(I)}[p]),$$ induced by $F$, is bijective, for all $p\in\mathbb{Z}$ and all sets $I$. We clearly have that $T^{(J)}\in\mathcal{C}$, for all sets $J$, and that $\mathcal{C}$ is closed under extensions, all shifts and direct summands. That is, $\mathcal{C}$ is a thick subcategory of $\mathcal{K}^b(\text{Add}(T))$ containing $\text{Sum}(T)$. It follows that $\mathcal{C}=\mathcal{K}^b(\text{Add}(T))$.

Fix now $M\in\mathcal{K}^b(\text{Add}(T))$ and consider the subcategory $\mathcal{C}'_M$ of $\mathcal{K}^b(\text{Add}(T))$ consisting of those $N$ such the induced map $$\text{Hom}_{\mathcal{K}(\mathcal{A})}(M,N[p])\longrightarrow\text{Hom}_{\mathcal{D}(\mathcal{A})}(F(M),F(N)[p])$$ is bijective, for all $p\in\mathbb{Z}$. Again, we have that $\mathcal{C}'_M$ is a thick subcategory of $\mathcal{K}^b(\text{Add}(T))$ which, due to the previous paragraph, contains $\text{Sum}(T)$. It follows that $\mathcal{C}'_M=\mathcal{K}^b(\text{Add}(T))$, for each $M\in\mathcal{K}^b(\text{Add}(T))$. Therefore $F$ is a fully faithful functor. 

Due to the fully faithful condition of $F$, we know that $F(\mathcal{K}^b(\text{Add}(T)))$ is a thick subcategory of $\mathcal{D}(\mathcal{A})$ such that $F(\mathcal{K}^b(\text{Add}(T)))=F(\text{thick}_{\mathcal{K}(\mathcal{A})}(\text{Sum}(T)))\subseteq\text{thick}_{\mathcal{D}(\mathcal{A})}(\text{Sum}(T))$. But the reverse inclusion also holds, because $T^{(I)}=F(T^{(I)})$, for each set $I$. 

2) The proof of assertion 1 gives that the functor $F$ induces an equivalence of triangulated categories $\mathcal{K}^b(\text{Add}(T))\stackrel{\sim}{\longrightarrow}\text{thick}_{\mathcal{D}(\mathcal{A})}(\text{Sum}(T))$. Assertion 2 is then a consequence of the fact that, as a subcategory of $\mathcal{K}^b(\text{Add}(T))$, the category $\text{Add}(T)[m]\star\text{Add}(T)[m+1]\star \cdots\star\text{Add}(T)[n]$ consists precisely of the complexes in $\mathcal{K}^{[-n,-m]}(\text{Add}(T))$. 

3) Put $\mathcal{U}_T:={}^\perp (T^{\perp_{\leq 0}})$ in the sequel. By Theorem \ref{teor.weakly equivalent to presilting}, each object $X\in\mathcal{U}_T$ is  the Milnor colimit of a sequence $$X_0\stackrel{x_1}{\longrightarrow}X_1\stackrel{x_2}{\longrightarrow}\cdots\stackrel{x_{n-1}}{\longrightarrow}X_{n-1}\stackrel{x_n}{\longrightarrow}X_n\stackrel{x_{n+1}}{\longrightarrow}\cdots, $$ where $X_0=T_0\in\text{Sum}(T)$ and $\text{cone}(x_n)=T_n[n]$, for some $T_n\in\text{Sum}(T)$. The proof of assertion 1 tells us that each morphism $x_n$ `is' a chain map and that the induced triangle $X_{n-1}\stackrel{x_n}{\longrightarrow}X_n\longrightarrow T_n[n]\stackrel{+}{\longrightarrow}$ may be viewed as a triangle in $\mathcal{K}^b(\mathcal{A})$, for each $n>0$.  This will allow us to construct a complex $T^\bullet:\cdots\longrightarrow T_n\longrightarrow T_{n-1}\longrightarrow \cdots\longrightarrow T_1\longrightarrow T_0\longrightarrow 0\longrightarrow\cdots$ which is isomorphic to $X$ in $\mathcal{D}(\mathcal{A})$. We will construct $T^\bullet$ inductively. Namely, for each $n>0$, we will give a morphism $f_n:T_n\longrightarrow T_{n-1}$ in $\mathcal{A}$ satisfying the following properties:

\begin{enumerate}
\item[a)] The composition of two consecutive maps in the sequence $T_n\stackrel{f_n}{\longrightarrow}T_{n-1}\longrightarrow \cdots\stackrel{f_2}{\longrightarrow} T_1\stackrel{f_1}{\longrightarrow} T_0$ is the zero map;
\item[b)] The complex $$\cdots\longrightarrow 0\longrightarrow T_n\stackrel{f_n}{\longrightarrow}T_{n-1}\longrightarrow \cdots\stackrel{f_2}{\longrightarrow} T_1\stackrel{f_1}{\longrightarrow} T_0\longrightarrow 0\longrightarrow \cdots$$ is isomorphic to $X_n$ in $\mathcal{D}(\mathcal{A})$;
\item[c)] Under the isomorphism of b) (and the ones from the preceding steps), the morphism $x_n:X_{n-1}\longrightarrow X_n$ is identified with the chain map given by the vertical arrows of the following diagram:

$$
\begin{xymatrix}{\cdots \ar[r]&0 \ar[r]&0 \ar[r]&T_{n-1} \ar@{=}[d] \ar[r]^{f_{n-1}}&\cdots \ar[r]^{f_{2}}&T_1 \ar@{=}[d] \ar[r]^{f_{1}}&T_0 \ar@{=}[d] \ar[r]&0 \ar[r]&\cdots \\
 \cdots \ar[r]&0 \ar[r]&T_n \ar[r]^{f_{n}}&T_{n-1} \ar[r]^{f_{n-1}}&\cdots \ar[r]^{f_{2}}&T_1 \ar[r]^{f_{1}}&T_0 \ar[r]&0 \ar[r]&\cdots }
\end{xymatrix}
$$

\end{enumerate}
Once we will have proved this, the complex $$T^\bullet:\cdots\stackrel{f_{n+1}}{\longrightarrow}T_n\stackrel{f_n}{\longrightarrow}\cdots\stackrel{f_2}{\longrightarrow}T_1\stackrel{f_1}{\longrightarrow}T_0\longrightarrow 0\longrightarrow \cdots$$ will be the desired one and the proof will be finished. Indeed, by taking stupid truncations, we have that $\sigma_{\geq -n}T^\bullet $ is the complex of condition b) above, and the canonical morphism $\sigma_{\geq -n+1}T^\bullet\longrightarrow\sigma_{\geq -n}T^\bullet$ is precisely the map of condition c) above. Therefore we will have an isomorphism $T^\bullet\cong\text{Mcolim}(\sigma_{\geq -n}T^\bullet )\cong\text{Mcolim}X_n=X$ in $\mathcal{D}(\mathcal{A})$. 

By definition of the sequence $(X_n,x_n)$, we have a triangle $T_1[0]\longrightarrow X_0=T_0[0]\stackrel{x_1}{\longrightarrow}X_1\stackrel{+}{\longrightarrow}$, and the morphism $T_1[0]\longrightarrow T_0[0]$ is of the form $f_1[0]$, for some morphism $f_1:T_1\longrightarrow T_0$ in $\mathcal{A}$. Suppose now that $n>1$. The induction hypothesis gives a complex $T_{n-1}^\bullet:\cdots\longrightarrow 0\longrightarrow T_{n-1}\stackrel{f_{n-1}}{\longrightarrow}\cdots\stackrel{f_1}{\longrightarrow}T_0\longrightarrow 0\longrightarrow \cdots$, which is isomorphic to $X_{n-1}$ in $\mathcal{D}(\mathcal{A})$. Since we have a triangle $X_{n-1}\stackrel{x_n}{\longrightarrow}X_n\longrightarrow T_n[n]\stackrel{+}{\longrightarrow}$, we also get a triangle $T_n[n-1]\stackrel{\alpha_n}{\longrightarrow}T_{n-1}^\bullet\stackrel{\beta_n}{\longrightarrow}X_n\stackrel{+}{\longrightarrow}$, where $\beta_n$ is identified with $x_n$ using the isomorphism $T^\bullet_{n-1}\cong X_{n-1}$. But assertion 1 tells us that $\alpha_n$ `is' a chain map. It is then given by the vertical arrows of the following commutative diagram, for some morphism $f_n:T_n\longrightarrow T_{n-1}$ in $\mathcal{A}$ such that $f_{n-1}\circ f_n=0$:

$$
\begin{xymatrix}{\cdots \ar[r]&0 \ar[r]&T_{n} \ar[d]^{f_{n}} \ar[r]& 0\ar[r] \ar[d] &\cdots \ar[r]&0 \ar[d] \ar[r]&0 \ar[d] \ar[r]& 0  \ar[r]&\cdots \\
 \cdots \ar[r]&0 \ar[r]&T_{n-1} \ar[r]^{f_{n-1}}&T_{n-2} \ar[r]^{f_{n-2}}&\cdots \ar[r]^{f_{2}}&T_1 \ar[r]^{f_{1}}&T_0 \ar[r]&0 \ar[r]&\cdots }
\end{xymatrix}
$$

Note that the cone of the last mentioned chain map is isomorphic to the complex $T^\bullet_n:\cdots\longrightarrow 0\longrightarrow T_n\stackrel{f_n}{\longrightarrow}T_{n-1}\stackrel{f_{n-1}}{\longrightarrow}\cdots\stackrel{f_2}{\longrightarrow}T_1\stackrel{f_1}{\longrightarrow}T_0\longrightarrow 0\longrightarrow\cdots$. Then all needed conditions a)-c) are satisfied. 
\end{proof}

\section{A tilting theory for objects in  AB3 abelian categories} \label{sect.tilting theory abelian}

The goal of this section is to show that the results in the previous section allow to extend the well-established theory of infinitely generated $n$-tilting modules, for $n\in\mathbb{N}$, to any abelian category as in Setup \ref{setup} (see \cite{C} for a similar attempt, when $n=1$ and $\mathcal{A}$ is a Grothendieck category). 

\begin{opr} \label{def.(partial) tilting object}
An object $T$ of $\mathcal{A}$ will be called \emph{partial $n$-tilting} when the following conditions hold:

\begin{enumerate}
\item[T0] The coproduct of $I$ copies of $T$, denoted  by $T^{(I)}$, is the same in $\mathcal{A}$ and $\mathcal{D}(\mathcal{A})$, for each set $I$;
\item[T1] $\text{Ext}_\mathcal{A}^k(T,T^{(I)})=0$, for all integers $k>0$ and all sets $I$;
\item[T2] The projective dimension of $T$ is $\leq n$;
\end{enumerate}
We will say that $T$ is a \emph{$n$-tilting object} if it is partial $n$-tilting and, in addition, the following condition holds:

\begin{enumerate}
\item[T3] There is a generating class $\mathcal{G}$   of $\mathcal{A}$ such that, for each $G\in\mathcal{G}$, there is an exact sequence $0\longrightarrow G\longrightarrow T^0\longrightarrow T^1\longrightarrow \cdots\longrightarrow T^n\longrightarrow 0$, where all the $T^k$ are in $\text{Add}(T)$. 
\end{enumerate}
We will say that $T$ is a \emph{(partial) tilting object} of $\mathcal{A}$ when it is (partial) $n$-tilting, for some $n\in\mathbb{N}$. Finally,  a \emph{classical (partial) tilting} object of $\mathcal{A}$ will be a (partial) tilting object which is compact as an object of $\mathcal{D}(\mathcal{A})$.
\end{opr}

\begin{rems} \label{rem.tilting object}
\begin{enumerate}
\item We will see in the proof of Theorem \ref{teor.characterization of tilting objects} that we could have chosen any $m\in\mathbb{N}$ in condition T3.  That is, `$n$-tilting object' is synonymous with `tilting object of projective dimension $\leq n$'.
\item Condition T0 is always satisfied when $\mathcal{A}$ is AB4 (e.g. a Grothendieck category). We  could have chosen  to define a notion of partial tilting object in $\mathcal{A}$ by replacing conditions T0 and T1 by the condition that $\text{Hom}_{\mathcal{D}(\mathcal{A})}(T,T^{*(I)}[k])=0$, for all integers $k>0$ and all sets $I$, where $T^{*(I)}$ denotes the coproduct of $I$ copies of $T$ in $\mathcal{D}(\mathcal{A}$). But some of the nice properties would disappear.  For instance, the description of the aisle of the associated t-structure given in Corollary \ref{cor.partial tilting are presilting} below would not be necessarily true. 
\item The reader is invited to define the dual notions of \emph{(partial) $n$-cotilting object} and \emph{(partial) cotilting object}, which make sense in any  abelian category $\mathcal{A}$ such that $\mathcal{D}(\mathcal{A})$ has $\text{Hom}$ sets and arbitrary products. We also leave to him/her the statements of the results dual to those which will be proved in the rest of the section for (partial) tilting objects. 
\end{enumerate}
\end{rems}

Recall that a Grothendieck category $\mathcal{A}$ is \emph{locally noetherian} when it has a set $\mathcal{S}$ of noetherian generators, i.e. all the objects in $\mathcal{S}$ satisfy ACC on subobjects. 
\begin{ex} \label{exem.tilting object}
Let $\mathcal{A}$ be a locally noetherian Grothendieck category of finite global dimension (e.g. $\mathcal{A}=\text{Qcoh}(\mathbb{X})$, where $\mathbb{X}$ is a smooth algebraic variety or $\mathcal{A}=\text{Mod}-R$ for a right noetherian ring $R$ of finite global dimension). Let $G$ be a generator of $\mathcal{A}$ and let $0\longrightarrow G\longrightarrow E^0\longrightarrow E^1\longrightarrow \cdots\longrightarrow E^m\longrightarrow 0$ be its minimal injective resolution. Then $T=\oplus_{0\leq i\leq m}E^i$ is a tilting object of $\mathcal{A}$. 
\end{ex}
\begin{proof}
We check the conditions of Definition \ref{def.(partial) tilting object}. Since $\mathcal{A}$ is AB4 condition T0 holds. By the locally noetherian condition, we know that each coproduct of injective objects is injective (see \cite[Proposition V.4.3]{St}), so that $T^{(I)}$ is an injective object of $\mathcal{A}$, for each set $I$. This gives condition T1, while condition T2 holds for some $n\in\mathbb{N}$ due to the finite global dimension of $\mathcal{A}$. Finally, taking as generating class $\mathcal{G}=\text{Sum}(G)$, we immediately get condition T3 using the exactness of coproducts and the fact that coproducts of injective objects are injective.
\end{proof}

\begin{cor} \label{cor.partial tilting are presilting}
Each partial tilting object $T$ of $\mathcal{A}$ is a partial silting object of $\mathcal{D}(\mathcal{A})$ whose associated t-structure is $(\mathcal{K}^{\leq 0}(\text{Sum}(T)),T^{\perp_{<0}})$. 
\end{cor}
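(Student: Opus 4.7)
The plan is to verify strong nonpositivity of $T$, invoke Theorem \ref{teor.weakly equivalent to presilting} to conclude that $T$ is partial silting, and then apply Proposition \ref{prop.aisle of partial tilting object}(3) to identify the aisle.

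First I would check that the singleton set $\{T\}$ is strongly nonpositive in $\mathcal{D}(\mathcal{A})$. Condition T0 says that, for every set $I$, the coproduct $T^{(I)}$ computed in $\mathcal{A}$ agrees with the coproduct of $I$ copies of $T$ in $\mathcal{D}(\mathcal{A})$. Hence, for all sets $I,J$ and each integer $k>0$,
\[
\text{Hom}_{\mathcal{D}(\mathcal{A})}(T^{(I)},T^{(J)}[k])\cong\prod_{i\in I}\text{Hom}_{\mathcal{D}(\mathcal{A})}(T,T^{(J)}[k])=\prod_{i\in I}\text{Ext}^k_{\mathcal{A}}(T,T^{(J)})=0
\]
by T1. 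Passing to direct summands then gives $\text{Hom}_{\mathcal{D}(\mathcal{A})}(X,Y[k])=0$ for all $X,Y\in\text{Add}(T)$ and $k>0$, which is precisely the strong nonpositivity of $\{T\}$ in the sense of Definition \ref{def.strongly nonpositive}.

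Second I would verify assertion 3 of Theorem \ref{teor.weakly equivalent to presilting} for the canonical t-structure $(\mathcal{V},\mathcal{V}^{\perp}[1])=(\mathcal{D}^{\leq 0}(\mathcal{A}),\mathcal{D}^{\geq 0}(\mathcal{A}))$. Clearly $T\in\mathcal{D}^{\leq 0}(\mathcal{A})$, so condition 3(a) holds. By condition T2, $\text{pd}_{\mathcal{A}}(T)\leq n$ for some $n\in\mathbb{N}$, and hence, by Lemma \ref{lem.finite projective dimension}, $\text{Hom}_{\mathcal{D}(\mathcal{A})}(T,?)$ vanishes on $\mathcal{D}^{<-n}(\mathcal{A})$. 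Taking $q=n+1$, we have
\[
\mathcal{V}[q]=\mathcal{D}^{\leq 0}(\mathcal{A})[n+1]=\mathcal{D}^{\leq -n-1}(\mathcal{A})\subseteq\mathcal{D}^{<-n}(\mathcal{A}),
\]
so condition 3(b) is satisfied. Theorem \ref{teor.weakly equivalent to presilting} then yields that $T$ is a partial silting object of $\mathcal{D}(\mathcal{A})$ whose associated t-structure is, by definition, $\tau_T=({}^{\perp}(T^{\perp_{\leq 0}}),T^{\perp_{<0}})$.

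Finally, I would appeal to Proposition \ref{prop.aisle of partial tilting object}(3), whose hypotheses (i) and (ii) coincide exactly with T0 and T1, to identify the aisle ${}^{\perp}(T^{\perp_{\leq 0}})$ with the subcategory of those complexes isomorphic in $\mathcal{D}(\mathcal{A})$ to a complex of $\mathcal{K}^{\leq 0}(\text{Sum}(T))$. This gives the desired description $\tau_T=(\mathcal{K}^{\leq 0}(\text{Sum}(T)),T^{\perp_{<0}})$.

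No single step is substantial: the deep work has already been done in Theorem \ref{teor.weakly equivalent to presilting} and Proposition \ref{prop.aisle of partial tilting object}. The only point warranting emphasis is that condition T0 is indispensable at both ends — it is what allows us to translate $\text{Ext}^k_{\mathcal{A}}$ into $\text{Hom}_{\mathcal{D}(\mathcal{A})}$ in the first step and what permits the invocation of Proposition \ref{prop.aisle of partial tilting object} in the last step.
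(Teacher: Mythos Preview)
Your proof is correct and follows essentially the same approach as the paper: invoke Theorem \ref{teor.weakly equivalent to presilting} with the canonical t-structure $(\mathcal{D}^{\leq 0}(\mathcal{A}),\mathcal{D}^{\geq 0}(\mathcal{A}))$ (using Lemma \ref{lem.finite projective dimension} for condition 3(b)), and then identify the aisle via Proposition \ref{prop.aisle of partial tilting object}. Your explicit verification of strong nonpositivity from T0 and T1 is a welcome elaboration that the paper leaves implicit.
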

\begin{proof}
The set $\mathcal{T}=\{T\}$ of $\mathcal{D}(\mathcal{A})$ satisfies assertion 3 of Theorem \ref{teor.weakly equivalent to presilting}, by taking $(\mathcal{V},\mathcal{V}^\perp [1])=(\mathcal{D}^{\leq 0}(\mathcal{A}),\mathcal{D}^{\geq 0}(\mathcal{A}))$ (see Lemma \ref{lem.finite projective dimension}). That the associated t-structure is as indicated follows from Proposition \ref{prop.aisle of partial tilting object}. 
\end{proof}

\begin{lem} \label{lem.Pres}
Let $T$ be an object of $\mathcal{A}$ satisfying properties T0 and T1 of Definition \ref{def.(partial) tilting object} and let $\mathcal{Y}:=\bigcap_{k>0}\text{Ker}(\text{Ext}_\mathcal{A}^k(T,?))$. If there is an $m\in\mathbb{N}$ such that $\text{Pres}^m(T)=\mathcal{Y}$, then $\text{Pres}^m(\mathcal{Y})\subseteq\mathcal{Y}$.
\end{lem}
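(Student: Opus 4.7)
The plan has two main parts.

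First, I would establish the auxiliary statement $\mathrm{Sum}(\mathcal{Y})\subseteq\mathcal{Y}$, after which standard dimension-shifting handles the rest. Let $Y=\coprod_{j\in J}Y_j$ with each $Y_j\in\mathcal{Y}=\mathrm{Pres}^m(T)$, and fix for each $j$ a length-$m$ $\mathrm{Sum}(T)$-presentation $T_j^{-m}\to\cdots\to T_j^0\to Y_j\to 0$. Consider the coproduct sequence $\coprod_{j}T_j^{-m}\to\cdots\to\coprod_{j}T_j^0\to Y\to 0$; by T0 each term $\coprod_j T_j^{-k}$ lies in $\mathrm{Sum}(T)$, and right-exactness is automatic because cokernels commute with coproducts in any AB3 category. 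The delicate point is middle-exactness (where $\mathcal{A}$ need not be AB4), and I would handle it by splitting each row into the short exact sequences $0\to C_j^{k+1}\to T_j^{-k}\to C_j^k\to 0$, viewing them as triangles in $\mathcal{D}(\mathcal{A})$, taking the coproduct of triangles (which remains a triangle in any triangulated category with coproducts), and using T0 to identify the derived-category coproducts at the $\mathrm{Sum}(T)$-positions with their $\mathcal{A}$-counterparts. Reassembling the summed triangles yields an exact sequence in $\mathcal{A}$ witnessing $Y\in\mathrm{Pres}^m(T)=\mathcal{Y}$.

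Second, given $Z\in\mathrm{Pres}^m(\mathcal{Y})$ with a defining exact sequence $W_{-m}\to\cdots\to W_0\to Z\to 0$ in $\mathcal{A}$ and each $W_{-i}\in\mathrm{Sum}(\mathcal{Y})$, the first part ensures $W_{-i}\in\mathcal{Y}$. Split the sequence into short exact sequences $0\to C_{i+1}\to W_{-i}\to C_i\to 0$ for $i=0,\ldots,m-1$, where $C_0=Z$ and $C_i=\mathrm{im}(W_{-i}\to W_{-i+1})$ for $i\geq 1$, together with the final surjection $W_{-m}\twoheadrightarrow C_m$. Because $\mathrm{Ext}^{j}_{\mathcal{A}}(T,W_{-i})=0$ for all $j\geq 1$, the long exact Ext sequence of each short exact sequence produces isomorphisms $\mathrm{Ext}^{k}_{\mathcal{A}}(T,C_i)\cong\mathrm{Ext}^{k+1}_{\mathcal{A}}(T,C_{i+1})$ for $k\geq 1$, and iterating yields $\mathrm{Ext}^{k}_{\mathcal{A}}(T,Z)\cong\mathrm{Ext}^{k+m}_{\mathcal{A}}(T,C_m)$. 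The vanishing of the right-hand side is then obtained by combining the position of $C_m$ both as a quotient of $W_{-m}\in\mathcal{Y}$ and as a subobject of $W_{-m+1}\in\mathcal{Y}$ with quotient $C_{m-1}$, together with the closure of $\mathcal{Y}$ under extensions and under cokernels of monomorphisms whose kernel lies in $\mathcal{Y}$ (both immediate from the Ext long exact sequence).

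The principal obstacle is the first part: in the absence of an AB4 hypothesis on $\mathcal{A}$, termwise coproducts of exact sequences need not remain exact in $\mathcal{A}$. Condition T0 is precisely the bridge that lets one verify exactness by passing to $\mathcal{D}(\mathcal{A})$, where coproducts of triangles are always triangles, and then descend back to $\mathcal{A}$ at the $\mathrm{Sum}(T)$-positions.
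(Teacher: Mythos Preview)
Your second step has a genuine gap. After the dimension shift you need $\text{Ext}^{k+m}_{\mathcal{A}}(T,C_m)=0$ for every $k\geq 1$, but the closure properties you invoke do not give this. That $C_m$ is a quotient of $W_{-m}\in\mathcal{Y}$ only yields $\text{Ext}^{j}(T,C_m)\cong\text{Ext}^{j+1}(T,K)$ for $j\ge 1$, where $K=\ker(W_{-m}\to C_m)$ is an object about which nothing is known; and that $C_m\hookrightarrow W_{-(m-1)}$ with cokernel $C_{m-1}$ merely reproduces the isomorphism $\text{Ext}^{j}(T,C_{m-1})\cong\text{Ext}^{j+1}(T,C_m)$ you already used in the shift, so no new information is gained. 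Since the lemma assumes only T0 and T1---\emph{not} T2---there is no a priori bound on $\text{Ext}^{\bullet}_{\mathcal{A}}(T,-)$ against arbitrary objects, and pure dimension shifting cannot terminate.

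The paper avoids this entirely by never trying to compute $\text{Ext}$ directly. It proves instead, by induction on $k=1,\dots,m+1$, that each successive image $B^k$ of the given sequence lies in $\text{Pres}^{k-1}(T)$; for $k=m+1$ this puts the cokernel in $\text{Pres}^m(T)=\mathcal{Y}$. The inductive step takes $0\to B^{k-1}\to Y^{k-1}\to B^k\to 0$, chooses a $\text{Sum}(T)$-precover $p:T'\twoheadrightarrow Y^{k-1}$, uses the precover condition together with $Y^{k-1}\in\mathcal{Y}$ to force $\ker p\in\mathcal{Y}=\text{Pres}^m(T)$, and then applies a horseshoe-type statement (the dual of \cite[Lemma~3.8]{B}) to the pulled-back extension $0\to\ker p\to\ker(\bar f^{k-1}\circ p)\to B^{k-1}\to 0$. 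The equality $\text{Pres}^m(T)=\mathcal{Y}$ is thus invoked at every step to convert $\text{Ext}$-vanishing back into an explicit $\text{Sum}(T)$-presentation---precisely the move your approach lacks. (Incidentally, the paper just takes the $Y^k$ in $\mathcal{Y}$ rather than in $\text{Sum}(\mathcal{Y})$, so your first step is not treated there either; and your own argument for it also stalls, because the $\mathcal{D}(\mathcal{A})$-coproduct of the intermediate stalks $C_j^k[0]$ need not be a stalk complex, so the coproduct triangles do not obviously splice back into an exact sequence of objects of $\mathcal{A}$.)
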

\begin{proof}
Let $Y^0\stackrel{f^0}{\longrightarrow}Y^1\stackrel{f^1}{\longrightarrow}\cdots\stackrel{f^{m-1}}{\longrightarrow}Y^m$ be an exact sequence in $\mathcal{A}$, with all the $Y^k$ in $\mathcal{Y}$, and let us prove that $\text{Coker}(f^{m-1})\in\mathcal{Y}$. We put $B^k=\text{Im}(f^{k-1})$, for $k=1,\dots,m$ and put $B^{m+1}=\text{Coker}(f^{m-1})$. We shall prove by induction on $k=1,\dots,m+1$ that $B^k\in\text{Pres}^{k-1}(T)$. For $k=m+1$, this will give that $B^{m+1}\in\text{Pres}^m(T)=\mathcal{Y}$ and will end the proof. 

For $k=1$ is clear. Let us take $k>1$ (and $k\leq m+1$). We consider the induced exact sequence $0\longrightarrow B^{k-1}\stackrel{u}{\longrightarrow} Y^{k-1}\stackrel{\bar{f}^{k-1}}{\longrightarrow} B^k\longrightarrow 0$ and fix a $\text{Sum}(T)$-precover $p:T'\longrightarrow Y^{k-1}$, which is necessarily an epimorphism (e.g. one can take the canonical (epi)morphism $T^{(\text{Hom}_\mathcal{A}(T,Y^{k-1}))}\twoheadrightarrow Y^{k-1}$). Note that, since the induced map $p_*:\text{Hom}_\mathcal{A}(T,T')\longrightarrow\text{Hom}_\mathcal{A}(T,Y^{k-1})$ is surjective,   the long exact sequence of $\text{Ext}$ applied to $0\longrightarrow\text{Ker}(p)\longrightarrow T'\stackrel{p}{\longrightarrow}Y^{k-1}$ gives that $\text{Ker}(p)\in\bigcap_{k>0}\text{Ker}(\text{Ext}_\mathcal{A}^k(T,?))=\mathcal{Y}$. On the other hand, since $u$ is a monomorphism, the upper left corner of the pullback of $p$ and $u$ is $\text{Ker}(\bar{f}^{k-1}\circ p)$. But two parallel arrows in a pullback have isomorphic kernels. 
We  then get an exact sequence $0\longrightarrow\text{Ker}(p)\longrightarrow\text{Ker}(\bar{f}^{k-1}\circ p)\longrightarrow B^{k-1}\longrightarrow 0$. Now the proof of \cite[Lemma 3.8]{B} is valid on any AB3* abelian category, and the dual of this proof applies to our case. It follows that $\text{Ker}(\bar{f}^{k-1}\circ p)\in\text{Pres}^{k-2}(T)$. By considering the exact sequence $0\longrightarrow\text{Ker}(\bar{f}^{k-1}\circ p) \hookrightarrow T'\stackrel{\bar{f}^{k-1}\circ p}{\longrightarrow}B^k\longrightarrow 0$, we conclude that $B^k\in\text{Pres}^{k-1}(T)$. 
\end{proof}

The following lemma  was pointed out to us by Luisa Fiorot. We here reproduce the essential idea of her proof and thank her for the help. 

\begin{lem}\label{lem.YcogeneratesA}
Let   $T$ be a $n$-tilting object in $\mathcal{A}$ and let $\mathcal{H}_T=T^{\perp_{>0}}\cap T^{\perp_{<0}}$ be the heart of the associated t-structure in $\mathcal{D}(\mathcal{A})$. Then the class
$\mathcal{Y}:=\bigcap_{k>0}{\rm Ker}(\text{Ext}^k_\mathcal{A}(T,?))$ coincides with $\mathcal{A}\cap\mathcal{H}_T$ and is a cogenerating class in  $\mathcal{A}$. Moreover,  any object
$A\in\mathcal{A}$ admits an exact sequence 
$0\to A\to Y\to T^1\to\cdots T^n\to 0$,  where $Y\in \mathcal{Y}$ and $T^k\in\Add(T)$ for any $k=1,\dots n$.
\end{lem}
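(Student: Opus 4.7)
My plan is to split the proof into two logically independent parts: the identification $\mathcal{Y}=\mathcal{A}\cap\mathcal{H}_T$, which is a direct computation, and the construction of the exact sequence $0\to A\to Y\to T^1\to\cdots\to T^n\to 0$, from which the cogenerating property follows immediately. For the identification, I would observe that for any $A\in\mathcal{A}$ viewed as a stalk complex in degree zero, $\text{Hom}_{\mathcal{D}(\mathcal{A})}(T,A[k])=0$ automatically whenever $k<0$, since $T$ itself sits in degree zero. Hence $A[0]$ always lies in $T^{\perp_{<0}}$, and membership in $\mathcal{H}_T=T^{\perp_{>0}}\cap T^{\perp_{<0}}$ reduces to requiring $\text{Ext}^k_\mathcal{A}(T,A)=0$ for all $k>0$, i.e., to $A\in\mathcal{Y}$.

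For the construction of the coresolution I plan to use condition T3 together with a pushout argument. Given $A\in\mathcal{A}$, the generating property of $\mathcal{G}$ yields an epimorphism $\pi\colon G\twoheadrightarrow A$ with $G\in\mathcal{G}$, and T3 supplies an exact sequence $0\to G\stackrel{j}{\hookrightarrow}T^0\to T^1\to\cdots\to T^n\to 0$ with $T^i\in\text{Add}(T)$. Writing $K=\ker\pi$, I would form the pushout of $\pi$ and $j$ in $\mathcal{A}$: since $j$ is a monomorphism, the induced map $A\hookrightarrow Y$ into the pushout object $Y$ is itself a monomorphism, and the standard pushout-in-abelian-categories lemma produces two short exact sequences $0\to A\to Y\to T^0/G\to 0$ and $0\to K\to T^0\to Y\to 0$. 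Because $T^0/G=\text{Ker}(T^1\to T^2)$ inherits from the coresolution of $G$ an $\text{Add}(T)$-coresolution $0\to T^0/G\to T^1\to\cdots\to T^n\to 0$ of length $n-1$, splicing it with $0\to A\to Y\to T^0/G\to 0$ yields the desired long exact sequence $0\to A\to Y\to T^1\to\cdots\to T^n\to 0$.

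The main obstacle is to verify that $Y\in\mathcal{Y}$. From the sequence $0\to K\to T^0\to Y\to 0$ and the fact that $T^0\in\text{Add}(T)\subseteq\mathcal{Y}$ (which follows from T0 and T1), dimension shifting gives $\text{Ext}^k_\mathcal{A}(T,Y)\cong\text{Ext}^{k+1}_\mathcal{A}(T,K)$ for every $k\geq 1$, so it suffices to show that $\text{Ext}^{\geq 2}_\mathcal{A}(T,K)=0$. For $n=1$ this is automatic by T2, but for $n\geq 2$ the argument is more delicate: combining the long exact Ext-sequence of $0\to K\to G\to A\to 0$ with the vanishing of $\text{Ext}^{>n}_\mathcal{A}(T,G)$ (obtained by iterated dimension shifting along the coresolution of $G$, which identifies $\text{Ext}^k_\mathcal{A}(T,G)$ with $\text{Ext}^{k-n}_\mathcal{A}(T,T^n)$ for $k\geq n+1$) and of $\text{Ext}^{>n}_\mathcal{A}(T,A)$ (from T2) confines the possibly nonzero $\text{Ext}^j_\mathcal{A}(T,K)$ to degrees $1\leq j\leq n+1$; the remaining obstruction is then eliminated by iterating the pushout construction, in the spirit of Lemma \ref{lem.Pres}, enlarging $G$ within $\mathcal{G}$ so that the new kernel absorbs the Ext classes in the critical range. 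Once $Y\in\mathcal{Y}$ is established, the cogenerating property of $\mathcal{Y}$ is read off at once from the monomorphism $A\hookrightarrow Y$.
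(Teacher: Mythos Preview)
Your identification $\mathcal{Y}=\mathcal{A}\cap\mathcal{H}_T$ is correct and coincides with the paper's argument. The difficulty lies entirely in the construction of the coresolution, and there your pushout approach has a genuine gap.

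Your scheme produces, from $0\to K\to G\to A\to 0$ and $0\to G\to T^0\to\cdots\to T^n\to 0$, a pushout object $Y$ with $\text{Ext}^k_\mathcal{A}(T,Y)\cong\text{Ext}^{k+1}_\mathcal{A}(T,K)$ for $k\geq 1$. For $n=1$ this vanishes by T2, but for $n\geq 2$ you must kill $\text{Ext}^j_\mathcal{A}(T,K)$ in the range $2\leq j\leq n$, and nothing in the hypotheses controls these groups: $K$ is just the kernel of an arbitrary $\mathcal{G}$-cover of $A$. Your final sentence (``iterating the pushout construction \ldots\ enlarging $G$ within $\mathcal{G}$ so that the new kernel absorbs the Ext classes'') is not an argument; Lemma~\ref{lem.Pres} concerns $\text{Pres}^m(\mathcal{Y})\subseteq\mathcal{Y}$ and gives no mechanism for improving $K$, and there is no evident way to choose $G\in\mathcal{G}$ making $\text{Ext}^{\geq 2}_\mathcal{A}(T,K)$ vanish. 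Any attempt to use the dual of Auslander--Buchweitz here is circular, since that requires $\mathcal{Y}$ to be cogenerating, which is what you are trying to prove.

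The paper avoids this entirely by exploiting the description of the aisle from Corollary~\ref{cor.partial tilting are presilting}: since $\mathcal{U}_T=\mathcal{K}^{\leq 0}(\text{Add}(T))$ and $\text{pd}_\mathcal{A}(T)\leq n$ forces $\mathcal{D}^{\leq -n}(\mathcal{A})\subseteq T^{\perp_{>0}}=\mathcal{U}_T$ (via Lemma~\ref{lem.finite projective dimension}), one has $\mathcal{A}\subseteq\mathcal{U}_T[-n]=\mathcal{K}^{\leq n}(\text{Add}(T))$. Thus $A[0]$ is isomorphic in $\mathcal{D}(\mathcal{A})$ to some
\[
T^\bullet:\quad \cdots\longrightarrow T^{-1}\stackrel{d^{-1}}{\longrightarrow} T^0\stackrel{d^0}{\longrightarrow}\cdots\longrightarrow T^n\longrightarrow 0
\]
with all $T^j\in\text{Add}(T)$. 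Setting $Y=\text{Coker}(d^{-1})$, one gets $Y[0]\cong\sigma_{\leq 0}T^\bullet\in\mathcal{K}^{\leq 0}(\text{Add}(T))=\mathcal{U}_T$, so $Y\in\mathcal{A}\cap\mathcal{U}_T=\mathcal{Y}$ \emph{for free}. The monomorphism $A\cong H^0(T^\bullet)\hookrightarrow Y$ and the induced exact sequence $0\to A\to Y\to T^1\to\cdots\to T^n\to 0$ then drop out immediately. The point is that the aisle description already packages the required Ext-vanishing for $Y$; your approach tries to verify it by hand and runs into an obstruction that does not dissolve without essentially reproving that description.
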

\begin{proof}
The aisle of the associated t-structure in $\mathcal{D}(\mathcal{A})$ is $\mathcal{U}_T=T^{\perp_{>0}}=\mathcal{K}^{\leq 0}(\text{Add}(T))$ (see Corollary \ref{cor.partial tilting are presilting}). 
We then have an equality of subcategories $\mathcal{Y}=\mathcal{A}\cap T^{\perp_{>0}}=\mathcal{A}\cap T^{\perp_{>0}}\cap T^{\perp_{<0}}=\mathcal{A}\cap\mathcal{H}_T$ since there are not negative extensions between objects of $\mathcal{A}$.  Moreover, by Lemma \ref{lem.finite projective dimension},  we have that $\mathcal{D}^{\leq -n}(\mathcal{A})\subseteq T^{\perp_{>0}}$, and so $\mathcal{A}\subseteq\mathcal{U}_T[-n]=\mathcal{K}^{\leq n}(\text{Add}(T))$. 

Given  any object $A\in\mathcal{A}$, we then have a complex $$T^\bullet : \hspace*{0.5cm} \cdots\longrightarrow T^{-k}\stackrel{d^{-k}}{\longrightarrow} T^{-k+1}\longrightarrow \cdots\stackrel{d^{-1}}{\longrightarrow} T^0\stackrel{d^0}{\longrightarrow} \cdots\longrightarrow T^{n-1}\stackrel{d^{n-1}}{\longrightarrow} T^n\longrightarrow 0\longrightarrow \cdots,$$
with all the $T^j$ in $\text{Add}(T)$, which is isomorphic to $A[0]$ in $\mathcal{D}(\mathcal{A})$. Then $Y:=\text{Coker}(d^{-1})$ is in $\mathcal{Y}$ because, using stupid truncations,  we have an isomorphism $\sigma_{\leq 0}T^\bullet\cong Y[0]$ in $\mathcal{D}(\mathcal{A})$ and $\sigma_{\leq 0}T^\bullet\in\mathcal{K}^{\leq 0}(\text{Add}(T))=T^{\perp_{>0}}$. But we have a monomorphism $A\cong H^0(T^\bullet )\rightarrowtail Y$ and, hence, $\mathcal{Y}$ is a cogenerating class of $\mathcal{A}$. On the other hand, using intelligent truncation, we have an isomorphism $A[0]\cong\tau^{\geq 0}T^\bullet$ in $\mathcal{D}(\mathcal{A})$, and $\tau^{\geq 0}T^\bullet$ is identified with the induced complex $$\cdots\longrightarrow 0\longrightarrow Y\longrightarrow T^1\longrightarrow \cdots\longrightarrow T^{n}\longrightarrow 0\longrightarrow \cdots $$
\end{proof}

The following main result of the section shows that several common characterizations of tilting modules pass naturally to our general setting.

\begin{thm} \label{teor.characterization of tilting objects}
Let $\mathcal{A}$ be an abelian category such that $\mathcal{D}(\mathcal{A})$ has $\text{Hom}$ sets and arbitrary coproducts, and let $T$ be an object of $\mathcal{A}$ such that the coproduct $T^{(I)}$ is the same in $\mathcal{A}$ and $\mathcal{D}(\mathcal{A})$. The following assertions are equivalent:

\begin{enumerate}
\item $T$ is a tilting object of $\mathcal{A}$.
\item $T$ has finite projective dimension and is a silting (or tilting) object of $\mathcal{D}(\mathcal{A})$.
\item $T$ is a (partial) silting object of $\mathcal{D}(\mathcal{A})$ such that, for some generating class $\mathcal{G}$  of $\mathcal{A}$, there is an $n\in\mathbb{N}$ such that the  inclusions $\mathcal{G}\subset\text{Add}(T)[-n]\star\text{Add}(T)[-n+1]\star \cdots\star\text{Add}(T)[0]\subseteq\text{thick}_{\mathcal{D}(\mathcal{A})}(\mathcal{G})$ hold.
\item $\text{Ext}_\mathcal{A}^k(T,T^{(I)})=0$, for all integers $k>0$ and all sets $I$, and  there is a generating class $\mathcal{G}$  of $\mathcal{A}$ such that:

\begin{enumerate}
\item There is a common finite upper bound on the projective dimensions of the objects of $\mathcal{G}$;
\item $\text{thick}_{\mathcal{D}(\mathcal{A})}(\mathcal{G})=\text{thick}_{\mathcal{D}(\mathcal{A})}(\text{Sum}(T))$.

\end{enumerate}

\item If $\mathcal{Y}=\bigcap_{k>0}\text{Ker}(\text{Ext}_\mathcal{A}^k(T,?))$, then:

\begin{enumerate}
\item  $\mathcal{Y}=\text{Pres}^{m}(T),$ for some integer $m\in\mathbb{N}$;
\item $\mathcal{Y}$ is a cogenerating class of $\mathcal{A}$.
\end{enumerate}
\end{enumerate}
If any of the equivalent conditions hold, the associated t-structure in $\mathcal{D}(\mathcal{A})$ is $\tau_T=(\mathcal{K}^{\leq 0}(\text{Sum}(T)),T^{\perp_{<0}})=(T^{\perp_{>0}},T^{\perp_{<0}})$.
\end{thm}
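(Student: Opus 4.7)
My plan is to establish the equivalences via the cycle $(1) \Rightarrow (2) \Rightarrow (3) \Rightarrow (4) \Rightarrow (5) \Rightarrow (1)$, and then extract the description of $\tau_T$ from Corollary \ref{cor.partial tilting are presilting}. Every implication reduces to machinery already built in the paper: Corollary \ref{cor.partial tilting are presilting}, Proposition \ref{prop.aisle of partial tilting object}, Lemma \ref{lem.t-structure class generators}, Lemma \ref{lem.finite projective dimension}, Lemma \ref{lem.Pres}, and Lemma \ref{lem.YcogeneratesA}.

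For $(1) \Rightarrow (2)$, finite projective dimension is exactly T2, and Corollary \ref{cor.partial tilting are presilting} already identifies $T$ as a partial silting object of $\mathcal{D}(\mathcal{A})$ with aisle $\mathcal{K}^{\leq 0}(\text{Sum}(T))$. To upgrade partial silting to silting, I use T3 to place each $G$ in the generating class $\mathcal{G}$ inside $\text{thick}_{\mathcal{D}(\mathcal{A})}(\text{Sum}(T))$ via its finite $\text{Add}(T)$-coresolution; any $X \in \bigcap_{k\in\mathbb{Z}} T^\perp[k]$ is then orthogonal to every shift of every $G$, and Lemma \ref{lem.t-structure class generators} forces $H^k(X) = 0$ for all $k$, hence $X \cong 0$. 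For $(2) \Rightarrow (3)$, I take $\mathcal{G} := \mathcal{A}$. The inclusion $\text{Add}(T)[-n] \star \cdots \star \text{Add}(T)[0] \subseteq \text{thick}_{\mathcal{D}(\mathcal{A})}(\mathcal{A})$ is immediate because $\text{Add}(T) \subseteq \mathcal{A}$ by hypothesis T0. For the reverse inclusion, each $A \in \mathcal{A}$ satisfies $\text{Ext}^{>n}_{\mathcal{A}}(T, A) = 0$ by Lemma \ref{lem.finite projective dimension}, so $A[n] \in \mathcal{U}_T = \mathcal{K}^{\leq 0}(\text{Sum}(T))$. A truncation argument combining the Milnor-colimit description of the aisle from Theorem \ref{teor.weakly equivalent to presilting} with the fact that $A$ is concentrated in cohomological degree $0$ then produces a bounded representation in $\mathcal{K}^{[0,n]}(\text{Add}(T))$, which by Proposition \ref{prop.aisle of partial tilting object}(2) is exactly $A \in \text{Add}(T)[-n] \star \cdots \star \text{Add}(T)[0]$.

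The remaining implications are more bookkeeping. For $(3) \Rightarrow (4)$, condition (4b) follows directly from the two inclusions in (3), while (4a) is obtained by dimension-shifting $\text{Hom}_{\mathcal{D}(\mathcal{A})}(G, -)$ along the bounded $\text{Add}(T)$-representation of each $G$ and invoking T1 from partial silting. For $(4) \Rightarrow (5)$, cogeneration of $\mathcal{Y}$ follows by monomorphisms into objects of $\mathcal{G}$ together with Ext-vanishing, and $\mathcal{Y} = \text{Pres}^m(T)$ is a consequence of the fact that $\mathcal{Y}$-objects lie in $\mathcal{H}_T \cap \mathcal{A}$, so by Proposition \ref{prop.aisle of partial tilting object} they admit $\text{Sum}(T)$-presentations that, using finite projective dimension, truncate to length $m$. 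For $(5) \Rightarrow (1)$, T0 is part of the hypothesis, T1 follows from $T \in \mathcal{Y}$, and T2 together with T3 come from combining Lemma \ref{lem.Pres} (closure of $\mathcal{Y}$ under $\text{Pres}^m$) with the cogenerating property of $\mathcal{Y}$, refining the resulting $\mathcal{Y}$-coresolutions into $\text{Add}(T)$-coresolutions via the $\text{Pres}^m$-structure. The final t-structure description is then Corollary \ref{cor.partial tilting are presilting} applied to the tilting object. The principal obstacle will be the truncation-and-stabilization step in $(2) \Rightarrow (3)$, where an unbounded $\text{Sum}(T)$-representation of $A \in \mathcal{A}$ must be collapsed to a complex of length $n+1$; this requires delicately balancing the silting structure against the cohomological concentration of $A$ in degree $0$.
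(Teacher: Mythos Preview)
Your cycle $(1)\Rightarrow(2)\Rightarrow(3)\Rightarrow(4)\Rightarrow(5)\Rightarrow(1)$ breaks at $(2)\Rightarrow(3)$: the choice $\mathcal{G}=\mathcal{A}$ is wrong, and the ``truncation-and-stabilization'' you flag as the principal obstacle is in fact impossible in general. Concretely, take $\mathcal{A}=\text{Mod}\text{-}R$ with $R=k[x]/(x^2)$ and $T=R$. Then $T$ has projective dimension $0$ and is a (classical) tilting object of $\mathcal{D}(R)$, so (2) holds. By Proposition~\ref{prop.aisle of partial tilting object}(2), the inclusion $\mathcal{A}\subset\text{Add}(T)[-n]\star\cdots\star\text{Add}(T)[0]$ would say that every module $A$ admits an exact sequence $0\to A\to P^0\to\cdots\to P^n\to 0$ with the $P^i$ free. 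Since $P^n$ is projective, $P^{n-1}\to P^n$ splits and one reduces length by one; iterating forces $A$ to be projective. But the simple module $k$ is not projective. So for this $T$ there is no $n$ with $\mathcal{A}\subset\text{Add}(T)[-n]\star\cdots\star\text{Add}(T)[0]$, and $\mathcal{G}=\mathcal{A}$ cannot witness (3). The paper avoids this by not attempting $(2)\Rightarrow(3)$ directly: it first proves $(2)\Rightarrow(1)$ (producing, for each $M\in\mathcal{A}$, an unbounded complex $T_M^\bullet\in\mathcal{K}^{\leq 0}(\text{Add}(T))$ quasi-isomorphic to $M[n]$ and then taking $\mathcal{G}=\{Z_M^{-n}\}$, the degree-$(-n)$ cocycles, which are genuinely smaller than $\mathcal{A}$), and only then proves $(1)\Rightarrow(3)$ using the generating class already supplied by T3.

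There is a second gap at $(4)\Rightarrow(5)$: you write that ``cogeneration of $\mathcal{Y}$ follows by monomorphisms into objects of $\mathcal{G}$'', but $\mathcal{G}$ is a \emph{generating} class, so objects of $\mathcal{A}$ receive epimorphisms from $\mathcal{G}$, not monomorphisms into it; there is nothing in (4) that produces a cogenerating class. The paper again routes through (1): it proves $(4)\Rightarrow(1)$ and then $(1)\Rightarrow(5)$, where the cogeneration of $\mathcal{Y}$ comes from Lemma~\ref{lem.YcogeneratesA}, whose hypothesis is precisely that $T$ is tilting. In short, the paper's logical scheme is $1\Leftrightarrow 2$, $1\Rightarrow 3\Rightarrow 4\Rightarrow 1$, $1\Rightarrow 5\Rightarrow 1$, and the detours through (1) are essential: both (3) and (5) need a carefully chosen $\mathcal{G}$ (respectively, the cogenerating class $\mathcal{Y}$) that only becomes available once one has the full tilting data of (1).
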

\begin{proof}
$1)\Longrightarrow 2)$ By Corollary \ref{cor.partial tilting are presilting}, we know that $T$ is a partial silting object of $\mathcal{D}(\mathcal{A})$. On the other hand, by property T3 in the definition of tilting object and Lemma \ref{lem.t-structure class generators}, we easily get that $T$  is a generator of $\mathcal{D}(\mathcal{A})$. 

$2)\Longrightarrow 1)$ It immediately follows that  $T$ is a partial tilting object, and only condition T3 in Definition \ref{def.(partial) tilting object} needs to be checked.   Let us put $\text{pd}_\mathcal{A}(T)=n$.  If  $M\in\mathcal{A}$ is any object, then we have that $M[n]\in T^{\perp_{>0}}=\mathcal{U}_T=\mathcal{K}^{\leq 0}(\text{Add}(T))$, due to Lemma \ref{lem.finite projective dimension}. Then we have a complex $$T_M^\bullet :\cdots\longrightarrow T_M^{-n-1}\stackrel{d^{-n-1}}{\longrightarrow} T_M^{-n}\stackrel{d^{-n}}{\longrightarrow} \cdots\stackrel{d^{-2}}{\longrightarrow} T_M^{-1}\stackrel{d^{-1}}{\longrightarrow} T_M^0\longrightarrow 0\longrightarrow\cdots $$ with homology concentrated in degree $-n$ and $H^{-n}(T_M^\bullet )\cong M$, such that $T_M^{-k}\in\text{Add}(T)$ for all $k\in\mathbb{N}$. 
Putting $B_M^{-n}=\text{Im}(d^{-n-1})$ and $Z_M^{-n}=\text{Ker}(d^{-n})$, we get an exact sequence $0\longrightarrow B_M^{-n}\longrightarrow Z_M^{-n}\longrightarrow M\longrightarrow 0$. Putting $\mathcal{G}=\{Z_M^{-n}\text{: }M\in\mathcal{A}\}$, we get a generating class of $\mathcal{A}$ satisfying the mentioned property T3. 

$1)\Longrightarrow 3)$ By Corollary \ref{cor.partial tilting are presilting}, we know that $T$ is a partial silting object of $\mathcal{D}(\mathcal{A})$ whose associated t-structure is the one of the final statement of this theorem. Let $n\in\mathbb{N}$ be such that $T$ is $n$-tilting. By assertion 2 of Proposition \ref{prop.aisle of partial tilting object} and condition T3, we get that 
 $\mathcal{G}\subset\text{Add}(T)[-n]\star\text{Add}(T)[-n+1]\star \cdots\star\text{Add}(T)[0]$. 

Consider now a $\mathcal{G}$-resolution of $T$  $$\cdots\longrightarrow G^{-m}\longrightarrow G^{-m+1}\longrightarrow \cdots\longrightarrow G^{-1}\longrightarrow G^0\longrightarrow T\longrightarrow 0 $$  and denote by $Z^{-m}$ the kernel of the differential $G^{-m}\longrightarrow G^{-m+1}$, for each $m\in\mathbb{N}$. We then have an induced complex $$X^\bullet :\cdots\longrightarrow 0\longrightarrow Z^{-n}\longrightarrow G^{-n}\longrightarrow G^{-n+1}\longrightarrow\cdots\longrightarrow G^1\longrightarrow G^0\longrightarrow 0\longrightarrow\cdots $$ which is quasi-isomorphic to the stalk complex $T[0]$.  If we consider the truncated complex $$G^\bullet :\cdots \longrightarrow 0\longrightarrow G^{-n}\longrightarrow G^{-n+1}\longrightarrow\cdots\longrightarrow G^1\longrightarrow G^0\longrightarrow 0 \longrightarrow\cdots ,$$ then $X^\bullet$ is just the cone in $\mathcal{K}(\mathcal{A})$ of the chain map $Z^{-n}[n]\longrightarrow G^\bullet$ determined by the inclusion $Z^{-n}\hookrightarrow G^{-n}$. We then get a triangle $$Z^{-n}[n]\longrightarrow G^\bullet\longrightarrow T[0]\stackrel{0}{\longrightarrow}Z^{-n}[n+1]$$ in $\mathcal{D}(\mathcal{A})$, which splits since $\text{Ext}_\mathcal{A}^{n+1}(T,Z^{-n})=0$. It follows that $T[0]$ is isomorphic to a direct summand of $G^\bullet$ in $\mathcal{D}(\mathcal{A})$, and we clearly have $G^\bullet\in\text{thick}_{\mathcal{D}(\mathcal{A})}(\mathcal{G})$.

$3)\Longrightarrow 4)$ is clear, except for condition 4.a. By assertion 2 of Proposition \ref{prop.aisle of partial tilting object}, we know that each $G\in\mathcal{G}$ admits an exact sequence $0\longrightarrow G\longrightarrow T^0\longrightarrow T^1\longrightarrow \cdots\longrightarrow T^n\longrightarrow 0$ (*), with all the $T^k$  in $\text{Add}(T)$. But, by hypothesis, we know that $T$ is a partial silting object of $\mathcal{D}(\mathcal{A})$, so that $\mathcal{U}_T={}^\perp (T^{\perp_{\leq 0}})$ is an aisle of $\mathcal{D}(\mathcal{A})$ such that $\text{Hom}_{\mathcal{D}(\mathcal{A})}(T,?)$ vanishes on $\mathcal{U}_T[1]$. 
Then the inclusion $\mathcal{G}\subset\text{Add}(T)[-n]\star\text{Add}(T)[-n+1]\star \cdots\star\text{Add}(T)[0]$ together with Lemma \ref{lem.t-structure class generators}  imply that $\mathcal{D}^{\leq 0}(\mathcal{A})={}^\perp(\mathcal{G}^{\perp_{\leq 0}})\subseteq\mathcal{U}_T[-n]$, so that $\text{Hom}_{\mathcal{D}(\mathcal{A})}(T,?)$ vanishes on $\mathcal{D}^{\leq 0}(\mathcal{A})[n+1]=\mathcal{D}^{<-n}(\mathcal{A})$. Therefore we get that $\text{pd}_\mathcal{A}(T)\leq n$ (see Lemma \ref{lem.finite projective dimension}) and, in particular, we have that $\text{pd}_\mathcal{A}(T^k)\leq n$ for all $T^k$ in the sequence (*). But, using the long exact sequence of $\text{Ext}$, one readily sees that the class $\mathcal{P}^{\leq n}(\mathcal{A})$ of objects of projective dimension $\leq n$ is closed under taking kernels of epimorphisms.  By iteration of this property, we then get that $\text{pd}_\mathcal{A}(G)\leq n$, for all $G\in\mathcal{G}$. 

$4)\Longrightarrow 1)$ Condition T1 of the definition of tilting object is automatic.  By condition 4.a and Lemma \ref{lem.finite projective dimension}, we know that there is a $m\in\mathbb{N}$ such that $\text{Hom}_{\mathcal{D}(\mathcal{A})}(G,?)$ vanishes on $\mathcal{D}^{<-m}(\mathcal{A})$, for all $G\in\mathcal{G}$. Since $T\in\text{Sum}(\mathcal{G})[r_1]\star \cdots\star\text{Sum}(\mathcal{G})[r_t]$, for some  $r_1,\dots,r_t\in\mathbb{Z}$, there is a large enough  $n\in\mathbb{N}$ such that $\text{Hom}_{\mathcal{D}(\mathcal{A})}(T,?)$ vanishes on $\mathcal{D}^{<-n}(\mathcal{A})$. Therefore we have $\text{pd}_\mathcal{A}(T)\leq n$, so that also condition T2 holds. 

Without loss of generality, put $n=\text{pd}_\mathcal{A}(T)$. Since $\mathcal{G}\subseteq\text{thick}_{\mathcal{D}(\mathcal{A})}(\text{Sum}(T))$, we know by Proposition \ref{prop.aisle of partial tilting object} that  each $G\in\mathcal{G}$ is isomorphic in $\mathcal{D}(\mathcal{A})$ to a complex of $\mathcal{K}^b(\text{Add}(T))$. This last complex will be then of the form $$T_G^\bullet:\cdots\longrightarrow 0\longrightarrow T^{-r}\longrightarrow \cdots\longrightarrow T^{-1}\stackrel{d^{-1}}{\longrightarrow} T^0\stackrel{d^0}{\longrightarrow}T^1\longrightarrow \cdots\longrightarrow T^p\longrightarrow 0\longrightarrow \cdots $$ Putting $Z_G=\text{Ker}(d^0)$ and $B_G=\text{Im}(d^{-1})$, we then get that $G\cong H^0(T^\bullet_G)=Z_G/B_G$ and $H^j(T_G^\bullet )=0$, for each $j\neq 0$. Then $\hat{\mathcal{G}}:=\{Z_G\text{: }G\in\mathcal{G}\}$ is a class of generators of $\mathcal{A}$ and, for each $\hat{G}=Z_G\in\hat{\mathcal{G}}$, we have an induced exact sequence $0\longrightarrow\hat{G}\longrightarrow T^0\longrightarrow T^1\longrightarrow \cdots\longrightarrow T^p\longrightarrow 0$ (**). 

To end the proof of this implication, we only need to check that we can choose $p=n$. If $p<n$ that is clear: we simply put $T^k=0$ for $k=p+1,\dots,n$. So we assume that $p>n$. 
 Let us consider the induced exact sequence $$0\longrightarrow K'\longrightarrow T^{p-n-1}\longrightarrow T^{p-n}\longrightarrow\cdots\longrightarrow T^p\longrightarrow 0, $$   which is an $\text{Add}(T)$-coresolution of $K'$. Since $\text{Ext}_\mathcal{A}^k(T,?)$ vanishes on all the $T^k$, $\text{Ext}_\mathcal{A}^j(T,K')$ is the $j$-th cohomology group of the induced complex of abelian groups $$\cdots\longrightarrow 0\longrightarrow\text{Hom}_\mathcal{A}(T,T^{p-n-1})\longrightarrow\text{Hom}_\mathcal{A}(T,T^{p-n})\longrightarrow\cdots\longrightarrow\text{Hom}_\mathcal{A}(T,T^p)\longrightarrow 0\longrightarrow \cdots,$$ where $\text{Hom}_\mathcal{A}(T,T^k)$ is in degree $k+n+1-p$ for each $k=p-n-1,p-n,\dots,p$. In particular, we have that $0=\text{Ext}_\mathcal{A}^{n+1}(T,K')$ is the cokernel of the map $\text{Hom}_\mathcal{A}(T,T^{p-1})\longrightarrow\text{Hom}_\mathcal{A}(T,T^p)$, so that this map is surjective. It follows that $d^{p-1}:T^{p-1}\longrightarrow T^p$ is a retraction and, hence, that  $\text{Ker}(d^{m-1})$ is also in $\text{Add}(T)$. That is,  if there exists a sequence as (**) of length $p$, then there also exists one of length $p-1$. By iterating the process, we arrive at an exact sequence like (**) of length exactly $n$. 

$1)=3)\Longrightarrow 5)$ Due to Lemma \ref{lem.YcogeneratesA}, we know that condition 5.b holds.  Let us put $n=\text{pd}_\mathcal{A}(T)$.  We have already seen in the proof of that lemma that  $\mathcal{Y}=\mathcal{U}_T\cap\mathcal{A}$, where  $\mathcal{U}_T=T^{\perp_{>0}}={}^\perp (T^{\perp_{\leq 0}})$ is the aisle of the associated t-structure in $\mathcal{D}(\mathcal{A})$. Moreover, by Proposition \ref{prop.aisle of partial tilting object}(3), we also know that $\mathcal{U}_T=\mathcal{K}^{\leq 0}(\text{Add}(T))$. We then get that $\mathcal{Y}=\mathcal{K}^{\leq 0}(\text{Add}(T))\cap\mathcal{A}\subseteq\text{Pres}^{n-1}(T)$. 

We will also prove the converse inclusion. Let $Y\in\text{Pres}^{n-1}(T)$ be any object and consider an exact sequence in $\mathcal{A}$ $$ T^{-n+1}\stackrel{d^{-n+1}}{\longrightarrow}T^{-n+2}\longrightarrow \cdots\longrightarrow T^{-1}\longrightarrow T^0\longrightarrow Y\longrightarrow 0. \hspace{1cm} (***)$$ Let us put $Z_n=\text{Ker}(d^{-n+1})$, consider a $\mathcal{G}$-resolution of $Z_n$ and patch it with the sequence (***). We then obtain a complex $$X^\bullet :\cdots\longrightarrow G^{-n-1}\longrightarrow G^{-n}\longrightarrow T^{-n+1}\longrightarrow \cdots\longrightarrow T^{-1}\longrightarrow T^0\longrightarrow 0\longrightarrow\cdots $$ which is quasi-isomorphic to the stalk complex $Y[0]$. By taking the stupid truncation at $-n+1$, we get a triangle $\sigma_{\geq -n+1}X^\bullet\longrightarrow X^\bullet\longrightarrow\sigma_{\leq -n}X^\bullet\stackrel{+}{\longrightarrow}$ in $\mathcal{K}(\mathcal{A})$. But $\sigma_{\geq -n+1}X^\bullet$ is a complex of objects in $\text{Add}(T)$ concentrated in degrees $-n+1,\dots,-1,0$. It follows that $\sigma_{\geq -n+1}X^\bullet\in\text{Add}(T)[0]\star\text{Add}(T)[1]\star \cdots\star\text{Add}(T)[n-1]$ and $\sigma_{\leq -n}X^\bullet\in\mathcal{D}^{\leq -n}(\mathcal{A})$. Therefore the functor $\text{Hom}_{\mathcal{D}(\mathcal{A})}(T,?[k]):\mathcal{D}(\mathcal{A})\longrightarrow\text{Ab}$ vanishes both on $\sigma_{\geq -n+1}X^\bullet$ and $\sigma_{\leq -n}X^\bullet$, for all $k>0$, due to the silting condition of $T$ and the fact that $\text{pd}_\mathcal{A}(T)\leq n$. We then get that $$\text{Ext}_\mathcal{A}^k(T,Y)\cong\text{Hom}_{\mathcal{D}(\mathcal{A})}(T,Y[k])\cong\text{Hom}_{\mathcal{D}(\mathcal{A})}(T,X^\bullet [k])=0,$$ for all $k>0$.

$5)\Longrightarrow 1)$  We have $T^{(I)}\in\text{Pres}^{m}(T)$, so that $\text{Ext}_\mathcal{A}^k(T,T^{(I)})=0$, for all sets $I$. Moreover, the cogenerating condition of $\mathcal{Y}$ together with Lemma \ref{lem.Pres} imply that each $M\in\text{Ob}(\mathcal{A})$ admits an exact sequence $0\longrightarrow M\longrightarrow Y^0\longrightarrow Y^1\longrightarrow \cdots\longrightarrow Y^m\longrightarrow Y^{m+1}\longrightarrow 0$, where the $Y^k$ are in $\mathcal{Y}$. Consider now the complex $$Y^\bullet :\hspace*{0.5cm}\cdots \longrightarrow 0\longrightarrow Y^0\longrightarrow Y^1\longrightarrow \cdots\longrightarrow Y^m\longrightarrow Y^{m+1}\longrightarrow 0 \longrightarrow\cdots $$
 Since $\text{Ext}_\mathcal{A}^k(T,?)$ vanishes on all $Y^j$ for all $k>0$, we get that $\text{Ext}_\mathcal{A}^j(T,M)$ is the $j$-th cohomology group of the complex $\text{Hom}_\mathcal{A}(T,Y^\bullet )$. It follows that $\text{Ext}_\mathcal{A}^k(T,M)=0$, for all $k>m+1$, and so $\text{pd}_\mathcal{A}(T)\leq m+1$. Therefore $T$ is a partial $(m+1)$-tilting module.  

In order to check  property T3 of Definition \ref{def.(partial) tilting object}, note that we can apply the dual of  \cite[Theorem 1.1]{AB}, with $\mathbf{X}$, $\omega$, $\hat{\mathbf{X}}$ and $\hat{\omega}$ replaced by $\mathcal{Y}$, $\text{Add}(T)$, $\mathcal{A}$ and $\text{Add}(T)^\vee$, respectively. Here $\text{Add}(T)^\vee$ denotes the subcategory consisting of those objects $M$ which admit an exact sequence $0\longrightarrow M\longrightarrow T^0\longrightarrow T^1\longrightarrow \cdots\longrightarrow T^p\longrightarrow 0$, for some $p\in\mathbb{N}$,  with all the $T^k$ in $\text{Add}(T)$. It follows that $\mathcal{G}:=\text{Add}(T)^\vee$ is a generating class. It remains to see that if $G\in\mathcal{G}$ and we fix an exact sequence $0\longrightarrow G\longrightarrow T^0\longrightarrow T^1\longrightarrow \cdots\longrightarrow T^p\longrightarrow 0$, with the $T^k$ in $\text{Add}(T)$, then we can choose one such  $\text{Add}(T)$-coresolution  with $p=\text{pd}_\mathcal{A}(T)$ ($=m+1$). This has been done in the proof of $4)\Longrightarrow 1)$.
\end{proof}

Note that if $\mathcal{A}$ has enough projectives, then $\text{Proj}\mathcal{A}\subset\text{add}(\mathcal{G})$, for any generating class $\mathcal{G}$. Moreover,  if there is a common finite upper bound in the projective dimensions of the objects of $\mathcal{G}$, then    one has $\text{thick}_{\mathcal{D}(\mathcal{A})}(\mathcal{G})=\text{thick}_{\mathcal{D}(\mathcal{A})}(\text{Sum}(\mathcal{P}))=\mathcal{K}^b(\text{Proj}\mathcal{A})$, for any class $\mathcal{P}$  of projective generators. Therefore  we immediately get from Theorem \ref{teor.characterization of tilting objects} and its proof the following result, which is well-known for module categories (see  \cite[Theorem 3.11]{B}, \cite[Corollary 3.7]{W}). 

\begin{cor} \label{cor.tilting object when enough projectives}
Let $\mathcal{A}$  as in Setup \ref{setup} have  enough projectives and  let  $T$ be an object of $\mathcal{A}$ such that the coproduct $T^{(I)}$ is the same in $\mathcal{A}$ and $\mathcal{D}(\mathcal{A})$, for every set $I$.  The following assertions are equivalent:

\begin{enumerate}
\item $T$ is a tilting object of $\mathcal{A}$.
\item The following assertions hold:

\begin{enumerate}
\item $\text{Ext}_\mathcal{A}^k(T,T^{(I)})=0$, for all integers $k>0$ and all sets $I$, 
\item There exists an exact sequence $0\longrightarrow P^{-n}\longrightarrow \cdots\longrightarrow P^{-1}\longrightarrow P^0\longrightarrow T\longrightarrow 0$, where all the $P^{-k}$ are projective objects;
\item For some (resp. every) class of projective generators $\mathcal{P}$ of $\mathcal{A}$, there is  $m\in\mathbb{N}$ such that each object $P\in\mathcal{P}$ admits an exact sequence $0\longrightarrow P\longrightarrow T^0\longrightarrow T^1\longrightarrow \cdots\longrightarrow T^m\longrightarrow 0$, where all the $T^k$ are in $\text{Add}(T)$. 
\end{enumerate}
\item $\text{Ext}_\mathcal{A}^k(T,T^{(I)})=0$, for all integers $k>0$ and all sets $I$, and $\text{thick}_{\mathcal{D}(\mathcal{A})}(\text{Add}(T))=\mathcal{K}^b(\text{Proj}\mathcal{A})$.
\item If $\mathcal{Y}:=\bigcap_{k>0}\text{Ker}(\text{Ext}_\mathcal{A}^k(T,?))$, then $\mathcal{Y}=\text{Pres}^{m}(T)$, for some $m\in\mathbb{N}$, and $\mathcal{Y}$ is a cogenerating class of $\mathcal{A}$.
\end{enumerate}
If $T$ satisfies any of these equivalent conditions, then $\mathcal{A}$ has a  projective generator and  $\tau_T=(T^{\perp_{>0}},T^{\perp_{<0}})=(\mathcal{K}^{\leq 0}(\text{Sum}(T)),T^{\perp_{<0}})$ is a t-structure in $\mathcal{D}(\mathcal{A})$. 
\end{cor}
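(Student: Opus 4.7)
Plan: The strategy is to reduce each of the four equivalent conditions to an equivalent condition of Theorem~\ref{teor.characterization of tilting objects}, using the key observation noted just before the corollary: when $\mathcal{A}$ has enough projectives, any class $\mathcal{P}$ of projective generators satisfies $\text{thick}_{\mathcal{D}(\mathcal{A})}(\mathcal{P})=\mathcal{K}^b(\text{Proj}\,\mathcal{A})$. Indeed, every projective $Q$ receives a split epimorphism from a coproduct of elements of $\mathcal{P}$ (using the generating property of $\mathcal{P}$ and the projectivity of $Q$), hence $\text{Proj}\,\mathcal{A}\subseteq\text{Add}(\mathcal{P})$, and the reverse inclusion is trivial.

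I would first note that (4) here is literally (5) in Theorem~\ref{teor.characterization of tilting objects}, giving (1)$\Leftrightarrow$(4) for free. For (1)$\Leftrightarrow$(3), the plan is to invoke (4) of Theorem~\ref{teor.characterization of tilting objects} with the generating class $\mathcal{G}=\mathcal{P}$ chosen to be a class of projective generators. Condition (4.a) of the theorem is automatic since projective generators have projective dimension zero; condition (4.b) becomes $\text{thick}(\text{Sum}(T))=\text{thick}(\mathcal{P})=\mathcal{K}^b(\text{Proj}\,\mathcal{A})$, which together with the $\text{Ext}$-vanishing is precisely~(3).

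For (2)$\Leftrightarrow$(3), I would argue (2)$\Rightarrow$(3) directly: (2.a) gives the $\text{Ext}$-vanishing; (2.b) forces $T[0]\in\mathcal{K}^b(\text{Proj}\,\mathcal{A})$, hence $\text{thick}(\text{Sum}(T))\subseteq\mathcal{K}^b(\text{Proj}\,\mathcal{A})$; (2.c) applied to \emph{some} class of projective generators $\mathcal{P}$ places each $P\in\mathcal{P}$ inside $\text{thick}(\text{Sum}(T))$, so $\mathcal{K}^b(\text{Proj}\,\mathcal{A})=\text{thick}(\mathcal{P})\subseteq\text{thick}(\text{Sum}(T))$. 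Conversely, for (3)$\Rightarrow$(2): (2.a) is part of (3); (2.b) follows from $T[0]\in\text{thick}(\text{Sum}(T))=\mathcal{K}^b(\text{Proj}\,\mathcal{A})$ combined with enough projectives, yielding a genuine finite projective resolution; for (2.c), knowing $T$ is tilting (by (3)$\Rightarrow$(1)), I would apply Lemma~\ref{lem.YcogeneratesA} to obtain an embedding $P\hookrightarrow Y$ of a given projective generator into some $Y\in\mathcal{Y}=\text{Pres}^m(T)$, then use projectivity of $P$ to lift the epimorphism $T^0\twoheadrightarrow Y$ (with $T^0\in\text{Add}(T)$) to a \emph{monomorphism} $P\hookrightarrow T^0$; the rest of the coresolution is then extracted from $P[0]\in\text{thick}(\text{Sum}(T))$ via Proposition~\ref{prop.aisle of partial tilting object}(2), and its length is bounded uniformly by $\text{pd}_\mathcal{A}(T)$ using the shortening argument employed in the proof of $(4)\Rightarrow(1)$ of Theorem~\ref{teor.characterization of tilting objects}. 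The ``some $\Leftrightarrow$ every'' phrasing in (2.c) is automatic from the cycle $(2)_{\text{some}}\Rightarrow(3)\Rightarrow(2)_{\text{every}}$ just described.

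The final statement on $\tau_T$ is inherited directly from Theorem~\ref{teor.characterization of tilting objects}, and the existence of a projective generator is obvious since $\mathcal{A}$ has enough projectives. I expect the main technical difficulty to lie in the explicit construction of the $\text{Add}(T)$-coresolution in (3)$\Rightarrow$(2.c): the monomorphism $P\hookrightarrow T^0$ comes for free from projectivity, but extending this to a bounded exact sequence with all remaining terms in $\text{Add}(T)$ requires combining the splitting $\ker(d^0)=P\oplus\text{Im}(d^{-1})$ (valid since $P$ is projective) with the shortening procedure used by the theorem's proof to trim the length of a general $\text{Add}(T)$-complex representing $P[0]$ down to $\text{pd}_\mathcal{A}(T)$.
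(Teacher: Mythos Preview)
Your reduction of the equivalences $(1)\Leftrightarrow(2)\Leftrightarrow(3)\Leftrightarrow(4)$ to Theorem~\ref{teor.characterization of tilting objects} is essentially what the paper does, though the paper is terser: it declares all four equivalences ``immediate'' from the theorem and the remark preceding the corollary, and focuses its entire written proof on the final claim about the projective generator. Your more explicit unpacking of $(3)\Rightarrow(2\text{c})$ is somewhat laborious compared to the direct route (namely: once $(1)$ holds, $\text{Proj}\,\mathcal{A}$ is itself a generating class, so the inclusion $\text{Proj}\,\mathcal{A}\subset\text{Add}(T)[-n]\star\cdots\star\text{Add}(T)[0]$ from assertion~(3) of the theorem, combined with Proposition~\ref{prop.aisle of partial tilting object}(2), yields the coresolution immediately since a complex in $\mathcal{K}^{[0,n]}(\text{Add}(T))$ with homology concentrated in degree~$0$ \emph{is} such a coresolution), but it is not wrong.

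There is, however, a genuine gap in your last sentence. You write that ``the existence of a projective generator is obvious since $\mathcal{A}$ has enough projectives.'' This is not obvious and, in fact, is precisely the one point the paper isolates as requiring proof. An AB3 abelian category with enough projectives need not possess a generator at all, let alone a projective one; ``enough projectives'' only guarantees that each individual object receives an epimorphism from \emph{some} projective, not that a single projective (or even a \emph{set} of projectives) suffices for all objects. The paper's argument uses the tilting hypothesis essentially: from condition~(2.b) one takes $P=\bigoplus_{0\le k\le n}P^{-k}$; since $T$ is silting it generates $\mathcal{D}(\mathcal{A})$, and as $T\in\text{thick}_{\mathcal{D}(\mathcal{A})}(P)$, so does $P$. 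Then for any $0\neq M\in\mathcal{A}$ one has $\text{Hom}_{\mathcal{D}(\mathcal{A})}(P[j],M)=0$ for $j\neq 0$, forcing $\text{Hom}_\mathcal{A}(P,M)\neq 0$. You should supply this argument rather than dismiss the claim.
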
 
\begin{proof}
By our previous comments, the only thing that does not follow immediately from Theorem \ref{teor.characterization of tilting objects} and its proof is the fact that $\mathcal{A}$ has a projective generator. To see this, consider condition 2.b and  take $P=\oplus_{0\leq k\leq n}P^{-k}$. Since $T$ is a silting object, whence a generator,  of $\mathcal{D}(\mathcal{A})$ we get that also $P$ is a generator of $\mathcal{D}(\mathcal{A})$. Therefore, for each $0\neq M\in\mathcal{A}$, we have that $\text{Hom}_\mathcal{A}(P,M)=\text{Hom}_{\mathcal{D}(\mathcal{A})}(P,M)\neq 0$ since $\text{Hom}_{\mathcal{D}(\mathcal{A})}(P[j],M)=0$ for $j\neq 0$. Then $P$ is a generator of $\mathcal{A}$. 
\end{proof}

\begin{rems}
\begin{enumerate}
\item It is evident from the proof of Theorem \ref{teor.characterization of tilting objects} that the $n$ of $n$-tilting is the same natural number that appears in assertion 3 of the theorem and equals $m+1$, for the $m$ of assertion 5 in the theorem. This and its dual generalize \cite[Theorem 3.11]{B}.  
\item From  the dual of Theorem \ref{teor.characterization of tilting objects} one derives that  \cite[Theorem 4.5]{Sto2} is  valid for any cotilting object in an abelian category $\mathcal{A}$  such that $\mathcal{D}(\mathcal{A})$ has products and $\text{Hom}$ sets (e.g. for any cotilting object in a Grothendieck category). 
\end{enumerate} 
\end{rems}

\section{Triangulated equivalences induced by  tilting objects} \label{sect.triangulated equivalences}

Given a triangulated category with coproducts $\mathcal{D}$ and a t-structure $\tau$ in it with heart $\mathcal{H}_\tau$, one can ask naturally  the following:

\begin{ques}
Under what conditions does the inclusion $\mathcal{H}_\tau\hookrightarrow\mathcal{D}$ extend to a triangulated equivalence  $\mathcal{D}(\mathcal{H}_\tau)\stackrel{\sim}{\longrightarrow}\mathcal{D}$? At least, when does it extend to a fully faithful functor $\mathcal{D}^b(\mathcal{H}_\tau)\rightarrowtail\mathcal{D}$?
\end{ques}

Very recently, Psaroudakis and Vitoria (see \cite[Section 5]{PV}) have used filtered categories and the realisation functor introduced in \cite{BBD} to study in depth the second part of the question, and have obtained precise answers in case $\tau$ is induced by a tilting object of $\mathcal{D}$, for some choices of $\mathcal{D}$. Also very recently, Fiorot, Mattiello and the second named author have used the tilting theory of last section to show in a very simple way that if $\mathcal{A}$ is an abelian category as in Setup \ref{setup}, $T$ is a tilting object of $\mathcal{A}$ and $\mathcal{H}_T$ denotes the heart of the associated t-structure in $\mathcal{D}(\mathcal{A})$, then the inclusion $\mathcal{H}_T\hookrightarrow\mathcal{D}(\mathcal{A})$ extends to a triangulated equivalence  $\mathcal{D}(\mathcal{H}_T)\stackrel{\sim}{\longrightarrow}\mathcal{D}(\mathcal{A})$  (see \cite[Proposition 2.3]{FMS}). The main result of this section, Theorem \ref{teor.tilting derived equivalence}  will show that the analogous of the latter result holds when we replace $\mathcal{D}(\mathcal{A})$ by any compactly generated algebraic triangulated category and $T$ by any \text{bounded tilting object} in $\mathcal{D}$ (see definition below).

The sets of objects with which we shall be dealing here are the following. 
 
\begin{opr} \label{def.big tilting object}
Let $\mathcal{D}$ be a triangulated category with arbitrary coproducts and let $\mathcal{T}$ be a set of objects of $\mathcal{D}$. We will say that $\mathcal{T}$ is a \emph{bounded (partial)  tilting set} when it is  (partial) tilting  (see Definition \ref{def.strong presilting}) and weakly equivalent to a classical (partial)  silting set (see Definitions \ref{def.weakly equivalent silting sets} and \ref{def.classical silting objects}). Of course the same adjectives are applied to an object $T$ when the set $\{T\}$ is so. 

A t-structure $\tau =(\mathcal{U},\mathcal{U}^\perp [1])$ will be called a \emph{(bounded, resp. classical) (partial) tilting t-structure} when there is a (bounded, resp. classical ) (partial) tilting  set $\mathcal{T}$ of $\mathcal{D}$ such that $\tau =({}^\perp (\mathcal{T}^{\perp_{\leq 0}}),\mathcal{T}^{\perp_{<0}})$. 
\end{opr}

\begin{rem}
Although, we will not use it in this paper, one can define \emph{bounded (partial) silting sets} and \emph{bounded (partial) silting t-structures} just replacing `tilting' by `silting' in the last definition.   When $A$ is an ordinary algebra, one can easily see that a bounded silting  object of $\mathcal{D}(A)$ is exactly what is called a semi-tilting complex in \cite{W} and a big silting complex in \cite{AMV}.  
\end{rem}

In the proof of our next result and throughout the rest of this section, we will assume that the reader is acquainted with  (small) dg categories and their derived categories. Our terminology is mainly taken from \cite{K2}, but we will use several results from \cite{K}. However, we will still keep the notation $\mathcal{K}(\mathcal{A})$ (instead of $\mathcal{H}\mathcal{A}$ as in Keller's papers) for the homotopy category of $\mathcal{A}$. 

\begin{prop} \label{prop.abelian category in module category}
Let $\mathcal{D}$ be a compactly generated algebraic triangulated category and let $\tau$ be a bounded tilting t-structure in $\mathcal{D}$, whose heart is denoted by $\mathcal{H}$. There is a bounded  tilting object $T$ in $\mathcal{D}$ satisfying the following properties:

\begin{enumerate}
\item $\tau =(T^{\perp_{>0}},T^{\perp_{<0}})$ is the t-structure associated to $T$;
\item If $E:=\text{End}_{\mathcal{D}}(T)$, then the functor $\text{Hom}_\mathcal{H}(T,?):\mathcal{H}\longrightarrow\text{Mod}-E$ is fully faithful, exact and has a left adjoint which preserves products. 
\end{enumerate}
\end{prop}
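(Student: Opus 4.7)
Plan: I would begin by producing a single tilting object from the bounded tilting hypothesis: let $\mathcal{T}$ be a bounded tilting set with $\tau = \tau_\mathcal{T}$, and put $T := \coprod_{T' \in \mathcal{T}} T'$. Since each $T' \in \mathcal{T}$ is a direct summand of $T$, one has $\text{Add}(T) = \text{Add}(\mathcal{T})$ and all relevant orthogonals coincide, so $T$ inherits strong exceptionality, the generation of $\mathcal{D}$, and weak equivalence to a classical tilting set; thus $T$ is a bounded tilting object with $\tau_T = \tau$. For part (1), $T$ being tilting gives $T^{\perp_{k\in\mathbb{Z}}} = 0$, so by Lemma \ref{lem.SumT-precovers}(1), any $X \in T^{\perp_{>0}}$ satisfies $\tau^{\mathcal{U}^\perp}X \in T^{\perp_{k \in \mathbb{Z}}} = 0$, hence $X \in \mathcal{U}_T$; the reverse inclusion is the partial silting property, so $\tau = (T^{\perp_{>0}}, T^{\perp_{<0}})$.

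For the easy portions of (2), strong exceptionality combined with $T \in \mathcal{U}_T$ places $T$ in $\mathcal{H}$, and the partial silting condition places $T$ in the co-heart $\mathcal{C}$. Lemma \ref{lem.coheart projective in heart} then shows $T = \tilde{H}(T)$ is projective in $\mathcal{H}$, giving exactness of $\text{Hom}_\mathcal{H}(T,?)$; and the proof of Theorem \ref{teor.presilting t-structures} shows $T$ is a projective generator of $\mathcal{H}$, giving faithfulness.

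The core remaining step is fully faithfulness together with the existence of a product-preserving left adjoint. For this I would exploit the dg enhancement available from the algebraic hypothesis on $\mathcal{D}$. Set $\mathbf{A} := \text{REnd}(T)$ in the enhancement, a dg algebra with $H^0(\mathbf{A}) = E$, and consider $F := \text{RHom}(T,?) : \mathcal{D} \longrightarrow \mathcal{D}(\mathbf{A})$. The vanishings $\text{Hom}_\mathcal{D}(T, M[k]) = 0$ for $M \in \mathcal{H}$ and $k \ne 0$ (which follow from $T \in \mathcal{C}$ and $M \in \mathcal{U}_T^\perp[1]$) show that $F$ is t-exact for $\tau_T$ and the canonical t-structure on $\mathcal{D}(\mathbf{A})$, and $F_{|\mathcal{H}}$ is exactly $\text{Hom}_\mathcal{H}(T,?) : \mathcal{H} \longrightarrow \text{Mod}-E$. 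The candidate left adjoint $L$ is obtained by composing the global left adjoint $?\otimes^L_\mathbf{A} T$ of $F$ with $\tilde{H}: \mathcal{D} \longrightarrow \mathcal{H}$. The main obstacle I foresee is showing the counit $L F_{|\mathcal{H}} \longrightarrow \text{id}_\mathcal{H}$ is an isomorphism, which is what forces fully faithfulness; here I would reduce to the classical derived Morita situation by invoking the weak equivalence of $T$ with a classical tilting set $\mathcal{T}_0$, combining the Milnor colimit description of $\mathcal{U}_T$ from Theorem \ref{teor.weakly equivalent to presilting} with the fact that the counit is trivially an isomorphism on $\text{Add}(T)$ and that compactness of $\mathcal{T}_0$ propagates through the resolution. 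Product preservation of $L$ I expect to handle by a parallel dévissage, using compactness of $\mathcal{T}_0$ and the fact that products in $\mathcal{H}$ are computed via $\tilde{H}$ of products in $\mathcal{D}$.
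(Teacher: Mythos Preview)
Your setup and the treatment of part~(1), exactness, and faithfulness are fine, and your construction of the left adjoint $L=\tilde H\circ(?\otimes^{\mathbf L}_{\mathbf A}T)\circ\mathrm{can}$ is exactly the paper's functor $F$. The counit argument is also essentially correct, and in fact simpler than your Milnor-colimit sketch suggests: since $T$ generates $\mathcal D$, the class of $X\in\mathcal D$ for which the derived counit $\text{RHom}(T,X)\otimes^{\mathbf L}T\to X$ is an isomorphism is a localizing subcategory containing $T$, hence all of $\mathcal D$; restricting to $\mathcal H$ gives fully faithfulness of $G$ directly. No d\'evissage through $\mathcal T_0$ is needed here.

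The genuine gap is product preservation of $L$. Since $\tilde H$ and the inclusion $\text{Mod}\text{-}E\hookrightarrow\mathcal D(E)$ preserve products, the issue reduces to whether $?\otimes^{\mathbf L}_{\mathbf A}T$ preserves products, and this does \emph{not} follow from any compactness-based d\'evissage: compactness of $\mathcal T_0$ gives you commutation with coproducts, not with products. The paper handles this by a different mechanism. First, it does not take your $T=\coprod_{T'\in\mathcal T}T'$; it passes to a power $T:=\tilde T^{(I)}$ large enough that every representable $A^\wedge$ lies in $\text{thick}_{\mathcal D(\mathcal A)}(T)$ (not merely in $\text{thick}(\text{Sum}(T))$). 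This hypothesis is exactly what is needed to invoke \cite[Theorem~6.4]{NS2}, which yields that $?\otimes^{\mathbf L}_E T$ admits a \emph{left} adjoint $\Lambda$. A functor with a left adjoint preserves products, so $F$ preserves products as claimed. Without the passage to $T^{(I)}$ you cannot apply this result, and your proposed d\'evissage has no mechanism to force $?\otimes^{\mathbf L}T$ to commute with products.
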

\begin{proof}
Let us assume that $\mathcal{T}$ is a tilting set such that $\tau =(\mathcal{T}^{\perp_{>0}},\mathcal{T}^{\perp_{<0}})$. By taking $\tilde{T}:=\coprod_{T\in\mathcal{T}}T$, we can and shall assume that $\mathcal{T}=\{\tilde{T}\}$.  By definition, $\mathcal{D}$ has a silting set $\mathcal{S}$ of compact objects which is weakly equivalent  to $\mathcal{T}$. By \cite[Theorem 4.3]{K}, we have a dg category $\mathcal{A}$, which can be chosen to be $K$-flat,  and a triangulated equivalence $\mathcal{D}\stackrel{\sim}{\longrightarrow}\mathcal{D}(\mathcal{A})$ which takes $\mathcal{S}$ onto  the set  $\{A^\wedge\text{: }A\in\mathcal{A}\}$ of representable $\mathcal{A}$-modules. These form then a silting set of compact generators of $\mathcal{D}(\mathcal{A})$, which implies  that $H^n\mathcal{A}(A,A')=0$, for all $n>0$ and all $A,A'\in\mathcal{A}$. 
So in the rest of the proof, we assume that $\mathcal{D}=\mathcal{D}(\mathcal{A})$, where $\mathcal{A}$ is a $K$-flat dg category with this homology upper boundedness condition,  and that  $\text{thick}_{\mathcal{D}(\mathcal{A})}(\text{Sum}(\tilde{T}))=\text{thick}_{\mathcal{D}(\mathcal{A})}(\text{Sum}(A^\wedge\text{: }A\in\mathcal{A}))$. It follows that there is a set $I$ such that $A^\wedge\in\text{thick}_{\mathcal{D}(\mathcal{A})}(\tilde{T}^{(I)})$, for all $A\in\mathcal{A}$. We put $T=\tilde{T}^{(I)}$ in the sequel, and will check that it satisfies the requirements.  

Property 1 is straightforward. We can assume without loss of generality that $T$ is a homotopically projective dg $\mathcal{A}$-module (i.e. that $\text{Hom}_{\mathcal{K}(\mathcal{A})}(T,?)$ vanishes on acyclic complexes) and put $B=\text{End}_{\mathcal{C}_{dg}\mathcal{A}}(T)$. Then $B$ is a dg algebra such that $H^k(B)\cong\text{Hom}_{\mathcal{K}(\mathcal{A})}(T,T[k])\cong\text{Hom}_{\mathcal{D}(\mathcal{A})}(T,T[k])$, for all $k\in\mathbb{Z}$, so that $B$ has homology concentrated in degree $0$ and $H^0(B)\cong\text{End}_{\mathcal{D}(\mathcal{A})}(T)=:E$. Note that the classical truncation at zero of $B$ gives a dg subalgebra $\tau^{\leq 0}B$. The corresponding inclusion $j:\tau^{\leq 0}B\hookrightarrow B$ of dg algebras is a quasi-isomorphism, which implies that the restriction of scalars $j_*:\mathcal{D}(B)\longrightarrow\mathcal{D}(\tau^{\leq 0}B)$ is a triangulated equivalence. Without loss of generality, we can replace $B$ by $\tau^{\leq 0}B$ and assume that $B=\oplus_{n\geq 0}B^{-n}$ is concentrated in degrees $\leq 0$. Note that $T$ is canonically a dg $B-\mathcal{A}$-bimodule and we have now a canonical projection $p:B\longrightarrow H^0(B)=E$ which is also a quasi-isomorphism of dg algebras. This implies that if $p^*=E\otimes_B?:\mathcal{C}_{dg}(B^{op})\longrightarrow\mathcal{C}_{dg}(E^{op})$ is the extension of scalars, then its left derived functor $\mathbf{L}p^*:\mathcal{D}(B^{op})\longrightarrow\mathcal{D}(E^{op})$ is a triangulated equivalence.  If necessary, we replace $T$ by a quasi-isomorphic homotopically projective $B-\mathcal{A}$-bimodule. The $K$-flatness of $A$ implies that ${}_BT$ is homotopically flat (just adapt the proof of \cite[Lemma 3.6]{NS3} to the case when $A$ is a dg category instead of a dg algebra). This implies that $p\otimes_B1_T:T=B\otimes_BT\longrightarrow E\otimes_BT$ is a quasi-isomorphism of $B-\mathcal{A}$-modules and that  $\mathbf{L}p^*(T)=E\otimes_BT$. As a final  step of reduction, we replace $B$ by $E$ and $T$ by $E\otimes_BT$, thus assuming in the rest of the proof that $T$ is an $E-\mathcal{A}$-bimodule. 

Now  assertion 4 of \cite[Theorem 6.4]{NS2} holds (with $E$ instead of $\mathcal{B}$), so that the classical derived  functor $\text{RHom}_\mathcal{A}(T,?):\mathcal{D}(\mathcal{A})\longrightarrow\mathcal{D}(E)$ is fully faithful and its left adjoint $?\otimes_E^\mathbf{L}T:\mathcal{D}(E)\longrightarrow\mathcal{D}(\mathcal{A})$ has itself a (fully faithful) left adjoint $\Lambda :\mathcal{D}(\mathcal{A})\longrightarrow\mathcal{D}(E)$. We now put $G:=\text{Hom}_\mathcal{H}(T,?):\mathcal{H}\longrightarrow\text{Mod}-E$. We claim that the functor $F:\text{Mod}-E\longrightarrow\mathcal{H}$ defined as the following composition $$\text{Mod}-E\stackrel{can}{\longrightarrow}\mathcal{D}(E)\stackrel{?\otimes_E^\mathbf{L}T}{\longrightarrow}\mathcal{D}(\mathcal{A})\stackrel{\tilde{H}}{\longrightarrow}\mathcal{H} \hspace*{1cm} (*) $$ is a left adjoint to $G$, where $\tilde{H}$ is the cohomological functor associated to the t-structure $\tau$.  Note that if $M$ is an $E$-module, then $M[0]\in\mathcal{D}^{\leq 0}(E)$, so that $M[0]$ is isomorphic in $\mathcal{D}(E)$ to the Milnor colimit of a sequence $M_0\stackrel{\alpha_1}{\longrightarrow} M_1\stackrel{\alpha_2}{\longrightarrow} \cdots\stackrel{\alpha_n}{\longrightarrow} M_n\longrightarrow \cdots$, where $M_0\in\text{Sum}(E)[0]$ and  $\text{cone}(\alpha_n)\in\text{Sum}(E)[n]$, for all  $n\geq 0$. We then have $M[0]\otimes_E^\mathbf{L}T\cong\text{Mcolim}(M_n\otimes_E^\mathbf{L}T)$, which is an object of the aisle $\mathcal{U}=T^{\perp_{>0}}$ (see Theorem \ref{teor.weakly equivalent to presilting}). Bearing in mind that $\tilde{H}_{| \mathcal{U}}:\mathcal{U}\longrightarrow\mathcal{H}$  is left adjoint to the inclusion functor $\mathcal{H}\hookrightarrow\mathcal{U}$ (see \cite[Lemma 3.1]{PS1}), we get a sequence of isomorphisms, for $M\in\text{Mod}-E$ and $Y\in\mathcal{H}$

\begin{center}
$\text{Hom}_\mathcal{H}(F(M),Y)=\text{Hom}_\mathcal{H}(\tilde{H}(M[0]\otimes_E^\mathbf{L}T),Y)\cong\text{Hom}_\mathcal{U}(M[0]\otimes_E^\mathbf{L}T,Y)=\text{Hom}_{\mathcal{D}(\mathcal{A})}(M[0]\otimes_E^\mathbf{L}T,Y)\cong\text{Hom}_{\mathcal{D}(E)}(M[0],\text{RHom}_\mathcal{A}(T,Y)). $
\end{center}
But we have that $Y\in\mathcal{H}=T^{\perp_{>0}}\cap T^{\perp_{<0}}$, which implies that $H^k(\text{RHom}_\mathcal{A}(T,Y))=\text{Hom}_{\mathcal{D}(\mathcal{A})}(T,Y[k])=0$, for all $k\neq 0$. It follows that we have an isomorphism $\text{RHom}_\mathcal{A}(T,Y)\cong\text{Hom}_{\mathcal{D}(\mathcal{A})}(T,Y)[0]=G(Y)[0]$ in $\mathcal{D}(E)$. We then have an isomorphism $\text{Hom}_\mathcal{H}(F(M),Y)\cong \text{Hom}_{\mathcal{D}(E)}(M[0],G(Y)[0])\cong\text{Hom}_E(M,G(Y))$. It is routine to check that this isomorphism is natural on $M$ and $Y$, so that $(F,G)$ is an adjoint pair. 

In order to prove that $G$ is fully faithful, we will check that the counit $\epsilon :F\circ G\longrightarrow 1_\mathcal{H}$ is a natural isomorphism. Due to the fact that $(?\otimes_E^\mathbf{L}T,\text{RHom}_\mathcal{A}(T,?))$ is an adjoint pair with fully faithful right component, the counit of this adjunction $\delta :(?\otimes_E^\mathbf{L}T)\circ\text{RHom}_\mathcal{A}(T,?)\longrightarrow 1_{\mathcal{D}(\mathcal{A})}$ is a natural isomorphism. Then $\tilde{H}(\delta ):\tilde{H}(\text{RHom}_\mathcal{A}(T,Y)\otimes_E^\mathbf{L}T)\longrightarrow\tilde{H}(Y)=Y$ is an isomorphism in $\mathcal{H}$, for all $Y\in\mathcal{H}$. We have already seen that there is an isomorphism $\text{RHom}_\mathcal{A}(T,Y)\cong G(Y)[0]$ in $\mathcal{D}(E)$, which allows us to view $\tilde{H}(\delta )$ as a morphism $(F\circ G)(Y)=\tilde{H}(G(Y)[0]\otimes_E^\mathbf{L}T)\longrightarrow Y$. This morphism is easily identified with the counit morphism $\epsilon_Y$.  

The functor $?\otimes_E^\mathbf{L}T:\mathcal{D}(E)\longrightarrow\mathcal{D}(\mathcal{A})$ preserves products since it has a left adjoint. Moreover, by Theorem \ref{teor.presilting t-structures}, we know that $\tilde{H}:\mathcal{D}(\mathcal{A})\longrightarrow\mathcal{H}$ preserves products. It follows that the three functors in the composition $(*)$ which defines $F$ preserve products. Therefore $F$ preserves products. 
\end{proof}

Recall the following definition (see \cite[Subsection 1.2.1]{L} and \cite[Section 1]{N2}).

\begin{opr} \label{def.left complete}
Let $\mathcal{H}$ be an abelian category. We say that its derived category $\mathcal{D}(\mathcal{H})$ is \emph{left complete} when the induced (non-canonical) morphism $M\longrightarrow\text{Holim}(\tau^{\geq n}M)$ is an isomorphism, for each $M\in\mathcal{D}(\mathcal{H})$. 
\end{opr}

The following result is  a direct consequence of \cite[Proposition 1.2.1.19]{L} since any AB3* abelian category with a projective generator is AB4* (see  the dual of \cite[Corollary 3.2.9]{Po}). See also \cite[Lemma 6.4] {PV} for a related result.

\begin{lem} \label{lem.Lurie}
If $\mathcal{H}$ is an AB3* abelian category with a projective generator, then $\mathcal{D}(\mathcal{H})$ is left complete. 
\end{lem}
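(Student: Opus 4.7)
The plan is to combine two external inputs already signalled in the remark preceding the lemma: first, the dual of \cite[Corollary 3.2.9]{Po}, which upgrades AB3* plus a projective generator to AB4*; and second, Lurie's \cite[Proposition 1.2.1.19]{L}, which says that the unbounded derived category of any AB4* abelian category is left complete in the sense of Definition \ref{def.left complete}.

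First I would explain the AB4* upgrade. Fix a projective generator $P$ of $\mathcal{H}$ and consider the representable functor $F := \mathrm{Hom}_{\mathcal{H}}(P,-) : \mathcal{H} \to \mathrm{Ab}$. By construction $F$ is exact (since $P$ is projective), faithful (since $P$ is a generator), and preserves arbitrary products. From these three properties one deduces that $F$ reflects short exact sequences: indeed, faithfulness plus generator-ness shows that $F$ reflects both monomorphisms (a morphism $f$ is mono iff $F(f)$ is injective, since $P$ detects the vanishing of any $g\colon P \to \mathrm{ker}(f)$) and epimorphisms (if $F(f)$ is surjective then $\mathrm{coker}(f)$ receives only the zero map from $P$, hence vanishes), and exactness in the middle is handled similarly using projectivity of $P$. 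Now given a family $(0 \to A_i \to B_i \to C_i \to 0)_{i\in I}$ of short exact sequences in $\mathcal{H}$, applying $F$ termwise yields exact sequences in $\mathrm{Ab}$; taking products and using that $\mathrm{Ab}$ is AB5* (a fortiori AB4*) gives an exact sequence $0 \to \prod F(A_i) \to \prod F(B_i) \to \prod F(C_i) \to 0$. Because $F$ preserves products, this sequence is the $F$-image of $0 \to \prod A_i \to \prod B_i \to \prod C_i \to 0$, and reflection of exactness delivers the AB4* condition for $\mathcal{H}$.

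With AB4* established, the lemma follows by direct invocation of Lurie's proposition: for each $M \in \mathcal{D}(\mathcal{H})$ the canonical morphism $M \to \mathrm{Holim}\,\tau^{\geq n} M$ is an isomorphism, which is precisely what Definition \ref{def.left complete} demands.

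The only genuine content to justify is the reflection-of-exactness for $F$ in step one, which uses both the generator and the projective assumption on $P$; everything after that is a citation. The main obstacle, such as it is, is being careful that all the ingredients (faithfulness, exactness, preservation of products) combine to reflect exactness of a whole short sequence rather than merely monomorphisms or epimorphisms in isolation.
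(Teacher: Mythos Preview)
Your approach is exactly the paper's: upgrade AB3* with a projective generator to AB4* via the dual of \cite[Corollary 3.2.9]{Po}, then invoke \cite[Proposition 1.2.1.19]{L}. The paper simply cites these two facts without expanding the first; you have spelled out the reflection-of-exactness argument, which is fine and correct in substance.

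One small slip: you write that $\mathrm{Ab}$ is AB5* (a fortiori AB4*). This is false---inverse limits are not exact in $\mathrm{Ab}$---but harmless, since all you actually use is that $\mathrm{Ab}$ is AB4*, i.e.\ that products of short exact sequences of abelian groups are exact, which is true. Just delete the ``AB5*'' claim and assert AB4* directly.
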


In the rest of the section, we will assume the following situation, as derived from the proof of Proposition \ref{prop.abelian category in module category}.

\begin{setup} \label{setup2}
$\mathcal{A}$ will be a nonpositively graded  $K$-flat dg category, $E$ will be an ordinary $K$-algebra and $T$ will be a dg $E-\mathcal{A}$-bimodule satisfying the following conditions:

\begin{enumerate}
\item $T$ is a tilting object of $\mathcal{D}(\mathcal{A})$ such that $A^\wedge\in\text{thick}_{\mathcal{D}(\mathcal{A})}(T)$, for all $A\in\mathcal{A}$;
\item The canonical map $E\longrightarrow\text{End}_{\mathcal{D}(\mathcal{A})}(T)$ is an isomorphism of algebras.
\end{enumerate}
By \cite[Theorem 6.4 and Proposition 6.9]{NS2}, we know that we have adjoint pairs $(?\otimes_E^\mathbf{L}T,\text{RHom}_\mathcal{A}(T,?))$ and $(\Lambda ,?\otimes_E^\mathbf{L}T)$ of triangulated functors, where $\text{RHom}_\mathcal{A}(T,?)),\Lambda :\mathcal{D}(\mathcal{A})\longrightarrow\mathcal{D}(E)$ are fully faithful and preserve compact objects. 
\end{setup}

\begin{lem} \label{lem.image of G}
 The functor $G=\text{Hom}_\mathcal{H}(T,?):\mathcal{H}\longrightarrow\text{Mod}-E$ induces a triangulated functor $G=\mathbf{R}G:\mathcal{D}(\mathcal{H})\longrightarrow\mathcal{D}(E)$ whose essential image is contained in the essential image of $\text{RHom}_\mathcal{A}(T,?):\mathcal{D}(\mathcal{A})\longrightarrow\mathcal{D}(E)$. Moreover, if $\mathbf{L}F:\mathcal{D}(E)\longrightarrow\mathcal{D}(\mathcal{H})$ is the left derived functor of $F$, then $(\mathbf{L}F,G)$ is an adjoint pair.
\end{lem}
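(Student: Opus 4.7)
The three assertions are addressed independently. First I would establish that $T$ is a projective object of $\mathcal{H}$: indeed $T\in\mathcal{U}_T\cap{}^\perp\mathcal{U}_T[1]$ is an element of the co-heart (the first inclusion holds because $T$ is strongly nonpositive, the second is exactly the partial silting condition of Definition \ref{def.strong presilting}(2)), and since $\tilde{H}(T)=T$ because $T\in\mathcal{H}$, Lemma \ref{lem.coheart projective in heart}(1) yields the claim (this is also consistent with the exactness of $G$ already recorded in Proposition \ref{prop.abelian category in module category}(2)). Consequently $\mathbf{R}G$ is well-defined without resolutions and is computed by applying $G$ termwise to any cochain complex in $\mathcal{C}(\mathcal{H})$; this gives a triangulated functor $\mathcal{D}(\mathcal{H})\to\mathcal{D}(E)$ that preserves products, since $G$ is a right adjoint at the abelian level.

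For the essential image inclusion, write $\mathcal{E}:=\text{Im}(\text{RHom}_\mathcal{A}(T,?))$; as the image of a fully faithful right adjoint, $\mathcal{E}$ is a thick triangulated subcategory of $\mathcal{D}(E)$ closed under products and hence under homotopy limits. The proof of Proposition \ref{prop.abelian category in module category} identifies $\text{RHom}_\mathcal{A}(T,M)\cong G(M)[0]$ in $\mathcal{D}(E)$ for every $M\in\mathcal{H}$, so $\mathbf{R}G$ already sends stalks into $\mathcal{E}$; induction on cohomology length then shows $\mathbf{R}G(\mathcal{D}^b(\mathcal{H}))\subseteq\mathcal{E}$. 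To extend to arbitrary $X\in\mathcal{D}(\mathcal{H})$, Lemma \ref{lem.Lurie} (applicable because $\mathcal{H}$ is AB3* with projective generator $T$, cf. Theorem \ref{teor.presilting t-structures}(5)) gives $X\cong\text{Holim}_n\tau^{\geq -n}X$, and $\mathbf{R}G$ preserves this homotopy limit because $G$ preserves products and $\mathbf{R}G$ is computed termwise. The question thus reduces to showing $\mathbf{R}G(Y)\in\mathcal{E}$ for $Y\in\mathcal{D}^+(\mathcal{H})$; I would deal with this by using the dg enhancement of $\mathcal{D}(\mathcal{A})$ to totalize a cochain representative of $Y$ (all of whose terms sit in $\mathcal{H}\subseteq\mathcal{D}(\mathcal{A})$) into a genuine object $\hat Y\in\mathcal{D}(\mathcal{A})$, and then to verify $\text{RHom}_\mathcal{A}(T,\hat Y)\cong\mathbf{R}G(Y)$ by a degreewise comparison combined with the homotopy-limit formula above.

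For the third assertion, $(F,G)$ is an adjoint pair of additive functors between $\text{Mod}-E$ and $\mathcal{H}$ with $G$ exact; since $E$ is an ordinary algebra, $\mathcal{D}(E)$ admits K-projective resolutions, so $\mathbf{L}F$ is defined, and the standard derived-adjunction principle lifts the abelian adjunction to $(\mathbf{L}F,\mathbf{R}G)=(\mathbf{L}F,G)$. The main technical obstacle throughout is the unbounded case of the essential image inclusion: because $\text{RHom}_\mathcal{A}(T,?)$ is only known to be a right adjoint, the image $\mathcal{E}$ is not visibly closed under Milnor colimits or arbitrary coproducts, so the termwise totalization of $Y\in\mathcal{D}^+(\mathcal{H})$ inside the dg enhancement of $\mathcal{D}(\mathcal{A})$ seems unavoidable and is the point requiring the most care.
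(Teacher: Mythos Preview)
Your overall architecture matches the paper's: $G$ is exact so $\mathbf{R}G$ acts termwise; the adjunction $(\mathbf{L}F,G)$ is the standard derived lift; and for the essential image you reduce from $\mathcal{D}(\mathcal{H})$ to $\mathcal{D}^+(\mathcal{H})$ via left completeness (Lemma~\ref{lem.Lurie}), handle $\mathcal{D}^b(\mathcal{H})$ by induction on cohomological length, and settle stalks by $G(M)[0]\cong\text{RHom}_\mathcal{A}(T,M)$. All of this is exactly what the paper does.

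The divergence is in the passage from $\mathcal{D}^+(\mathcal{H})$ to $\mathcal{D}^b(\mathcal{H})$. You declare the dg enhancement ``unavoidable'' and propose to totalize a bounded-below cochain representative of $Y$ inside the dg model of $\mathcal{D}(\mathcal{A})$, then compare $\text{RHom}_\mathcal{A}(T,\hat Y)$ with $\mathbf{R}G(Y)$. This is not wrong in spirit, but it is genuinely delicate: the differentials $Y^k\to Y^{k+1}$ are only morphisms in $\mathcal{D}(\mathcal{A})$, so the composites $d^{k+1}\circ d^k$ are merely null-homotopic rather than zero, and you are implicitly building a twisted complex or invoking a realization functor---nontrivial machinery you would have to justify carefully.

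The paper avoids all of this by reusing the \emph{same} homotopy-limit trick one more time. For $Y^\bullet\in\mathcal{C}^{\geq 0}(\mathcal{H})$ one has $Y^\bullet=\text{Holim}\,\sigma_{\leq n}Y^\bullet$ via stupid truncations, and since $G$ acts termwise, $G(Y^\bullet)=\text{Holim}\,\sigma_{\leq n}G(Y^\bullet)$. Each $\sigma_{\leq n}Y^\bullet$ is bounded, so lies in the case you have already handled; and $\mathcal{E}=\text{Im}(\text{RHom}_\mathcal{A}(T,?))$ is closed under homotopy limits, exactly as you observed for the first reduction. So the dg enhancement is not needed at all: the bounded-below case reduces to the bounded case by the same mechanism that got you from unbounded to bounded-below. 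Once you see this, the proof becomes entirely elementary.
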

\begin{proof}
Since $G:\mathcal{H}\longrightarrow\text{Mod}-E$ is an exact functor, we have $G=\mathbf{R}G$. That is, its right derived functor $\mathbf{R}G:\mathcal{D}(\mathcal{H})\longrightarrow\mathcal{D}(E)$ exists and takes the complex $$Y^\bullet :\cdots \longrightarrow Y^{k-1}\longrightarrow Y^k\longrightarrow Y^{k+1}\longrightarrow \cdots$$ to the complex  $$G(Y^\bullet) :\cdots \longrightarrow G(Y^{k-1})\longrightarrow G(Y^k)\longrightarrow G(Y^{k+1})\longrightarrow \cdots$$ 
The last sentence of the lemma is then a standard fact about derived functors.

We now consider the full subcategory $\mathcal{C}$ of $\mathcal{D}(\mathcal{H})$ consisting of those complexes $Y^\bullet\in\mathcal{D}(\mathcal{H})$ such that $G(Y^\bullet )\in\text{Im}(\text{RHom}_\mathcal{A}(T,?))=:\mathcal{Z}$. The goal is to prove that $\mathcal{C}=\mathcal{D}(\mathcal{H})$. Note that  $\mathcal{Z}$ is a full triangulated subcategory of $\mathcal{D}(E)$ closed under taking products and, hence, it is also closed under taking homotopy limits. Moreover, the category $\mathcal{H}$ is AB3* (see \cite[Proposition 3.2]{PS1}) and has $T$ as a projective generator. It then follows that $\mathcal{H}$ is AB4* and, hence, products in $\mathcal{D}(\mathcal{H})$ are calculated pointwise. In particular $G=\mathbf{R}G$ preserves products and, hence, also homotopy limits. We then have that $\mathcal{C}$ is closed under taking homotopy limits in $\mathcal{D}(\mathcal{H})$, and this is a left complete triangulated category by Lemma \ref{lem.Lurie}. Our task is then reduced to prove that $\mathcal{D}^+(\mathcal{H})\subseteq\mathcal{C}$.

Let now $Y^\bullet\in\mathcal{C}^+(\mathcal{H})$ be any bounded below complex of objects in $\mathcal{H}$ which, without loss of generality, we assume to be concentrated in degrees $\geq 0$. By taking stupid truncations, we have that $Y^\bullet =\text{Holim}\sigma_{\leq n}Y^\bullet$ and, since the functor $G=\mathbf{R}G$ acts pointwise, we get that $G(Y^\bullet )=\text{Holim}G(\sigma_{\leq n}Y^\bullet )=\text{Holim}\sigma_{\leq n}G(Y^\bullet )$. In this way, the proof is further reduced to prove that $\mathcal{D}^b(\mathcal{H})\subset\mathcal{C}$. But each object $Y^\bullet$ of $\mathcal{D}^b(\mathcal{H})$ is a finite iterated extension of the stalk complexes $H^k(Y^\bullet )[-k]$. This finally reduces the proof to check that if $Y\in\mathcal{H}$, then each stalk complex $G(Y)[k]$ is in $\mathcal{Z}$. But this is clear since we have seen in the proof of Proposition \ref{prop.abelian category in module category} that $G(Y)[0]\cong\text{RHom}_\mathcal{A}(T,Y)$.
\end{proof}

\begin{lem} \label{lem.vahishing of derived tensor}
Let $$C^\bullet :\cdots\longrightarrow C^{k-1}\longrightarrow C^k\longrightarrow C^{k+1}\longrightarrow\cdots$$ be a complex of $E$-modules such that $C^k[0]\otimes_E^\mathbf{L}T=0$, for all $k\in\mathbb{Z}$. Then we also have that $C^\bullet\otimes_E^\mathbf{L}T=0$. 
\end{lem}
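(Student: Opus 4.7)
The plan is to reduce to the bounded case, where a direct induction works, by exhibiting $C^\bullet$ as built from its stalks via coproducts, extensions, Milnor colimits, and homotopy limits. The crucial formal input is Setup \ref{setup2}: the triangulated functor $?\otimes_E^{\mathbf{L}}T$ admits both a right adjoint $\text{RHom}_\mathcal{A}(T,?)$ and a left adjoint $\Lambda$, so it preserves all coproducts and all products, and hence all Milnor colimits and all homotopy limits. Setting $\mathcal{N} := \{X\in\mathcal{D}(E) : X\otimes_E^{\mathbf{L}}T = 0\}$, this kernel is therefore a localizing subcategory of $\mathcal{D}(E)$ which is in addition closed under $\text{Holim}$. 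By hypothesis each stalk $C^k[0]$, and hence each shift $C^k[-k]$, lies in $\mathcal{N}$.

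First I would treat the bounded case by induction on the length of the support. If $D^\bullet$ is supported in $[a,b]$ with every $D^k[0]\in\mathcal{N}$, the short exact sequence of stupid truncations $0\to\sigma_{\geq a+1}D^\bullet\to D^\bullet\to D^a[-a]\to 0$ yields a triangle in $\mathcal{D}(E)$ whose right-hand term lies in $\mathcal{N}$ and whose left-hand term has support strictly shorter; induction together with the extension-closure of $\mathcal{N}$ gives $D^\bullet\in\mathcal{N}$.

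Next I would handle the one-sided bounded cases. For $C^\bullet$ bounded above in $(-\infty,b]$, the ascending chain of subcomplexes $\sigma_{\geq -n}C^\bullet$ consists of bounded complexes whose pointwise union is $C^\bullet$; since $\mathcal{C}(E)$ is AB5 and the inclusions are monomorphisms, the colimit agrees in $\mathcal{D}(E)$ with $\text{Mcolim}(\sigma_{\geq -n}C^\bullet)$, so closure of $\mathcal{N}$ under Milnor colimits places $C^\bullet$ in $\mathcal{N}$. Dually, for $C^\bullet$ bounded below in $[a,\infty)$, the tower of surjections $\sigma_{\leq n+1}C^\bullet\twoheadrightarrow\sigma_{\leq n}C^\bullet$ stabilizes pointwise (hence trivially satisfies Mittag--Leffler) and has inverse limit $C^\bullet$; therefore $C^\bullet\cong\text{Holim}(\sigma_{\leq n}C^\bullet)$ in $\mathcal{D}(E)$, and closure of $\mathcal{N}$ under $\text{Holim}$ gives $C^\bullet\in\mathcal{N}$. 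For arbitrary $C^\bullet$, picking any $n$ and using the triangle $\sigma_{\geq n+1}C^\bullet\to C^\bullet\to\sigma_{\leq n}C^\bullet\stackrel{+}{\longrightarrow}$ whose outer vertices are respectively bounded below and bounded above (with stalks among those of $C^\bullet$) again forces $C^\bullet\in\mathcal{N}$.

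The main obstacle, and the reason the argument is not purely formal, is the bounded-below step: derived tensor generally fails to commute with inverse limits or homotopy limits, so without extra input one cannot push the hypothesis on stalks through $\text{Holim}$. The resolution here is that Setup \ref{setup2} provides the left adjoint $\Lambda$ of $?\otimes_E^{\mathbf{L}}T$, which forces the functor to preserve products and therefore the Milnor-type triangle defining $\text{Holim}$; this is the single place where the special nature of the tilting setting, as opposed to formal properties of derived tensor, enters the argument.
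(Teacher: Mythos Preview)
Your proof is correct and follows essentially the same approach as the paper: reduce to the bounded case via stupid truncations, using that $?\otimes_E^{\mathbf{L}}T$ preserves both Milnor colimits (as a left adjoint) and homotopy limits (because of the left adjoint $\Lambda$ from Setup~\ref{setup2}). The only organisational difference is that the paper first applies $\text{Mcolim}(\sigma_{\geq -n}C^\bullet)$ to reduce an arbitrary complex to the bounded-below case, then $\text{Holim}(\sigma_{\leq n}C^\bullet)$ to reach the bounded case, whereas you split the general case by a single stupid-truncation triangle into a bounded-above and a bounded-below piece handled separately; both routes use the same ingredients and your identification of the left adjoint $\Lambda$ as the crucial input is exactly the point the paper makes.
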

\begin{proof}
By taking stupid truncations, we have an isomorphism $C^\bullet\cong\text{Mcolim}(\sigma_{\geq -n}C^\bullet )$ in $\mathcal{K}(E)$ (and, hence, also in $\mathcal{D}(E)$). We then get an isomorphism $C^\bullet\otimes_E^\mathbf{L}T\cong\text{Mcolim}((\sigma_{\geq -n}C^\bullet )\otimes_E^\mathbf{L}T)$ in $\mathcal{D}(\mathcal{A})$. This reduces the proof to the case when $C^\bullet\in\mathcal{C}^+(E)$. Without loss of generality, we assume that  $C^\bullet\in\mathcal{C}^{\geq 0}(E)$. Then we have an isomorphism $C^\bullet =\text{Holim}(\sigma_{\leq n}C^\bullet )$ in $\mathcal{D}(E)$. But the functor $?\otimes_E^\mathbf{L}T:\mathcal{D}(E)\longrightarrow\mathcal{D}(\mathcal{A})$ preserves products, and hence homotopy limits, because it has a left adjoint. It follows that $C^\bullet\otimes_E^\mathbf{L}T\cong\text{Holim}((\sigma_{\leq n}C^\bullet )\otimes_E^\mathbf{L}T)$, so that the proof is reduced to the case when $C^\bullet\in\mathcal{C}^b(E)$. But in this case $C^\bullet$ is a finite iterated extension of the stalk complexes $C^k[-k]$ and we have $C^k[-k]\otimes_E^\mathbf{L}T=(C^k[0]\otimes_E^\mathbf{L}T)[-k]=0$, for each $k\in\mathbb{Z}$. 
\end{proof}

In the proof of the following lemma, we will follow the terminology and notation of \cite{S} to which we refer for all definitions. We will denote by $\mathcal{K}_\mathcal{P}(\mathcal{H})$ the homotopy category of homological projective multicomplexes over $\mathcal{H}$ and will consider the functor $\kappa :\mathcal{K}_\mathcal{P}(\mathcal{H})\longrightarrow\mathcal{K}(\mathcal{H})$ which takes any $X^{\bullet\bullet}\in\mathcal{K}_\mathcal{P}(\mathcal{H})$ to its totalization complex $\text{Tot}(X^{\bullet\bullet})$. Note that then $\text{Im}(\kappa )\subseteq\mathcal{K}(\text{Proj}\mathcal{H})=\mathcal{K}(\text{Sum}(T))$. 
Recall that if $q:\mathcal{K}(\mathcal{H})\longrightarrow\mathcal{D}(\mathcal{H})$ is the usual localization functor, then the composition $q\circ\kappa :\mathcal{K}_\mathcal{P}(\mathcal{H})\longrightarrow\mathcal{D}(\mathcal{H})$ is a triangulated equivalence (see \cite[Theorem 1]{S}).

\begin{lem} \label{lem.coproducts and G}
Let $(X_i^\bullet )_{i\in I}$ be a family of objects of $\mathcal{D}(\mathcal{H})$ which has a coproduct in this category and consider the induced triangle in $\mathcal{D}(E)$

\begin{center}

$\coprod G(X_i^\bullet )\longrightarrow G(\coprod X_i^\bullet )\longrightarrow C\stackrel{+}{\longrightarrow}$,
\end{center}

Then we have $C\otimes_E^\mathbf{L}T=0$ in $\mathcal{D}(\mathcal{A})$.
\end{lem}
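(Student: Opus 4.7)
The strategy is to show that the canonical morphism
$$\varphi : \left(\coprod G(X_i^\bullet)\right) \otimes_E^\mathbf{L} T \longrightarrow G\!\left(\coprod X_i^\bullet\right) \otimes_E^\mathbf{L} T$$
is an isomorphism in $\mathcal{D}(\mathcal{A})$. This suffices because tensoring the defining triangle for $C$ with $T$, and using that $?\otimes_E^\mathbf{L} T$ preserves coproducts (as a left adjoint), yields a triangle whose first map is $\varphi$ and whose third term is $C \otimes_E^\mathbf{L} T$.

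The key ingredient is the natural isomorphism $G(M)[0] \otimes_E^\mathbf{L} T \cong M$ in $\mathcal{D}(\mathcal{A})$ for each $M \in \mathcal{H}$, obtained by combining the identification $G(M)[0] \cong \text{RHom}_\mathcal{A}(T,M)$ (from Lemma \ref{lem.image of G} and the proof of Proposition \ref{prop.abelian category in module category}) with the counit isomorphism of the fully faithful right adjoint $\text{RHom}_\mathcal{A}(T,?)$ of Setup \ref{setup2}. Since $T$ is a projective generator of $\mathcal{H}$ (by Theorem \ref{teor.presilting t-structures} and Lemma \ref{lem.coheart projective in heart}), I would invoke Spaltenstein's equivalence $q \circ \kappa : \mathcal{K}_\mathcal{P}(\mathcal{H}) \stackrel{\sim}{\to} \mathcal{D}(\mathcal{H})$ to represent each $X_i^\bullet$ by the totalization $P_i^\bullet \in \mathcal{K}(\text{Add}(T))$ of a projective multicomplex. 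By strong exceptionality of $T$, coproducts in $\mathcal{H}$ and in $\mathcal{D}(\mathcal{A})$ of families from $\text{Add}(T)$ coincide, so the termwise coproduct $\coprod P_i^\bullet$ in $\mathcal{K}(\mathcal{H})$ again has entries in $\text{Add}(T)$ and represents $\coprod X_i^\bullet$ in $\mathcal{D}(\mathcal{H})$.

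Applying $G$ termwise and then $?\otimes_E^\mathbf{L} T$ reduces $\varphi$, entry by entry, to the canonical isomorphism $\coprod_{\mathcal{D}(\mathcal{A})} T^{(J_{i,k})} \stackrel{\sim}{\longrightarrow} T^{(\coprod_i J_{i,k})}$. To upgrade this termwise isomorphism to the level of totalizations, I would use that every entry $G(T^{(J)})$ is $T$-acyclic (since $G(T^{(J)})[0] \otimes_E^\mathbf{L} T \cong T^{(J)}$ is concentrated in degree zero by the tilting property), so the derived tensor of complexes of such entries can be computed termwise via a standard hyper-Tor spectral sequence argument.

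The principal obstacle is extending this pointwise identification to the complex level when the $P_i^\bullet$ are unbounded, where the hyper-Tor spectral sequence can fail to converge. I would circumvent this by writing $P_i^\bullet \cong \text{Mcolim}_n \sigma^{\leq n} P_i^\bullet$ via stupid truncations (each $\sigma^{\leq n} P_i^\bullet$ is bounded above), and exploiting that Milnor colimits commute both with coproducts and with $?\otimes_E^\mathbf{L} T$ (a left adjoint), thereby reducing the problem to the bounded-above setting where the spectral sequence converges and the preceding argument applies.
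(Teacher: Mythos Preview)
Your approach has a genuine gap at the step where you assert that the termwise coproduct $\coprod P_i^\bullet$ in $\mathcal{K}(\mathcal{H})$ ``represents $\coprod X_i^\bullet$ in $\mathcal{D}(\mathcal{H})$.'' This is precisely the delicate point the lemma is designed to handle. The heart $\mathcal{H}$ is only known to be AB3 with a projective generator; it is \emph{not} known to be AB4, so pointwise coproducts in $\mathcal{C}(\mathcal{H})$ need not compute coproducts in $\mathcal{D}(\mathcal{H})$, even for complexes with projective terms. Unbounded complexes of projectives are not K-projective in general, and your Milnor colimit reduction does not rescue this: swapping $\text{Mcolim}_n$ with $\coprod_i$ requires the existence of the iterated coproducts $\coprod_{n,i}\sigma^{\leq n}P_i^\bullet$ in $\mathcal{D}(\mathcal{H})$, whereas the hypothesis grants only that \emph{this particular} family $(X_i^\bullet)$ has a coproduct. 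Moreover, $G$ itself is not known to preserve coproducts (indeed, the cone $C$ measures exactly this failure), so you cannot freely commute $G$ past Milnor colimits either.

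The paper's proof circumvents this by \emph{not} claiming the pointwise coproduct $\coprod' X_i^\bullet$ equals the derived one. Instead it exhibits a section $f:\coprod X_i^\bullet\to\coprod' X_i^\bullet$ in $\mathcal{D}(\mathcal{H})$ (using only fullness of $q_{|\text{Im}(\kappa)}$), and then the octahedral axiom shows $C$ is a direct summand of the cone $C'$ of $\coprod G(X_i^\bullet)\to G(\coprod' X_i^\bullet)$. This reduces the problem to killing $C'\otimes_E^\mathbf{L}T$, which is tractable because $C'$ is an honest cokernel in $\mathcal{C}(E)$ computed degreewise, so Lemma~\ref{lem.vahishing of derived tensor} applies once one checks $C'^k[0]\otimes_E^\mathbf{L}T=0$ for each $k$. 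Your degreewise computation and use of the counit isomorphism are essentially the same as the paper's treatment of $C'$; what is missing is the mechanism (the section plus octahedron) that transfers the conclusion from $C'$ back to $C$.
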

\begin{proof}
All throughout the proof, we will see the action of the canonical functor $q:\mathcal{K}(\mathcal{H})\longrightarrow\mathcal{D}(\mathcal{H})$ as the identity. That is, by abuse of notation,  we will put $X=q(X)$ and $f=q(f)$, for any object $X$ and morphism $f$ in $\mathcal{K}(\mathcal{H})$. Note that the restriction functor $q_{\text{Im}(\kappa )}:\text{Im}(\kappa )\longrightarrow\mathcal{D}(\mathcal{H})$ is full and dense since $q\circ\kappa$ is an equivalence of categories. This allows us to assume all throughout the proof that $X_i^\bullet\in\text{Im}(\kappa )\subset\mathcal{K}(\text{Proj}\mathcal{H})$, for each $i\in I$.

Consider now the coproducts $\coprod 'X_i^\bullet$ and $\coprod X_i^\bullet$ of the $X_i^\bullet$ in $\mathcal{K}(\mathcal{H})$ and $\mathcal{D}(\mathcal{H})$, respectively. Note that the first one  is calculated as in $\mathcal{C}(\mathcal{H})$ (i.e. pointwise). Let us denote by $\lambda_j:X_j^\bullet\longrightarrow\coprod X_i^\bullet$ and $\iota_j:X_j^\bullet\longrightarrow\coprod 'X_i^\bullet$ the respective injections into the coproduct, in $\mathcal{D}(\mathcal{H})$ and $\mathcal{K}(\mathcal{H})$, respectively. 
By the universal property of the coproduct $\coprod X_i^\bullet$, we have a unique morphism $f:\coprod X_i^\bullet\longrightarrow\coprod 'X_i^\bullet$ in $\mathcal{D}(\mathcal{H})$ such that $f\circ\lambda_j =\iota_j$, for all $j\in I$. It follows that, for each $Y^\bullet\in\text{Im}(\kappa )$, we have the following commutative diagram, where the horizontal arrows are the canonical isomorphisms coming from the universal property of coproducts:

$$
\begin{xymatrix}{ \text{Hom}_{\mathcal{K(H)}}(\coprod ' X_i^{\bullet}, Y^{\bullet}) \ar[d]^{q} \ar[r]^{\simeq} & \prod\limits_{i\in I} \text{Hom}_{\mathcal{K(H)}}( X_i^{\bullet}, Y^{\bullet}) \ar[dd]^{q}\\
\text{Hom}_{\mathcal{D(H)}}(\coprod ' X_i^{\bullet}, Y^{\bullet}) \ar[d]^{f^*} &\\
\text{Hom}_{\mathcal{D(H)}}(\coprod  X_i^{\bullet}, Y^{\bullet}) \ar[r]^{\simeq} & \prod\limits_{i\in I} \text{Hom}_{\mathcal{D(H)}}( X_i^{\bullet}, Y^{\bullet})}
\end{xymatrix}
$$

Due to the fullness of $q_{| \text{Im}(\kappa )}$,  the right vertical arrow of the last diagram is an epimorphism, which implies that $f^*$ is an epimorphism, for each $Y^\bullet\in\text{Im}(\kappa )$.

But each object of $\mathcal{D}(\mathcal{H})$, in particular $\coprod X_i^\bullet $, is in the image of $q_{| \text{Im}(\kappa )}$. This means that we can (and shall) assume that, as an object of $\mathcal{K}(\mathcal{H})$, one has $\coprod X_i^\bullet\in\text{Im}(\kappa )$. Putting then $Y^\bullet=\coprod X_i^\bullet$ in the last paragraph, we get that the map $f^*:\text{Hom}_{\mathcal{D}(\mathcal{H})}(\coprod 'X_i^\bullet ,\coprod X_i^\bullet )\longrightarrow\text{Hom}_{\mathcal{D}(\mathcal{H})}(\coprod X_i^\bullet ,\coprod X_i^\bullet )$ is an epimorphism. Therefore $f$ is a section in $\mathcal{D}(\mathcal{H})$.

The octahedral axiom gives the following commutative diagram in $\mathcal{D}(E)$, where the morphism $\coprod G(X_i^\bullet )\longrightarrow G(\coprod X_i^\bullet )$ (resp. $\coprod G(X_i^\bullet )\longrightarrow G(\coprod 'X_i^\bullet )$) is the canonical one in $\mathcal{D}(E)$ (resp. $\mathcal{K}(E)$) induced by the universal property of the coproduct  (here we use that the coproduct of complexes in $\mathcal{D}(E)$ can be calculated pointwise and, hence, `coincides' with the coproduct in $\mathcal{K}(E)$): 

$$
\begin{xymatrix}{ \coprod G(X_i^{\bullet}) \ar[r] \ar@{=}[d] &  G(\coprod X_i^{\bullet}) \ar[r] \ar[d]^{G(f)}& C \ar@{-->}[d]\\
\coprod G(X_i^{\bullet}) \ar[r]  &  G(\coprod' X_i^{\bullet})\ar[r] \ar[d]^{\beta} & C' \ar@{-->}[d]^{\alpha}\\
& N \ar@{=}[r]& N}
\end{xymatrix}
$$

The fact that $G(f)$ is a section implies that $\beta$ is a retraction, so that $\alpha$ is also a retraction in $\mathcal{D}(E)$ and the dotted triangle splits. Since $\mathcal{D}(E)$ has arbitrary coproducts, we know that idempotents split and, hence, we have $C'\cong C\oplus N$. 

The proof is whence reduced to check that $C'\otimes_E^\mathbf{L}T=0$. But the canonical morphism $u:\coprod G(X_i^\bullet )\longrightarrow G(\coprod 'X_i^\bullet )$ is the image of the `same' morphism in $\mathcal{C}(E)$, which is a monomorphism in this category since $G:\mathcal{H}\longrightarrow\text{Mod}-E$ is exact and preserves finite coproducts. Therefore we can assume that we have an exact sequence $0\longrightarrow\coprod G(X_i^\bullet )\stackrel{u}{\longrightarrow} G(\coprod 'X_i^\bullet )\longrightarrow C'\longrightarrow 0$ in the abelian category $\mathcal{C}(E)$ whose image by the usual functor $\mathcal{C}(E)\longrightarrow\mathcal{D}(E)$ is the second horizontal triangle  of the diagram above. We then have an exact sequence $$0\longrightarrow\coprod G(X_i^\bullet )^k\stackrel{u}{\longrightarrow} G({\coprod} 'X_i^{\bullet} )^k\longrightarrow C'^k\longrightarrow 0 \hspace*{1cm} (*)$$ in $\text{Mod}-E$,  for each $k\in\mathbb{Z}$. But, due to the exactness of $G:\mathcal{H}\longrightarrow\text{Mod}-E$, we know that $G(Y^\bullet )^k=G(Y^k)$, for each $Y^\bullet\in\mathcal{C}(\mathcal{H})$ and each $k\in\mathbb{Z}$. This together with the fact that coproducts in $\mathcal{K}(\mathcal{H})$ are calculated pointwise allows us to rewrite the sequence (*) as  

$$0\longrightarrow\coprod G(X_i^k)\stackrel{u}{\longrightarrow} G(\coprod X_i^k)\longrightarrow C'^k\longrightarrow 0 \hspace*{1cm} (*)$$

We have seen in the proof of Proposition 
\ref{prop.abelian category in module category} that $G(Y)[0]\cong\text{RHom}_\mathcal{A}(T,Y)$, for each $Y\in\mathcal{H}$. The last exact sequence then gives a triangle in $\mathcal{D}(E)$  $$\coprod_{i\in I}\text{RHom}_\mathcal{A}(T,X_i^k[0])\longrightarrow\text{RHom}_\mathcal{A}(T,(\coprod_{i\in I}X_i^k)[0])\longrightarrow C'^k[0]\stackrel{+}{\longrightarrow}. $$ In principle, the coproduct of the $X_i^k$ has  been taken in $\mathcal{H}$. But, by the choice of the complexes $X_i^\bullet$, we know that each $X_i^k$ is a projective object of $\mathcal{H}$, so that the coproduct $\coprod_{i\in I}X_i^k$ is the same in $\mathcal{H}$ and in $\mathcal{D}(\mathcal{H})$.  Applying now the functor $?\otimes_E^\mathbf{L}T:\mathcal{D}(E)\longrightarrow\mathcal{D}(\mathcal{A})$ to the last triangle and bearing in mind that the counit $(?\otimes_E^\mathbf{L}T)\circ\text{RHom}_\mathcal{A}(T,?)\longrightarrow 1_{\mathcal{D}(\mathcal{A})}$ is a natural isomorphism, we easily conclude that $C'^k[0]\otimes_E^\mathbf{L}T=0$, for all $k\in\mathbb{Z}$. Then  $C'\otimes_E^\mathbf{L}T=0$ in $\mathcal{D}(\mathcal{A})$, by Lemma \ref{lem.vahishing of derived tensor}.

\end{proof}

We are now ready for the main result of this section. We refer the reader to \cite{FMS} for a simple proof of the corresponding result when $\mathcal{D}=\mathcal{D}(\mathcal{A})$ is the derived category of an abelian category $\mathcal{A}$ as in Setup \ref{setup},  and $T$ is a tilting object of $\mathcal{A}$.  

\begin{thm} \label{teor.tilting derived equivalence}
Let $K$ be a commutative ring, let $\mathcal{D}$ be a compactly generated algebraic triangulated $K$-category, let $\mathcal{T}$ be a  bounded tilting set of $\mathcal{D}$ and let $\mathcal{H}$ be the heart of the associated t-structure. The inclusion $\mathcal{H}\hookrightarrow\mathcal{D}$ extends to a triangulated equivalence $\Psi :\mathcal{D}(\mathcal{H})\stackrel{\sim}{\longrightarrow}\mathcal{D}$.  
\end{thm}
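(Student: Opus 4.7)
My plan is first to apply Proposition~\ref{prop.abelian category in module category} and the reductions in its proof to place ourselves in the situation of Setup~\ref{setup2}: we may assume $\mathcal{D}=\mathcal{D}(\mathcal{A})$ for a $k$-flat nonpositively graded dg category $\mathcal{A}$, with $T$ a bounded tilting object such that $A^\wedge\in\text{thick}_\mathcal{D}(T)$ for every $A\in\mathcal{A}$, and $E=\text{End}_\mathcal{D}(T)$. I would then define
$$\Psi := (?\otimes_E^{\mathbf{L}}T)\circ\mathbf{R}G : \mathcal{D}(\mathcal{H})\longrightarrow\mathcal{D},$$
where $\mathbf{R}G:\mathcal{D}(\mathcal{H})\to\mathcal{D}(E)$ is the derived extension of $G=\text{Hom}_\mathcal{H}(T,?)$ from Lemma~\ref{lem.image of G}. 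For $Y\in\mathcal{H}$, the argument in the proof of Proposition~\ref{prop.abelian category in module category} yields $\mathbf{R}G(Y[0])\cong\text{RHom}_\mathcal{A}(T,Y)$, and the counit of the adjunction $(?\otimes_E^\mathbf{L}T,\text{RHom}_\mathcal{A}(T,?))$ then produces $\Psi(Y[0])\cong Y$, so $\Psi$ extends the inclusion $\mathcal{H}\hookrightarrow\mathcal{D}$.

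Next I would verify that $\Psi$ is a triangulated equivalence in three stages. Coproduct preservation is a direct consequence of Lemma~\ref{lem.coproducts and G}: if $C$ denotes the cone of the canonical morphism $\coprod\mathbf{R}G(X_i)\to\mathbf{R}G(\coprod X_i)$, then $C\otimes_E^\mathbf{L}T=0$, and since $?\otimes_E^\mathbf{L}T$ preserves coproducts, so does $\Psi$. For full faithfulness, Lemma~\ref{lem.image of G} shows that $\mathbf{R}G$ lands in $\text{Im}(\text{RHom}_\mathcal{A}(T,?))$, a subcategory on which $?\otimes_E^\mathbf{L}T$ is quasi-inverse to the fully faithful $\text{RHom}_\mathcal{A}(T,?)$; full faithfulness of $\Psi$ is thereby reduced to that of $\mathbf{R}G$. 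Using the adjoint pair $(\mathbf{L}F,\mathbf{R}G)$, the latter is equivalent to the counit $\epsilon:\mathbf{L}F\circ\mathbf{R}G\to\text{id}_{\mathcal{D}(\mathcal{H})}$ being a natural isomorphism. The subcategory $\mathcal{X}\subseteq\mathcal{D}(\mathcal{H})$ on which $\epsilon$ is iso is localizing (since $\mathbf{L}F$ preserves coproducts as a left adjoint, and $\mathbf{L}F(C)=0$ whenever $C\otimes_E^\mathbf{L}T=0$), and one verifies directly that $Y[0]\in\mathcal{X}$ for every $Y\in\mathcal{H}$ by invoking the full faithfulness of the abelian-level functor $G:\mathcal{H}\hookrightarrow\text{Mod}-E$ established in Proposition~\ref{prop.abelian category in module category}. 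Essential surjectivity is then immediate: the essential image of $\Psi$ is a localizing subcategory of $\mathcal{D}$ containing $T$, hence containing every representable $A^\wedge\in\text{thick}_\mathcal{D}(T)$, hence equal to $\mathcal{D}$.

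The main obstacle is promoting ``$\mathcal{X}$ contains every stalk $Y[0]$'' to ``$\mathcal{X}=\mathcal{D}(\mathcal{H})$''. Since $T$ is not assumed compact in $\mathcal{H}$ and $\mathcal{H}$ is merely AB3, standard devissage does not immediately give $\mathcal{D}(\mathcal{H})=\text{Loc}_{\mathcal{D}(\mathcal{H})}(T[0])$. My plan to bypass this is to exploit the left completeness of $\mathcal{D}(\mathcal{H})$ granted by Lemma~\ref{lem.Lurie} (since $\mathcal{H}$ is AB3* via its projective generator $T$, whence AB4*): every $X\in\mathcal{D}(\mathcal{H})$ is the homotopy limit of its intelligent truncations $\tau^{\geq -n}X$, each of which is bounded below, built iteratively from shifts of stalks, and so lies in $\mathcal{X}$. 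To conclude that $\mathcal{X}$ absorbs such homotopy limits, I would establish a product-analogue of Lemma~\ref{lem.coproducts and G}, namely that $\mathbf{R}G$ commutes with products up to a cone that vanishes under $?\otimes_E^\mathbf{L}T$, and hence also under $\mathbf{L}F$. Combined with the coproduct closure verified above, this gives $\mathcal{X}=\mathcal{D}(\mathcal{H})$ and completes the proof that $\Psi$ is a triangulated equivalence.
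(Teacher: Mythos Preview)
Your reduction to Setup~\ref{setup2}, the definition of $\Psi$, the verification that it extends the inclusion, and the coproduct-preservation argument via Lemma~\ref{lem.coproducts and G} together with the inclusion $\text{Ker}(?\otimes_E^\mathbf{L}T)\subseteq\text{Ker}(\mathbf{L}F)$ (which does follow from Lemma~\ref{lem.image of G}) are all fine and match the paper's setup. The essential surjectivity argument is also correct.

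There is, however, a genuine gap in your full-faithfulness argument, precisely at the point you flag as the main obstacle. Your proposed fix does not work: a ``product analogue of Lemma~\ref{lem.coproducts and G}'' is vacuously true, since $\mathbf{R}G=G$ is a right adjoint applied termwise in an AB4* category and therefore already preserves products exactly. The obstruction to $\mathcal{X}$ being closed under homotopy limits lies entirely in $\mathbf{L}F$: to conclude that $\epsilon_{\prod X_i}$ is an isomorphism you need $\mathbf{L}F$ to preserve the product $\prod\mathbf{R}G(X_i)$, and a left adjoint has no reason to do this. No amount of control over $\mathbf{R}G$ helps here. (There is also a smaller gap: the abelian-level full faithfulness of $G$ only gives $F G(Y)\cong Y$, not $L_iF(G(Y))=0$ for $i>0$; since $G(T^{(I)})=\text{Hom}_\mathcal{H}(T,T^{(I)})$ need not be projective when $T$ is not self-small, the objects $G(Y)$ are not obviously $F$-acyclic.)

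The paper avoids this by working on the other side of the adjunction. Instead of proving the counit of $(\mathbf{L}F,\mathbf{R}G)$ is an isomorphism, it proves that the \emph{left} adjoint $\Phi=\mathbf{L}F\circ\Lambda:\mathcal{D}(\mathcal{A})\to\mathcal{D}(\mathcal{H})$ is fully faithful. The point is that $\mathcal{D}(\mathcal{A})$ is compactly generated by the $A^\wedge$, so d\'evissage is available there (whereas in $\mathcal{D}(\mathcal{H})$ it is not, as you correctly note). Lemma~\ref{lem.coproducts and G} is used not for closure of $\mathcal{X}$ but to show that each $\Phi(A^\wedge)$ is compact in $\mathcal{D}(\mathcal{H})$ (in the sense of preserving those coproducts that exist); this is what makes the d\'evissage go through. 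Once $\Phi$ is fully faithful, $(\text{Im}(\Phi),\text{Im}(\Phi)^\perp)$ is a semi-orthogonal decomposition, and one checks $\text{Im}(\Phi)^\perp=0$ using the conservativity of $\mathbf{R}G$ (which is exactly the fact that $T$ generates $\mathcal{H}$).
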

\begin{proof}
As shown in the proof Proposition \ref{prop.abelian category in module category},  we can assume  that $\mathcal{T}=\{T\}$ and that we are in the situation of Setup \ref{setup2}, with $\mathcal{D}=\mathcal{D}(\mathcal{A})$. We  can then use the functors $G$ and $F$ given by such proposition. 
Using  the adjoint pairs $(\mathbf{L}F,G)$ and $(\Lambda ,?\otimes_E^\mathbf{L}T)$, we see that we have  
 two compositions of triangulated functors 

\begin{center}
$\Phi :\mathcal{D}=\mathcal{D}(\mathcal{A})\stackrel{\Lambda}{\longrightarrow}\mathcal{D}(E)\stackrel{\mathbf{L}F}{\longrightarrow}\mathcal{D}(\mathcal{H})$

$\Psi :\mathcal{D}(\mathcal{H})\stackrel{G}{\longrightarrow}\mathcal{D}(E)\stackrel{?\otimes_E^\mathbf{L}T}{\longrightarrow}\mathcal{D}(\mathcal{A})=\mathcal{D}$,
\end{center}
such  that $(\Phi,\Psi)$ is an adjoint pair of triangulated functors. 

Put $\mathcal{T}':=\text{thick}_{\mathcal{D}(\mathcal{H})}(T)$ in the sequel. We claim that the restriction $\Psi_{| \mathcal{T}'}:\mathcal{T}'\longrightarrow\mathcal{D}(\mathcal{A})$ is fully faithful. It is enough to check that the induced map $\text{Hom}_{\mathcal{D}(\mathcal{H})}(T,T[k])\longrightarrow\text{Hom}_{\mathcal{D}(\mathcal{A})}(\Psi (T),\Psi (T)[k])$ is an isomorphism, for all $k\in\mathbb{Z}$. But $\Psi (T)=E\otimes_E^\mathbf{L}T=T$ and the last map gets identified with the composition $$\text{Hom}_{\mathcal{D}(\mathcal{H})}(T,T[k])\stackrel{G}{\longrightarrow}\text{Hom}_{\mathcal{D}(E)}(E,E[k])\stackrel{?\otimes_E^\mathbf{L}T}{\longrightarrow}\text{Hom}_{\mathcal{D}(\mathcal{A})}(T,T[k]).$$ This map is  bijective since the three appearing $K$-modules are zero when $k\neq 0$.

By Lemma \ref{lem.image of G}, we know that $\text{Im}(G)\subseteq\text{Im}(\text{RHom}_\mathcal{A}(T,?))$. Moreover,  the restriction of $?\otimes_E^\mathbf{L}T$ induces an equivalence of triangulated categories $\text{Im}(\text{RHom}_\mathcal{A}(T,?))\stackrel{\sim}{\longrightarrow}\mathcal{D}(\mathcal{A})$. The already proved fully faithful condition of $\Psi_{| \mathcal{T}'} =(?\otimes_E^\mathbf{L}T)\circ G_{| \mathcal{T}'}$ then implies that also $ G_{| \mathcal{T}'}:\mathcal{T}'\longrightarrow\mathcal{D}(E)$ is fully faithful. Moreover, since each object of $\text{per}(E)=\mathcal{D}^c(E)$ is a direct summand of a finite iterated extension of stalk complexes $E[k]=G(T[k])$, we readily see  that $G$ induces an equivalence of categories $\mathcal{T}'=\text{thick}_{\mathcal{D}(\mathcal{H})}(T)\stackrel{\sim}{\longrightarrow}\text{thick}_{\mathcal{D}(E)}(E)=\text{per}(E)$ whose inverse is necessarily $\mathbf{L}F_{| \text{per}(E)}:\text{per}(E)\longrightarrow\text{thick}_{\mathcal{D}(\mathcal{H})}(T)$. Therefore $\mathbf{L}F$ is fully faithful when restricted to $\text{per}(E)$. 

We will now prove that $\Phi :\mathcal{D}(\mathcal{A})\longrightarrow\mathcal{D}(\mathcal{H})$ is fully faithful. To do it we follow a standard method (see the proof of \cite[Lemma 4.2]{K}). Consider the full subcategory $\mathcal{Z}$ of $\mathcal{D}(\mathcal{A})$ consisting of the dg $\mathcal{A}$-modules $N$ such that the canonical map $\Phi:\text{Hom}_{\mathcal{D}(\mathcal{A})}(A^\wedge ,N[k])\longrightarrow\text{Hom}_{\mathcal{D}(\mathcal{H})}(\Phi (A^\wedge),\Phi (N)[k])$ is an isomorphism, for all $A\in\mathcal{A}$ and all integers $k\in\mathbb{Z}$. It is clearly a triangulated subcategory and it contains all representable $A$-modules $B^\wedge$. Indeed, by \cite[Theorem 6.4 and Proposition 6.9]{NS2}, we know that $\Lambda$ preserves compact objects and we have already seen that the restriction of $\mathbf{L}F$ to $\text{per}(E)$ is fully faithful. We then get that the restriction of $\Phi$ to $\text{per}(\mathcal{A})=\mathcal{D}^c(\mathcal{A})$ is fully faithful, so that $B^\wedge\in\mathcal{Z}$, for all objects $B\in\mathcal{A}$. We claim that $\mathcal{Z}$ is also closed under taking coproducts, which, by d\'evissage,   will imply that  $\mathcal{Z}=\mathcal{D}(\mathcal{A})$. For this it is enough to prove that, for each $A\in\mathcal{A}$, the object $\Phi (A^\wedge )$ is a compact in $\mathcal{D}(\mathcal{H})$,  in the sense that if a family of objects $(Y_i^\bullet )_{i\in I}$ of $\mathcal{D}(\mathcal{H})$ has a coproduct in this category, then the canonical map $\coprod_{i\in I}\text{Hom}_{\mathcal{D}(\mathcal{H})}(\Phi (A^\wedge ),Y_i^\bullet )\longrightarrow\text{Hom}_{\mathcal{D}(\mathcal{H})}(\Phi (A^\wedge ),\coprod_{i\in I}Y_i^\bullet )$ is an isomorphism.   Indeed,  since $\Phi$ is a composition of left adjoint functors, it preserves coproducts. That is, if $(N_i)_{i\in I}$ is a family of objects of $\mathcal{D}(\mathcal{A})$, then the coproduct in $\mathcal{D}(\mathcal{H})$ of the $\Phi (N_i)$ exists and is isomorphic to $\Phi (\coprod_{i\in I}N_i)$. If the compactness condition of $\Phi (A^\wedge )$ is checked, then we  will get an isomorphism $$\coprod_{i\in I}\text{Hom}_{\mathcal{D}(\mathcal{H})}(\Phi (A^\wedge),\Phi (N_i))\stackrel{\sim}{\longrightarrow}\text{Hom}_{\mathcal{D}(\mathcal{H})}(\Phi (A^\wedge ),\coprod_{i\in I}\Phi (N_i))\cong\text{Hom}_{\mathcal{D}(\mathcal{H})}(\Phi (A^\wedge ),\Phi (\coprod_{i\in I}N_i)).$$
When the $N_i$ are in $\mathcal{Z}$, this isomorphism can be then inserted in a commutative diagram, where the two horizontal arrows and the left vertical one are isomorphisms:

$$
\begin{xymatrix}{ \coprod \text{Hom}_{\mathcal{D(A)}}( A^{\wedge}, N_i) \ar[r]^{\simeq} \ar[d]^{\phi}  &  \text{Hom}_{\mathcal{D(A)}}(  A^{\wedge},\coprod N_i) \ar[d]^{\phi} \\
\coprod \text{Hom}_{\mathcal{D(H)}} (\phi( A^{\wedge} ), \phi(N_i)) \ar[r]^{\simeq} &  \text{Hom}_{\mathcal{D(H)}} ( \phi(A^{\wedge}),\phi(\coprod N_i)) }
\end{xymatrix}
$$

It will follow that also the right vertical arrow is an isomorphism, which in turn implies that $\coprod_{i\in I}N_i\in\mathcal{Z}$, so that $\mathcal{Z}$ will be closed under taking coproducts in $\mathcal{D}(\mathcal{A})$ as desired. 

Let us prove then that $\Phi (A^\wedge )$ is compact in $\mathcal{D}(\mathcal{H})$, for each $A\in\mathcal{A}$.  Using successively the adjunctions $(\mathbf{L}F,G)$ and $(\Lambda ,?\otimes_E^\mathbf{L}T)$, given a family $(Y_i^\bullet )$ of objects in $\mathcal{D}(\mathcal{H})$ that has a coproduct in this category and an object $A\in\mathcal{A}$, we have  a chain of isomorphisms 

\begin{center}
$\text{Hom}_{\mathcal{D}(\mathcal{H})}(\Phi (A^\wedge ),\coprod Y_i^\bullet )\cong\text{Hom}_{\mathcal{D}(E)}(\Lambda (A^\wedge ),G(\coprod Y_i^\bullet ))\cong\text{Hom}_{\mathcal{D}(\mathcal{A})}(A^\wedge ,G(\coprod Y_i^\bullet )\otimes_E^\mathbf{L}T)$. \hspace*{1cm} (**)
\end{center}
But, by Lemma \ref{lem.coproducts and G},    we have an isomorphism $(\coprod G(Y_i^\bullet ))\otimes_E^\mathbf{L}T\stackrel{\sim}{\longrightarrow} G(\coprod Y_i^\bullet )\otimes_E^\mathbf{L}T$. This fact, the isomorphism (**) and  the compactness of $A^\wedge$ in $\mathcal{D}(\mathcal{A})$ tell us that we have an isomorphism 

\begin{center}
$\text{Hom}_{\mathcal{D}(\mathcal{H})}(\Phi (A^\wedge ),\coprod Y_i^\bullet )\cong\text{Hom}_{\mathcal{D}(\mathcal{A})}(A^\wedge ,\coprod G(Y_i^\bullet )\otimes_E^\mathbf{L}T)\cong\coprod \text{Hom}_{\mathcal{D}(\mathcal{A})}(A^\wedge , G(Y_i^\bullet )\otimes_E^\mathbf{L}T)\cong\coprod\text{Hom}_{\mathcal{D}(E)}(\Lambda (A^\wedge ),G(Y_i^\bullet ))\cong\coprod\text{Hom}_{\mathcal{D}(\mathcal{H})}(\Phi (A^\wedge ),Y_i^\bullet )$,
\end{center}
which is easily seen to be the inverse of the canonical morphism $\coprod\text{Hom}_{\mathcal{D}(\mathcal{H})}(\Phi (A^\wedge ),Y_i^\bullet )\longrightarrow\text{Hom}_{\mathcal{D}(\mathcal{H})}(\Phi (A^\wedge ),\coprod Y_i^\bullet )$. 

 We now finish the proof of the fully faithful condition of $\Phi$.  For any $N\in\mathcal{D}(\mathcal{A})$ fixed, we consider the full subcategory $\mathcal{C}^N$ of $\mathcal{D}(\mathcal{A})$ consisting of those dg $\mathcal{A}$-modules $M$ such that $\Phi :\text{Hom}_{\mathcal{D}(\mathcal{A})}(M[k],N)\longrightarrow\text{Hom}_{\mathcal{D}(\mathcal{H})}(\Phi (M)[k],\Phi (N))$ is an isomorphism, for all $k\in\mathbb{Z}$. This is a triangulated subcategory of $\mathcal{D}(\mathcal{A})$ clearly closed under taking coproducts. The previous paragraphs of this proof show that $\mathcal{C}^N$ contains all representable $\mathcal{A}$-modules $A^\wedge$, with $A\in\mathcal{A}$. By d\'evissage, we conclude that $\mathcal{C}^N=\mathcal{D}(\mathcal{A})$, and hence that $\Phi$ is fully faithful.

Let us put $\mathcal{X}=\text{Im}(\Phi )$, which is then a full triangulated subcategory of $\mathcal{D}(\mathcal{H})$ closed under taking coproducts, when they exist. The fact that $\Phi$ is fully faithful and has a right adjoint implies that the inclusion functor $\mathcal{X}\hookrightarrow\mathcal{D}(\mathcal{H})$ has a right adjoint. By \cite[Proposition 1]{KV}, we know that $(\mathcal{X},\mathcal{X}^\perp )$ is a semi-orthogonal decomposition of $\mathcal{D}(\mathcal{H})$. We will prove that $\mathcal{X}^\perp =0$, which will imply that $\mathcal{X}=\mathcal{D}(\mathcal{H})$ and, hence, that $\Phi$ is an equivalence of categories. Then its quasi-inverse $\Psi :\mathcal{D}(\mathcal{H})\longrightarrow\mathcal{D}=\mathcal{D}(\mathcal{A})$ will be the desired functor. Indeed, if $X\in\mathcal{H}$ then, as seen in the proof of Proposition \ref{prop.abelian category in module category}, we have  $\Psi (X)=G(X)[0]\otimes_E^\mathbf{L}T\cong X$, for each $X\in\mathcal{H}$, from which it is easily seen that $\Psi_{| \mathcal{H}}:\mathcal{H}\longrightarrow\mathcal{D}$ is naturally isomorphic to the inclusion functor. 

Let $Y^\bullet$ be an object of $\mathcal{X}^\perp$. Then, for each $A\in\mathcal{A}$ and $k\in\mathbb{Z}$, we have that  
$0=\text{Hom}_{\mathcal{D}(\mathcal{H})}(\Phi (A^\wedge )[k],Y^\bullet)\cong\text{Hom}_{\mathcal{D}(\mathcal{A})}(A^\wedge [k],\Psi (Y^\bullet ))$. This implies that $0=\Psi (Y^\bullet )=[(?\otimes_E^\mathbf{L}T)\circ G](Y^\bullet )$.  By Lemma \ref{lem.image of G}, we know that $G(Y^\bullet)\cong\text{RHom}_\mathcal{A}(T,M)$, for some $M\in\mathcal{D}(\mathcal{A})$ fixed in the sequel. 
 Moreover, since the counit $\delta :(?\otimes_E^\mathbf{L}T)\circ\text{RHom}_\mathcal{A}(T,?)\longrightarrow 1_{\mathcal{D}(\mathcal{A})}$ is a natural isomorphism we get an isomorphism $$0=\Phi (Y^\bullet )=\text{RHom}_\mathcal{A}(T,M)\otimes_E^\mathbf{L}T\stackrel{\epsilon_M}{\longrightarrow}M. $$ This implies that $G(Y^\bullet )=0$. In other words, if $Y^\bullet$  is the complex in $\mathcal{C}(\mathcal{H})$  $$\cdots\longrightarrow Y^{k-1}\longrightarrow Y^k\longrightarrow Y^{k+1}\longrightarrow \cdots, $$ then the complex in $\mathcal{C}(E)$ $$G(Y^\bullet ): \hspace*{0.5cm} \cdots\longrightarrow G(Y^{k-1})\longrightarrow G(Y^k)\longrightarrow G(Y^{k+1})\longrightarrow \cdots$$ is acyclic. But due to the exactness of $G$, we have that $0=H^k(G(Y^\bullet ))=G(H^k(Y^\bullet))$, so that we have $\text{Hom}_\mathcal{H}(T,H^k(Y^\bullet ))=0$, for all $k\in\mathbb{Z}$. This in turn implies that $H^k(Y^\bullet )=0$, for all $k\in\mathbb{Z}$, because $T$ is a projective generator of $\mathcal{H}$. That is,  $Y^\bullet$ is acyclic and hence $Y^\bullet =0$ in $\mathcal{D}(\mathcal{H})$. 
\end{proof}

A few corollaries follow now (see \cite{PV} for related results when $\mathcal{D}=\mathcal{D}(\mathcal{A})$ is the derived category of an abelian category as in Setup \ref{setup}). 

\begin{cor} \label{cor.restriction of the equivalence}
Let $\mathcal{D}$ be a compactly generated algebraic triangulated category, let $\mathcal{T}$ be a bounded tilting set in $\mathcal{D}$ and let $\mathcal{S}$ be a silting set of compact objects of $\mathcal{D}$ which is weakly equivalent to $\mathcal{T}$. 
If $\mathcal{H}$ is the heart of the t-structure in $\mathcal{D}$ associated to $\mathcal{T}$ and $\Psi :\mathcal{D}(\mathcal{H})\stackrel{\sim}{\longrightarrow}\mathcal{D}$ is any triangulated equivalence which extends the inclusion functor $\mathcal{H}\hookrightarrow\mathcal{D}$, then the following assertions hold:

\begin{enumerate}
\item $\Psi (\mathcal{D}^b(\mathcal{H}))$ consists of the objects $M$ of $\mathcal{D}$ such that, for some natural number $n=n(M)$, one has $\text{Hom}_\mathcal{D}(S,M[k])=0$ whenever  $|k|>n$ and $S\in\mathcal{S}$.
\item $\Psi (\mathcal{D}^-(\mathcal{H}))$ (resp. $\Psi (\mathcal{D}^+(\mathcal{H}))$) consists of the objects $M$ of $\mathcal{D}$ such that, for some natural number $n=n(M)$, one has $\text{Hom}_\mathcal{D}(S,M[k])=0$ whenever  $k>n$ (resp. $k<-n$) and $S\in\mathcal{S}$.
\end{enumerate}  
\end{cor}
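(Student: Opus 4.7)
The plan is to compare the canonical t-structure on $\mathcal{D}(\mathcal{H})$ with $\tau =(T^{\perp_{>0}}, T^{\perp_{<0}})$ on $\mathcal{D}$ through $\Psi$ (where $T=\coprod_{T'\in\mathcal{T}}T'$), and then convert a $\tilde H$-cohomological vanishing condition into a Hom-vanishing condition, first against $\mathcal{T}$ and then against $\mathcal{S}$. The first task is to establish that $\Psi$ is t-exact, i.e.\ $\Psi(\mathcal{D}^{\le 0}(\mathcal{H}))=\mathcal{U}$ and $\Psi(\mathcal{D}^{\ge 0}(\mathcal{H}))=\mathcal{U}^\perp[1]$. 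For the specific equivalence constructed in Theorem \ref{teor.tilting derived equivalence}, $\Psi=(?\otimes^{\mathbf L}_E T)\circ G$; the exact, fully faithful functor $G$ preserves cohomological degree, and by the Milnor colimit description of the aisle in Theorem \ref{teor.weakly equivalent to presilting}, $?\otimes_E^{\mathbf L}T$ sends $\mathcal{D}^{\le 0}(E)$ into $T^{\perp_{>0}}$; together with the dual argument this yields t-exactness. Since the image $\Psi(\mathcal{D}^*(\mathcal{H}))$ will turn out to be intrinsically characterized in $\mathcal{D}$, this in particular shows the characterization is independent of the chosen $\Psi$. Consequently $\Psi$ intertwines cohomology: $\tilde H^k\circ\Psi\cong H^k$, and $M\in\Psi(\mathcal{D}^*(\mathcal{H}))$ iff $\tilde H^k(M)=0$ in the appropriate range of $k$ ($|k|\gg 0$, $k\gg 0$, $k\ll 0$ respectively).

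Next I translate the $\tilde H$-vanishing to a Hom-vanishing against $\mathcal{T}$. Each $T'\in\mathcal{T}$ lies in the co-heart $\mathcal{C}=\mathrm{Add}(\mathcal{T})$ (Lemma \ref{lem.presilting set}), and $\tilde H(\mathcal{T})$ is a set of projective generators of $\mathcal{H}$ (Theorem \ref{teor.presilting t-structures}). The formula proved in the implication $1)=3)\Rightarrow 5)$ of that theorem gives a natural isomorphism
\[
\mathrm{Hom}_{\mathcal{D}}(T',M[k])\;\cong\;\mathrm{Hom}_{\mathcal{H}}\bigl(\tilde H(T'),\tilde H^k(M)\bigr)
\]
for each $T'\in\mathcal{T}$. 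Because $\tilde H(\mathcal{T})$ generates $\mathcal{H}$, the vanishing $\tilde H^k(M)=0$ is equivalent to the vanishing of $\mathrm{Hom}_{\mathcal{D}}(T',M[k])$ for all $T'\in\mathcal{T}$. Hence the images $\Psi(\mathcal{D}^b)$, $\Psi(\mathcal{D}^-)$, $\Psi(\mathcal{D}^+)$ are precisely characterized by the corresponding Hom-vanishing condition against $\mathcal{T}$, uniformly in $T'$.

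Finally I transfer from $\mathcal{T}$ to $\mathcal{S}$ using the weak equivalence $\mathrm{thick}(\mathrm{Sum}(\mathcal{S}))=\mathrm{thick}(\mathrm{Sum}(\mathcal{T}))$ and Lemma \ref{lem.star-product of subcategories}. The coproducts $\hat T=\coprod_{T'\in\mathcal{T}}T'$ and $\hat S=\coprod_{S\in\mathcal{S}}S$ belong respectively to $\mathrm{Sum}(\mathcal{S})[r]\star\cdots\star\mathrm{Sum}(\mathcal{S})[s]$ and $\mathrm{Sum}(\mathcal{T})[r']\star\cdots\star\mathrm{Sum}(\mathcal{T})[s']$ for suitable uniform integers (applying the summand-width argument of that lemma to pass to all objects of $\mathcal{T}$ and $\mathcal{S}$ simultaneously). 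Iterating the long exact sequences coming from the successive triangles of a star product, the condition $\mathrm{Hom}_{\mathcal{D}}(\mathcal{T},M[k])=0$ for $k$ in a given half-line is then equivalent to $\mathrm{Hom}_{\mathcal{D}}(\mathcal{S},M[k])=0$ for $k$ in a shifted half-line, where the shift is bounded by a constant depending only on $r,s,r',s'$. Combining this with the previous paragraph yields both characterizations (1) and (2).

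The principal obstacle is the t-exactness of $\Psi$ in step 1, particularly the half-space $\Psi(\mathcal{D}^{\ge 0}(\mathcal{H}))\subseteq\mathcal{U}^\perp[1]$, where one cannot use the Milnor colimit description directly and one must instead exploit either the adjunction properties from Setup \ref{setup2} (using that $?\otimes_E^{\mathbf L}T$ has a left adjoint and the relevant counits/units are isomorphisms on the appropriate half-spaces, as in the proof of Theorem \ref{teor.tilting derived equivalence}), or the intrinsic characterization of aisles in a nondegenerate t-structure (the tilting t-structure $\tau$ being nondegenerate by Corollary \ref{cor.bijection t-structures}). Once t-exactness is in hand, everything else is formal.
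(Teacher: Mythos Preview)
Your approach is more elaborate than needed and has a gap in handling ``any $\Psi$''. You prove t-exactness only for the specific equivalence of Theorem \ref{teor.tilting derived equivalence} and then assert that because the resulting description of $\Psi_0(\mathcal{D}^*(\mathcal{H}))$ is intrinsic to $\mathcal{D}$, it must hold for every $\Psi$. That inference is circular: knowing $\Psi_0(\mathcal{D}^-(\mathcal{H}))=\{M:\text{cond}\}$ for one $\Psi_0$ does not by itself yield $\Psi(\mathcal{D}^-(\mathcal{H}))=\{M:\text{cond}\}$ for another $\Psi$; you would still need to show that any two such equivalences send $\mathcal{D}^-(\mathcal{H})$ to the same subcategory of $\mathcal{D}$, which is essentially t-exactness of the arbitrary $\Psi$.

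The paper's proof bypasses t-exactness entirely and handles an arbitrary $\Psi$ in one stroke. Since $T\in\mathcal{H}$ and $\Psi$ extends the inclusion, one has $\Psi(T)\cong T$. As $T$ is a projective generator of $\mathcal{H}$, Lemma \ref{lem.t-structure class generators} gives
\[
X^\bullet\in\mathcal{D}^-(\mathcal{H})\ \Longleftrightarrow\ \text{Hom}_{\mathcal{D}(\mathcal{H})}(T,X^\bullet[k])=0\ \text{for }k\gg 0,
\]
and since $\Psi$ is fully faithful with $\Psi(T)\cong T$, this is equivalent to $\text{Hom}_{\mathcal{D}}(T,\Psi(X^\bullet)[k])=0$ for $k\gg 0$. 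A symmetric argument treats $\mathcal{D}^+$. Your third step, passing from $\mathcal{T}$ to $\mathcal{S}$ via $\text{thick}_{\mathcal{D}}(\text{Sum}(\mathcal{T}))=\text{thick}_{\mathcal{D}}(\text{Sum}(\mathcal{S}))$, is the same as the paper's. Your formula $\text{Hom}_{\mathcal{D}}(T',M[k])\cong\text{Hom}_{\mathcal{H}}(\tilde H(T'),\tilde H^k(M))$ is correct but unnecessary once one argues inside $\mathcal{D}(\mathcal{H})$ as above. Note too that this same $\Psi(T)\cong T$ argument is exactly what would close your gap: it shows directly that any $\Psi$ extending the inclusion is t-exact, without appealing to the dg-algebra construction.
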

\begin{proof}
Without loss of generality, we assume that $\mathcal{T}=\{T\}$. Let $X^\bullet\in\mathcal{D}(\mathcal{H})$ be any object. Using the fact that $T$ is a projective generator of $\mathcal{H}$ and that $\Psi (T)\cong T$, we get a chain of double implications: 

\begin{center}
$X^\bullet\in\mathcal{D}^-(\mathcal{H})$ $\Longleftrightarrow$ $\text{Hom}_{\mathcal{D}(\mathcal{H})}(T,X^\bullet [k])=0$, for $k\gg 0$ $\Longleftrightarrow$ $\text{Hom}_\mathcal{D}(T,\Psi (X^\bullet )[k])=0$, for $k\gg 0$. 
\end{center}
The fact that $\text{thick}_\mathcal{D}(\text{Sum}(T))=\text{thick}_\mathcal{D}(\text{Sum}(\mathcal{S}))$ gives then that $X^\bullet\in\mathcal{D}^-(\mathcal{H})$ if and only if there exists $n=n(X^\bullet )\in\mathbb{N}$ such that $\text{Hom}_\mathcal{D}(S,\Psi (X^\bullet )[k])=0$, for all $S\in\mathcal{S}$ and all integers $k>n$. Then the part of assertion 2 concerning the description of $\Psi (\mathcal{D}^-(\mathcal{H}))$ is clear. A symmetric argument proves the corresponding assertion for $\Psi (\mathcal{D}^+(\mathcal{H}))$, which in turn proves also assertion 1. 
\end{proof}

\begin{cor} \label{cor.restriction dg algebra}
Let $A$ be a dg algebra (e.g. an ordinary algebra), let $\mathcal{T}$ be a bounded tilting set in $\mathcal{D}(A)$ and let $\mathcal{H}$ be the heart of the t-structure $(\mathcal{T}^{\perp_{>0}}, \mathcal{T}^{\perp_{<0}})$ in $\mathcal{D}(A)$. If $\Psi :\mathcal{D}(\mathcal{H})\stackrel{\sim}{\longrightarrow}\mathcal{D}(A)$ is any triangulated equivalence which extends the inclusion functor $\mathcal{H}\hookrightarrow\mathcal{D}(A)$, then it induces by restriction triangulated equivalences $\mathcal{D}^*(\mathcal{H})\stackrel{\sim}{\longrightarrow}\mathcal{D}^*(A)$, for $*\in\{b,+,-\}$.
\end{cor}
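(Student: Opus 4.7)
The plan is to derive this directly from Corollary \ref{cor.restriction of the equivalence} by picking a convenient silting set of compact objects and translating the Hom-vanishing conditions into cohomological ones. First I would apply the hypothesis that $\mathcal{T}$ is a bounded tilting set, which by Definition \ref{def.big tilting object} yields a classical silting set $\mathcal{S}$ of compact objects weakly equivalent to $\mathcal{T}$. Since $\mathcal{S}$ is a silting (hence generating) set of compact objects of $\mathcal{D}(A)$ and $\{A\}$ is a classical (compact) generator of $\mathcal{D}(A)$, we have $\text{thick}_{\mathcal{D}(A)}(\mathcal{S})=\mathcal{D}^c(A)=\text{thick}_{\mathcal{D}(A)}(\{A\})$.

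Next, I would invoke Corollary \ref{cor.restriction of the equivalence}: $\Psi(\mathcal{D}^-(\mathcal{H}))$ (resp. $\Psi(\mathcal{D}^+(\mathcal{H}))$, $\Psi(\mathcal{D}^b(\mathcal{H}))$) is characterised inside $\mathcal{D}(A)$ by the existence of an integer $n=n(M)$ such that $\text{Hom}_{\mathcal{D}(A)}(S,M[k])=0$ whenever $S\in\mathcal{S}$ and $k>n$ (resp. $k<-n$, resp. $|k|>n$). The key reduction is then to replace $\mathcal{S}$ by $\{A\}$ in these conditions. Using Lemma \ref{lem.star-product of subcategories}, from $\{A\}\subseteq\text{thick}_{\mathcal{D}(A)}(\mathcal{S})$ we obtain integers $r\leq s$ with $A\in\text{Add}(\mathcal{S})[r]\star\text{Add}(\mathcal{S})[r+1]\star\cdots\star\text{Add}(\mathcal{S})[s]$, and symmetrically each $S\in\mathcal{S}$ lies in $\text{Add}(A)[r']\star\cdots\star\text{Add}(A)[s']$ (since $\mathcal{S}\subseteq\text{per}(A)=\text{thick}_{\mathcal{D}(A)}(\{A\})$). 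A routine application of the long exact sequence of Hom along these finite star decompositions shows that the existence of $n$ making $\text{Hom}_{\mathcal{D}(A)}(\mathcal{S},M[k])$ vanish for $k>n$ (resp. $k<-n$, $|k|>n$) is equivalent to the existence of $n'$ making $\text{Hom}_{\mathcal{D}(A)}(A,M[k])=H^k(M)$ vanish in the same range (shifted by $|r|+|s|$).

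Finally, substituting this in the three descriptions from Corollary \ref{cor.restriction of the equivalence} yields exactly the standard definitions of $\mathcal{D}^-(A)$, $\mathcal{D}^+(A)$ and $\mathcal{D}^b(A)$ as full subcategories of $\mathcal{D}(A)$ determined by the vanishing of $H^k$ for $k\gg 0$, $k\ll 0$, or $|k|\gg 0$, respectively. No serious obstacle is expected: the whole argument amounts to observing that, for compactly generated thick subcategories, Hom-vanishing with respect to one compact generator and Hom-vanishing with respect to another differ only by a uniform shift, and this is built into Lemma \ref{lem.star-product of subcategories}.
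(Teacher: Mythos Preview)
Your overall strategy coincides with the paper's: invoke Corollary \ref{cor.restriction of the equivalence} and then translate the $\mathcal{S}$-Hom-vanishing description of $\Psi(\mathcal{D}^*(\mathcal{H}))$ into the cohomological description of $\mathcal{D}^*(A)$ using that $\text{thick}_{\mathcal{D}(A)}(\mathcal{S})=\text{per}(A)=\text{thick}_{\mathcal{D}(A)}(A)$. However, there is a genuine gap. When you write that ``each $S\in\mathcal{S}$ lies in $\text{Add}(A)[r']\star\cdots\star\text{Add}(A)[s']$'' you are tacitly assuming that the integers $r',s'$ can be chosen \emph{uniformly} over all $S\in\mathcal{S}$. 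The inclusion $\mathcal{S}\subseteq\text{per}(A)$ only yields, for each individual $S$, integers $r'_S,s'_S$ depending on $S$; if $\mathcal{S}$ happened to be infinite there would be no reason for these to be bounded. Without that uniformity the implication ``$H^k(M)=0$ for $k>n'$ $\Longrightarrow$ there exists $n$ with $\text{Hom}_{\mathcal{D}(A)}(S,M[k])=0$ for all $S\in\mathcal{S}$ and all $k>n$'' does not follow, and this is precisely the direction needed to identify the image with $\mathcal{D}^-(A)$ (and similarly for $+,b$). A secondary point: Lemma \ref{lem.star-product of subcategories} requires the set to be strongly nonpositive, which $\{A\}$ need not be a priori; but this is a minor issue compared to the uniformity problem.

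The paper closes exactly this gap by first proving that $\mathcal{S}$ is \emph{finite}. One chooses a finite subset $\mathcal{S}_0\subseteq\mathcal{S}$ with $A\in\text{thick}_{\mathcal{D}(A)}(\mathcal{S}_0)$, observes that $\mathcal{S}_0$ is itself a classical silting set (it is classical partial silting and generates $\mathcal{D}(A)$), and then invokes \cite[Theorem 2.18]{AI} to conclude $\mathcal{S}=\mathcal{S}_0$. With $\mathcal{S}$ finite, the object $\hat S:=\coprod_{S\in\mathcal{S}}S$ is compact, hence lies in a single $\text{add}(A)[r']\star\cdots\star\text{add}(A)[s']$, and the uniform bounds you need are automatic; one obtains $\text{thick}_{\mathcal{D}(A)}(\text{Sum}(\mathcal{S}))=\text{thick}_{\mathcal{D}(A)}(\text{Sum}(A))$ and, as a byproduct, that $A$ is homologically bounded. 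After this step your argument goes through verbatim.
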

\begin{proof}
 As usual, we assume that  $\mathcal{T}=\{T\}$. For each classical silting set $\mathcal{S}$ of $\mathcal{D}(A)$, one has $\text{thick}_{\mathcal{D}(A)}(\mathcal{S})=\text{per}(A)=\text{thick}_{\mathcal{D}(A)}(A)$ (*) (see \cite[Theorem 5.3]{K}). Note that then we have a finite subset $\mathcal{S}_0\subseteq\mathcal{S}$ such that $A\in\text{thick}_{\mathcal{D}(A)}(\mathcal{S}_0)$. Then $\mathcal{S}_0$ is also a classical silting set, because it is classical partial silting and generates $\mathcal{D}(A)$.  By \cite[Theorem 2.18]{AI}, we conclude that $\mathcal{S}=\mathcal{S}_0$, so that $\mathcal{S}$ is finite. But the equality $\text{thick}_{\mathcal{D}(A)}(\text{Sum}(\mathcal{S}))=\text{thick}_{\mathcal{D}(A)}(\text{Sum}(T))$  gives that, up to shift, we have an inclusion $\mathcal{S}\subset\text{Add}(T)\star\text{Add}(T)[1]\star \cdots\star\text{Add}(T)[n]$, for some $n\in\mathbb{N}$. Due to the tilting condition of $T$, we know that $\text{Hom}_{\mathcal{D}(A)}(X,Y[k])=0$, for  $|k|>2n$, whenever $X,Y\in\text{Add}(T)\star\text{Add}(T)[1]\star \cdots\star\text{Add}(T)[n]$. Therefore we have that $\text{Hom}_{D(A)}(S,S'[k])=0$, for $|k|>n$, whenever $S,S'\in\mathcal{S}$. 

Bearing in mind the equality (*) above, we get an $p\in\mathbb{N}$ such that $H^k(A)\cong\text{Hom}_{\mathcal{D}(A)}(A,A[k])=0$, for $|k|>p$. That is, $A$ is homologically bounded. Moreover, the finiteness of $\mathcal{S}$ implies that we actually have $\text{thick}_{\mathcal{D}(A)}(\text{Sum}(\mathcal{S}))=\text{thick}_{\mathcal{D}(A)}(\text{Sum}(A))$.  It then follows in a straightforward way that a dg $A$-module $Y$ is in $\mathcal{D}^-(A)$ if and only if there is a $m=m(Y)\in\mathbb{N}$ such that $\text{Hom}_{\mathcal{D}(A)}(S,Y[k])=0$ for $k>m$. A similar argument works when we take $+$ or $b$ instead of $-$. Now apply Corollary \ref{cor.restriction of the equivalence}.
 \end{proof}

Theorem \ref{teor.tilting derived equivalence} allows us to say something when we replace `tilting' by `partial tilting'. We refer to \cite[Section 1.4]{BBD} for the definition of recollement of triangulated categories. 

\begin{cor} \label{cor.recollement}
Let $\mathcal{D}$ be a compactly generated algebraic triangulated category, let $\mathcal{T}$ be a bounded partial tilting set of $\mathcal{D}$, let $\tau =({}^\perp (\mathcal{T}^{\perp_{\leq 0}}),\mathcal{T}^{\perp_{<0}})$ be its associated t-structure and let $\mathcal{H}$ be its heart. The inclusion $\mathcal{H}\hookrightarrow\mathcal{D}$ extends to a fully faithful functor $j_!:\mathcal{D}(\mathcal{H})\longrightarrow\mathcal{D}$ which fits in a recollement

$$\begin{xymatrix}{\mathcal{D'} \ar[r]^{i_*}& \mathcal{D} \ar@<3ex>[l]_{i^!}\ar@<-3ex>[l]_{i^*}\ar[r]^{j^*} & \mathcal{D(H)} \ar@<3ex>_{j_*}[l]\ar@<-3ex>_{j_!}[l]}
\end{xymatrix}$$

\end{cor}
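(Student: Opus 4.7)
The plan is to reduce the partial tilting case to Theorem \ref{teor.tilting derived equivalence} by passing to the smallest localizing subcategory of $\mathcal{D}$ containing $\mathcal{T}$. Fix a classical partial silting set $\mathcal{S}$ in $\mathcal{D}$ weakly equivalent to $\mathcal{T}$ (which exists by definition of boundedness), and set $\mathcal{D}'':=\mathrm{Loc}_\mathcal{D}(\mathcal{T})=\mathrm{Loc}_\mathcal{D}(\mathcal{S})$. First I would check that $\mathcal{D}''$ is itself a compactly generated algebraic triangulated category in which $\mathcal{T}$ is a bounded tilting set. Algebraicity is inherited from $\mathcal{D}$ via Keller's theorem realizing $\mathcal{D}\simeq \mathcal{D}(\mathcal{A})$ for some dg category $\mathcal{A}$; compact generation of $\mathcal{D}''$ follows because each $S\in \mathcal{S}$ is compact in $\mathcal{D}$ and therefore remains compact in the localizing subcategory it generates. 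The set $\mathcal{T}$ is partial tilting in $\mathcal{D}''$ (the Hom-vanishing conditions are inherited), it generates $\mathcal{D}''$ by construction, and it is weakly equivalent in $\mathcal{D}''$ to the classical silting set $\mathcal{S}$, so it is a bounded tilting set of $\mathcal{D}''$.

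Second, I would check that $\tau$ restricts to a t-structure on $\mathcal{D}''$ with the same heart $\mathcal{H}$. Using Theorem \ref{teor.weakly equivalent to presilting}, each object of $\mathcal{U}:={}^\perp(\mathcal{T}^{\perp_{\leq 0}})$ is a Milnor colimit of a sequence whose terms are built from $\mathrm{Sum}(\mathcal{T})$, so $\mathcal{U}\subseteq \mathcal{D}''$, and in particular $\mathcal{H}\subseteq \mathcal{D}''$. For any $X\in\mathcal{D}''$ the truncation triangle $\tau_\mathcal{U}X\to X\to \tau^{\mathcal{U}^\perp}X\stackrel{+}{\to}$ has its first two terms in $\mathcal{D}''$, forcing $\tau^{\mathcal{U}^\perp}X\in\mathcal{D}''$ as well. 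Hence the pair $(\mathcal{U},(\mathcal{U}^\perp\cap \mathcal{D}'')[1])$ is a t-structure on $\mathcal{D}''$ with heart $\mathcal{H}$. Applying Theorem \ref{teor.tilting derived equivalence} to $(\mathcal{D}'',\mathcal{T})$ yields a triangulated equivalence $\Psi:\mathcal{D}(\mathcal{H})\stackrel{\sim}{\longrightarrow}\mathcal{D}''$ extending the inclusion $\mathcal{H}\hookrightarrow \mathcal{D}''$.

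Third, I would combine $\Psi$ with the standard recollement attached to a smashing localizing subcategory. Since $\mathcal{D}''$ is generated as a localizing subcategory of the compactly generated category $\mathcal{D}$ by objects that are compact in $\mathcal{D}$, the right orthogonal $\mathcal{D}''^\perp$ is closed under coproducts, i.e.~$\mathcal{D}''$ is smashing in $\mathcal{D}$. Brown representability (which holds for both $\mathcal{D}$ and $\mathcal{D}''$) together with this smashing property produces a recollement of the form $\mathcal{D}''^\perp \rightleftarrows \mathcal{D} \rightleftarrows \mathcal{D}''$ in which the fully faithful functor $j_!$ is precisely the inclusion $\iota:\mathcal{D}''\hookrightarrow \mathcal{D}$ (compare \cite[Chapter 9]{N}). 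Setting $\mathcal{D}':=\mathcal{D}''^\perp$ and post-composing the two fully faithful embeddings $\mathcal{D}''\to \mathcal{D}$ with $\Psi$ and the projection $\mathcal{D}\to \mathcal{D}''$ with $\Psi^{-1}$ yields the desired recollement with $\mathcal{D}(\mathcal{H})$ on the right; the functor $j_!=\iota\circ\Psi$ is fully faithful and restricts on $\mathcal{H}$ to the original inclusion $\mathcal{H}\hookrightarrow \mathcal{D}$.

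The only slightly delicate points are the verifications that $\mathcal{D}''$ inherits both compact generation and algebraicity from $\mathcal{D}$, and that it is smashing; all three are standard consequences of the fact that $\mathcal{D}''$ is generated, as a localizing subcategory, by a set of compact objects of the compactly generated algebraic category $\mathcal{D}$.
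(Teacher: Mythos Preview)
Your proposal is correct and follows essentially the same route as the paper's proof: pass to $\mathcal{D}''=\mathrm{Loc}_\mathcal{D}(\mathcal{T})=\mathrm{Loc}_\mathcal{D}(\mathcal{S})$, observe that it is compactly generated algebraic and that $\mathcal{T}$ becomes a bounded tilting set there, check that the heart of the induced t-structure on $\mathcal{D}''$ coincides with $\mathcal{H}$, apply Theorem~\ref{teor.tilting derived equivalence}, and compose the resulting equivalence with the inclusion coming from the smashing recollement. The only cosmetic difference is in the verification that the heart is unchanged: you use the Milnor-colimit description of $\mathcal{U}_\mathcal{T}$ from Theorem~\ref{teor.weakly equivalent to presilting} to get $\mathcal{U}_\mathcal{T}\subseteq\mathcal{D}''$, whereas the paper uses the decomposition $\mathcal{T}^{\perp_{>0}}=\mathcal{U}_\mathcal{T}\star\mathcal{T}^{\perp_{k\in\mathbb{Z}}}$ from Lemma~\ref{lem.SumT-precovers}/Theorem~\ref{teor.presilting t-structures}; both yield the same conclusion.
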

\begin{proof}
Let $\mathcal{S}$ be a classical partial silting set in $\mathcal{D}$ which is weakly equivalent to $\mathcal{T}$. Consider the associated localizing subcategory $\mathcal{D}'':=\text{Loc}_\mathcal{D}(\mathcal{T})=\text{Loc}_{\mathcal{D}}(\mathcal{S})$. Since it is compactly generated it is smashing and we have a recollement  

$$\begin{xymatrix}{\mathcal{D'} \ar[r]& \mathcal{D} \ar@<3ex>[l]\ar@<-3ex>[l]\ar[r] & \mathcal{D''} \ar@<3ex>[l]\ar@<-3ex>[l]}
\end{xymatrix}$$

 (see \cite[Theorem 5]{NS}), where the upper arrow $\mathcal{D}\longleftarrow\mathcal{D}''$ is the inclusion functor. We claim that $\mathcal{D}''$   is also an algebraic triangulated category. To see this, abusing notation, we also denote by $(i^*,i_*,i^!,j_!,j^*,j_*)$ the arrows in the last recollement. 
 By properties of recollements, we know that $\mathcal{D}''$ is triangle-equivalent to the Verdier quotient $\mathcal{D}/\text{Im}(i_*)$ and we also know that $\text{Im}(i_*)=\text{Loc}_\mathcal{D}(i_*(\mathcal{G}))$, for a set $\mathcal{G}$ of compact generators of $\mathcal{D}'$ (e.g. one can take $\mathcal{G}$ to be a set of representatives of the isoclasses of the objects in $i^*(\mathcal{D}^c)$).  The algebraicity of $\mathcal{D}''$ follows then from \cite[Theorem 7.2]{Porta}.

   Note that $\mathcal{T}$ is a bounded tilting set of $\mathcal{D}''$.  By Theorem \ref{teor.presilting t-structures} and its proof, we have that $\mathcal{T}^{\perp_{>0}}=\mathcal{U}_\mathcal{T}\star\mathcal{T}^{\perp_{k\in\mathbb{Z}}}$, which implies that $\mathcal{T}^{\perp_{>0}\text{ }(\mathcal{D}'')}:=\mathcal{T}^{\perp_{>0}}\cap\mathcal{D}''=\mathcal{U}_\mathcal{T}={}^\perp (\mathcal{T}^{\leq 0})$. Then the heart of the t-structure in $\mathcal{D}''$ associated to $\mathcal{T}$ is $$\mathcal{H}'=\mathcal{T}^{\perp_{>0}\text{ }(\mathcal{D}'')}\cap\mathcal{T}^{\perp_{<0}\text{ }(\mathcal{D}'')}=\mathcal{U}_\mathcal{T}\cap\mathcal{T}^{\perp_{<0}}\cap\mathcal{D}''=\mathcal{U}_\mathcal{T}\cap\mathcal{T}^{\perp_{<0}}=\mathcal{H}$$ since $\mathcal{U}_\mathcal{T}\subset\mathcal{D}''$. 
Theorem \ref{teor.tilting derived equivalence} gives then a triangulated equivalence $\Psi:\mathcal{D}(\mathcal{H})\stackrel{\sim}{\longrightarrow}\mathcal{D}''$. The desired functor $j_!$ is the composition $\mathcal{D}(\mathcal{H})\stackrel{\Psi}{\longrightarrow}\mathcal{D}''\stackrel{incl}{\hookrightarrow}\mathcal{D}$.
\end{proof}

We end by giving a partial affirmative answer to Question \ref{ques.self-small silting} (see the end of Section \ref{sect.silting bijection}).

\begin{cor} \label{cor.self-small versus compact tilting}
Let $\mathcal{D}$ be a compactly generated algebraic triangulated category and let $T$ be an object of $\mathcal{D}$. Then $T$ is a self-small bounded tilting object if and only if it is a classical tilting object. 
\end{cor}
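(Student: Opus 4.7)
The forward implication is immediate: every compact object is self-small, and if $T$ is a classical tilting object then $\{T\}$ is a classical silting set weakly equivalent to itself, so $T$ is automatically a bounded tilting object.

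For the non-trivial direction, assume that $T$ is a self-small bounded tilting object. The plan is to exhibit $\mathcal{H}$ as a module category in which $T$ corresponds to the regular module, and then transport this along the equivalence of Theorem~\ref{teor.tilting derived equivalence} to conclude compactness in~$\mathcal{D}$. First, observe that $T$ lies in the heart $\mathcal{H}$ of the associated t-structure: by definition of a tilting object the set $\{T\}$ is strongly exceptional, so $\text{Hom}_\mathcal{D}(T,T[k])=0$ for every $k\neq 0$, which places $T$ in $T^{\perp_{>0}}\cap T^{\perp_{<0}}=\mathcal{H}$. The proof of Theorem~\ref{teor.presilting t-structures} shows that $\tilde{H}(T)$ is a projective generator of $\mathcal{H}$, and since $T\in\mathcal{H}$ we have $\tilde{H}(T)=T$.

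Next, apply Corollary~\ref{cor.self-smallness versus compactness}: the self-smallness of the silting set $\{T\}$ is equivalent to $\tilde{H}(T)=T$ being compact in $\mathcal{H}$. Thus $T$ is a compact projective generator, i.e.\ a progenerator, of $\mathcal{H}$. By Proposition~\ref{prop.Gabriel-Mitchell}, the functor $\text{Hom}_\mathcal{H}(T,?):\mathcal{H}\stackrel{\sim}{\longrightarrow}\text{Mod}-E$ is an equivalence of abelian categories, where $E=\text{End}_\mathcal{H}(T)=\text{End}_\mathcal{D}(T)$, and it sends $T$ to the regular module $E$.

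Finally, Theorem~\ref{teor.tilting derived equivalence} provides a triangulated equivalence $\Psi:\mathcal{D}(\mathcal{H})\stackrel{\sim}{\longrightarrow}\mathcal{D}$ which extends the inclusion $\mathcal{H}\hookrightarrow\mathcal{D}$; in particular $\Psi(T)\cong T$. The equivalence $\mathcal{H}\stackrel{\sim}{\longrightarrow}\text{Mod}-E$ lifts to a triangulated equivalence $\mathcal{D}(\mathcal{H})\stackrel{\sim}{\longrightarrow}\mathcal{D}(E)$ identifying the stalk complex $T[0]\in\mathcal{D}(\mathcal{H})$ with $E[0]\in\mathcal{D}(E)$. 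Composing with $\Psi^{-1}$ yields a triangulated equivalence $\mathcal{D}\stackrel{\sim}{\longrightarrow}\mathcal{D}(E)$ sending $T$ to $E$. Since $E$ is compact in $\mathcal{D}(E)$, it follows that $T$ is compact in $\mathcal{D}$, and thus $T$ is a classical tilting object.

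The only delicate point is the last one: one must be sure that the progenerator equivalence $\mathcal{H}\cong\text{Mod}-E$ does lift to a triangulated equivalence at the level of derived categories which matches, under $\Psi$, the object $T\in\mathcal{D}$ with $E\in\mathcal{D}(E)$. This is a standard consequence of the functoriality of the derived category construction and of the fact that $\Psi$ extends the inclusion of the heart, but it is the step where the various identifications must be carefully tracked.
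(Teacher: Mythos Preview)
Your argument is correct and follows essentially the same approach as the paper: the paper's proof simply instructs the reader to adapt \cite[Corollary 2.3]{FMS} using Theorem~\ref{teor.tilting derived equivalence}, and what you have written is precisely that adaptation spelled out in detail---use self-smallness and Corollary~\ref{cor.self-smallness versus compactness} to see that $T$ is a progenerator of $\mathcal{H}$, identify $\mathcal{H}\simeq\text{Mod}-E$, and then transport compactness of $E$ in $\mathcal{D}(E)$ back to $T$ via the triangulated equivalence of Theorem~\ref{teor.tilting derived equivalence}. Your caveat about lifting the abelian equivalence to the derived level is appropriate but, as you note, entirely standard.
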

\begin{proof}
Just adapt the proof of \cite[Corollary 2.5]{FMS}, using our Theorem \ref{teor.tilting derived equivalence} instead of \cite[Proposition 2.3]{FMS}. 
\end{proof}

\section{Relation with exceptional sequences} \label{sec.exceptional sequences}

Starting with Rudakov's seminar (see \cite{Ru}) the concepts of exceptional and strongly exceptional sequence of coherent sheaves have played a fundamental role in Algebraic Geometry and it is still an open problem to identify those algebraic varieties $\mathbb{X}$ for which there exists an exceptional sequence in the category $\text{coh}(\mathbb{X})$ of coherent sheaves (see the introduction of \cite{HP}). 
In this final section we want to stress the relationship between these classical concepts and those of partial silting (or tilting) sets studied in earlier sections.  
In a typical pattern of this paper, we shall pass from  finite (strongly) exceptional sequences to infinite ones and from `coherent' objects to arbitrary ones. 

\begin{opr}
Let $\mathcal{G}$ be any Grothendieck category and let $(T_n)_{n\in\mathbb{Z}}$ be a sequence of (possibly zero) objects of $\mathcal{G}$. We will say that $(T_n)_{n\in\mathbb{Z}}$ is

\begin{enumerate}
\item \emph{Exceptional} when $\text{Ext}_\mathcal{G}^k(T_n,T_p^{(I)})=0$, for  all sets $I$ and all triples $(n,p,k)\in\mathbb{Z}\times\mathbb{Z}\times\mathbb{N}$ such that either $n>p$ or $n=p$ and $k>0$. 
\item \emph{Strongly exceptional} when $\text{Ext}_\mathcal{G}^k(T_n,T_p^{(I)})=0$, for   all sets $I$ and all triples  $(n,p,k)\in\mathbb{Z}\times\mathbb{Z}\times\mathbb{N}$ such that either $k>0$ or $n>p$ and $k=0$.
\item \emph{Superexceptional}  when $\text{Ext}_\mathcal{G}^k(T_n,T_p^{(I)})=0$, for all sets $I$ and all triples  $(n,p,k)\in\mathbb{Z}\times\mathbb{Z}\times\mathbb{N}$ such that $k>p-n$.
\end{enumerate}
All these sequences will be called \emph{complete} when they generate $\mathcal{D}(\mathcal{G})$. 
\end{opr}

Note that `strongly exceptional' implies `superexceptional', which in turn implies `exceptional'.  Note also that, in the three definitions, the condition for $k=0$ says that $\text{Hom}_\mathcal{G}(T_n,T_p^{(I)})=0$ for all sets $I$  and integers  $n>p$. This is equivalent to say that  $\text{Hom}_\mathcal{G}(T_n,T_p)=0$, for $n>p$, since the canonical morphism $T_p^{(I)}\longrightarrow T_p^I$ is a monomorphism in a Grothendieck category. 

 Recall that an abelian category $\mathcal{A}$  is \emph{hereditary} when $\text{gldim}(\mathcal{A})\leq 1$ (see Definition \ref{def.finite projective dimension}). 
 The category of quasi-coherent sheaves on a weighted projective line (see \cite{GL1}) or the module category over the path algebra of a possibly infinite quiver are examples of hereditary Grothendieck categories. 
Note that if $\mathcal{G}$ is a hereditary Grothendieck category, then  a sequence $(T_n)_{n\in\mathbb{Z}}$ in $\mathcal{G}$ is  superexceptional if and only if it is exceptional.  

Recall that a Grothendieck category $\mathcal{G}$ is \emph{locally coherent} when the objects $X$ such that $\text{Hom}_\mathcal{G}(X,?):\mathcal{G}\longrightarrow\text{Ab}$ preserves direct limits, usually called \emph{finitely presented objects}, form a skeletally small class $fp(\mathcal{G})$ of generators  of $\mathcal{G}$ which is closed under taking kernels. 

We are ready for the main result of the section. 

\begin{prop} \label{prop.exceptional sequences}
Let $\mathcal{G}$ be any  Grothendieck category with $\text{gldim}(\mathcal{G})\leq d<\infty$, and let $(T_n)_{n\in\mathbb{Z}}$ be a sequence of objects in $\mathcal{G}$. The following assertions hold:

\begin{enumerate}
\item $(T_n)_{n\in\mathbb{Z}}$ is a superexceptional sequence if and only if $\mathcal{T}=\{T_n[n]\text{: }n\in\mathbb{Z}\}$ is a strongly nonpositive set in $\mathcal{D}(\mathcal{G})$.
\item $(T_n)_{n\in\mathbb{Z}}$ is exceptional if and only if $\mathcal{T}=\{T_n[nd]\text{: }n\in\mathbb{Z}\}$ is a strongly nonpositive set in $\mathcal{D}(\mathcal{G})$.
\end{enumerate}

Moreover, when $\mathcal{G}$ is hereditary, each strongly nonpositive set in $\mathcal{D}(\mathcal{G})$ is equivalent to a set $\mathcal{T}$ as above (see Definition \ref{def.equivalent partial silting}). 

  For general $d>0$, under conditions 1 or 2 above,  if either one of the  conditions below holds then $\mathcal{T}$ is partial silting, and it is even  partial tilting  in case $(T_n)_{n\in\mathbb{Z}}$ is strongly exceptional. 

\begin{enumerate}
\item[a)] The family $(T_n)_{n\in\mathbb{N}}$ is finite, i.e. $T_n=0$ for almost all $n\in\mathbb{Z}$;
\item[b)] $\mathcal{G}$ is locally coherent and all the $T_n$ are finitely presented objects of $\mathcal{G}$.  
\end{enumerate}

\end{prop}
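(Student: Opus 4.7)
The plan is to break the statement into four parts and handle each. The core computation throughout is the identity
\[
\mathrm{Hom}_{\mathcal{D}(\mathcal{G})}(T_n[\sigma_n], T_p^{(I)}[\sigma_p+k]) \cong \mathrm{Ext}^{k+\sigma_p-\sigma_n}_{\mathcal{G}}(T_n, T_p^{(I)}),
\]
(understood to be zero for negative index), available since $\mathcal{G}$ has enough injectives; here $\sigma_n = n$ in case 1 and $\sigma_n = nd$ in case 2. Under this identity, vanishing for all $k > 0$ translates, in case 1, to exactly $\mathrm{Ext}^j(T_n, T_p^{(I)}) = 0$ for $j > p-n$ (superexceptional), and in case 2, via case analysis on the sign of $p-n$ combined with the global dimension bound $\mathrm{gldim}(\mathcal{G}) \le d$, to exactly the exceptional conditions (for $n \ge p$ use the exceptional hypothesis and the subcase $k + (p-n)d = 0$ is Hom vanishing for $n > p$; for $n < p$ the shifted index exceeds $d$, so Ext vanishes automatically). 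To promote stalk-to-stalk vanishing to strong nonpositivity on the whole $\mathrm{Sum}(\mathcal{T})$, I use that $\mathcal{G}$ is AB4, so a generic element of $\mathrm{Sum}(\mathcal{T})$ is realized pointwise as $\coprod_n T_n^{(I_n)}[\sigma_n]$; splitting $\mathrm{Hom}$ out of the source into a product and controlling the target coproduct via the hypercohomology spectral sequence $E_2^{a,b} = \mathrm{Ext}^a(T_n^{(I_n)}[\sigma_n], H^b(Y))$ reduces the question to vanishing on the antidiagonal $a+b=0$, whose terms are precisely shifted Ext groups covered by the sequence-level hypotheses applied to coproducts $T_p^{(J)}$ (included in the definitions). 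Finite global dimension ensures bounded columns and convergence.

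For the hereditary addendum I use that $\mathrm{gldim}(\mathcal{G}) \le 1$ forces every complex to split canonically as $X \cong \bigoplus_{n \in \mathbb{Z}} H^n(X)[-n]$. Given a strongly nonpositive set $\mathcal{S} = \{S_\alpha\}$, set $T_n := \coprod_\alpha H^{-n}(S_\alpha) \in \mathcal{G}$ (valid by AB3). Since $H^{-n}(S_\alpha)[n]$ is a direct summand of $S_\alpha$, we have $T_n[n] \in \mathrm{Add}(\mathcal{S})$; conversely $S_\alpha \cong \bigoplus_n H^{-n}(S_\alpha)[n]$ is a direct summand of $\coprod_n T_n[n]$, so $S_\alpha \in \mathrm{Add}(\{T_n[n]\})$. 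Hence $\mathrm{Add}(\{T_n[n]\}) = \mathrm{Add}(\mathcal{S})$, and the first part of the proposition forces $(T_n)$ to be exceptional ($=$ superexceptional in the hereditary case).

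For the partial silting part I verify condition (3) of Theorem \ref{teor.weakly equivalent to presilting}. Under (a), writing $N_\pm := \max/\min\{n : T_n \ne 0\}$ (both finite), the canonical t-structure $\mathcal{V} = \mathcal{D}^{\le -\sigma N_-}(\mathcal{G})$ contains $\mathcal{T}$, and by Lemma \ref{lem.finite projective dimension} together with $\mathrm{gldim}(\mathcal{G}) \le d$, the functor $\mathrm{Hom}(T_n[\sigma n], ?)$ vanishes on $\mathcal{D}^{<-d-\sigma n}$; thus a uniform $q$ making $\mathrm{Hom}(T, ?)$ vanish on $\mathcal{V}[q]$ for all $T \in \mathcal{T}$ can be chosen using finiteness of $N_\pm$. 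Under (b), in a locally coherent Grothendieck category of finite global dimension, every finitely presented object is compact in $\mathcal{D}(\mathcal{G})$ (because $\mathrm{Ext}^k_{\mathcal{G}}(T, ?)$ commutes with filtered colimits for $T \in fp(\mathcal{G})$ and all $k \le d$); thus $\mathcal{T}$ is a set of compact objects in the compactly generated $\mathcal{D}(\mathcal{G})$, and Example \ref{ex.examples partial silting}(1) applies directly. For the partial tilting upgrade under a strongly exceptional $(T_n)$, the same stalk-to-stalk analysis extends from $k > 0$ to all $k \ne 0$: the enhanced vanishing $\mathrm{Ext}^k(T_n, T_p^{(I)}) = 0$ now holds for all $k > 0$ and all pairs $(n,p)$, and the boundary cases where the shifted index equals zero are handled by the Hom-vanishing half of exceptionality. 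The main obstacle in executing this plan is the spectral-sequence convergence in the Sum computation of paragraph 1 when $(T_n)$ has infinitely many nonzero terms --- this is precisely where the hypothesis $\mathrm{gldim}(\mathcal{G}) \le d$ is essential, producing the column boundedness needed to conclude $\mathrm{Hom}$-vanishing in the unbounded-target setting.
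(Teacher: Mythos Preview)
Your approach is essentially correct and parallels the paper's, but differs in execution on two points worth noting.

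For the core strong nonpositivity computation, you invoke a hypercohomology spectral sequence $E_2^{a,b}=\mathrm{Ext}^a(T_n,H^b(Y))$ to handle the infinite coproduct target $Y=\coprod_p T_p^{(I_p)}[\sigma_p+k]$. This works, but the convergence argument for unbounded $Y$ that you flag as ``the main obstacle'' is, when spelled out, essentially the paper's argument in disguise: since $\mathrm{pd}(T_n)\le d$, Lemma~\ref{lem.finite projective dimension} lets you replace $Y$ by its truncation to a window of width $d+1$, after which the spectral sequence is trivially convergent. The paper makes this explicit from the start by decomposing the coproduct as $Y\coprod Z\coprod W$ according to whether $p\ge n-k+d+1$, $n-k\le p\le n-k+d$, or $p\le n-k-1$; then $\mathrm{Hom}(T_n[n],Y)=0$ by the projective-dimension lemma, $\mathrm{Hom}(T_n[n],W)=0$ by the canonical t-structure, and only the finite piece $Z$ survives. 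This is more elementary and avoids any spectral-sequence bookkeeping.

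For assertion~2, you do a direct case analysis on the sign of $p-n$ (correct), while the paper reduces to assertion~1 by the trick of defining a ``stretched'' sequence $T'_m = T_{m/d}$ for $d\mid m$ and $T'_m=0$ otherwise, observing that $(T_n)$ is exceptional exactly when $(T'_m)$ is superexceptional. The reduction is slicker but your direct check is equally valid.

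The hereditary addendum, the partial silting argument under (a) via Theorem~\ref{teor.weakly equivalent to presilting}, and the compactness argument under (b) all match the paper's proof closely.
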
 
\begin{proof}
For the first part of the proposition, we just need to prove assertion 1 since, as it is easily verified and left to the reader,  the sequence $(T_n)_{n\in\mathbb{Z}}$ is exceptional if and only if the sequence $(T'_n)_{n\in\mathbb{Z}}$ is superexceptional, where $T'_n=0$ for $n\not\in d\mathbb{Z}$ and $T'_n=T_{\frac{n}{d}}$ for $n\in d\mathbb{Z}$. 

We then prove assertion 1. The set $\mathcal{T}$ is strongly nonpositive if and only if $\text{Hom}_{\mathcal{D}(\mathcal{G})}(T_n[n],\coprod_{p\in\mathbb{Z}}T_p^{(I_p)}[p+k])=0$ (*), for all $n,p\in\mathbb{Z}$, for all integers $k>0$ and for each family $(I_p)_{p\in\mathbb{Z}}$ of sets. Let us fix $n\in\mathbb{Z}$ and $k>0$ and decompose $X:=\coprod_{p\in\mathbb{Z}}T_p^{(I_p)}[p+k]$ as $X=Y\coprod Z\coprod W$, where $Y=\coprod_{p\geq n-k+d+1}T_p^{(I_p)}[p+k]$, $Z=T_{n-k}^{(I_{n-k})}[n]\coprod T_{n-k+1}^{(I_{n-k+1})}[n+1]\coprod\cdots\coprod T_{n-k+d}^{(I_{n-k+d})}[n+d]$ and $W=\coprod_{p\leq n-k-1}T_p^{(I_p)}[p+k]$. Then we have that $Y\in\mathcal{D}^{\leq -n-d-1}(\mathcal{G})$ while $W\in\mathcal{D}^{> -n}(\mathcal{G})$. Then $\text{Hom}_{\mathcal{D}(\mathcal{G})}(T_n[n],W)=0$ and, by Lemma \ref{lem.finite projective dimension} and the fact that each object of $\mathcal{G}$ has projective dimension $\leq d$, we also have that $\text{Hom}_{\mathcal{D}(\mathcal{G})}(T_n[n],Y)=0$. Therefore equality (*) holds for $n$ and $k$ if and only if $\text{Hom}_{\mathcal{D}(\mathcal{G})}(T_n[n],Z)=0$, which is in turn equivalent to saying that $\text{Ext}_\mathcal{G}^j(T_n,T_{n-k+j}^{(I_{n-k+j})})=0$, for all $j=0,1,\dots,d$. Bearing in mind that $\text{Ext}_\mathcal{G}^j(?,?):\mathcal{G}^{op}\times\mathcal{G}\longrightarrow Ab$ vanishes for $j\in\mathbb{N}\setminus\{0,1,\dots,d\}$, we deduce that the equality (*) holds if and only if $\text{Ext}_\mathcal{G}^j(T_n,T_p^{(I)})=0$, for all sets I, whenever $j$ is a natural number such that $j>p-n$. That is, if and only if $(T_n)_{n\in\mathbb{Z}}$ is a superexceptional sequence in $\mathcal{G}$. 

Suppose now that $\mathcal{G}$ is hereditary and that $\mathcal{T}'$ is a strongly nonpositive set of objects in $\mathcal{D}(\mathcal{G})$. It is well-known that in this case each $X\in\mathcal{D}(\mathcal{G})$ is isomorphic to $\coprod_{n\in\mathbb{Z}}H^{-n}(X)[n]$. This implies that $\tilde{\mathcal{T}}'=\{H^{-n}(T')[n]\text{: }n\in\mathbb{Z}\text{, }T'\in\mathcal{T}'\}$ is also strongly nonpositive. Taking now $T_n=\coprod_{T'\in\mathcal{T}'}H^{-n}(T')$, we get a set $\{T_n[n]\text{: }n\in\mathbb{Z}\}$ which is also strongly nonpositive in $\mathcal{D}(\mathcal{G})$ and satisfies that $\text{Add}(\mathcal{T}')=\text{Add}(\mathcal{T})$. By the previous paragraph, we know that $(T_n)_{n\in\mathbb{Z}}$ is a  (super)exceptional sequence in $\mathcal{G}$.

We pass to prove the partial silting condition. When $(T_n)_{n\in\mathbb{Z}}$ is finite, due to the fact that $\text{gldim}(\mathcal{G})<\infty$,  we know that there are integers $r<s$ such that $\mathcal{T}\subset\mathcal{D}^{\leq s}(\mathcal{G})$ and $\text{Hom}_{\mathcal{D}(\mathcal{G})}(T_{n}[n],?)$ vanishes on $\mathcal{D}^{\leq r}(\mathcal{G})=\mathcal{D}^{\leq s}(\mathcal{G})[s-r]$, for all $n\in\mathbb{Z}$. Then Theorem \ref{teor.weakly equivalent to presilting} applies, with $\mathcal{V}=\mathcal{D}^{\leq s}(\mathcal{G})$ and $q=s-r$. 

Suppose now that $\mathcal{G}$ is locally coherent. We  claim if $X$ is a finitely presented object of $\mathcal{G}$, then  $X=X[0]$ is a compact object of $\mathcal{D}(\mathcal{G})$. Once this is proved the partial silting condition of $\mathcal{T}$ under assumption 2 will follow directly from Example \ref{ex.examples partial silting}(1). We need to prove that if $(M_i)_{i\in I}$ is any family of objects of $\mathcal{D}(\mathcal{G})$, then the canonical map $\coprod_{i\in I}\text{Hom}_{\mathcal{D}(\mathcal{G})}(X,M_i)\longrightarrow\text{Hom}_{\mathcal{D}(\mathcal{G})}(X,\coprod_{i\in I}M_i)$ is an isomorphism. By taking successively the canonical truncation triangles $\tau^{\leq -d-1}M_i\longrightarrow M_i\longrightarrow\tau^{>-d-1}M_i\stackrel{+}{\longrightarrow}$ and $\tau^{\leq 0}(\tau^{>-d-1}M_i)\longrightarrow\tau^{>-d-1}M_i\longrightarrow\tau^{>0}M_i\stackrel{+}{\longrightarrow}$ and bearing in mind that $\text{Hom}_{\mathcal{D}(\mathcal{G})}(X,?)$ vanishes on $\mathcal{D}^{\leq -d-1}(\mathcal{G})$ (see Lemma \ref{lem.finite projective dimension}) and on $\mathcal{D}^{>0}(\mathcal{G})$, we readily see that we can assume without loss of generality that $M_i=\tau^{\leq 0}(\tau^{>-d-1}M_i)$, for each $i\in I$. But then each $M_i$ has homology concentrated in degrees $-d,-d+1,\dots,-1,0$, so that each $M_i$ is a finite iterated extension in $\mathcal{D}(\mathcal{G})$ of the stalk complexes $H^{-d}(M_i)[d]$, $H^{-d+1}(M_i)[d-1]$, \dots, $H^0(M_i)[0]$. It follows that we can assume without loss of generality that, for some $k=0,1,\dots,d$, all $M_i$ are stalk complexes at $k$. The proof is whence reduced to prove that if $(Y_i)_{i\in I}$ is a family of objects of $\mathcal{G}$ and $k\in\{0,1,\dots,d\}$, then the canonical map $$\coprod_{i\in I}\text{Ext}_\mathcal{G}^k(X,Y_i)=\coprod_{i\in I}\text{Hom}_{\mathcal{D}(\mathcal{G})}(X,Y_i[k])\longrightarrow\text{Hom}_{\mathcal{D}(\mathcal{G})}(X,\coprod_{i\in I}Y_i[k])=\text{Ext}_\mathcal{G}^k(X,\coprod_{i\in I}Y_i)$$ is an isomorphism. But this is obvious since $\coprod_{i\in I}Y_i$ is the direct limit of its finite subcoproducts and, due to the locally coherent condition of $\mathcal{G}$,  all the functors $\text{Ext}_\mathcal{G}^k(X,?)$ ($k\geq 0$) preserve direct limits (see \cite[Proposition 3.5]{Sa}).

The fact that, under hypotheses a) or b), the set $\mathcal{T}$ is partial tilting whenever $(T_n)_{n\in\mathbb{Z}}$ is strongly exceptional is obvious. 
\end{proof}

\begin{rem} \label{rem.equivalence}
When $\mathbb{X}$ is a smooth algebraic variety, a well-known result of Baer and Bondal (see \cite{Ba} and \cite{Bondal}) says that if $\mathcal{E}=\{E_1,\dots,E_n\}$ is a  strongly exceptional sequence in $\text{coh}(\mathbb{X})$ which is complete (i.e.   $\text{thick}(\mathcal{E})=\mathcal{D}^b(\mathbb{X}):=\mathcal{D}^b(\text{coh}(\mathbb{X}))$), then there is a triangulated equivalence $\mathcal{D}^b(\mathbb{X})\cong\mathcal{D}^b(\text{mod}-R)$ which extends to the unbounded level, where $R=\text{End}_{\text{coh}(\mathbb{X})}(\oplus_{1\leq i\leq n}E_i)$. In view of Proposition \ref{prop.exceptional sequences} and 
Theorem \ref{teor.tilting derived equivalence}, we see that the analogous result, with $\text{Mod}-R$ replaced by the heart of the  t-structure generated by $\mathcal{E}$, is also true when   $\mathcal{E}$ is a strongly exceptional sequence of not necessarily  coherent sheaves in $\text{Qcoh}(\mathbb{X})$.  
\end{rem}

We now give an example of an infinite (super)exceptional sequence of `coherent' objects and a finite one consisting of `noncoherent' objects. 
\begin{ex}
\begin{enumerate}
\item (See also \cite[Example 4.18]{PV}) Consider a field $K$ and an acyclic quiver $Q$ admitting an infinite path $i_0\longrightarrow i_1\longrightarrow \cdots i_n\longrightarrow \cdots$. Putting $T_n=0$ for $n<0$, and $T_n=e_{i_n}KQ$ for $n\geq 0$, we get a strongly  exceptional sequence in the hereditary category $\text{Mod}-KQ$ of unitary (right) $KQ$-modules.
 
\item Let $\mathbb{X}=\mathbf{P}^n(K)$ be the projective $n$-space over the algebraically closed field $K$. Let $0\longrightarrow O_\mathbb{X}\longrightarrow E^0\longrightarrow E^1\longrightarrow \cdots\longrightarrow E^n\longrightarrow 0$ be the minimal injective resolution in $\text{Qcoh}(\mathbb{X})$ of the structural sheaf. If $?(i):\text{Qcoh}(\mathbb{X})\longrightarrow\text{Qcoh}(\mathbb{X})$ denotes the canonical shift equivalence, for each $i\in\mathbb{Z}$, and we put $\hat{E}^k=\oplus_{i=0}^nE^k(i)$, for each $k=0,1,\dots,n$, then $\mathcal{E}:=\{\hat{E}_0,\hat{E}_1,\dots,\hat{E}_n\}$ is a complete strongly exceptional sequence in $\text{Qcoh}(\mathbb{X})$. In particular, if $(\mathcal{E}^{\perp_{>0}},\mathcal{E}^{\perp_{<0}})$ is the associated t-structure in $\mathcal{D}(\mathbb{X}):=\mathcal{D}(\text{Qcoh}(\mathbb{X}))$, then we have a triangulated equivalence $\mathcal{D}(\mathcal{H})\cong\mathcal{D}(\mathbb{X})$, where $\mathcal{H}=\mathcal{E}^{\perp_{>0}}\cap\mathcal{E}^{\perp_{<0}}$ is the heart of this t-structure (see Remark \ref{rem.equivalence}).
\end{enumerate}
\end{ex}
\begin{proof}

1) Recall that $\text{Hom}_{KQ}(e_{i_n}KQ,e_{i_p}KQ)\cong e_{i_p}KQe_{i_n}$. If this $K$-vector space is nonzero, then there is a path $i_n\rightarrow ...\rightarrow i_p$ (possibly of length 0) in $Q$  (here we are viewing a path $j_0\stackrel{\alpha_1}{\rightarrow}j_1\stackrel{\alpha_2}{\rightarrow}...\stackrel{\alpha_n}{\rightarrow}j_n$ in $Q$ as the composition of arrows $\alpha_n\alpha_{n-1}...\alpha_1$ and, hence, it belongs to $e_{j_n}KQe_{j_0}$).  If now   $(n,p,k)\in\mathbb{Z}\times\mathbb{Z}\times\mathbb{N}$ is any triple, then  $\text{Ext}_{KQ}^k(T_n,T_p^{(I)})=0$ whenever $k>0$ due to the projective condition of the $T_n$ in $\text{Mod}-KQ$. On the other hand, if $k=0$ and $n>p$ we have that $\text{Ext}_{KQ}^0(T_n,T_p^{(I)})=\text{Hom}_{KQ}(T_n,T_p^{(I)})$. This is clearly the zero vector space since, due to the acyclicity of $Q$, there are no paths in $Q$ from $i_n$ to $i_p$ whenever $p\geq 0$ (the only case that need to be considered). Therefore $(T_n)_{n\in\mathbb{Z}}$ is a strongly exceptional sequence in $\text{Mod}-KQ$. 

\vspace*{0.3cm}

2) We claim that  $\text{Hom}_{\text{Qcoh}(\mathbb{X})}(E^r(i),E^s(j))=0$ (and hence   $\text{Hom}_{\text{Qcoh}(\mathbb{X})}(E^r(i),E^s(j)^{(I)})=0$ for all sets $I$) whenever $r>s$ and $i,j\in\{0,1,\dots,n\}$. To see this, we use  Serre's theorem and identify $\text{Qcoh}(\mathbb{X})$ with the `category of tails' $\text{Gr}-R/\text{Tor}-R$, where $R=K[x_0,x_1,\dots,x_n]$, $\text{Gr}-R$ is the category of graded $R$-modules and $\text{Tor}-R$ consists of the graded $R$-modules whose elements are annihilated by powers of the ideal $R^+=(x_0,\dots,x_1)$. Adapting to the graded situation the argument for the ungraded case (see \cite[Theorem 18.8]{M}) we get that if $0\longrightarrow R\longrightarrow I^0\longrightarrow I^1\longrightarrow \cdots\longrightarrow I^n\longrightarrow I^{n+1}\longrightarrow 0$ is the minimal injective resolution of $R$ in $\text{Gr}-R$, then $I^k$ is the direct sum of the $E_{gr}(R/\mathbf{p})$, where $E_{gr}(?)$ denotes the injective envelope in $\text{Gr}-R$ and $\mathbf{p}$ ranges over all graded prime ideals of $R$ with graded height equal to $k$ (with the obvious definition). This means that $I^k$ is torsionfree for $k=0,1,\dots,n$ while $I^{n+1}$ is in $\text{Tor}-R$. Then if $q:\text{Gr}-R\longrightarrow \text{Gr}-R/\text{Tor}-R\cong\text{Qcoh}(\mathbb{X})$ is the quotient functor, we get that the minimal injective resolution of $q(R)\cong O_\mathbb{X}$ in $\text{Gr}-R/\text{Tor}-R\cong\text{Qcoh}(\mathbb{X})$ is $0\longrightarrow q(I^0)\longrightarrow q(I^1)\longrightarrow \cdots\longrightarrow q(I^n)\longrightarrow 0$. Recall that $q$ has a fully faithful right adjoint $j:\text{Gr}-R/\text{Tor}-R\rightarrowtail \text{Gr}-R$ whose essential image consists of those $Y\in \text{Gr}-R$ such that $\text{Ext}_{\text{Gr}-R}^i(T,Y)=0$, for $i=0,1$ and all $T\in \text{Tor}-R$.  Then the unit map $I^k\longrightarrow (j\circ q)(I^k)$ is an isomorphism, for all $k=0,1,\dots,n$, and we have 

\begin{center}
$\text{Hom}_{\text{Qcoh}(\mathbb{X})}(E^r(i),E^s(j))=\text{Hom}_{\text{Gr}-R/\text{Tor}-R}(q(I^r)(i),q(I^s)(j))\cong\text{Hom}_{\text{Gr}-R}((j\circ q)(I^r)(i),(j\circ q)(I^r)(j))\cong\text{Hom}_{\text{Gr}-R}(I^r(i),I^s(j)).$
\end{center}
But if $f:I^r(i)\longrightarrow I^s(j)$ were a nonzero morphism in $\text{Gr}-R$, then $\text{Im}(f)$ would be a nonzero graded submodule of $I^s(j)$ whose graded support consists of graded prime ideals of graded height $\geq r$ and whose associated graded prime ideals are of graded height $s$. This is a contradiction, and our claim is settled. This also implies that $\mathcal{E}=\{\hat{E}^0,\hat{E}^1,\dots,\hat{E}^n\}$ is a strongly exceptional sequence due to the injective condition of all coproducts of the $\hat{E}^k$ in $\text{Qcoh}(\mathbb{X})$. On the other hand, we know that $\mathcal{T}:=\{O_\mathbb{X},O_\mathbb{X}(1),\dots,O_\mathbb{X}(n)\}$ is a classical tilting set of $\text{Qcoh}(\mathbb{X})$ (see \cite[Lemma 2]{Bei}), so that it generates $\mathcal{D}(\mathbb{X})$ as a triangulated category.  Since we clearly have that $\text{thick}_{\mathcal{D}(\mathbb{X})}(\mathcal{T})\subseteq\text{thick}_{\mathcal{D}(\mathbb{X})}(\mathcal{E})$ we conclude that $\mathcal{E}$ generates $\mathcal{D}(\mathbb{X})$ and, hence, $\mathcal{E}$ is  complete. 

\end{proof}

\end{document}